\title{Log abundance of the moduli b-divisors of lc-trivial fibrations}
\author{Zhengyu Hu}
\date{2021/02/14}
\keywords{log canonical pair, canonical bundle formula, lc-trivial fibration, log abundance, generalised pairs}
\subjclass[2010]{Primary: 14N30,14E30, Secondary: 13H05}
\address{Mathematical Sciences Research Center, 
	Chongqing University of Technology, No.69 Hongguang Avenue, Chongqing, 400054, China}
\email{zhengyuhu16@gmail.com}
\newcommand{\Supp}[0]{{\operatorname{Supp}}}
\DeclareMathOperator{\CDiv}{CDiv}
\DeclareMathOperator{\mult}{mult}
\DeclareMathOperator{\Spec}{Spec}
\DeclareMathOperator{\ex}{Ex}
\newtheorem{thm}{Theorem}[section]
\newtheorem{lem}[thm]{Lemma}
\newtheorem{cor}[thm]{Corollary}
\newtheorem{prop}[thm]{Proposition}
\theoremstyle{definition}
\newtheorem{defn}[thm]{Definition}
\newtheorem{rem}[thm]{Remark}
\newtheorem{note}[thm]{Notation}
\newtheorem{exa}[thm]{Example}
\newtheorem*{ack}{Acknowledgments}
\newtheorem*{claim*}{Claim}
\newcommand{\N}{\mathbb N}
\newcommand{\K}{\mathbb K}
\newcommand{\PP}{\mathbb P}
\newcommand{\Q}{\mathbb Q}
\newcommand{\R}{\mathbb R}
\newcommand{\Z}{\mathbb Z}
\newcommand{\bir}{\dashrightarrow}
\newcommand{\rddown}[1]{\left\lfloor{#1}\right\rfloor} % round-down
\begin{document}

\maketitle

\begin{abstract}
We prove that the moduli b-divisor of an lc-trivial fibration from a log canonical pair is log abundant. The result follows from a theorem on the restriction of the moduli b-divisor, based on a theory of lc-trivial morphisms, which allows us to treat $\R$-divisors and proper morphisms possibly with disconnected fibres. We also prove a theorem on extending a finite cover over a closed subvariety to that over a variety in arbitrary characteristic.
\end{abstract}

\tableofcontents

\section{Introduction}\label{sec1}
We work over an algebraically closed field of characteristic zero unless stated otherwise.\\

{\noindent \textbf{Log abundance of the moduli b-divisor.}}
Given a proper surjective morphism $f:X \to Y$ of normal varieties with connected fibres, and a sub-log canonical (sub-lc for short) sub-pair $(X,B)$ with $K_X+B \sim_\R 0$ over $Y$, there exists a canonical
decomposition of Kodaira type
$$
K_X + B \sim_\R f^*(K_Y + B_Y + M_Y ),
$$
where $B_Y$ and $M_Y$ are $\R$-divisors on $Y$ , called the \emph{discriminant} and
\emph{moduli} $\R$-divisors. For any birational models $X' \to X$ and $Y' \to Y$ such that the induced map $f':X' \to Y'$ is a morphism, we similarly define the discriminant and moduli $\R$-divisors, hence the discriminant $\R$-b-divisor $\mathbf{B}$ and moduli $\R$-Cartier $\R$-b-divisors $\mathbf{M}$ (See Section \ref{sec2-defn} for definition of b-divisors). Using the theory of variations of (mixed) Hodge structure, F. Ambro\cite{ambro1}, and O. Fujino and Y. Gongyo\cite{fujino-gongyo2} show that, if $B$ is a $\Q$-divisor, $f$ has connected fibres, and $\mathrm{rank} f_*\mathcal{O}_X(\lceil  \mathbf{A}^*
(X, B)\rceil) = 1$ (Definition \ref{defn-Q-lc-trivial-fib}), then the moduli b-divisor $\mathbf{M}$ is $\Q$-b-Cartier and b-nef, hence $\mathbf{K}+\mathbf{B}$ is $\Q$-b-Cartier. 

We extend this result to the real coefficients case: 

\begin{thm}[\text{=Theorem \ref{thm--lc-trivial}, cf.\cite[Theorem 2.5]{ambro1}}]\label{thmmain--lc-trivial}
	Notation as above, we have $\mathbf{B}=\sum_i \alpha_i \mathbf{B}_{Y,i}$ is a convex combination of $\Q$-b-divisors $\mathbf{B}_i$ such that $\mathbf{K}+\mathbf{B}_i$ is $\Q$-b-Cartier. In particular, $\mathbf{K}+\mathbf{B}$ is $\R$-b-Cartier. In the same way, the moduli b-divisor $\mathbf{M}=\sum_i \alpha_i \mathbf{M}_{Y,i}$ is a convex combination of $\Q$-b-Cartier and b-nef $\Q$-divisors. 
\end{thm}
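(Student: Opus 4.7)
The plan is to deduce the $\R$-coefficient statement from the $\Q$-coefficient theorem of Ambro and Fujino--Gongyo by expressing the boundary $B$ as a finite convex combination $B=\sum_i \alpha_i B_i$ of $\Q$-divisors, each of which still defines an lc-trivial fibration $f\colon(X,B_i)\to Y$ (that is, each $(X,B_i)$ is sub-lc, $K_X+B_i\sim_\Q 0/Y$, and $\mathrm{rank}\, f_*\mathcal{O}_X(\lceil \mathbf{A}^*(X,B_i)\rceil)=1$). Once this is in hand, the $\Q$-case immediately yields that $\mathbf{K}+\mathbf{B}_i$ is $\Q$-b-Cartier and $\mathbf{M}_i$ is $\Q$-b-Cartier and b-nef for every $i$. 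The identities $\mathbf{B}=\sum_i \alpha_i \mathbf{B}_i$ and $\mathbf{M}=\sum_i \alpha_i \mathbf{M}_i$ on each birational model follow from the linearity of the discriminant and moduli in the boundary, completing the proof.

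The construction of the decomposition is the heart of the matter. I would work inside the finite-dimensional $\R$-vector space $V$ of $\R$-divisors supported on a fixed finite set containing $\Supp(B)$ together with the components of any divisors witnessing $K_X+B\sim_\R 0/Y$. Inside $V$, the locus $V_0$ of $B'$ with $K_X+B'\sim_\R 0/Y$ is an affine subspace cut out by $\Q$-linear conditions coming from principal divisors and pullbacks from $Y$, hence a rational affine subspace through $B$. Intersecting $V_0$ with the sub-lc locus yields a rational polytope $P\ni B$, and for $B'$ in a sufficiently small neighbourhood of $B$ inside $P$ the round-ups $\lceil \mathbf{A}^*(X,B')\rceil$ on any given birational model are locally constant, so the rank condition persists. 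Choosing finitely many $\Q$-divisors $B_i \in P$ near $B$ with $B$ in their convex hull then produces the required decomposition.

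The principal obstacle I expect is to justify that the $\R$-linear equivalence $K_X+B\sim_\R 0/Y$ can be witnessed by $\Q$-rational data, so that $V_0$ is indeed a $\Q$-rational affine subspace of $V$; this requires unpacking $\sim_\R/Y$ into a finite sum with real coefficients of principal divisors and $f$-pullbacks of $\R$-Cartier divisors on $Y$, and then reorganising it so that the underlying Cartier divisors and the supports of the rational functions are all defined over $\Q$. A secondary subtlety is that the above local analysis is carried out on a single birational model, whereas the identities $\mathbf{B}=\sum_i \alpha_i \mathbf{B}_i$ and $\mathbf{M}=\sum_i \alpha_i \mathbf{M}_i$ must hold as b-divisors; this amounts to checking that the chosen decomposition is compatible with pullback under every further log resolution, which follows once one observes that the $\Q$-structure on $V_0$ and the rational polytope $P$ do not depend on the model up to refinement.
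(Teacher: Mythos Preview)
Your first step—writing $B$ as a convex combination of $\Q$-boundaries $B_i$ each giving a $\Q$-lc-trivial fibration—matches the paper's Lemma~\ref{lem-Q-lc-trivial-fib} and is fine. The gap is in your claim that ``the identities $\mathbf{B}=\sum_i \alpha_i \mathbf{B}_i$ and $\mathbf{M}=\sum_i \alpha_i \mathbf{M}_i$ on each birational model follow from the linearity of the discriminant and moduli in the boundary.'' The discriminant is \emph{not} linear in the boundary: for a prime divisor $P$ on a model $Y'$, the coefficient $1-b_P$ comes from an lc-threshold, and $b_P$ as a function of the boundary is a minimum of affine functions over the prime divisors lying over $P$, hence only piecewise affine. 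Concavity gives $B_{Y'}\le \sum_i\alpha_i B_{Y',i}$, not equality. The paper handles this in Proposition~\ref{prop-R-lc-trivial} by observing that the $b_P$ give a rational polyhedral subdivision of the polytope and then shrinking to a rational sub-polytope on which every $b_P$ is genuinely affine.

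Even once you fix this on a single model, your proposed justification of the b-divisor identity does not work: the affineness of $b_P$ on the shrunken polytope is a statement about the finitely many prime divisors $P$ on the chosen model, and passing to a higher model $Y''$ introduces new exceptional divisors for which $b_P$ may again fail to be affine on your polytope. What is actually needed is that all the $\mathbf{M}_i$ (and $\mathbf{M}$) \emph{descend} to a common model; then the trace equality there gives the b-divisor equality for free. The paper secures this by passing to a \emph{resolved model} and invoking a toroidal argument (Lemma~\ref{lem-resolved-model}, following \cite[Section~5]{ambro1}) to show that the base of a resolved model is an Ambro model for every $(X,B_i)$ simultaneously. Your remark that ``the $\Q$-structure on $V_0$ and the rational polytope $P$ do not depend on the model up to refinement'' does not address this; the descent of the moduli b-divisors is a genuine input, not a formality.
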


As immediate consequences, we prove a Fujino-Mori type canonical bundle formula (Theorem \ref{thm-canonical-bundle-formula}) and the existence of canonical models of Kawamata log terminal pairs (Theorem \ref{thm-canonical-model}). For more details, see \cite{hu2}.

Moreover, if we assume further that $Y$ is complete, and every lc centre is vertical$/Y$, then the moduli b-divisor $\mathbf{M}$ is b-nef and abundant (Corollary \ref{cor--klt-trivial}, cf.\cite{ambro1}\cite{fujino-gongyo2}). 

Using techniques from minimal model theory, O. Fujino and Y. Gongyo \cite{fujino-gongyo2} proved that, the assumption that every lc centre is vertical can be removed. In this paper, we will NOT directly apply the result of \cite{fujino-gongyo2}, but a similar technique will be used in Section \ref{subsec-adj-commute}. Indeed, the main result of \cite{fujino-gongyo2} will be treated uniformly in Section \ref{subsec-log-abundance}.

Generalised (polarised) pairs, introduced by C. Birkar and D. Q. Zhang \cite{birkarzhang}, are originated from the above construction. Notation as above, %by the $\R$-b-Cartierness of $\mathbf{K}+\mathbf{B}$ and $\mathbf{M}$, 
the pair $(X,B+M)$ with data $\mathbf{M}$ is a generalised lc (g-lc for short) generalised pair (g-pair for short) (see Section \ref{sec2-defn} for definition of g-pairs and generalised singularities). We call it the \emph{induced g-pair}. The main purpose of this paper is to study the positivity of the moduli b-divisor $\mathbf{M}$, with respect to the induced g-pair.

Given a g-pair $(X,B+M)$ with data $\overline{M}$, and an $\mathbb{R}$-b-Cartier b-divisor  $\mathbf{D}$ on $X$, we say $\mathbf{D}$ is \emph{b-nef and log abundant with respect to the g-pair} if $\mathbf{D}_{X'}$ is nef and log abundant with respect to any sufficiently high log resolution $(X',B'+M')$ (see Definition \ref{defn-b-nef-log-abundant-divisor}). Note that log abundance is a nice inductive property in birational geometry.

Our main result is the log abundance of the moduli b-divisor:
\begin{thm}[\text{=Theorem \ref{thm-abun-moduli}}]\label{thmmain-abun-moduli}
	Let $f:X \to Y$ be a proper surjective morphism between normal complete varieties with connected fibres. Suppose that $(X,B)$ is an lc pair with $K_X+B \sim_\mathbb{R} 0/Y$. Then the moduli b-divisor $\mathbf{M}$ is b-nef and log abundant with respect to the induced g-pair.
\end{thm}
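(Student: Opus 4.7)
I would first apply Theorem~\ref{thmmain--lc-trivial} to write $\mathbf{M}=\sum_i\alpha_i\mathbf{M}_i$ as a finite convex combination of $\Q$-b-Cartier b-nef $\Q$-b-divisors, each arising as the moduli b-divisor of a $\Q$-lc-trivial fibration sharing the underlying morphism $f$. It would then suffice to prove the statement for each component: b-nefness passes through convex combinations, and for log abundance, the discriminants $\mathbf{B}_i$ can be chosen so that the $\Q$-induced g-pairs have the same g-lc loci (strata), so that proving each $\mathbf{M}_i$ nef and abundant on every g-lc centre of its induced $\Q$-g-pair yields the corresponding property for $\mathbf{M}$. From now on I would work in the $\Q$-coefficient setting and use the classical canonical bundle formula of Ambro.

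Next I would establish b-nefness and abundance of $\mathbf{M}$ as an $\R$-b-divisor (ignoring the lc stratification of the base), without yet addressing log abundance. When every lc centre of $(X,B)$ is vertical over $Y$, this is Corollary~\ref{cor--klt-trivial}. For the general case I would import the Fujino--Gongyo minimal model theoretic reduction, whose adaptation is announced for Section~\ref{subsec-log-abundance}: run a suitable MMP on a log resolution of $(X,B)$ relative to $Y$ to modify $(X,B)$ birationally so that all remaining lc centres become vertical over (a model of) $Y$. The canonical bundle formula is birationally invariant in the appropriate sense, so the moduli b-divisor on $Y$ is unchanged, and the vertical case then supplies b-nefness and abundance.

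The core of the argument, and the main obstacle, is log abundance, which I would obtain by induction on $\dim Y$. The case $\dim Y=0$ is trivial. For the inductive step, let $S$ be a g-lc centre of the induced g-pair on $Y$. Using the restriction theorem for moduli b-divisors announced in the abstract and central to the paper, I would identify $\mathbf{M}|_S$, on a sufficiently high model, with the moduli b-divisor of a naturally defined lc-trivial morphism over $S$ (obtained by adjunction of $(X,B)$ to an lc centre mapping onto $S$), and I would check, via generalised adjunction on the base, that the induced g-pair of this auxiliary fibration agrees with the adjunction of $(Y,B_Y+M_Y)$ to $S$. By the induction hypothesis, $\mathbf{M}|_S$ is then b-nef and log abundant with respect to this g-pair on $S$. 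Combined with the b-nefness and abundance on $Y$ established in the previous paragraph, this yields log abundance of $\mathbf{M}$ with respect to the induced g-pair.

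The most delicate point will be the inductive step: to apply the restriction theorem I need to allow lc centres of $(X,B)$ that are horizontal over $Y$ but whose images meet $S$ transversally only after birational modification, and the associated adjunction fibration need not have connected fibres. This is precisely where the theory of lc-trivial \emph{morphisms} (as opposed to fibrations with connected fibres) developed earlier in the paper becomes essential, and verifying that the moduli b-divisor behaves compatibly under this adjunction in the $\R$-coefficient, disconnected-fibre setting is where I expect the bulk of the technical work to lie.
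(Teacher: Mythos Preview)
Your overall architecture matches the paper's: pass to a dlt model, use the restriction theorem (Theorem~\ref{thmmain-moduli-div}) to identify $\mathbf{M}|_T$ with the moduli b-divisor of an induced lc-trivial morphism over each g-lc centre $T$, and reduce abundance of $\mathbf{M}$ itself to the generically-klt case of Corollary~\ref{cor--klt-trivial}. Two points need correction.

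First, your reduction to $\Q$-coefficients is unnecessary and creates a genuine difficulty. The decomposition $B=\sum_i\alpha_i B_i$ of Theorem~\ref{thmmain--lc-trivial} produces $\Q$-\emph{sub}-boundaries: the pairs $(X,B_i)$ are only sub-lc, so neither the hypothesis of Theorem~\ref{thmmain-abun-moduli} nor the Fujino--Gongyo theorem applies to them, and the MMP step you describe has no boundary to run on. The paper avoids this entirely by working with $\R$-coefficients throughout; Corollary~\ref{cor--klt-trivial} and Theorem~\ref{thm-moduli-div} are already stated in that generality.

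Second, and more seriously, the mechanism in your step~2 does not exist. A horizontal lc centre $S\subset\lfloor B\rfloor$ dominates $Y$, so no birational modification of $X$ over $Y$ can make it vertical or contract it, and lowering its coefficient changes $(X,B)$ and hence $\mathbf{M}$. What Fujino--Gongyo actually do, and what the paper does, is \emph{restrict} to a minimal horizontal lc centre $W$ and show that the moduli b-divisor of $f|_W:(W,B_W)\to Y$ coincides with $\mathbf{M}$; since $(W,B_W)$ is klt over the generic point of $Y$, Corollary~\ref{cor--klt-trivial} then applies. But this is exactly another instance of Theorem~\ref{thm-moduli-div} (with $T=Y$) --- the same tool you already plan to use in your inductive step. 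The paper exploits this uniformity: after passing to a dlt model via Proposition~\ref{propmain-dlt-trivial-fib}, both the ``abundance of $\mathbf{M}$'' part and the ``log'' part follow from Theorem~\ref{thm-moduli-div} alone, with the MMP confined to the proof of that restriction theorem (Lemma~\ref{lem-moduli-res}) rather than appearing as a separate reduction here.
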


The connectedness assumption above can be replaced with \emph{goodness} (Definition \ref{defn-good-lc-trivial}) and a modified definition of discriminant b-divisors and moduli b-divisors (Definition \ref{defn-moduli}). In fact, we believe it is more natural to consider ``lc-trivial" type morphism NOT necessarily with connected fibres, since it behaves more flexible in the inductive arguments. Let us explain how we expand the category. 

{\noindent \textbf{Lc-trivial morphisms.}}
Given a proper surjective morphism  $f: (X, B) \to Y$ between normal varieties and a sub-pair $(X, B)$ with $K_X+B \sim_\R 0/Y$, we consider its Stein factorisation $X \overset{\widetilde{f}}{\to} \widetilde{Y} \overset{\gamma}{\to} Y$ and say it \emph{lc-trivial} if $\widetilde{f}:(X,B ) \to \widetilde{Y}$ is lc-trivial (Definition \ref{defn-lc-trivial-morphism}). Moreover, we define the discriminant and moduli b-divisors by proper push-forwards via a finite map (see Definitions \ref{defn-pushdown} and \ref{defn-moduli}). One can verify its positivity by the following theorem. 
Quite recently, J. Han and W. Liu also obtained a similar result \cite[Theorem 4.5]{hanliu}.

\begin{thm}[=Theorem \ref{thm-gen-pair}]\label{thmmain-gen-pair}
	Let $(\widetilde{X}/Z, \widetilde{B} + \widetilde{M})$ be a g-sub-pair with data $\widetilde{\mathbf{M}}$ over a variety $Z$. Let $f : \widetilde{X} \to	X$ be a proper surjective generically finite morphism of normal varieties over $Z$ with $K_{\widetilde{X}} + \widetilde{B} + \widetilde{M} \sim_{\mathbb{K}} 0/X$. Then, there is a g-sub-pair $(X/Z, B + M)$ with data $\mathbf{M}$ such that
	$$
	K_{\widetilde{X}} + \widetilde{B} + \widetilde{M} \sim_{\mathbb{K}}f^*( K_X + B + M)
	$$
	Moreover, if $(\widetilde{X}/Z, \widetilde{B} + \widetilde{M})$ is g-lc (resp. g-klt, g-sub-lc, g-sub-klt), then so is $(X/Z, B + M)$; and if $\widetilde{\mathbf{M}}$ is b-nef and abundant$/Z$ (resp. semi-ample$/Z$), then so is $\mathbf{M}$.
\end{thm}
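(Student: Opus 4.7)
The plan is to reduce to the case where $f$ is finite Galois and then construct $(B+M)$ by Galois averaging followed by descent. First, taking the Stein factorisation of $f$, write $f = h \circ g$ with $g : \widetilde{X} \to X^\sharp$ a proper birational morphism of normal varieties and $h : X^\sharp \to X$ finite. Pushing down along the birational morphism $g$ (keeping $\widetilde{\mathbf{M}}$ as the b-divisor data, now viewed on $X^\sharp$, and replacing $\widetilde{B}, \widetilde{M}$ by their $g_*$-images) produces a g-sub-pair on $X^\sharp$ satisfying $K_{\widetilde{X}} + \widetilde{B} + \widetilde{M} \sim_\K g^*(K_{X^\sharp} + B^\sharp + M^\sharp)$ with the same singularity and positivity behaviour, reducing the problem to the finite case. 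Next, let $L$ be a Galois closure of $K(\widetilde{X})/K(X)$ and let $\overline{f} : \overline{X} \to X$ be the normalisation of $X$ in $L$; it is finite Galois with group $G = \operatorname{Gal}(L/K(X))$ and factors through $f$ via some finite morphism $h : \overline{X} \to \widetilde{X}$. Pulling back the g-sub-pair along $h$ gives $(\overline{X}/Z, \overline{B} + \overline{M})$ with data $\overline{\mathbf{M}} := h^*\widetilde{\mathbf{M}}$, retaining g-(sub-)lc/klt and b-nefness/abundance/semi-ampleness.

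The group $G$ acts on $\overline{X}$ by $X$-automorphisms, and the averaging operator $\operatorname{Av}_G := |G|^{-1}\sum_{\sigma \in G}\sigma^*$ produces $G$-invariant divisors. For each sufficiently high birational model $X' \to X$ we select a $G$-equivariant resolution $\overline{X}' \to \overline{X}$ on which $\overline{\mathbf{M}}$ descends and such that the induced morphism $\overline{f}' : \overline{X}' \to X'$ remains finite Galois with group $G$. By Galois descent (possibly after a further equivariant refinement chosen so that isotropy subgroups act trivially on fibres at the branch locus), the $G$-invariant $\R$-Cartier divisor
$$\operatorname{Av}_G \overline{\mathbf{M}}_{\overline{X}'} \;=\; \frac{1}{|G|}\sum_{\sigma \in G} \sigma^*\overline{\mathbf{M}}_{\overline{X}'}$$
descends uniquely to an $\R$-Cartier divisor $M_{X'}$ on $X'$ with $\overline{f}'^{*}M_{X'} = \operatorname{Av}_G \overline{\mathbf{M}}_{\overline{X}'}$. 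Varying $X'$ and verifying compatibility across resolutions yields the desired moduli b-divisor $\mathbf{M}$ on $X$. The discriminant $B$ is then determined by the relation $\overline{f}^{*}(K_X + B + M) = K_{\overline{X}} + \operatorname{Av}_G\overline{B} + \operatorname{Av}_G\overline{M}$, which on the unramified locus of $\overline{f}$ reduces to $B = |G|^{-1}\overline{f}_*\overline{B}$ plus a standard ramification correction.

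Pulling the displayed relation for $\overline{f}$ back along $h$ and using the ramification formula for canonical divisors yields the sought identity $K_{\widetilde{X}} + \widetilde{B} + \widetilde{M} \sim_\K f^*(K_X + B + M)$. Singularity preservation from $(\widetilde{X}/Z, \widetilde{B} + \widetilde{M})$ to $(X/Z, B + M)$ follows because g-lc, g-klt and their sub versions can be detected after a finite base change through the ramification formula for discrepancies, and are preserved by finite group quotients. Similarly, b-nefness, b-abundance and b-semi-ampleness descend from $\widetilde{\mathbf{M}}$ to $\mathbf{M}$ since each of these is stable under finite pull-back (along $h$), Galois averaging, and finite descent (along $\overline{f}$). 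The principal obstacle lies in the Galois-descent step: one must ensure that $\operatorname{Av}_G\overline{\mathbf{M}}_{\overline{X}'}$ genuinely descends to an $\R$-Cartier divisor on $X'$---which can fail in the presence of nontrivial fibrewise isotropy at the ramification locus, hence requires carefully chosen $G$-equivariant resolutions---and one must verify that the descended divisors $M_{X'}$ are mutually compatible, assembling into a well-defined b-divisor $\mathbf{M}$ independent of the chosen resolutions.
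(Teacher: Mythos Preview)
Your approach is correct but takes a different route from the paper. The paper never passes to a Galois closure: it defines $\mathbf{M}$ and $\mathbf{B}$ directly by proper push-forward divided by degree (Definition~\ref{defn-pushdown}), namely $\mathbf{M}_{X'} = \frac{1}{\deg f}f'_*\widetilde{\mathbf{M}}_{\widetilde{X}'}$ on each birational model $X'$. That this is a well-defined $\R$-b-Cartier b-divisor is established by Lemma~\ref{lem-descend}, whose proof is a short computation with the norm map $N_{K(\widetilde{X})/K(X)}$ on rational functions. Nefness, abundance and semi-ampleness of $\mathbf{M}$ then follow from Lemma~\ref{lem-nef-abund-3} (push-forward of such divisors under finite maps with $\R$-Cartier image), and singularity preservation is read off from $B' = \frac{1}{\deg f}(R'+\widetilde{B}')$ as in \cite[Lemma 1.1]{fg-bundle}.

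Your Galois-averaging construction yields the same divisor---for a Galois cover $\overline{f}$ one has $\overline{f}^{\,*}\bigl(\tfrac{1}{|G|}\overline{f}_*D\bigr)=\operatorname{Av}_G D$, so descent of the average is exactly push-forward divided by degree---but it introduces extra machinery (Galois closure, $G$-equivariant resolutions, checking descent at ramification) that the paper avoids. The paper's approach is shorter and sidesteps the compatibility worry you flag at the end, since Lemma~\ref{lem-descend} directly gives $f'_*\pi^*D = \phi^* f_*D$ for any pair of compatible birational models. Your approach has the conceptual merit of making the symmetrisation explicit, and would generalise more readily if one wanted to keep track of group-theoretic data; the paper's trade-off is a more opaque definition in exchange for a cleaner verification.
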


As to an lc pair, similar operations can be performed to an lc-trivial morphism from an lc pair, so that we can construct a \emph{dlt model} (see Definition \ref{defn-dlt-trivial-fib}). Therefore, the next proposition allows us to study the property of a moduli b-divisor on a dlt model.
\begin{prop}[=Proposition \ref{prop-dlt-trivial-fib}]\label{propmain-dlt-trivial-fib}
		Let $f:(X,B) \to Y$ be an lc-trivial morphism from an lc pair. Then, there exist a birational model $\phi:Y' \to Y$ and a log birational model $(X',B')$ of $(X,B)$ with a B-birational contraction $\pi:(X',B') \dashrightarrow (X,B)$ such that the induced map $f':(X',B')\dashrightarrow Y'$ is a quasi-projective $\Q$-factorial dlt model.
	$$
	\xymatrix{
		X' \ar[d]_{f'} \ar@{-->}[r]^{\pi}   &  X\ar[d]^{f}\  &\\
		Y' \ar[r]^{\phi} &    Y } 
	$$
	In particular, if $f:(X,B) \to Y$ is good, then $f':(X',B') \to  Y'$ is good.
\end{prop}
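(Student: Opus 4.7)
The plan is to first replace $Y$ by a quasi-projective birational base via Chow's lemma, then pass to a log resolution of $(X,B)$ mapping to this new base, and finally extract a $\Q$-factorial dlt model by running an appropriate MMP. Concretely, Chow's lemma applied to $Y$ yields a projective birational morphism $\phi:Y'\to Y$ with $Y'$ normal and quasi-projective. A similar application to $X$, followed by taking the closure of the graph of the rational map $X\dashrightarrow Y'$, produces a projective birational morphism $X_0\to X$ with $X_0$ quasi-projective and $X_0\to Y'$ a morphism. Now choose a log resolution $\mu:W\to X$ factoring through $X_0$, so that $\tilde f:W\to Y'$ is a morphism and $W$ is smooth quasi-projective. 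Set $B_W:=\mu_*^{-1}B+\sum_i E_i$ where $E_i$ are the $\mu$-exceptional prime divisors. Then $(W,B_W)$ is log smooth, hence $\Q$-factorial dlt, and since $(X,B)$ is lc,
\[
K_W+B_W=\mu^*(K_X+B)+F,\qquad F=\sum_i\bigl(1+a(E_i;X,B)\bigr)E_i\geq 0,
\]
with $F$ supported on the $\mu$-exceptional divisors.

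Pulling back $K_X+B\sim_\R 0/Y$ and using that $\phi$ is birational yields $K_W+B_W\sim_\R F/Y'$ with $F$ effective. I would then run a $(K_W+B_W)$-MMP over $Y'$ with scaling of an $\tilde f$-ample divisor. Termination for such a $\Q$-factorial dlt pair with effective boundary over a quasi-projective base is a standard consequence of MMP theory, producing a $\Q$-factorial dlt pair $(X',B')$ together with the induced morphism $f':X'\to Y'$. Writing $\psi:W\dashrightarrow X'$ for the MMP and $\pi:=\mu\circ\psi^{-1}:X'\dashrightarrow X$, the map $\psi$ does not extract divisors, so $\pi$ is a birational contraction.

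For the B-birationality of $\pi$, every divisor contracted by $\psi$ must lie in $\Supp F$, and the negativity lemma applied to the $\mu$-exceptional $F$ (along the fibres of $\tilde f$) forces every component of $F$ with positive coefficient into the relative fixed part of $F/Y'$, hence into the contracted locus; the surviving $\mu$-exceptional components then satisfy $a(E;X,B)=-1$ and contribute zero to $F$. Consequently $K_{X'}+B'=\pi^*(K_X+B)$ on any common log resolution of $(X,B)$ and $(X',B')$, so $\pi$ is crepant, completing the proof that $\pi$ is a B-birational contraction. For the goodness statement, the rank condition in Definition \ref{defn-good-lc-trivial} involves only the generic fibre of the lc-trivial morphism and the b-divisor $\mathbf{A}^*(X,B)$, both transforming functorially under B-birational modifications of $(X,B)$ and birational modifications of $Y$; hence $f'$ is good whenever $f$ is.

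The principal technical obstacle is ensuring that the MMP over $Y'$ both contracts every $\mu$-exceptional prime divisor with positive coefficient in $F$, including those that dominate $Y'$ and are therefore not $\tilde f$-exceptional, and does not contract any non-$\mu$-exceptional prime divisor (so that $\pi$ remains a birational contraction in the target direction). This is handled by combining the negativity lemma applied along the fibres of $\tilde f$ with the structure of the MMP with scaling, which reduces $K_W+B_W$ to its relative mobile part and thereby distinguishes the exceptional from the non-exceptional components of $F$.
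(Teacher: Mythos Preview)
Your proposal has two genuine gaps.

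First, the definition of a \emph{dlt model} (Definition~\ref{defn-dlt-trivial-fib}) requires not only that $(X',B')$ be dlt but also that the induced g-pair $(Y',B_{Y'}+M_{Y'})$ be g-dlt. You construct $Y'$ by Chow's lemma, which only makes it quasi-projective; there is no reason the induced g-pair on $Y'$ is g-dlt. The paper instead first takes $\phi:Y'\to Y$ to be a g-dlt modification of the induced g-lc pair $(Y,B_Y+M_Y)$ via Lemma~\ref{lem-g-dlt-blow-up}, and only then constructs $X'$ over this specific $Y'$.

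Second, your termination and contraction claim for the MMP over $Y'$ is not justified. You assert that the negativity lemma along fibres of $\tilde f$ forces every positive-coefficient component of $F$ into the contracted locus, but $F$ being $\mu$-exceptional (i.e.\ exceptional over $X$) does not make its vertical part very exceptional over $Y'$: a vertical component of $F$ could map onto a $\phi$-exceptional divisor of $Y'$, and nothing in your setup guarantees another non-$F$ component lies over the same divisor. The paper handles this in Lemma~\ref{lem-dlt-trivial-fib}: because $Y'$ is a g-dlt model, every $\phi$-exceptional divisor $\Gamma$ on $Y'$ satisfies $a(\Gamma,Y,B_Y+M_Y)=0$, and one further blows up $\overline{X}$ so that each such $\Gamma$ is dominated by a component of $\lfloor\overline{B}\rfloor$ with log discrepancy zero for $(X,B)$. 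This component is then not in $\Supp E$ and witnesses very exceptionality of $E^v$ over $Y'$, after which Lemma~\ref{lem-exc-2+} gives termination with $E_Y=0$. Your appeal to ``standard MMP theory'' for termination is also unsupported in this generality.

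Finally, your description of goodness confuses Definition~\ref{defn-good-lc-trivial} (the relation $\widetilde{\mathbf{M}}=\gamma^*\mathbf{M}$ for the Stein factorisation) with the rank condition of Definition~\ref{defn-Q-lc-trivial-fib}; the preservation of goodness under B-birational modification and base birational modification follows from Remark~\ref{rem--moduli-lc-trivial-morphism}(1), not from a rank condition on the generic fibre.
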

Let us explain the term \emph{good}. A good lc-trivial morphism means the moduli b-divisors given by $\widetilde{f}$ and $f$, respectively, have a simple relation $\widetilde{\mathbf{M}}=\gamma^* \mathbf{M}$ (Definition \ref{defn-good-lc-trivial}). In particular, a fibration is automatically good. An important observation is that the goodness is preserved under restrictions to strata.

{\noindent \textbf{Adjunction for fibre space commutes with restriction.}}
With the above observation, we are ready to achieve the main technical theorem of this paper. 

The author was informed by E. Floris and V. Lazi\'{c} that they already obtained the result when $B$ is a $\Q$-divisor and $f$ has connected fibres, via a different approach. See \cite[Proposition 4.4]{floris-lazic}. 
\begin{thm}[=Theorem \ref{thm-moduli-div}]\label{thmmain-moduli-div}
	 Let $f:(X,B) \to Y$ be a good dlt model and $T$ be a stratum of the induced dlt pair $(Y,B_Y)$. Suppose $S$ is a stratum of $(X,B)$ saturated over $T$. Then, 
	\begin{enumerate}
		\item $f|_S:(S,B_S) \to T$ is a good dlt model where $K_S+B_S=(K_X+B)|_S$.
		
		\item If we denote by $(T,B_T+M_T)$ the g-dlt pair with data $\mathbf{M}|_T$ given by the adjunction formula $K_T+B_T+M_T=(K_Y+B_Y+M_Y)|_T$, and we denote by $(T,C_T +N_T)$ the g-dlt pair with data $\mathbf{N}$ given by the dlt model $f|_S:(S,B_S) \to T$. Then, $$\mathbf{M}|_T = \mathbf{N}.$$
	\end{enumerate}  
\end{thm}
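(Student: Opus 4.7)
The plan is to handle (1) and (2) separately, passing to a common log resolution for the heart of (2).

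For (1), ordinary dlt adjunction for the stratum $S \subset X$ produces the dlt pair $(S, B_S)$ with $K_S + B_S = (K_X + B)|_S$. The saturation of $S$ over $T$ makes $f|_S : S \to T$ proper and surjective with compatible Stein factorisation, and restricting $K_X + B \sim_\R 0/Y$ to $S$ yields $K_S + B_S \sim_\R 0/T$, hence an lc-trivial morphism. The dlt-model structure and $\Q$-factoriality of $f|_S$ are inherited from those of $f$ via the MMP that constructed the dlt model (using Proposition \ref{propmain-dlt-trivial-fib}). Goodness of $f|_S$ then follows from goodness of $f$: the relation $\widetilde{\mathbf{M}} = \gamma^* \mathbf{M}$ between the moduli b-divisors in the Stein factorisation pulls back compatibly when we restrict to saturated strata on both sides.

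For (2), let $T'$ be a sufficiently high birational model of $T$, arising as the image of a stratum on a common log resolution $X' \to X$, $Y' \to Y$ with $f' : X' \to Y'$ a morphism, and let $S' \subset X'$ be the corresponding model of $S$. From the canonical bundle formula for $f'$,
\begin{equation*}
K_{X'} + B_{X'} \sim_\R (f')^*(K_{Y'} + B_{Y'} + M_{Y'}),
\end{equation*}
g-pair dlt adjunction along $T' \subset Y'$ combined with ordinary dlt adjunction along $S' \subset X'$ gives
\begin{equation*}
K_{S'} + B_{S'} \sim_\R (f'|_{S'})^*(K_{T'} + B_{T'} + M_{T'}).
\end{equation*}
On the other hand, the canonical bundle formula applied directly to the good lc-trivial morphism $f'|_{S'} : (S', B_{S'}) \to T'$ from part (1) produces its own discriminant and moduli divisors $C_{T'}$ and $N_{T'}$ with
\begin{equation*}
K_{S'} + B_{S'} \sim_\R (f'|_{S'})^*(K_{T'} + C_{T'} + N_{T'}).
\end{equation*}

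The core step is then to prove the divisor-level identity $B_{T'} = C_{T'}$. This is a codimension-one lct computation: for any prime $P' \subset T'$, both $1 - \mathrm{coeff}_{P'} B_{T'}$ and $1 - \mathrm{coeff}_{P'} C_{T'}$ can be read off from local SNC data on $X'$ over the generic point of $P'$. On the $Y$-side this proceeds via the coefficient of the lift of $P'$ in $B_{Y'}$ together with the different contributed by the other components of $\lfloor B_{Y'} \rfloor$ cutting out $T'$; on the $X$-side it is the lct of $(f'|_{S'})^*P'$ with respect to $(S', B_{S'} - B_{S'}^{\mathrm{hor}})$. Both quantities equal the lct of $(f')^*P'$ with respect to $(X', B_{X'} - B_{X'}^{\mathrm{hor}})$ computed along the strata of $X'$ dominating $T$, where the \emph{saturation hypothesis} guarantees that the $S$-stratum captures every relevant lc datum. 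Once $B_{T'} = C_{T'}$ is established, subtracting the two canonical bundle formulae gives $M_{T'} \sim_\R N_{T'}$, and since the comparison is compatible under further birational pullbacks between models, we conclude $\mathbf{M}|_T = \mathbf{N}$ at the b-divisor level. The main obstacle is precisely this divisor-level identity, which requires reconciling three adjunction processes (g-pair dlt adjunction on $Y$, ordinary dlt adjunction on $X$, and the canonical bundle formula for $f|_S$); it is the saturation condition that makes these three compatible at the codimension-one level, and without it one would only have the inequality $B_{T'} \geq C_{T'}$.
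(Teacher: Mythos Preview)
Your proposal has a genuine gap at the heart of part (2). You assert that the divisor-level identity $B_{T'} = C_{T'}$ follows from a ``codimension-one lct computation'' on a common log resolution, with both sides equal to an lct on $(X', B_{X'} - B_{X'}^{\mathrm{hor}})$. This assertion is precisely the non-trivial content of the theorem and is not a routine SNC bookkeeping exercise. The easy direction is $C_{T'} \le B_{T'}$: sub-lc-ness of $(X', B_{X'} + b_P (f')^*P)$ restricts to sub-lc-ness of $(S', B_{S'} + b_P (f'|_{S'})^*P|_{T'})$, so the lct on $S'$ is at least as large. But the reverse inequality fails in general on a mere log resolution: the lct on $X'$ is computed by \emph{some} divisor over the generic point of a lift of $P'$, and there is no reason this divisor lies on (or even meets) the stratum $S'$, even under saturation. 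The paper closes this gap by first passing to a weakly semi-stable model (Theorems \ref{thm-equidimension} and \ref{thm-ss-reduction}), so that fibres are reduced and the relevant vertical divisors are controlled, and then running an MMP (Lemma \ref{lem-moduli-res}) that contracts exactly the components obstructing the reverse inequality; the adjunction inequality $(\spadesuit)$ on the resulting model is what forces $C_T \ge B_T$.

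There are two further issues you have not addressed. First, the restriction $f|_S$ need not have connected fibres, so the moduli b-divisor $\mathbf{N}$ is defined via the Stein factorisation and a push-forward (Definition \ref{defn-moduli}); comparing it to $\mathbf{M}|_T$ requires the finite-cover extension step (Theorem \ref{thm-extend-finite-cover}) to reduce to the fibred case. Your discussion of goodness in part (1) is also circular: in the paper, goodness of $f|_S$ is \emph{derived} from the equality $\mathbf{M}|_T = \mathbf{N}$ (via Lemma \ref{lem-finite-pullback}), not established beforehand. Second, even granting $M_{T'} \sim_\R N_{T'}$ on one model, you need both b-divisors to \emph{descend} to $T'$ in order to conclude $\mathbf{M}|_T = \mathbf{N}$; an arbitrary log resolution is not an Ambro model for $\mathbf{N}$, and the paper arranges this by choosing the semi-stable model carefully and invoking Lemma \ref{lem-lc-fib-base-change}.
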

Combining the above theorem and Proposition \ref{propmain-dlt-trivial-fib}, we can prove Theorem \ref{thmmain-abun-moduli}. \\

{\noindent \textbf{Relative log abundance of the moduli b-divisor.}}
By Lemma \ref{lem-global-local-nef-abun} and the compactification lemma \ref{lem-compact}, we give a relative version of Theorem \ref{thmmain-abun-moduli} as below.
\begin{thm}[=Theorem \ref{thm-relative-log-abundance}]\label{thmmain--relative-log-abundance}
	Let $f:X \to Y$ be a proper surjective morphism between normal varieties with connected fibres, $Y$ be proper over a variety $Z$. Suppose that $(X,B)$ is an lc pair with $K_X+B \sim_\mathbb{R} 0/Y$. Then the moduli b-divisor $\mathbf{M}$ is b-nef and log abundant over $Z$ with respect to the induced g-pair.
\end{thm}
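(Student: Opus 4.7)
The plan is to reduce the relative statement to the absolute (complete) case already established in Theorem \ref{thmmain-abun-moduli}, using the compactification lemma \ref{lem-compact} to produce a proper ambient model, and then using Lemma \ref{lem-global-local-nef-abun} to transfer b-nef and log abundance from the global setting to the relative one. Since the relative notion ``b-nef and log abundant over $Z$'' is by design a localisation of the absolute notion, this two-step reduction is the natural route.

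First, I would apply the compactification lemma \ref{lem-compact} to the morphism $f:(X,B) \to Y$ over $Z$. This should yield a commutative diagram with a complete normal variety $\overline Y$ containing $Y$ as an open subset, a proper surjective morphism $\overline f:\overline X \to \overline Y$ with connected fibres extending $f$, and an lc sub-pair $(\overline X,\overline B)$ with $K_{\overline X}+\overline B\sim_{\mathbb R} 0/\overline Y$ such that $(\overline X,\overline B)|_{f^{-1}(Y)} = (X,B)$. This compactification is purely a geometric operation: it does not destroy lc singularities once one chooses the extension of $B$ with care, and it preserves the lc-trivial fibration structure over the open locus $Y \subset \overline Y$. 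I would record that the resulting discriminant and moduli b-divisors $\overline{\mathbf B}$, $\overline{\mathbf M}$ on $\overline Y$ restrict to the original $\mathbf B$, $\mathbf M$ on $Y$, which follows directly from the birational definitions of $\mathbf B$ and $\mathbf M$ applied to any log resolution that refines both the compactified and the original setups.

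Next, since $\overline Y$ is now complete, Theorem \ref{thmmain-abun-moduli} applies to the lc-trivial fibration $\overline f:(\overline X,\overline B) \to \overline Y$ and produces that $\overline{\mathbf M}$ is b-nef and log abundant with respect to the induced g-pair $(\overline Y, \overline B_{\overline Y}+\overline M_{\overline Y})$. Finally I would invoke Lemma \ref{lem-global-local-nef-abun} (the global-to-local principle for nef/abundant b-divisors) to conclude that the restriction of $\overline{\mathbf M}$ to the open set $Y$, namely $\mathbf M$, is b-nef and log abundant over $Z$ with respect to the induced g-pair $(Y,B_Y+M_Y)$. The point here is that nefness and abundance on a sufficiently high log resolution of $(\overline Y, \overline B_{\overline Y}+\overline M_{\overline Y})$ restrict to nefness and (log) abundance over $Z$ on the corresponding open part, since $Y \to Z$ factors through the restriction of the resolution.

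The main obstacle will be the compatibility in the compactification step: one must check that the construction of \ref{lem-compact} can be carried out so that the extended sub-pair remains lc and that the extended moduli b-divisor indeed restricts to $\mathbf M$ on $Y$. Being a b-divisor, $\mathbf M$ is defined trace by trace on all higher models; the compatibility should ultimately follow from the birational invariance built into the canonical bundle formula, applied to a common log resolution of the compactified and non-compactified data. Verifying this compatibility carefully, and ensuring that every lc centre needed for the log abundance check on $Y$ already appears on the compactified side (so that the stratum-wise abundance provided by Theorem \ref{thmmain-abun-moduli} restricts cleanly), is the step that requires the most care; everything else is a formal consequence of Theorem \ref{thmmain-abun-moduli} and Lemma \ref{lem-global-local-nef-abun}.
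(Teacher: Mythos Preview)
Your overall strategy---compactify, apply Theorem~\ref{thmmain-abun-moduli}, then descend via Lemma~\ref{lem-global-local-nef-abun}---is exactly the paper's route. However, there is one genuine missing step and one imprecision.

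The missing step is that Lemma~\ref{lem-compact} is stated only for a \emph{fibred dlt model}, not for an arbitrary lc-trivial fibration. You cannot apply it directly to $f:(X,B)\to Y$; you must first invoke Proposition~\ref{prop-dlt-trivial-fib} to replace $f$ by a $\mathbb{Q}$-factorial dlt model $f':(X',B')\to Y'$, and only then compactify. This is precisely the point you flag as ``the main obstacle'' (ensuring the extended pair remains lc), and the paper resolves it not by an ad hoc compatibility check but by arranging dlt singularities first, for which the compactification result from \cite{has-mmp} is available. Without this reduction your invocation of Lemma~\ref{lem-compact} is not justified.

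The imprecision concerns your reading of Lemma~\ref{lem-global-local-nef-abun}. That lemma does not say anything about restricting to an open subset; it says that a nef and abundant divisor on a projective variety is automatically nef and abundant \emph{over} any base $Z$. The argument runs: after compactifying $Z\hookrightarrow Z^c$ (so $Y^c$ is projective), Theorem~\ref{thm-abun-moduli} gives that $\mathbf{M}^c$ is b-nef and log abundant absolutely; Lemma~\ref{lem-global-local-nef-abun} applied stratum by stratum then gives b-nef and log abundant over $Z^c$, hence over the open $Z\subset Z^c$ since the relative notions are computed on (geometric) generic fibres. The identification $\mathbf{M}^c|_Y=\mathbf{M}$ is immediate from the construction in Lemma~\ref{lem-compact}, not a separate issue. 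The paper also first localises to $Z$ affine (Definition~\ref{defn--abund}), which you omit but which is harmless.
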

Similarly, the connectedness assumption above can be replaced with goodness.\\

{\noindent \textbf{Extending a finite cover.}} 
We study the extension of a finite cover over a closed subvariety to that over the whole variety. Although only a special case is needed in this paper, assuming the ground field is an algebraically closed field of characteristic zero, we will prove it in greater generality, since it may be interesting to general audience.

\begin{thm}[=Theorem \ref{thm-extend-finite-cover}]\label{thmmain-extend-finite-cover}
	Let $X$ be a normal variety over an arbitrary field and $S$ be a closed subvariety. Suppose we are given a finite morphism $\gamma: \widetilde{S} \to S$ from a normal variety. Suppose further that one of the following conidtions holds:
	\begin{enumerate}
		\item The residue field $\kappa(\eta_S)$ at the generic point $\eta_S$ of $S$ is perfect.
		
		\item $X$ is regular at the generic point of $S$.
	\end{enumerate}
	Then there exists a finite morphism $\rho: \widetilde{X} \to X$ of normal varieties together with a closed subvariety $\widehat{S} \subset \widetilde{X}$ satisfying:
	$$
	\xymatrix{
		\widetilde{S} \ar[dr]_{\gamma}\ar[r]^{\nu} & \widehat{S} \ar[d]^{} \ar@{^(->}[r]^{}   &  \widetilde{X}\ar[d]^{\rho}\  &\\
		& S \ar@{^(->}[r]^{} &    X } 
	$$ 
	
	(1). $\widehat{S}$ is mapped onto $S$ through $\rho$ and the above diagram commutes.
	
	(2). $\nu$ is the normalisation of $\widehat{S}$. 
	
	(3). $\deg \rho= \deg \gamma$. 
\end{thm}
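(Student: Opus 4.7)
The idea is to lift $\gamma$ at the level of function fields: set $K = k(X)$, $L = k(S)$, $\widetilde{L} = k(\widetilde{S})$, and construct a finite field extension $\widetilde{K}/K$ of degree $d := [\widetilde{L}:L] = \deg\gamma$ together with a prime of the integral closure of $A := \mathcal{O}_{X,\eta_S}$ in $\widetilde{K}$ lying over $\mathfrak{m}_A$ whose residue field is exactly $\widetilde{L}$. Then $\widetilde{X}$ will be the normalisation of $X$ in $\widetilde{K}$, which is finite over $X$ by excellence of varieties over a field, and $\widehat{S}$ will be the closed subvariety defined by that distinguished prime.

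In case (1), since $L$ is perfect, $\widetilde{L}/L$ is separable and admits a primitive element $\theta$ with monic minimal polynomial $f(x) \in L[x]$ of degree $d$. Lift the coefficients of $f$ through the surjection $A \twoheadrightarrow L$ to obtain a monic polynomial $\widetilde{f}(x) \in A[x]$ reducing to $f$. The crucial observation is that $\widetilde{f}$ remains irreducible in $K[x]$: any monic factorisation over $K$ would lie in $A[x]$ by normality of $A$, and then reducing modulo $\mathfrak{m}_A$ would contradict the irreducibility of $f$ over $L$. Hence $\widetilde{K} := K[x]/(\widetilde{f}(x))$ is the required degree-$d$ field extension, and $B := A[x]/(\widetilde{f}(x))$ is a local subring of $\widetilde{K}$, free of rank $d$ over $A$, with residue field $B/\mathfrak{m}_A B \cong L[x]/(f(x)) = \widetilde{L}$.

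In case (2), where $A$ is regular but $L$ may be imperfect, the primitive-element approach breaks down because $\widetilde{L}/L$ can be inseparable. One way around this is to decompose $\widetilde{L}/L$ into a tower of simple subextensions and apply the case-(1) construction recursively, using regularity of $A$ to ensure that the intermediate rings arising at each stage remain normal (so the Gauss-type irreducibility argument still applies). Alternatively, one can invoke Cohen's structure theorem to write $\widehat{A} \cong L[[t_1,\dots,t_m]]$, perform the lift at the formal level via $\widetilde{L}[[t_1,\dots,t_m]]$, and descend by Artin approximation combined with excellence. Either way the output is, as in case (1), a finite field extension $\widetilde{K}/K$ of degree $d$ containing a local subring $B$ with residue field $\widetilde{L}$.

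Once $\widetilde{K}$ and $\widetilde{X}$ are in hand, let $\widetilde{A}$ be the integral closure of $A$ in $\widetilde{K}$; the inclusion $\widetilde{L} = B/\mathfrak{m}_A B \hookrightarrow \widetilde{A}/\widetilde{\mathfrak{p}}$ at any prime $\widetilde{\mathfrak{p}}$ of $\widetilde{A}$ above $\mathfrak{m}_A$ shows that every such residue field contains $\widetilde{L}$, and the inequality $\sum_i e_i f_i \leq [\widetilde{K}:K] = d$ combined with $f_i \geq [\widetilde{L}:L] = d$ forces exactly one such prime, unramified, with residue field precisely $\widetilde{L}$. Take $\widehat{S} \subset \widetilde{X}$ to be the closed subvariety cut out by spreading this prime out. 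Then $\rho(\widehat{S}) = S$ by construction, giving (1); $k(\widehat{S}) = \widetilde{L} = k(\widetilde{S})$ together with normality of $\widetilde{S}$ implies that $\widetilde{S}$ is the normalisation of $\widehat{S}$, giving (2); and $\deg \rho = [\widetilde{K}:K] = d = \deg \gamma$, giving (3). The main obstacle is case (2): inseparability of $\widetilde{L}/L$ makes the primitive-element shortcut unavailable, forcing the tower construction (or an algebraisation through the completion), and this is genuinely where the regularity hypothesis is used. A subsidiary technical point is justifying the degree identity $\sum e_i f_i = d$ used to pin down the residue field exactly; this is automatic in codimension one (where $A$ is a DVR) and, in general, is controlled by flatness of $\widetilde{A}$ over $A$ via miracle flatness under regularity.
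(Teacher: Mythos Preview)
Your overall strategy matches the paper's: lift the minimal polynomial of a generator from $L$ to the normal local ring $A=\mathcal{O}_{X,\eta_S}$, use integrality of $A$ (Gauss) to keep the degree, and in the inseparable situation of case~(2) iterate through a tower of simple extensions, using that each $A_i[x]/(\widetilde f_{i+1})$ stays regular because its maximal ideal is $\mathfrak m_{A_i}$ times the ring. The paper organises this as Lemmas~\ref{lem-com-alg-1}--\ref{lem-com-alg-3} and then globalises on an affine chart; you work directly at the local ring and spread out. These are the same argument.

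There is, however, a genuine gap in your treatment of case~(1). After constructing $B=A[x]/(\widetilde f)$ you must pass to the integral closure $\widetilde A$ of $A$ in $\widetilde K$, and you invoke $\sum_i e_i f_i\le d$ to force the residue field at a prime over $\mathfrak m_A$ to be exactly $\widetilde L$. But you only justify this inequality ``in codimension one'' or ``via miracle flatness under regularity''; neither applies in case~(1) when $\operatorname{ht}\mathfrak m_A\ge 2$ and $A$ is merely normal. In fact the paper's counter-examples (Remark~\ref{rem-com-alg-2}) show that residue degrees can exceed $[\widetilde K:K]$ for general normal $A$, so no uniform $\sum e_if_i\le d$ is available. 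The correct fix, which is exactly the paper's Lemma~\ref{lem-com-alg-1}(1), is to use perfectness of $L$ again on the \emph{residue} side: for any prime $\widetilde{\mathfrak p}$ of $\widetilde A$ over $\mathfrak m_A$, the extension $\kappa(\widetilde{\mathfrak p})/L$ is separable, hence has a primitive element $\theta$; lift $\theta$ to $b\in\widetilde A$, take its monic minimal polynomial over $K$ (which lies in $A[x]$ by normality of $A$ and has degree $\le d$), and reduce mod $\mathfrak m_A$ to get $[\kappa(\widetilde{\mathfrak p}):L]\le d$. Combined with $\widetilde L\subset\kappa(\widetilde{\mathfrak p})$ this gives $\kappa(\widetilde{\mathfrak p})=\widetilde L$ as required. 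In case~(2) your tower already yields a \emph{regular} local ring, so $\widetilde A$ equals your $B$ and no separate residue-field control is needed; the miracle-flatness remark is then superfluous (and would in any case require $\widetilde A$ to be Cohen--Macaulay, which you have not established independently).
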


{\noindent \textbf{Sketch of proof of Theorem \ref{thmmain-moduli-div}.}}
First, by induction on dimension, we can assume $S$ is a prime divisor and $T$ is a prime divisor or $Y$. Taking the Stein factorisation and by Proposition \ref{propmain-dlt-trivial-fib}, we can assume $f$ has connected fibres. Replacing $f$ by an appropriate base change by Theorem \ref{thmmain-extend-finite-cover}, we can assume $f|_S$ has connected fibres. Finally we apply weak semi-stable reduction (Theorem \ref{thm-ss-reduction}) to reduce to the case in Lemma \ref{lem-moduli-res}.
\\

{\noindent \textbf{Contents of the paper.}}
In Section \ref{sec2}, we collect definitions, notations and results. 
In Section \ref{subsec-lc-trivial-fib}, we study lc-trivial fibrations with real coefficients, and prove Theorem \ref{thmmain--lc-trivial}.
In Section \ref{subsec-lc-trivial-morphism}, we define (good) lc-trivial morphisms and moduli b-divisors. We also prove Theorem \ref{thmmain-gen-pair}. 
In Section \ref{subsec-dlt-trivial-morphism}, we define dlt models and prove Proposition \ref{propmain-dlt-trivial-fib}. 
In Section \ref{subsec-adj-commute}, we prove Theorem \ref{thmmain-abun-moduli}.
In Section \ref{subsec-log-abundance}, we prove Theorems \ref{thmmain-moduli-div} and \ref{thmmain--relative-log-abundance}.
In Appendix \ref{sec3}, we prove Theorem \ref{thmmain-extend-finite-cover}.
In Appendix \ref{subsec-ssred}, we review a semi-stable reduction with an extra requirement on the base variety. 

\begin{ack}
The author is grateful to Wai-Kit Yeung for discussions and comments on Section \ref{sec3}. He thanks Yifei Chen, Kenta Hashizume, Chen Jiang, Zhan Jiang and Longke Tang for discussions. He also thanks Professors Florin Ambro, Caucher Birkar, Kalle Karu and Aise Johan de Jong %Mircea Musta\c{t}\u{a} 
for answering his questions. Part of the work was carried out when he was visiting CMS, Zhejiang Univeristy. He thanks Professors Kefeng Liu and Hongwei Xu for their hospitality.
\end{ack}

\section{Preliminaries}\label{sec2}
In this section we collect definitions and some important results. Throughout this paper all varieties are over a fixed algebraically closed field of characteristic zero and a divisor refers to an $\R$-Weil divisor unless stated otherwise. 

\subsection{Notations and definitions}\label{sec2-defn}
We collect some notations and definitions. 

\noindent \textbf{Conventions.}
We denote by $\mathbb{K}$ the rational number field $\Q$ or the real number field $\R$. A birational model of a normal variety $X$, often denoted by $X'$, means a variety admits a proper and birational morphism to $X$, and a divisor $D$ over $X$ means a divisor on a birational model of $X$. 

\noindent \textbf{Contractions.}
In this paper a \emph{contraction} refers to a proper morphism $f\colon X\to Y$ of varieties 
such that $f_*\mathcal{O}_X=\mathcal{O}_Y$. In particular, $f$ has connected fibres. Moreover, 
if $X$ is normal, then $Y$ is also normal. A contraction $f$ is \emph{small} if $f$ does not contract any divisor. A birational map $\pi: X \dashrightarrow Y $ is a \emph{birational contraction} if the inverse of $\pi$ does not contract divisors. Note that $\pi$ is not necessarily a morphism unless stated otherwise. 

\noindent  \textbf{Divisors.}
Let $X$ be a normal variety, and let $M$ be a divisor on $X$. 
We denote the coefficient of a prime divisor $D$ in $M$ by $\mult_DM$. %If every non-zero coefficient of $M$ belongs to a set $\Phi\subseteq \R$, we write $M\in \Phi$. 
Writing $M=\sum m_iM_i$ where 
$M_i$ are the distinct irreducible components, the notation $M^{\ge a}$ means 
$\sum_{m_i\ge a} m_iM_i$, that is, we ignore the components with coefficient $<a$. One similarly defines $M^{= a}$. 

By a $\K$-rational function we mean a formal product of finitely many rational functions with $\K$-exponent, namely $\varphi:=\prod_{i=1}^k \varphi_i^{\alpha_i}$ with $\alpha_i \in \K$ for all $i$. We denote its $\K$-Cartier divisor by $(\varphi):=\sum_{i=1}^k \alpha_i (\varphi_i)$. Given two $\R$-Cartier divisors $D,D'$ on $X$, we say $D,D'$ are $\K$-linearly equivalent, and denote by $D \sim_\K D'$, if there exists a $\K$-rational function $\varphi$ such that $D=D'+(\varphi)$.

Given a proper morphism $f:X \to Z$, we say $D,D'$ are $\K$-linearly equivalent (resp. linearly equivalent, equivalent) over $Z$ and denote by $D \sim_\K D'/Z$ (resp. $D\sim D'/Z$, $D=D'/Z$) if there exists an $\R$-Cartier divisor $D_Z$ on $Z$ such that $D \sim_\K D' + f^* D_Z$ (resp. $D \sim  D' + f^* D_Z$, $D= D' + f^* D_Z$).

\noindent \textbf{Very exceptional divisors.}
Let $f : X \rightarrow Y$ be a
dominant morphism from a normal variety to a variety, $D$ a divisor on $X$, and $Z \subset X$ a closed
subset. We say $Z$ is \emph{horizontal} over $Y$ if $f(Z)$ dominates $Y$, and we say $Z$ is \emph{vertical} over $Y$ if $f(Z)$ is a proper subset of $Y$. 

Suppose $f$ is a contraction of normal varieties. Recall that a divisor $D$ is \emph{very exceptional}$/Y$ if $D$ is vertical$/Y$ and for any prime divisor
$P$ on Y there is a prime divisor $Q$ on $X$ which is not a component of $D$ but
$f(Q) = P$, i.e. over the generic point of $P$ we have $\mathrm{Supp} f^\ast P \nsubseteq \mathrm{Supp} D$.

If $\mathrm{codim} f(D) \ge 2$, then $D$ is very exceptional. In this case we say $D$ is \emph{$f$-exceptional}.

%For practical reason, we slightly generalise the above definition. 
%\begin{defn}[Very exceptional divisor]\label{defn-very-exc-div}
For practical reason, we sometimes use the terminology ``very exceptional" even when $f$ is not necessarily a contraction. More precisely, let $f:X \to Y$ be a proper surjective morphism from a normal variety to a variety, and $X \to \widetilde{Y} \to Y$ be the Stein factorisation. We say a divisor $D$ is \emph{very exceptional} over $Y$ if it is very exceptional over $\widetilde{Y}$.
%\end{defn}

\noindent \textbf{b-divisors.}
We recall some definitions regarding b-divisors. Let $X$ be a variety. A b-divisor $\mathbf{D}$ of $X$ is a family
$\{\mathbf{D}_{X'}\}_{X'}$ of $\R$-Weil divisors indexed by all birational models $X'$ of $X$,
such that $\mu_*(\mathbf{D}_{X''}) = \mathbf{D}_{X'}$ if $\mu: X'' \to X'$ is a birational contraction.

In most cases we focus on a class of b-divisors but not in full generality. An \emph{$\K$-b-Cartier b-divisor} $\mathbf{M}$ is defined by the choice of  
a projective birational morphism 
$\overline{X} \to X$ from a normal variety and an $\K$-Cartier divisor $\overline{M}$ on $\overline{X}$ in the way that $\mathbf{M}_{X'}=\mu^*\overline{M}$ for any birational model $\mu: X' \to \overline{X}$. In this case we say that $\overline{M}$ \emph{represents} $\mathbf{M}$ or $\mathbf{M}$ \emph{descends} to $\overline{X}$. 

Given an $\K$-b-Cartier b-divisors $\mathbf{M}$ on $X$ represented by $M_Y$ and a surjective proper morphism $f: Y \to X$, we define the pull-back of $\mathbf{M}$ as the $\K$-b-Cartier b-divisors $f^*\mathbf{M}$ represented by $\overline{f}^* \overline{M}$ where $\overline{f}: \overline{Y} \to \overline{X}$ is induced by $f$ and $\overline{X} \to X$. 

An $\R$-b-Cartier b-divisor represented by some $\overline{X}\to X$ and $\overline{M}$ is \emph{b-nef} if $\overline{M}$ is 
nef. Similarly we define a \emph{b-nef and abundant} b-divisor if $\overline{M}$ is 
nef and abundant.

We say a few words about the relative case. Let $f : X \to Y$ be a proper surjective morphism of varieties and $\mathbf{D}_1,\mathbf{D}_2$ be $\R$-b-Cartier b-divisors on $X$ represented by $\overline{D_1},\overline{D_2}$ on $\overline{X} \to X$. We say $\mathbf{D}$ is \emph{$\mathbb{K}$-linearly equivalent over $Y$} and denote by $\mathbf{D}_1 \sim_\mathbb{K} \mathbf{D}_2/Y$ if there exists birational models $\phi:X' \to \overline{X}$ and $ Y' \to Y$ such that the induced map $X' \dashrightarrow Y'$ is a morphism and $\phi^* \overline{D_1} \sim_{\mathbb{K}} \phi^* \overline{D_2}/Y'$. The reader may want to check this is well-defined since it is independent of the choice of birational models. In particular, we say $\mathbf{D}$ is \emph{$\mathbb{K}$-linearly trivial over $Y$} if $\mathbf{D} \sim_{\mathbb{K}} 0/Y$. 

\noindent \textbf{Morphisms induced by base change.}
Let $f:X \to Y$ be a dominant morphism from a normal variety to a variety. Given a proper morphism $g: Y' \to Y$ of varieties, we call $f': X \times_Y Y' \to Y'$ the \emph{base change}. If $g$ is surjective, then we consider the normalisation $W:=( X \times_Y Y')^\nu$ which is a disjoint union of finitely many normal varieties, denote by $W=\coprod_i W_i$. For each $i$, we say $W_i$ is a \emph{main component} if $W_i$ dominates both $Z$ and $X$. Recall the basic facts: 
\begin{enumerate}
	\item If either $f$ or $g$ has the connected geometric generic fibre, then the main component is unique. In this case, denoting by $X'$ the unique main component, we say $f':X' \to Y'$ is the morphism \emph{induced by the base change}.
	
	\item If either $f$ or $g$ has equidimensional fibres, then every component is a main component. 
\end{enumerate}

\noindent \textbf{Pairs.} 
A \emph{sub-pair} $(X/Z,B)$ consists of a normal variety $X$, a proper morphism $X \to Z$ and an $\R$-divisor 
$B$ such that $K_X+B$ is $\R$-Cartier. We say $B$ is a \emph{pre-boundary}.
If the coefficients of $B$ are at most $1$ we say $B$ is a 
\emph{sub-boundary}, and if in addition $B\ge 0$, 
we say $B$ is a \emph{boundary}. A sub-pair $(X/Z,B)$ is called a \emph{pair} if $B$ is a boundary.
When $Z$ is not relevant we usually drop it
and do not mention it: in this case one can just assume $X \to Z$ is the identity. 
When $Z$ is a point we also drop it but say the pair is projective.

Let $\phi\colon W\to X$ be a log resolution of a sub-pair $(X,B)$. Let $K_W+B_W$ be the 
pulback of $K_X+B$. The \emph{log discrepancy} of a prime divisor $D$ on $W$ with respect to $(X,B)$ 
is $1-\mult_DB_W$ and it is denoted by $a(D,X,B)$.
We say $(X,B)$ is a \emph{sub-lc pair} (resp. \emph{sub-klt pair}) 
if $a(D,X,B)$ is $\ge 0$ (resp. $>0$) for every $D$. When $(X,B)$ 
is a pair we remove the sub and say the pair is lc, etc. Note that if $(X,B)$ is an lc pair, then 
the coefficients of $B$ necessarily belong to $[0,1]$.  

Let $(X,B)$ be a sub-pair. A \emph{non-klt place} of $(X,B)$ is a prime divisor $D$ on 
birational models of $X$ such that $a(D,X,B)\le 0$. A \emph{non-klt center} is the image on 
$X$ of a non-klt place. When $(X,B)$ is lc, a non-klt center is also called an 
\emph{lc center}. For definitions and standard results on singularities of pairs
we refer to \cite{kollar-mori}.

\noindent \textbf{Generalised pairs.}
For the basic theory of generalised (polarised) pairs we refer to \cite[Section 4]{birkarzhang}.
Below we recall some of the main notions and discuss some basic properties.

A \emph{generalised sub-pair} (\emph{g-sub-pair} for short) consists of 
\begin{itemize}
	\item a normal variety $X$ equipped with a proper
	morphism $X\to Z$, 
	
	\item an $\R$-divisor $B$ on $X$, and 
	
	\item a b-$\R$-Cartier b-divisor $\mathbf{M}$ represented 
	by some projective birational morphism $\overline{X} \overset{\phi}\to X$ and $\R$-Cartier divisor
	$\overline{M}$ on $X$ such that $\overline{M}$ is nef$/Z$ and $K_{X}+B+M$ is $\R$-Cartier,
	where $M := \phi_*\overline{M}$.
\end{itemize}
A generalised sub-pair is a \emph{generalised pair} (\emph{g-pair} or \emph{pair} for short) if $B$ is effective. 

We usually refer to the sub-pair by saying $(X/Z,B+M)$ is a \emph{g-sub-pair with 
data} $\overline{M}$ (or $\mathbf{M}$). Since a b-$\R$-Cartier b-divisor is defined birationally, in practice we will often replace $\overline{X}$ with a log resolution (and hence omit it) and replace $\overline{M}$ with its pullback. In this case, we say $(\overline{X},\overline{B}+\overline{M}) \to X$, where $K_{\overline{X}}+\overline{B}+\overline{M}$ is the pull-back of $K_X+B+M$, is a \emph{data log resolution}.
When $Z$ is not relevant we usually drop it
and do not mention it: in this case one can just assume $X \to Z$ is the identity. 
When $Z$ is a point we also drop it but say the pair is projective. A g-sub-pair naturally defines a b-divisor $\mathbf{K}+\mathbf{B}+\mathbf{M}$ which descends to $X$ so that for any projective birational morphism $X' \overset{\phi'}\to X$ we have $\phi'^*(K_X+B+M)=\mathbf{K}_{X'}+\mathbf{B}_{X'}+\mathbf{M}_{X'}$. We call $\mathbf{B}$ the \emph{pre-boundary b-divisor} and $\mathbf{M}$ the \emph{moduli b-divisor}. Similarly, If the coefficients of $\mathbf{B}$ are at most $1$ we say $\mathbf{B}$ is a 
\emph{sub-boundary b-divisor}, and if in addition $B\ge 0$, 
we say $\mathbf{B}$ is a \emph{boundary b-divisor}.

Two g-sub-pairs $(X/Z,B+M),(X'/Z,B+M)$ with the same data $\overline{M}$ are \emph{B-birational} if there is a common log resolution $X \overset{\pi}{\leftarrow}  \overline{X} \overset{\pi'}{\to} X'$ with $K_{\overline{X}}+\overline{B}+ \overline{M} = \pi^* (K_X+B+M)= \pi'^*(K_{X'}+B'+M')$. In this case, the rational map $(\pi' \circ \pi^{-1}): X \dashrightarrow X'$ is a \emph{B-birational map}. If its inverse map does not contract divisors, then we say it is a \emph{B-birational contraction}. 

Given a g-sub-pair $(X/Z,B+M)$ with data $\overline{M}$, if $\overline{M}$
is a non-negative $\R$-linear combination of $\Q$-Cartier divisors which are
nef over $Z$, then the g-pair $(X/Z, B + M)$ is an \emph{NQC g-pair}. Here, NQC stands for \emph{nef $\Q$-Cartier combinations}.  

\noindent \textbf{Generalised singularities.} 
Now we define generalised singularities of a g-pair.
Replacing $X$ we can assume $\phi$ is a log resolution of $(X,B)$. We can write 
$$
K_{\overline{X}}+\overline{B}+\overline{M}=\phi^*(K_{X}+B+M)
$$
for some uniquely determined $B$. For a prime divisor $D$ on $X$ the \emph{generalised log discrepancy} 
$a(D,X,B+M)$ is defined to be $1-\mult_D\overline{B}$. 

We say $(X,B+M)$ is 
\emph{generalised lc} or \emph{g-lc} (resp. \emph{generalised klt} or \emph{g-klt})
if for each $D$ the generalised log discrepancy $a(D,X,B+M)$ is $\ge 0$ (resp. $>0$).
A g-lc pair $(X/Z,B+M)$ with data $\overline{M}$ is \emph{generalised dlt} or \emph{g-dlt} if $X$ is quasi-projective and there is an open subset $U \subset X$ containing the generic points of all g-lc centres with $(U,B|_U+M|_U)$ log smooth. If in addition $\rddown{B}$ is irreducible, we say the pair is \emph{generalised plt} or \emph{g-plt} for short. Note that \emph{g-sub-lc, g-sub-klt,} etc. can be defined similarly when we drop the assumption of the effectivity of $B$.

A \emph{generalised non-klt center} of a g-sub-pair $(X,B+M)$ is the image of a prime divisor 
$D$ over $X$ with $a(D,X,B+M)\le 0$, and 
the \emph{generalised non-klt locus} of the g-sub-pair is the union of all the generalised non-klt centers. When $(X,B+M)$ is g-lc (resp. g-sub-lc), a generalised non-klt center is also called a 
\emph{g-lc centre} (resp. \emph{g-sub-lc centre}).

\noindent \textbf{Minimal models.}
A g-pair $(Y/Z,B_Y+M_Y)$ with data $M_{\overline{Y}}$ is a \emph{log birational model} of a g-pair $(X/Z,B+M)$ with data $\overline{M}$ if we are given a birational map
$\phi\colon X\bir Y$, $B_Y=B^\sim+E$ where $B^\sim$ is the birational transform of $B$ and
$E$ is the reduced exceptional divisor of $\phi^{-1}$, that is, $E=\sum E_j$ where $E_j$ are the
exceptional/$X$ prime divisors on $Y$ and $\overline{M}_Y=\overline{M}$. 

A log birational model $(\overline{X}/Z,\overline{B}+\overline{M})$ is a \emph{log smooth model} of $(X/Z,B+M)$ if it is log smooth with data $\overline{M}$.

A log birational model $(Y/Z,B_Y+M_Y)$ is a \emph{weak log canonical (weak lc for short) model} of $(X/Z,B+M)$ if

$\bullet$ the data $M_{\overline{Y}}=\overline{M}$,

$\bullet$ $K_Y+B_Y+M_Y$ is nef$/Z$, and

$\bullet$ for any prime divisor $D$ on $X$ which is exceptional/$Y$, we have
$$
a(D,X,B+M)\le a(D,Y,B_Y+M_Y).
$$

A weak lc model $(Y/Z,B_Y+M_Y)$ is a \emph{minimal model} of $(X/Z,B+M)$ if

$\bullet$ $Y$ is $\Q$-factorial,

$\bullet$ the above inequality on log discrepancies is strict.

A minimal model $(Y/Z, B_Y+M_Y)$ is a \emph{log minimal model} if

$\bullet$ $(Y/Z,B_Y+M_Y)$ is g-dlt.

A minimal model $(Y/Z, B_Y+M_Y)$ is a \emph{good minimal model} if $K_Y + B_Y +M_Y$ is semi-ample$/Z$. In this case, $K_Y + B_Y +M_Y$ defines a contraction $g:Y \to W$ such that $K_Y + B_Y +M_Y=g^*A_W$ for some ample$/Z$ divisor $A_W$. We say $W$ is the \emph{canonical model} of $(X/Z,B+M)$.  \\

On the other hand, a log birational model $(Y/Z,B_Y+M_Y)$  is called a \emph{weak Mori fibre space} of $(X/Z,B+M)$ if

$\bullet$ there is a $K_Y+B_Y+M_Y$-negative extremal contraction $Y\to T$
with $\dim Y>\dim T$, and

$\bullet$ for any prime divisor $D$ (on birational models of $X$) we have
$$
a(D,X,B+M)\le a(D,Y,B_Y+M_Y)
$$
and strict inequality holds if $D$ is
on $X$ and contracted$/Y$.

A weak Mori fibre space $(Y/Z,B_Y+M_Y)$ is a \emph{Mori fibre space} of $(X/Z,B+M)$ if

$\bullet$ $(Y/Z,B_Y+M_Y)$ is $\Q$-factorial g-dlt.

\subsection{Nakayama-Zariski decompositions}\label{sec2-decomp}
Nakayama \cite{nakayama} defined a decomposition $D=P_\sigma(D)+N_\sigma(D)$ for any pseudo-effective
$\R$-Cartier divisor $D$ on a smooth projective variety. We refer to this as the \emph{Nakayama-Zariski decomposition}.
We call $P_\sigma$ the positive part and $N_\sigma$ the negative part. We can
extend it to the singular case as follows.
Let $X$ be a normal projective variety and $D$ be a pseudo-effective $\R$-Cartier divisor on $X$. We define $P_\sigma(D)$
by taking a resolution $f\colon W\to X$ and letting $P_\sigma(D):=f_*P_\sigma(f^*D)$. An $\R$-Cartier divisor $D$ is called \emph{pseudo-movable} if $D= P_\sigma(D)$. \\

%It is worth to remark that, given a normal variety $X$ projective$/Z$ and a pseudo-effective divisor $D/Z$, Nakayama \cite{nakayama} also defines the relative Nakayama-Zariski decomposition $D=P_\sigma(D/Z)+N_\sigma(D/Z)$. However, this notion may bring some subtle difficulties outside a neighborhood of generic fibre (see \cite[III, 4.3. Lemma]{nakayama}). So we will not use this notion in the article.  \\

\noindent \textbf{Asymptotic vanishing orders.}
We collect basic definitions from \cite{nakayama} but in the setting of normal varieties instead of smooth varieties. 

Suppose that $D$ is a divisor on a normal projective variety $X$ with $\kappa_\iota(D) \geq 0$ and $\Gamma$ is a prime divisor over $X$ with its corresponding discrete valuation $\mult_\Gamma$. We define the \emph{asymptotic vanishing order} of $D$ along $\Gamma$ as
$$
o_\Gamma(D):= \inf \{\mult_\Gamma(L)| L \in |E|_\Q \} .
$$
where $0 \le E \sim_\R D$ and $|E|_{\mathbb{K}}$ denotes the sets of effective divisors $L$ satisfying $L\sim_{\mathbb{K}} E$ and the \emph{asymptotic fixed part} as $F(D):=\sum_\Gamma o_\Gamma (D) \Gamma$ where $\Gamma$ runs over all prime divisors on $X$. One can verify the following elementary lemma.
\begin{lem}
	The above definition is independent of the choice of $E$. Moreover, the asymptotic vanishing order coincides with the number derived similarly from $|E|_\R$ instead of $|E|_\Q$.
\end{lem}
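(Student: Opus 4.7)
The plan is to prove both assertions at once via the identity
\[
\inf_{L \in |E|_\Q} \mult_\Gamma L = \inf_{L \in |E|_\R} \mult_\Gamma L.
\]
The right-hand side depends only on the $\R$-linear equivalence class of $E$, i.e., only on $D$, so this identity will yield both the independence of $E$ (first statement) and the coincidence with the $|E|_\R$-version (second statement) in one stroke. The inequality $\ge$ is immediate from $|E|_\Q \subset |E|_\R$, so the substance is the reverse inequality: given $L \in |E|_\R$ and $\epsilon > 0$, I must produce $L' \in |E|_\Q$ with $\mult_\Gamma L' \le \mult_\Gamma L + \epsilon$.

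The strategy is to work inside the finite-dimensional $\R$-vector space $V_T$ of $\R$-divisors with support contained in $T := \Supp(L) \cup \Supp(E)$. Writing $V_\Q$ and $V_\R$ for the spaces of $\Q$- and $\R$-linearly trivial divisors respectively, the essential input is density of $V_\Q \cap V_T$ in $V_\R \cap V_T$. This follows from flatness of $\Q \hookrightarrow \R$ applied to the kernel of the Picard map $V_T \to \Pic(X) \otimes \R$, which identifies $V_\R \cap V_T$ with $(V_\Q \cap V_T) \otimes_\Q \R$.

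With density in hand, I would first replace $L$ by $L_s := (1 - s) L + sE$ for small $s > 0$. This still lies in $|E|_\R$, is strictly positive on every component of $T$, and satisfies $L_s - E = (1 - s)(L - E) \in V_\R \cap V_T$. Choose $w \in V_\Q \cap V_T$ close to $L_s - E$ coefficient-wise, and set $L' := E + w$. Then $L' - E = w \in V_\Q$, so $L' \sim_\Q E$; the support of $L'$ lies in $T$; and the strict positivity of $L_s$ combined with the closeness of $w$ forces $L' \ge 0$. Hence $L' \in |E|_\Q$. The multiplicity at $\Gamma$ is controlled in two pieces: $\mult_\Gamma L_s \to \mult_\Gamma L$ as $s \to 0$, and $\mult_\Gamma L'$ is close to $\mult_\Gamma L_s$ by the coefficient-wise approximation; if $\Gamma \notin T$, both quantities vanish identically.

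The main potential obstacle is the density claim, which asserts that the kernel of the $\R$-Picard map on $V_T$ is defined over $\Q$. This reduces to a standard flat-base-change identity for kernels and is essentially formal after tensoring with $\Q$ or $\R$, where torsion in $\Pic(X)$ disappears. A secondary subtlety is that a direct rational approximation of $L$ itself may fail to be effective when $L$ has zero coefficient on some component of $T$; the perturbation to $L_s$ is designed precisely to install strict positivity on all of $T$ before the rational approximation step, so that effectivity is preserved under small perturbations.
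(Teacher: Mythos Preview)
Your proof is correct and takes a genuinely different route from the paper's. The paper establishes the key inequality $\inf_{L\in|E|_\Q}\mult_\Gamma L\le\mult_\Gamma E'$ for arbitrary effective $E'\sim_\R E$ by invoking an explicit construction from \cite[Proof of Lemma~2.5.6]{fujino-book}: for sufficiently divisible $n$ there is a rational function $g_n$ furnishing an injection $H^0(X,\mathcal{O}_X(\lfloor nE'\rfloor))\hookrightarrow H^0(X,\mathcal{O}_X(\lfloor(n+1)E\rfloor))$, and the divisors $E_n:=E+\tfrac{1}{n+1}(g_n)$ lie in $|E|_\Q$ with $\mult_\Gamma E_n\to\mult_\Gamma E'$. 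Your argument instead is pure linear algebra: the space of $\R$-linearly trivial divisors supported in the fixed finite set $T$ is the $\R$-span of the $\Q$-linearly trivial ones by flatness of $\Q\hookrightarrow\R$, and the perturbation $L\mapsto(1-s)L+sE$ guarantees strict positivity on all of $T$ before you approximate rationally. Your approach is self-contained and more transparent; the paper's approach, while relying on an external reference, connects the statement to the behaviour of graded linear series, which is closer to how these asymptotic invariants are typically used.

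Two minor remarks. First, your last sentence tacitly assumes $\Gamma$ is a prime divisor on $X$; when $\Gamma$ lives on a higher model you should either pass to a resolution first (as the paper does in its opening sentence) or simply note that $\mult_\Gamma$ extends to a continuous linear functional on $V_T$ via pullback, so the approximation still controls it. Second, on a normal variety the relevant target of your ``Picard map'' is the class group $\mathrm{Cl}(X)\otimes\R$ rather than $\Pic(X)\otimes\R$; this does not affect the flatness argument.
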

\begin{proof}
	Replacing $X$ with a resolution we can assume $X$ is smooth and $\Gamma$ is a prime divisor on $X$. Since the first assertion can be implied by the latter, it suffices to show the latter assertion. Let $E' \sim_\R E$ be another effective divisor and write $E'+ \sum_i a_i(f_i)=E$ where $a_i \in \R$ and $f_i$'s are rational functions. It is enough to show that 
	\begin{equation*}\tag{$\dag$}
	\inf \{\mult_\Gamma(L)| L \in |E|_\Q \} \le \mult_\Gamma E'.
	\end{equation*}
	By \cite[Proof of Lemma 2.5.6]{fujino-book}, for any sufficiently divisible positive integer $n$, there is an injection
	$$
	H^0(X,\mathcal{O}_X(\rddown{nE'})) \hookrightarrow 	H^0(X,\mathcal{O}_X(\rddown{(n+1)E}))
	$$
	given by a rational function $1/g_n$. Moreover, by the construction of $g_n$, we have that $(g_n)$ is a $\Q$-linear combination of $(f_i)$'s and $\lim\limits_{n \to \infty} \frac{1}{n+1}(g_n)= \sum_i a_i(f_i)$. Now set $E_n=E+\frac{1}{n+1}(g_n)$. We obtain $0 \le E_n \sim_\Q E$ and $\lim\limits_{n \to \infty} \mult_\Gamma E_n =\mult_\Gamma E'$ which in turn implies the inequality $(\dag)$. 
\end{proof}
Note that when $D$ is a $\Q$-Cartier divisor, the definition here coincides with the classical definition due to the previous lemma and a standard argument from convex geometry. 

Let us fix an ample divisor $A$. We define the \emph{numerical vanishing order} of a pseudo-effective divisor $D$ along $\Gamma$ as
$$
\sigma_\Gamma(D):= \lim\limits_{\epsilon \downarrow 0} o_\Gamma(D+\epsilon A).
$$
\cite[III, 1.7. Lemma]{nakayama} verifies that the definition above is independent of the choice of $A$. Readers may notice that the variety here is not required to be smooth. Since we work with discrete valuations, by passing to a resolution, the argument from \cite{nakayama} still works. Moreover, $\sigma_{\Gamma}$ defines a lower semi-continuous convex function on the pseudo-effective cone $\mathrm{PE}(X) \subset N^1(X)$, and $\sigma_\Gamma=o_\Gamma$ is continuous on the big cone $\mathrm{Big}(X)$. Hence we have $\sigma_\Gamma(D):= \lim\limits_{\epsilon \downarrow 0} \sigma_\Gamma(D+\epsilon E)$ for any pseudo-effective divisor $E$.

By an easy calculation one deduces that the negative part of the Nakayama-Zariski decomposition of $D$ has the coefficients which are numerical asymptotic vanishing orders: $N_\sigma(D) =\sum_\Gamma \sigma_\Gamma(D) \Gamma$, where $\Gamma$ runs over all prime divisors on $X$. Note that in general neither $N_\sigma(D)$ nor $P_\sigma(D)$ are $\R$-Cartier.

\subsection{Nef and abundant divisors}\label{sec2-dim}
In this subsection we will introduce the notion of nef and abundant divisor and elementary properties in the setting of $\R$-divisors. Most contents of this subsection are taken from \cite{nakayama} with slight modifications. We write proofs of some results for the reader's convenience.

\noindent \textbf{Iitaka dimension and numerical dimension.}
Recall the following definitions of Iitaka dimension and numerical dimension. Both integers are birational invariants given by the growth of the quantity of sections. 

\begin{defn}[Very general fibre]
	Let $f:X \to Y$ be a contraction of normal varieties, $k \subseteq K$ be a field extension to an uncountable field and $f_K: X_K \to Y_K$ be the morphism induced by the base change $\Spec K \to \Spec k$. We say $F$ is a \emph{very general fibre} of $f$ with respect to $k\subseteq K$ if the fibre is over a very general point, i.e. a point outside a countably many proper closed subsets of $Y_K$. When $K=k$ or $K$ is not relevant we usually do not mention it.
\end{defn}

\begin{defn}[Invariant Iitaka dimension]\label{defn--inv-iitaka-dim}
	Let $X$ be a normal complete variety, and $D$ be an $\mathbb{R}$-Cartier divisor $D$ on $X$. 
	We define the {\em invariant  Iitaka dimension} of $D$, denoted by $\kappa_{\iota}(X,D)$, as follows (see also \cite[Definition 2.5.5]{fujino-book}):  
	If there is an $\mathbb{R}$-divisor $E\geq 0$ such that $D\sim_{\mathbb{R}}E$, set $\kappa_{\iota}(X,D)=\kappa(X,E)$. 
	Here, the right hand side is the usual Iitaka dimension of $E$. 
	Otherwise, we set $\kappa_{\iota}(X,D)=-\infty$. 
	We can check that $\kappa_{\iota}(X,D)$ is well-defined, i.e., when there is $E\geq 0$ such that $D\sim_{\mathbb{R}}E$, the invariant Iitaka dimension $\kappa_{\iota}(X,D)$ does not depend on the choice of $E$. 
	By definition, we have $\kappa_{\iota}(X,D)\geq0$ if and only if $D$ is $\mathbb{R}$-linearly equivalent to an effective $\mathbb{R}$-divisor. 
	
	Let $X\to Z$ be a proper morphism from a normal variety to a variety, and let $D$ be an $\mathbb{R}$-Cartier divisor on $X$. 
	Then the {\em relative invariant Iitaka dimension} of $D$, denoted by $\kappa_{\iota}(X/Z,D)$, is defined by $\kappa_{\iota}(X/Z,D)=\kappa_{\iota}(X,D|_{F})$, where $F$ is a very general fibre of the Stein factorisation of $X\to Z$.
	Note that the value $\kappa_{\iota}(X,D|_{F})$ does not depend on the choice of $F$ (see \cite[Lemma 2.10]{hashizumehu}). 
\end{defn}

\begin{defn}[Numerical dimension]\label{defn--num-dim}
	Let $X$ be a normal projective variety, and $D$ be an $\mathbb{R}$-Cartier divisor $D$ on $X$. 
	We define the {\em numerical dimension} of $D$, denoted by $\kappa_{\sigma}(X,D)$, as follows (see also \cite[V, 2.5 Definition]{nakayama}): 
	For any Cartier divisor $A$ on $X$, we set
	$$
	\sigma(D;A)={\rm max}\left\{k\in \mathbb{Z}_{\geq0}\middle|\, \underset{m\to \infty}{\rm lim}{\rm sup}\frac{{\rm dim}H^{0}(X,\mathcal{O}_{X}(\llcorner mD \lrcorner+A))}{m^{k}}>0\right\}
	$$
	if ${\rm dim}H^{0}(X,\mathcal{O}_{X}(\llcorner mD \lrcorner+A))>0$ for infinitely many $m\in \mathbb{Z}_{>0}$, and otherwise we set $\sigma(D;A):=-\infty$. 
	Then, we define 
	$$\kappa_{\sigma}(X,D):={\rm max}\{\sigma(D;A)\,|\,A{\rm\; is\; a\;Cartier\;divisor\;on\;}X\}.$$
	
	Let $X\to Z$ be a projective morphism from a normal variety to a variety, and let $D$ be an $\mathbb{R}$-Cartier divisor on $X$. 
	Then, the {\em relative numerical dimension} of $D$ over $Z$, denoted by $\kappa_{\sigma}(X/Z,D)$, is defined by $\kappa_{\sigma}(F,D|_{F})$, where $F$ is a very general fibre of the Stein factorisation of $X\to Z$.  
	We note that the value $\kappa_{\sigma}(F,D|_{F})$ does not depend on the choice of $F$, so the relative numerical dimension is well-defined. 
\end{defn}

For a collection of basic properties of the invariant Iitaka dimension and the numerical dimension, we refer to \cite[Remark 2.8]{hashizumehu}. 

\begin{defn}[Relatively abundant divisor and relatively log abundant divisor]\label{defn--abund}
	Let $f\colon X\to Z$ be a projective morphism from a normal variety to a variety, and $D$ be an $\mathbb{R}$-Cartier divisor on $X$. 
	We say that $D$ is {\em abundant over} $Z$ if the equality $\kappa_{\iota}(X/Z,D)=\kappa_{\sigma}(X/Z,D)$ holds. %,  where $F$ is a sufficiently general fiber of the Stein factorization of $\pi$. Note that $\kappa_{\iota}(F,D|_{F})$ does not depend on the choice of $F$. 
	When $Z$ is a point, we simply say $D$ is {\em abundant}. 
	
	Let $f\colon X\to Z$ and $D$ be as above, and $(X,B+M)$ be a g-sub-lc sub-pair. 
	We say that $D$ is $\pi$-{\em log abundant} (or {\em log abundant over} $Z$) with respect to $(X,B+M)$ if $D$ is abundant over $Z$ and for any g-lc center $S$ of $(X,B+M)$ with the normalization $S^{\nu}\to S$, the pullback $D|_{S^{\nu}}$ is abundant over $Z$. 
\end{defn}

\begin{rem}\label{rem--abundant-generic}
	With notation in Definition \ref{defn--abund}, let $X_{\overline{\eta}}$ be the geometric generic fibre of the Stein factorisation of $f$. 
	By flat base change, one can easily verify that $D$ is abundant over $Z$ if and only if the restriction $D|_{X_{\overline{\eta}}}$ is abundant. Moreover, if we denote by $X_{\overline{\eta_i}}$ the geometric generic fibres of the Stein factorisation of $f|_{S_i^\nu}$ for each g-sub-lc centre $S_i$, then, $D$ is log abundant over $Z$ if and only if $D|_{X_{\overline{\eta}}}$ is abundant and $D|_{X_{\overline{\eta_i}}}$ is abundant for every $i$. 
\end{rem}

\noindent \textbf{Nef and abundant divisors.}
Recall that, given a a proper morphism $f\colon X\to Z$ from a normal variety to a variety, an $\R$-Cartier divisor $D$ is \emph{semi-ample} over $Z$ if there exist a proper surjective morphism $h: X \to Y$ over $Z$ and an ample divisor $D_Y$ of $Y$ such that $D \sim_\R h^*D_Y$.

\begin{lem}\label{lem-semi-ample-divisor}
	Notation as above, let $D$ be an $\R$-Cartier divisor. 
	\begin{enumerate}
		\item $D$ is semi-ample$/Z$ if and only if $D$ is a convex combination of semi-ample$/Z$ $\Q$-divisors.
		
		\item Let $D'$ be another $\R$-Cartier divisor. If $D,D'$ are semi-ample$/Z$, then so is $D+D'$.
		
		\item Let $g:W \to X$ be a proper surjective morphism. Then, $D$ is semi-ample$/Z$ if and only if $g^*D$ is semi-ample$/Z$.
	\end{enumerate}
	 
\end{lem}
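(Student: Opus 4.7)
The plan is to establish (2) by a direct fibre-product construction, deduce (1) from (2) combined with a routine rational approximation, and handle (3) via Stein factorisation together with a rigidity and Galois-descent argument.

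For (2), given $D_i\sim_\R h_i^*A_i$ with $h_i\colon X\to Y_i$ proper surjective$/Z$ and $A_i$ ample $\R$-Cartier on $Y_i$, I would let $Y$ be the (normalised) image of the combined morphism $(h_1,h_2)\colon X\to Y_1\times_Z Y_2$. Then $h\colon X\to Y$ is proper surjective and both projections $p_i\colon Y\to Y_i$ are proper surjective$/Z$, so
\[
D_1+D_2\sim_\R h^*\bigl(p_1^*A_1+p_2^*A_2\bigr).
\]
The critical point is that $p_1^*A_1+p_2^*A_2$ is ample$/Z$ on $Y$: on the ambient fibre product $Y_1\times_Z Y_2$ the sum of relatively ample pull-backs from the two factors is relatively ample, and ampleness restricts to the closed subvariety $Y$.

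For (1), the direction $(\Leftarrow)$ follows by iterating (2). For $(\Rightarrow)$, write $D\sim_\R h^*D_Y$ and decompose $D_Y=\sum_j s_jA_j$ with $s_j>0$ and $A_j$ ample $\Q$-Cartier. The $\R$-linear equivalence is witnessed by $D=\sum_j s_j h^*A_j+\sum_i\alpha_i(\varphi_i)$ for some $\alpha_i\in\R$ and rational functions $\varphi_i$ on $X$. I would then approximate the real vector $(s_j,\alpha_i)$ by finitely many rational vectors $(s_j^{(k)},\alpha_i^{(k)})$ with $s_j^{(k)}>0$, together with positive real weights $t_k$ summing to $1$ and satisfying $\sum_k t_k(s_j^{(k)},\alpha_i^{(k)})=(s_j,\alpha_i)$; the divisors
\[
D_k:=\sum_j s_j^{(k)}h^*A_j+\sum_i\alpha_i^{(k)}(\varphi_i)
\]
are then semi-ample $\Q$-Cartier (each $\Q$-linearly equivalent to $h^*(\sum_j s_j^{(k)}A_j)$, which is ample $\Q$-Cartier for $s_j^{(k)}$ close to $s_j$), and $D=\sum_k t_k D_k$ expresses $D$ as the required convex combination.

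For (3), the direction $(\Rightarrow)$ is immediate from $(h\circ g)^*D_Y=g^*h^*D_Y$. For $(\Leftarrow)$, take the Stein factorisation $g\colon W\overset{g'}{\to}X'\overset{\pi}{\to}X$, with $g'$ a contraction and $\pi$ finite surjective, and write $g^*D\sim_\R\phi^*D_Y$ with $\phi\colon W\to Y$ proper surjective and $D_Y$ ample. On any connected fibre $F$ of $g'$ the restriction $g^*D|_F$ is $\R$-trivial, hence $\phi^*D_Y|_F\sim_\R 0$; ampleness of $D_Y$ combined with a rigidity argument (any irreducible curve $C\subset F$ satisfies $D_Y\cdot\phi_*C=\phi^*D_Y\cdot C=0$, forcing $\phi_*C=0$) then shows $\phi(F)$ is a point, so $\phi$ factors as $\phi=\phi'\circ g'$ for some $\phi'\colon X'\to Y$. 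Using $g'_*\mathcal{O}_W=\mathcal{O}_{X'}$, $\R$-linear equivalence descends through $g'$, giving $\pi^*D\sim_\R\phi'^*D_Y$ on $X'$. The final descent through the finite map $\pi$ is the standard fact that semi-ampleness descends through finite surjective morphisms, proved by passing to a Galois closure of $\pi$ and averaging the semi-ample fibration under the Galois action so that its target descends to a quotient of $X$.

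I expect the descent step in $(3)(\Leftarrow)$ to be the main obstacle: both the rigidity argument on fibres of $g'$ and the Galois-averaging step to push the fibration through the finite cover require some care, though both are standard techniques in birational geometry. The other parts are essentially routine convex-geometric and approximation arguments.
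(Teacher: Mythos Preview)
Your argument is correct, but it is organised differently from the paper's and, for (3), uses a genuinely different mechanism. The paper proves (1) first by the same convex-geometry approximation you give, then derives (2) as an immediate consequence of (1) (a convex combination of convex combinations is again one), whereas you prove (2) directly via the fibre-product construction and only then use it for the $(\Leftarrow)$ direction of (1). For (3), the paper reduces the $\R$-Cartier case to the $\Q$-Cartier case via (1): one perturbs simultaneously the coefficients expressing $D$ as an $\R$-combination of Cartier divisors and the data witnessing semi-ampleness of $g^*D$, obtaining $\Q$-Cartier $D_k$ with $g^*D_k$ semi-ample and $D=\sum t_kD_k$. The $\Q$-Cartier finite case is then handled by the push--pull identity $g_*g^*D=(\deg g)D$ together with the norm map on sections, which is quicker than your Galois-averaging construction. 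Your route---rigidity on fibres to factor the semi-ample morphism through the contraction, then Galois descent for the finite part---works directly for $\R$-divisors without the reduction, but the two descent steps you flag as needing care really do: descending $\R$-linear equivalence through a contraction $g'$ amounts to the injectivity of $g'^*\colon\mathrm{Pic}(X')\otimes\R\to\mathrm{Pic}(W)\otimes\R$, and the analogous statement for the finite map requires the norm on $\mathrm{Pic}$ to see that $\pi^*$ has torsion kernel. Both are standard, but the paper's reduction to $\Q$-coefficients sidesteps them entirely.
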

\begin{proof}
	(1) can be proved by an elementary argument from convex geometry. (2) is a direct consequence of (1). Note that (3) is obvious when $f$ is a contraction and $D$ is $\Q$-Cartier. Suppose $g$ is a finite morphism and $D$ is $\Q$-Cartier. Since $f$ is finite flat over an open subset of $Y$ outside a locus of codimension $\ge 2$, one can easily verify (3) by $g_*g^*D=(\deg g)D$ (see \cite[\href{https://stacks.math.columbia.edu/tag/02RH}{Lemma 02RH}]{stacks-project}). By the Stein factorisation, it remains to verify the case $D$ is not $\Q$-Cartier, which follows from (1).
\end{proof}

\begin{lem}[Nef and abundant divisor, \text{cf.\cite[V. 2.3 Lemma]{nakayama}}]\label{lem-nef-abundant-divisor}
	Let $f\colon X\to Z$ be a projective morphism from a normal variety to a variety, and let $D$ be a nef$/Z$ $\R$-Cartier divisor on $X$. Then, the following conditions are equivalent:
	\begin{enumerate}
		\item $D$ is abundant over $Z$.
		
		\item There exist a birational model $\pi: X' \to X$, a surjective morphism $g : X' \to Y$ of smooth quasi-projective varieties over $Z$,
		and a nef and big$/Z$ divisor $B$ of $Y$ such that $\pi^*D \sim_\R g^*B$.
		
		\item Given a sufficiently general fibre $F$, the asymptotic vanishing order $o_\Gamma(D|_F)=0$ for every prime divisor $\Gamma$ over $F$.
	\end{enumerate} 
\end{lem}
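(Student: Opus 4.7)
The plan is to mirror Nakayama's argument \cite[V, 2.3 Lemma]{nakayama}, which handles the $\mathbb{Q}$-Cartier case, and extend it to $\mathbb{R}$-coefficients via the invariant Iitaka dimension $\kappa_\iota$ of Definition 2.10 together with an approximation-by-$\mathbb{Q}$-divisors argument. Since all three conditions concern a general fibre of the Stein factorisation of $f$, the first move is a reduction to the absolute case. By Remark 2.13, the equality $\kappa_\iota(X/Z,D)=\kappa_\sigma(X/Z,D)$ is equivalent to the analogous equality for $D|_{X_{\overline\eta}}$ on the geometric generic fibre; the vanishing of $o_\Gamma(D|_F)$ in (3) is by definition a statement on a very general fibre; and the existence of $(X',g,B)$ in (2) can be pulled back to and pushed forward from a fibre. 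So we may assume $Z$ is a point and $X$ is normal projective.

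The implication $(2)\Rightarrow(1)$ is then quick. Given $\pi^*D\sim_{\mathbb{R}} g^*B$ with $B$ nef and big on $Y$, birational invariance of $\kappa_\iota$ and $\kappa_\sigma$ yields $\kappa_\iota(X,D)=\kappa_\iota(X',\pi^*D)=\kappa_\iota(X',g^*B)$ and similarly for $\kappa_\sigma$; after Stein factorising $g$ we have a contraction to a smooth variety, so these invariants coincide with $\kappa_\iota(Y,B)=\kappa_\sigma(Y,B)=\dim Y$ by the fact that $B$ is nef and big. Hence $D$ is abundant.

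For $(1)\Rightarrow(3)$ and $(1)\Rightarrow(2)$, the plan is to exploit the Iitaka fibration. Write $D\sim_{\mathbb{R}} E\ge 0$ and choose a log resolution $\pi\colon X'\to X$ on which the Iitaka fibration $g\colon X'\to Y$ of $E$ is a morphism, with $\dim Y=\kappa_\iota(X,D)$. On a very general fibre $F'$ of $g$ the restriction $\pi^*D|_{F'}$ has invariant Iitaka dimension $0$; combined with $\kappa_\sigma(X,D)=\kappa_\iota(X,D)$ and nefness, this forces $\kappa_\sigma(F',\pi^*D|_{F'})=0$, hence $\pi^*D|_{F'}\equiv 0$. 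A standard relative numerical descent argument (nef with numerically trivial restriction to the general fibre of a morphism with connected fibres to a smooth variety) then produces a nef divisor $B$ on $Y$ with $\pi^*D\equiv g^*B$. The numerical-dimension computation $\kappa_\sigma(X,D)=\dim Y$ upgrades $B$ to nef and big. The passage from $\equiv$ to $\sim_{\mathbb{R}}$ and the extraction of $o_\Gamma(D|_F)=0$ from condition (1) proceed by decomposing $D$ as a finite convex combination of nef $\mathbb{Q}$-divisors sharing the Iitaka fibration $g$ (using $\kappa_\iota\ge 0$ and $\sim_{\mathbb{R}}E$) and invoking the $\mathbb{Q}$-case of Nakayama.

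Finally $(3)\Rightarrow(2)$ is the remaining step: assuming $o_\Gamma(D|_F)=0$ for every $\Gamma$ over $F$, the same Iitaka-fibration construction on a log resolution shows $D|_F$ has no asymptotic fixed divisor, so the general fibre of its Iitaka fibration is numerically trivial; combined with nefness this yields the pullback description in (2). The main obstacle I anticipate is the passage from $\mathbb{Q}$- to $\mathbb{R}$-coefficients, specifically promoting a numerical identity $\pi^*D\equiv g^*B$ to an $\mathbb{R}$-linear equivalence in the absence of any rationality of the divisor class; this will be handled by the decomposition of $D$ as a positive combination of nef abundant $\mathbb{Q}$-divisors compatible with the common Iitaka base, whose existence follows from the fact that $\kappa_\iota(X,D)\ge 0$ admits a witness $E\ge 0$ whose support generates only finitely many relevant prime divisors.
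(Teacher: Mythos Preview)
There is a genuine gap in your reduction step. Conditions (1) and (3) are, as you say, fibre-wise (by Remark~\ref{rem--abundant-generic} and by definition), but condition (2) is not: it asserts the existence of a factorisation $X'\to Y\to Z$ \emph{over all of $Z$} together with a divisor $B$ that is nef and big \emph{over $Z$}, and the relation $\pi^*D\sim_{\R}g^*B$ is required globally. Knowing that such $(Y,g,B)$ exists on the generic (or a very general) fibre of $X\to Z$ does not produce it over $Z$; at best one spreads out over an open $U_Z\subset Z$, and then must deal with the vertical discrepancy over $Z\setminus U_Z$. This is exactly what the paper's proof of $(1)\Rightarrow(2)$ does: it invokes \cite[V,~2.3~Lemma(1)]{nakayama} on the generic fibre to get $\pi^*D\sim_{\R}g^*B$ over some $U_Z$, then applies an equidimensional reduction to $g$ and the negativity lemma \cite[Lemma~3.3]{birkar-flip} to absorb the vertical error into $B$ over all of $Z$. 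Your ``push forward from a fibre'' hides precisely this work, so your proposed implications $(1)\Rightarrow(2)$ and $(3)\Rightarrow(2)$ are incomplete in the relative setting.

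A second issue is your plan to upgrade $\pi^*D\equiv g^*B$ to $\pi^*D\sim_{\R}g^*B$ by writing $D$ as a positive $\R$-combination of nef and abundant $\Q$-divisors sharing the same Iitaka base. Such a decomposition is not available for an arbitrary nef and abundant $\R$-divisor; compare Lemma~\ref{lem-nef-abund-2}, where the converse direction requires the extra hypothesis $D\sim_{\R}K_X+B$ for some lc pair $(X,B)$. The paper avoids this obstacle entirely: it never passes through numerical equivalence, but carries the $\R$-rational function witnessing $\sim_{\R}$ from the open set $U_Z$ and corrects it by a pullback from $Y'$ using equidimensionality, so the $\sim_{\R}$ relation is preserved throughout.
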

\begin{proof}
	First we prove (1) implies (2). By \cite[V. 2.3 Lemma(1)]{nakayama} and Remark \ref{rem--abundant-generic}, there exist a non-empty open subset $U_Z \subset Z$,  a birational model $\pi: X' \to X$, a surjective morphism $g : X' \to Y$ of smooth quasi-projective varieties over $Z$,
	and an $\R$-Cartier divisor $B$ of $Y$ such that $(\pi|_{U'})^*(D|_{U}) \sim_\R (g|_{U'})^*(B|_{U_Y})$, where $U,U'$ and $U_Y$ are the inverse images of $U_Z$, and $B|_{U_Y}$ is nef and big over $Z$. Applying the equidimensional reduction, there exist birational models $\pi':X'' \to X', \phi:Y' \to Y'$ so that the induced morphism $g': X'' \to Y'$ is equidimensional. Let $\varphi$ be the $\R$-rational function such that $\pi^*D +(\varphi) -g^*B =0$ over $U$. Since $g'$ is equidimensional, by an easy argument (for example, see \cite[Proof of Lemma 3.9]{hu}), there exists a divisor $\Delta$ on $Y'$ so that $(\pi \circ \pi')^*D +(\varphi) =g'^*(\phi^*B+\Delta) +E$ where $E \ge 0$ is very exceptional$/Y'$. Hence, by the negativity lemma \cite[Lemma 3.3]{birkar-flip}, we deduce $E=0$. Replacing $Y$ with $Y'$, $B$ with $\phi^*B+\Delta$ and $X$ with a resolution of $X''$, we complete the proof.
	
	%We prove (2) implies (1). This follows directly from \cite[V. 2.7 Proposition]{nakayama}. 
	The implication from (2) to (3) is obvious. Now we prove (3) implies (1). By Definition \ref{defn--abund}, replacing $X$ with $F$, we may assume $Z$ is a point. By definition of asymptotic vanishing order, replacing $D$ we may assume $D \ge 0$. Take an Iitaka fibration $g:X' \to Y$. By \cite[V. 2.3 Proof of Lemma(1)]{nakayama}, we may assume $g$ is equidimensional and $\pi^*D \sim_\Q g^*B +E$ where $\pi:X' \to X$ and $B$ is a big divisor. Since $o_\Gamma(D)=0$ for every prime divisor $\Gamma$ over $X$, and $\kappa(E/Y)=0$, we deduce $E$ is vertical$/Y$ and hence $E=0/Y$ by the negativity lemma \cite[Lemma 3.3]{birkar-flip}. The rest follows directly from \cite[V. 2.7 Proposition]{nakayama}. 
\end{proof}

\begin{rem}
	One can regard (2) as an alternative definition of nef and abundant $\R$-Cartier divisor (see \cite{ambro2}\cite{fg-bundle}). Note that by our definition of abundant divisor, notation as above, one cannot expect $\pi^*D \sim_\Q g^*B$ as in \cite[V. 2.3 Lemma(1)]{nakayama} unless $D \sim_\Q E/U_Z$ for some divisor $E \ge 0$ and a non-empty open subset $U_Z \subset Z$ (by \cite[Lemma 2.10]{hashizumehu}). Nevertheless, if $D\ge 0$ is effective, then we can find $B$ so that $\pi^*D = g^*B$.
\end{rem}

\begin{lem}\label{lem-nef-abund-1}
	Let $f:X \to Z$ be a proper morphism from a normal variety to a variety, $g:Y \to X$ be a proper surjective morphism of normal varieties, and $D$ be an $\R$-Cartier divisor on $X$. Then, $D$ is nef and abundant over $Z$ if and only if $f^*D$ is nef and abundant over $Z$.
\end{lem}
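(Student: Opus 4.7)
Reading the statement as saying that $D$ is nef and abundant over $Z$ if and only if $g^*D$ is (the expression $f^*D$ is evidently a typo, since $D$ lives on $X$ and $g$ is the morphism $Y\to X$), I would split the two conditions and handle each separately.

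The nef direction is routine: since $g$ is proper and surjective, for every curve $C\subset Y$ contracted over $Z$ the image $g(C)$ is either a point (in which case $g^*D\cdot C=0$) or a curve in $X$ contracted over $Z$, and the projection formula gives $g^*D\cdot C = D\cdot g_*C$. Conversely, every curve in $X$ contracted over $Z$ lifts, possibly after passing to a multisection, to a curve in $Y$ contracted over $Z$, so nefness of $g^*D/Z$ descends to nefness of $D/Z$.

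For abundance, I would reduce to the absolute case. Stein factorise $f:X\to Z$ as $X\to \widetilde Z\to Z$ and $f\circ g:Y\to Z$ as $Y\to \widetilde Z'\to Z$. The universal property gives a finite surjective morphism $\widetilde Z'\to \widetilde Z$. Passing to the geometric generic points and invoking Definition \ref{defn--abund} together with Remark \ref{rem--abundant-generic}, abundance of $D/Z$ (resp.\ $g^*D/Z$) is equivalent to abundance of $D|_{X_{\overline{\eta}}}$ (resp.\ $g^*D|_{Y_{\overline{\eta'}}}$). The base change of $g$ to the geometric generic point of $\widetilde Z'$ is still a proper surjective morphism between normal varieties (after base-changing $X_{\overline{\eta}}$ along the finite map $\widetilde Z'\to \widetilde Z$, which decomposes the fibre into a disjoint union of copies of $X_{\overline\eta}$). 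Hence it suffices to show the statement when $Z$ is a point and both $X,Y$ are normal and projective.

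In the absolute case, the equivalence reduces to the two equalities
\[
\kappa_{\iota}(X,D)=\kappa_{\iota}(Y,g^*D),\qquad \kappa_{\sigma}(X,D)=\kappa_{\sigma}(Y,g^*D).
\]
The second equality is a general property of the numerical dimension under pullback by proper surjective morphisms and follows from \cite{nakayama} (by passing to a common resolution and using that nefness together with $\sigma(\,\cdot\,;A)$ behave well under proper surjective pullback). For the first equality I would factor $g$ through its Stein factorisation $Y\overset{\phi}\to Y'\overset{\psi}\to X$. For the contraction $\phi$, the identity $\phi_*\mathcal O_Y=\mathcal O_{Y'}$ implies that the function fields, hence the spaces of $\mathbb R$-rational functions and of effective representatives in $\mathbb R$-linear equivalence classes, agree on $Y$ and $Y'$; this gives $\kappa_{\iota}(Y,\phi^*D')=\kappa_{\iota}(Y',D')$. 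For the finite map $\psi$, the push-pull identity $\psi_*\psi^*D=(\deg\psi)D$ together with effectivity of $\psi_*$ applied to an effective representative on $Y'$ lets one descend an effective representative from $Y'$ to $X$, and the classical equality $\kappa(X,E)=\kappa(Y',\psi^*E)$ for effective $\mathbb R$-divisors $E$ completes the argument.

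The main obstacle I expect is the $\mathbb R$-coefficient bookkeeping in the last paragraph: for $\mathbb Q$-divisors the preservation of Iitaka dimension under proper surjective pullback is classical, but for $\mathbb R$-divisors one must descend an $\mathbb R$-linear equivalence $g^*D\sim_{\mathbb R} E'\ge 0$ to an $\mathbb R$-linear equivalence $D\sim_{\mathbb R} E\ge 0$, which is precisely what the Stein-factorisation argument above is designed to handle. Everything else is a direct consequence of the characterisation in Lemma \ref{lem-nef-abundant-divisor} and the reduction to the absolute case.
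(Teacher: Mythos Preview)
Your proposal is substantially more elaborate than the paper's, which dispatches the lemma in one line by citing \cite[V, 2.7 Proposition]{nakayama}. That proposition (together with the surrounding material in Nakayama's Chapter~V) records the invariance of both $\kappa_\iota$ and $\kappa_\sigma$ under proper surjective pullback, so no further argument is required.

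Your detailed route is mostly sound, but there is a genuine error in the contraction step. You write that for the Stein contraction $\phi:Y\to Y'$, ``the identity $\phi_*\mathcal O_Y=\mathcal O_{Y'}$ implies that the function fields\ldots\ agree on $Y$ and $Y'$''. This is false whenever $\phi$ has positive-dimensional fibres: $k(Y)$ is then a transcendental extension of $k(Y')$, and the $\mathbb R$-rational functions on $Y$ do not all descend. In particular, an effective representative $F\sim_{\mathbb R}\phi^*D'$ on $Y$ need not be a pullback, and your justification of $\kappa_\iota(Y,\phi^*D')=\kappa_\iota(Y',D')$ collapses. The equality is true, but it needs a real argument: one shows $F$ is vertical (since $F|_G\ge 0$ and $F|_G\sim_{\mathbb R}0$ on a proper general fibre $G$ force $F|_G=0$), and then descends via a negativity-type argument---or one simply invokes Nakayama as the paper does.

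A cleaner way to salvage your approach, which you allude to in your final sentence, is to bypass the Iitaka-dimension bookkeeping altogether and use the characterisation in Lemma~\ref{lem-nef-abundant-divisor}: condition~(2) (existence of a model on which $\pi^*D\sim_{\mathbb R}h^*B$ with $B$ nef and big$/Z$) or condition~(3) (vanishing of all $o_\Gamma$ on a general fibre) is manifestly preserved under pullback by a proper surjective morphism.
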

\begin{proof}
	The lemma follows directly from \cite[V. 2.7. Proposition]{nakayama}.
\end{proof}

\begin{lem}\label{lem-descend}
	Let $f:X \to Y$ be a surjective finite morphism and $D$ be an $\R$-Cartier divisor on $X$. Let $\phi:Y' \to Y$, $\pi:X' \to X$ be birational models and $f':X' \to Y'$ be a morphism such that $ \phi \circ f'= f \circ \pi$. Suppose $f_*D$ is $\R$-Cartier. Then, we have 
	$$
	f'_* \pi^* D= \phi^* f_* D.
	$$
\end{lem}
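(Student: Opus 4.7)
The plan is to factor through the normalization of the fibered product and reduce to the classical norm formula for the pushforward of a principal divisor under a finite morphism of normal integral schemes.

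First I would introduce $\widetilde{X} := (X \times_Y Y')^{\nu}$, the normalization of the fibered product, together with its structure morphisms $\widetilde{\pi}: \widetilde{X} \to X$ and $\widetilde{f}: \widetilde{X} \to Y'$. Since $\phi$ is birational, the generic fibre of $X \times_Y Y' \to Y'$ is $\Spec k(X)$, so $\widetilde{X}$ is an irreducible normal variety birational to $X$; in particular $\widetilde{\pi}$ is proper birational, while $\widetilde{f}$ is finite (normalization is finite for varieties, and $X \times_Y Y' \to Y'$ is the base change of $f$). By the universal properties of the fibered product and of normalization, the pair $(\pi,f')$ induces a morphism $\rho : X' \to \widetilde{X}$ with $\widetilde{\pi}\circ\rho = \pi$ and $\widetilde{f}\circ\rho = f'$; this $\rho$ is birational (its function-field map is an isomorphism because those of $\pi$ and $\widetilde{\pi}$ are) and proper (since $\pi$ is proper and $\widetilde{\pi}$ is separated).

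Because $\widetilde{\pi}^*D$ is $\R$-Cartier on $\widetilde{X}$ and $\rho$ is proper birational, the projection formula gives $\rho_*\rho^*(\widetilde{\pi}^*D)=\widetilde{\pi}^*D$; combining this with $\pi^*D=\rho^*\widetilde{\pi}^*D$ yields
$$
f'_*\pi^*D \;=\; \widetilde{f}_*\rho_*\rho^*(\widetilde{\pi}^*D) \;=\; \widetilde{f}_*\widetilde{\pi}^*D,
$$
so it remains to prove $\widetilde{f}_*\widetilde{\pi}^*D=\phi^*f_*D$ on $Y'$. I would verify this coefficient by coefficient at each prime divisor $P\subset Y'$. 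Since $\widetilde{f}$ is finite, only finitely many codim-one points of $\widetilde{X}$ lie above $\eta_P$; by $\R$-Cartierness of $D$ I can choose a common affine open $U\subset X$ containing the images in $X$ of these points on which $D|_U=\sum_i a_i (g_i)$ with $g_i\in k(X)^{\times}$ and $a_i\in\R$. The classical identity $f_*((g))=(N_{k(X)/k(Y)}(g))$ for a finite morphism of normal integral schemes (a direct consequence of the formula $v_Q(N(g))=\sum_{P\to Q}[\kappa(\eta_P):\kappa(\eta_Q)]\,v_P(g)$, requiring no flatness) and its analogue for $\widetilde{f}$, combined with the identifications $k(\widetilde{X})=k(X)$ and $k(Y')=k(Y)$ coming from $\widetilde{\pi}$ and $\phi$, reduce the equality of coefficients to the tautology that the two norm maps agree under these identifications.

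The main point to be careful about is the last step at $\phi$-exceptional prime divisors $P\subset Y'$, where $f$ need not be flat over $\phi(\eta_P)$ and so the naive flat base-change argument is unavailable; the workaround is precisely that the norm formula above is a purely valuation-theoretic statement at codim-one points, and flatness of $\widetilde{f}$ at each codim-one point of $Y'$ is automatic because $\widetilde{X}$ is normal integral and a torsion-free finite module over a DVR is free. One also has to arrange the local expressions $D|_U=\sum a_i(g_i)$ so that the same rational functions $g_i$ serve simultaneously for all prime divisors $P$ of interest; since $f$ is finite, for each $P$ one only deals with finitely many points of $X$, so this causes no difficulty.
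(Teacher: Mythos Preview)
Your proof is correct and reaches the same endpoint as the paper's --- Fulton's norm identity $f_*((g)) = (N_{k(X)/k(Y)}(g))$ --- but the reduction is organised differently. The paper does not introduce the normalized fibre product at all: it works directly with $f':X'\to Y'$, fixes an exceptional prime divisor $E_Y\subset Y'$, shrinks $Y$ (hence $X$, which is finite over $Y$) to an affine open around $\phi(\eta_{E_Y})$, and then \emph{moves} $D$ by an $\R$-rational function $\varphi$ so that $D'=D+(\varphi)$ has support missing the finitely many relevant points of $X$. After this move, both $f'_*\pi^*D'$ and $\phi^*f_*D'$ have coefficient zero along $E_Y$, and the remaining identity for $(\varphi)$ is exactly the norm formula. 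Your route replaces this moving step by first factoring $X'\xrightarrow{\rho}\widetilde{X}\xrightarrow{\widetilde{f}} Y'$ through the (main component of the) normalized base change, applying the projection formula to the proper birational $\rho$, and then expressing $D$ locally as $\sum a_i(g_i)$ to feed into the norm formula for the finite map $\widetilde{f}$. Your approach makes transparent why only the finite morphism $\widetilde{f}$ matters and avoids checking separately that $f'_*\pi^*D'$ vanishes; the paper's moving trick is more hands-on but sidesteps any discussion of the fibre product and its possible non-dominant components. One small point: your claim that $D|_U=\sum a_i(g_i)$ on a \emph{single} open $U$ containing several prescribed points is not immediate from $\R$-Cartierness on an affine (Cartier $\ne$ principal in general); it uses that the Picard group of a semi-local ring is trivial, which is exactly the content hidden in the paper's moving step.
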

\begin{proof}
	If $\phi$ is a small contraction, then there is nothing to prove. Suppose the set of prime exceptional$/Y$ divisors is non-empty, and we pick an element $E_Y$. Let $\sum_k E_{k}$ be a reduced divisor whose components are all prime divisors mapped onto $E_Y$. It suffices to show the equation near the point $y$. %Since the push-forward and the pull-back of $\R$-Cartier divisors are $\R$-linear, we may assume $D$ is a prime divisor. 
	Since the question is local, by shrinking $Y$ and the others accordingly, we may assume $X,Y$ are affine. 
	$$
	\begin{array}{cclcl} 
	\sum_k E_{k} & \subset & X' &\stackrel{\pi}{\to} & X \\
	\downarrow & & \downarrow f' & & \downarrow f \\
	y \in  E_{Y} & \subset & Y'  & \stackrel{\phi}{\to} & Y 
	\end{array}
	$$
	Take an $\R$-rational function $\varphi$ so that $D'=D +  (\varphi)$ does not contain any image of $\sum_k E_{k}$ in its support. This is possible because there are only finitely many such images and $X$ is affine. In particular, $f_* D'$ does not contain the image of $E_Y$ in its support by the definition of proper push-forwards. So, near the point $y$ we have $f'_* \pi^* D'=0= \phi^* f_* D'$. It remains to verify $f'_* \pi^* (\varphi)=\phi^* f_* (\varphi)$ which follows from the fact that the proper push-forward of a principal divisor of a non-zero rational function $(\varphi)$ is that of its norm $N_{K(X)/K(Y)}(\varphi)$ (see \cite[Proposition 1.4]{fulton}).
\end{proof}

\begin{lem}\label{lem-nef-abund-3}
    Let $f:X \to Z$ be a proper morphism from a normal variety to a variety, $g:X \to Y$ be a surjective finite morphism of normal varieties over $Z$, and $D$ be an $\R$-Cartier divisor on $X$. Suppose $g_*D$ is $\R$-Cartier.  
    \begin{enumerate}
    	\item If $D$ is semi-ample over $Z$, then so is $g_*D$. 
    	
    	\item If $D$ is nef and abundant over $Z$, then so is $g_*D$. 
    \end{enumerate}
\end{lem}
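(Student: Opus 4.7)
The plan is to reduce to a Galois situation and then exploit a ``trace'' type identity. First I would take the normalisation $X'$ of $Y$ in a Galois closure of $K(X)/K(Y)$, producing a finite Galois cover $g'' : X' \to Y$ of normal varieties with group $G = \mathrm{Gal}(K(X')/K(Y))$, together with a factorisation $g'' = g \circ g'$ where $g' : X' \to X$ is the intermediate finite map and $X = X'/H$ for $H = \mathrm{Gal}(K(X')/K(X))$. Since $g$ is a morphism over $Z$, the map $X \to Z$ factors through $Y$, so every $\sigma \in G$ is a $Z$-automorphism of $X'$; in particular $g''$ is proper over $Z$.

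The key identity I want to establish is
\[
(g'')^{*} g_{*} D \;=\; \sum_{c \in G/H} \sigma_{c}^{*}\, (g')^{*} D
\]
as $\R$-Cartier divisors on $X'$, where $\sigma_{c}$ denotes any representative of the coset $c$; the right-hand side is well-defined because $(g')^{*}D$ is $H$-invariant. To prove it, I would combine the projection formula $(g')_{*}(g')^{*}D = (\deg g')\, D = |H|\,D$, which for a finite surjective morphism of normal varieties comes from the norm on rational functions, with the Galois identity $(g'')^{*}(g'')_{*}E = \sum_{\sigma \in G}\sigma^{*}E$ applied to $E = (g')^{*}D$; the two are linked through $(g'')_{*}(g')^{*}D = g_{*}(g')_{*}(g')^{*}D = |H|\,g_{*}D$. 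Both identities reduce to the Cartier case by $\R$-linearity.

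For part (1), by Lemma \ref{lem-semi-ample-divisor}(3), $(g')^{*}D$ is semi-ample over $Z$. Each $\sigma_{c}^{*}(g')^{*}D$ is then semi-ample over $Z$ because $\sigma_{c}$ is a $Z$-automorphism, so the sum $(g'')^{*}g_{*}D$ is semi-ample over $Z$ by Lemma \ref{lem-semi-ample-divisor}(2). A final application of Lemma \ref{lem-semi-ample-divisor}(3) to the finite surjective morphism $g''$ yields that $g_{*}D$ is semi-ample over $Z$. Part (2) follows in the same fashion: by Lemma \ref{lem-nef-abund-1}, $(g')^{*}D$ is nef and abundant over $Z$, and hence so is each $\sigma_{c}^{*}(g')^{*}D$. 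Nefness of the sum is immediate, while abundance follows from the characterisation in Lemma \ref{lem-nef-abundant-divisor}(3) together with the subadditivity $o_{\Gamma}(E_{1}+E_{2}) \le o_{\Gamma}(E_{1}) + o_{\Gamma}(E_{2})$ of asymptotic vanishing orders applied on a sufficiently general fibre of $X' \to Z$. Thus $(g'')^{*}g_{*}D$ is nef and abundant over $Z$, and Lemma \ref{lem-nef-abund-1} applied to $g''$ completes the argument.

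The main technical point is the Galois identity relating pushforward and pullback of $\R$-Cartier divisors under a finite Galois cover, but by $\R$-linearity this reduces to the Cartier case, where it amounts to the observation that a local defining equation $\varphi$ of a Cartier divisor $E$ on $X'$ pushes forward to the norm $\prod_{\sigma \in G}\sigma(\varphi)$ on $Y$, whose pullback to $X'$ is exactly the Cartier divisor $\sum_{\sigma \in G}\sigma^{*}E$.
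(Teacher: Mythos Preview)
Your proof is correct, but it follows a genuinely different route from the paper's. The paper dispatches (1) as ``obvious'' (implicitly via the norm map on line bundles: if $mD$ is globally generated then a general section is nowhere-vanishing on any given finite fibre, so its norm globally generates $\mathrm{Nm}(\mathcal{O}_X(mD)) = \mathcal{O}_Y(m\,g_*D)$), and for (2) it invokes Lemma~\ref{lem-descend}, which says $\phi^* g_* D = g'_* \pi^* D$ for compatible birational models $\pi,\phi$; combined with the characterisation Lemma~\ref{lem-nef-abundant-divisor}(3) this lets one test asymptotic vanishing orders of $g_*D$ on any higher model of $Y$ directly in terms of those of $D$ on a model of $X$.

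Your Galois-closure argument is more uniform: the single identity $(g'')^* g_* D = \sum_{c \in G/H} \sigma_c^* (g')^* D$ reduces both statements to closure of the relevant positivity class under finite pullback, sum, and finite descent, all of which are already available as Lemmas~\ref{lem-semi-ample-divisor} and~\ref{lem-nef-abund-1} (and the unnamed lemma preceding Lemma~\ref{lem-nef-abund-2} for sums of nef and abundant divisors, which is exactly your subadditivity step). This avoids Lemma~\ref{lem-descend} entirely. The cost is the extra bookkeeping of the Galois closure and the trace identity; the benefit is that the argument is transparently the same for both parts and makes the underlying averaging mechanism explicit. One small point: your final paragraph reduces the trace identity ``by $\R$-linearity to the Cartier case'', but the individual Cartier summands of $D$ need not have $\R$-Cartier pushforward; it is cleaner to say that both sides of the identity are $\R$-Cartier by hypothesis and then verify equality as Weil divisors, where linearity in $D$ is unproblematic.
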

\begin{proof}
	(1) is obvious. (2) follows from Lemmas \ref{lem-nef-abundant-divisor}(3) and \ref{lem-descend}.
\end{proof}

\begin{lem}[From global to local]\label{lem-global-local-nef-abun}
	Let $f:X \to Z$ be a surjective morphism from a normal projective variety to a variety and $D$ be a nef and abundant $\R$-Cartier divisor. Then, $D$ is nef and abundant over $Z$.
\end{lem}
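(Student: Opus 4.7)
The plan is to combine Lemma \ref{lem-nef-abundant-divisor}(2) with a covering argument on fibres. Nefness of $D|_F$ on a very general fibre $F$ of the Stein factorisation $X\to\widetilde Z$ is immediate, since any curve on $F$ is a curve on $X$ along which $D$ has non-negative degree. So the content reduces to showing abundance of $D|_F$.

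Since $D$ is nef and abundant on the projective variety $X$, Lemma \ref{lem-nef-abundant-divisor}(2) (applied with base a point) yields a birational model $\pi\colon X'\to X$, a surjective morphism $g\colon X'\to Y$ of smooth projective varieties, and a nef and big $\R$-Cartier divisor $B$ on $Y$ with $\pi^*D\sim_{\R}g^*B$. Consider the morphism $(g,f\circ\pi)\colon X'\to Y\times Z$, denote its image by $W$, and let $p_Y,p_Z$ be the two projections from $W$. For a very general point $\tilde z\in\widetilde Z$ lying over $z\in Z$, let $F$ be the corresponding fibre of $X\to\widetilde Z$, $F':=\pi^{-1}(F)$ (which is birational to $F$), and $Y_{\tilde z}:=g(F')=p_Y(p_Z^{-1}(z))$. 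Then $\pi^*D|_{F'}\sim_{\R}(g|_{F'})^*(B|_{Y_{\tilde z}})$, so it suffices to show that $B|_{Y_{\tilde z}}$ is nef and big on $Y_{\tilde z}$ for very general $\tilde z$; applying the converse direction of Lemma \ref{lem-nef-abundant-divisor}(2) to a resolution of $Y_{\tilde z}$ (and, if necessary, of $F'$) then gives the desired abundance of $D|_F$.

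The remaining point is to verify that $B|_{Y_{\tilde z}}$ is nef and big. Nefness is automatic since $B$ is nef. For bigness, the surjectivity of $g$ yields $\bigcup_{\tilde z\in\widetilde Z}Y_{\tilde z}=Y$; hence $Y_{\tilde z}$ cannot always be contained in the augmented base locus $\mathrm{B}_+(B)\subsetneq Y$ of the nef and big divisor $B$. A standard semicontinuity/constructibility argument applied to the family $\{Y_{\tilde z}\}_{\tilde z\in\widetilde Z}$ shows that the locus $\{\tilde z\in\widetilde Z\mid Y_{\tilde z}\subset\mathrm{B}_+(B)\}$ is a proper subset of $\widetilde Z$. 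For $\tilde z$ outside this subset, $Y_{\tilde z}\not\subset\mathrm{B}_+(B)$, so $B|_{Y_{\tilde z}}$ is nef and big, as required.

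The main obstacle is the last step: ensuring that $B$ stays big after restriction to the image $Y_{\tilde z}$ of a general fibre. The key ingredients are the covering property $\bigcup_{\tilde z}Y_{\tilde z}=Y$, which comes from the surjectivity of $g$, and the standard fact that a nef and big divisor restricts to a nef and big divisor on any subvariety not contained in its augmented base locus.
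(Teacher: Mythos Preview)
Your argument is essentially correct and takes a genuinely different route from the paper's proof. A couple of small points to tighten: the equality $Y_{\tilde z}=p_Y(p_Z^{-1}(z))$ need not hold when several points of $\widetilde Z$ lie over $z$, so you should factor through $Y\times\widetilde Z$ rather than $Y\times Z$; and ``proper subset'' should be ``proper closed subset'' (which the upper-semicontinuity of fibre dimension for $W\cap(\mathrm{B}_+(B)\times\widetilde Z)\to\widetilde Z$ gives you), so that a very general $\tilde z$ indeed lies in the complement.

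The paper instead works globally rather than fibrewise. From $\pi^*D\sim_\R g^*B$ with $B$ nef and big it extracts decompositions $\pi^*D=A_m+\tfrac1m E$ with $A_m$ semi-ample and $E\ge0$ fixed; since global semi-ampleness trivially implies semi-ampleness over $Z$, each $A_m$ defines a morphism $X'\to Y_m$ over $Z$. Comparing $A_m$ and $A_{mk}$ for $k\ge2$ and passing to a common resolution $Y$ of $Y_m,Y_{mk}$, one finds that $E$ itself is (up to $\sim_\R$) pulled back from $Y$, hence so is $\pi^*D$, and the resulting divisor on $Y$ is nef and big over $Z$. Lemma~\ref{lem-nef-abundant-divisor}(2) then gives the conclusion directly.

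Your approach is more elementary in that it avoids constructing the relative fibration and uses only the standard fact that nef and big restricts to big outside $\mathrm{B}_+$. The paper's approach has the advantage of producing the relative model $X'\to Y/Z$ witnessing condition~(2) of Lemma~\ref{lem-nef-abundant-divisor} uniformly, rather than fibre by fibre, which can be useful when one needs the actual morphism and not just the numerical conclusion.
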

\begin{proof}
	By Lemma \ref{lem-nef-abundant-divisor}, there exist a smooth birational model $\pi: X' \to X$ and an effective divisor $E$ such that $\pi^*D=A_m +\frac{1}{m}E$ with $A_m$ semi-ample for every $m \in \N$. By Lemma \ref{lem-semi-ample-divisor}, $A_m$ is semi-ample over $Z$. Let $Y_m,Y_{mk}$ be the normal varieties over $Z$ given by $A_m,A_{mk}$ respectively, for some integer $k \ge 2$, and $Y$ be a common resolution of them. Replacing $X'$ we may assume $g:X' \dashrightarrow Y$ is a morphism. It follows that $E\sim_\R g^*E_Y$ for some effective divisor $E_Y$ on $Y$, hence $\pi^*D =g^*B$ for some nef and big$/Z$ divisor which completes the proof by Lemma \ref{lem-nef-abundant-divisor} again.
\end{proof}

\begin{lem}
	Let $f\colon X\to Z$ be a projective morphism from a normal variety to a variety, and let $D_1,D_2$ be two $\mathbb{R}$-Cartier divisors on $X$. If both $D_1,D_2$ are nef and abundant, then so is $D_1+D_2$.
\end{lem}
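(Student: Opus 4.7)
The plan is to reduce to the characterisation of nef and abundant$/Z$ divisors given by Lemma \ref{lem-nef-abundant-divisor}(2). Nefness of $D_1+D_2$ is immediate, so only abundance requires attention. Applying Lemma \ref{lem-nef-abundant-divisor}(2) to each $D_i$ separately, I obtain birational models $\pi_i:X_i' \to X$, surjective morphisms $g_i:X_i' \to Y_i$ to smooth quasi-projective varieties over $Z$, and nef and big$/Z$ divisors $B_i$ on $Y_i$ with $\pi_i^*D_i \sim_\R g_i^*B_i$. I then pass to a common smooth birational model $\pi:X' \to X$ dominating both $X_1'$ and $X_2'$, so that each $g_i$ lifts to $\bar g_i:X' \to Y_i$ with $\pi^*D_i \sim_\R \bar g_i^*B_i$.

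Next I form the product map $h_0 := (\bar g_1, \bar g_2) : X' \to Y_1 \times_Z Y_2$ and let $Y^0$ be its image, which is an integral closed subvariety since $X'$ is irreducible. Normalising $Y^0$ and then resolving its singularities, I get a smooth quasi-projective variety $Y/Z$ with a proper birational morphism $\mu:Y \to Y^0$. After a further blow-up of $X'$ to eliminate indeterminacy, I may assume $h_0$ factors through a proper surjective morphism $h:X' \to Y$. Setting $p_i:Y \to Y_i$ to be the morphisms induced by $\operatorname{pr}_i \circ \mu$ followed by the inclusion $Y^0 \hookrightarrow Y_1 \times_Z Y_2$, I have $\bar g_i = p_i \circ h$ and consequently
\[
\pi^*(D_1+D_2) \sim_\R h^*(p_1^*B_1 + p_2^*B_2).
\]
Both $p_i$ are surjective because the $\bar g_i$ are.

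The key technical step, which I expect to be the main obstacle, is to show that $p_1^*B_1 + p_2^*B_2$ is nef and big$/Z$ on $Y$; granting this, a direct application of Lemma \ref{lem-nef-abundant-divisor}(2) in the reverse direction immediately yields that $D_1+D_2$ is nef and abundant over $Z$. Nefness of the sum is clear. For bigness$/Z$, I apply a relative form of Kodaira's lemma (fibrewise over $Z$, justified by Remark \ref{rem--abundant-generic}) to decompose $B_i \sim_\R A_i + E_i$ with $A_i$ ample$/Z$ on $Y_i$ and $E_i \ge 0$, so that
\[
p_1^*B_1 + p_2^*B_2 \sim_\R (p_1^*A_1 + p_2^*A_2) + (p_1^*E_1 + p_2^*E_2).
\]
The divisor $\operatorname{pr}_1^*A_1 + \operatorname{pr}_2^*A_2$ is ample$/Z$ on the fibre product $Y_1 \times_Z Y_2$, since on every fibre over $Z$ it becomes the sum of ample pullbacks from the two factors of a product of projective varieties, which is ample. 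Its restriction to the closed subvariety $Y^0$ remains ample$/Z$, and its pullback $p_1^*A_1 + p_2^*A_2$ via the proper birational morphism $\mu:Y \to Y^0$ is semi-ample and big$/Z$ on $Y$. Adding the effective divisor $p_1^*E_1 + p_2^*E_2$ preserves bigness$/Z$, completing the argument.
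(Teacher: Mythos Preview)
Your argument is correct, but it takes a considerably longer route than the paper's. The paper dispatches the lemma in one line by invoking characterisation~(3) of Lemma~\ref{lem-nef-abundant-divisor}: on a sufficiently general fibre $F$ one has $o_\Gamma(D_i|_F)=0$ for every prime divisor $\Gamma$ over $F$, and subadditivity of the asymptotic vanishing order gives $o_\Gamma((D_1+D_2)|_F)\le o_\Gamma(D_1|_F)+o_\Gamma(D_2|_F)=0$, so $D_1+D_2$ is again nef and abundant over $Z$.

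You instead work with characterisation~(2), combining the two auxiliary fibrations $g_i:X_i'\to Y_i$ through the product map into $Y_1\times_Z Y_2$, resolving the image, and checking that $p_1^*B_1+p_2^*B_2$ is nef and big$/Z$ via Kodaira's lemma and ampleness of the Segre-type class on the product. This is perfectly valid and has the virtue of being constructive: it actually exhibits the fibration realising $D_1+D_2$ as a pullback of a nef and big divisor. The cost is that you have to manage several birational modifications (the common model $X'$, the resolution $Y\to Y^0$, the further blow-up to resolve indeterminacy of $h$) and justify properness of $h_0$ so that the image $Y^0$ is closed. The paper's route through~(3) avoids all of this bookkeeping at the price of being less explicit.
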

\begin{proof}
	This is immediate from Lemma \ref{lem-nef-abundant-divisor}(3).
	%By Lemma \ref{lem-nef-abundant-divisor} there exist a birational model $\pi: X' \to X$, surjective morphisms $g_1: X' \to Y_1, g_2:X' \to Y_2$ and nef and big$/Z$ divisors $B_1,B_2$ on $Y_1,Y_2$ respectively, such that $\pi^*D_1=g_1^*Y_1,\pi^*D_2=g_2^*Y_2$. Let $g: X' \to Y$ be the Iitaka fibration. Note that $g_1,g_2$ are birational to the Iitaka fibrations of $D_1,D_2$ respectively. Replacing $X' $ and $Y $ we may assume $g_1,g_2$ factor through $g$ which implies the lemma.
\end{proof}

\begin{lem}\label{lem-nef-abund-2}
		Let $f\colon X\to Z$ be a projective morphism from a normal variety to a variety, and let $D$ be an $\mathbb{R}$-Cartier divisor on $X$. Suppose $D$ is a non-negative $\R$-linear combination of finitely many $\Q$-Cartier divisors which are nef and abundant over $Z$. Then, $D$ is nef and abundant over $Z$. Moreover, the converse holds if there exists a boundary divisor $B$ such that $(X,B)$ is lc and $D \sim_\R K_X+B$.
\end{lem}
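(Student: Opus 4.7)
The forward direction reduces to two observations. First, for any $\lambda\ge 0$ and any $\Q$-Cartier divisor $D_0$ nef and abundant over $Z$, the scaled divisor $\lambda D_0$ is again nef and abundant over $Z$: Lemma~\ref{lem-nef-abundant-divisor}(2) provides $\pi\colon X'\to X$, a surjective morphism $g\colon X'\to Y$, and a nef and big$/Z$ divisor $B_Y$ with $\pi^*D_0\sim_\R g^*B_Y$; then $\pi^*(\lambda D_0)\sim_\R g^*(\lambda B_Y)$ and $\lambda B_Y$ remains nef and big$/Z$. Second, the previous lemma guarantees that the sum of two nef and abundant divisors over $Z$ is again nef and abundant over $Z$. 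By induction on the number of summands, any non-negative $\R$-linear combination of $\Q$-Cartier nef and abundant$/Z$ divisors is nef and abundant over $Z$.

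For the converse I plan to use a Shokurov-type polytope argument enabled by the lc hypothesis. Write $B=\sum_j b_jB_j$ with $B_j$ distinct prime divisors, and fix an $\R$-rational function $\varphi=\prod_k\varphi_k^{\alpha_k}$ realising $D-K_X-B=(\varphi)$. Consider the subset $\mathcal{P}\subset\R^{J+K}$ of tuples $(b'_j,\alpha'_k)$ satisfying $\sum_j b'_jB_j\ge 0$, $(X,\sum_j b'_jB_j)$ is lc, and $\sum_j(b'_j-b_j)B_j+\sum_k(\alpha'_k-\alpha_k)(\varphi_k)=0$ as a Weil divisor on $X$. The first two conditions carve out a rational polytope via log discrepancy inequalities on a fixed log resolution, and the third is a rational linear equation, so $\mathcal{P}$ is a rational polytope containing $(b_j,\alpha_k)$. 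Choosing a convex decomposition $(b_j,\alpha_k)=\sum_i\lambda_i(b^{(i)}_j,\alpha^{(i)}_k)$ with rational vertices in $\mathcal{P}$, and setting $B_i:=\sum_j b^{(i)}_jB_j$ and $D_i:=K_X+B_i+\sum_k\alpha^{(i)}_k(\varphi_k)$, one obtains $\sum_i\lambda_iD_i=D$ with each $(X,B_i)$ lc and each $D_i$ $\Q$-Cartier (because $K_X+B_i$, being a rational-coefficient $\R$-Cartier Weil divisor, is automatically $\Q$-Cartier by a standard lattice argument).

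The main obstacle is to ensure that each $D_i$ is also nef and abundant over $Z$. Nefness can be enforced by further intersecting $\mathcal{P}$ with the preimage of the relative nef cone, which is a rational polytope by the relative cone theorem for lc pairs. For abundance, one applies Lemma~\ref{lem-nef-abundant-divisor}(2) to $D$, obtaining $g\colon X'\to Y$ and a nef and big$/Z$ divisor $B_Y$ with $\pi^*D\sim_\R g^*B_Y$; since nef and big is an open condition inside the relative nef cone of $Y/Z$, one may decompose $B_Y$ as a non-negative $\R$-linear combination of rational nef and big$/Z$ divisors, whose pull-backs produce $\Q$-Cartier nef and abundant$/Z$ perturbations of $\pi^*D$ that descend to $X$ via $\pi_*$. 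The hard part will be coordinating this abundance-side decomposition with the lc polytope decomposition so that a single common choice of rational vertices in $\mathcal{P}$ yields $D_i$'s that are simultaneously $\Q$-Cartier, nef$/Z$, and abundant$/Z$; once this is arranged, the required expression $D=\sum_i\lambda_iD_i$ follows.
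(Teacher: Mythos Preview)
Your forward direction is fine and matches the paper.

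The converse has a genuine gap in the setup. Your third constraint $\sum_j(b'_j-b_j)B_j+\sum_k(\alpha'_k-\alpha_k)(\varphi_k)=0$ forces $B_i+(\varphi_i)=B+(\varphi)$ for every point of $\mathcal{P}$, and hence $D_i=K_X+B_i+(\varphi_i)=K_X+B+(\varphi)=D$ for every $i$. So your ``decomposition'' is $D=\sum_i\lambda_i D$ with $D_i=D$ identically, and your subsequent claim that each $D_i$ is $\Q$-Cartier amounts to asserting that $D$ itself is $\Q$-Cartier. But $D$ is only $\R$-Cartier: take for instance $X=\mathbb{P}^1$, $D=(2+\sqrt{2})[0]-\sqrt{2}[1]$, $B=[0]+[1]+[2]+[3]$; then $(X,B)$ is lc, $D\sim_\R K_X+B$, $D$ is nef and abundant, but $D$ is not $\Q$-Cartier. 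Equivalently, the affine subspace cut out by your third condition is a translate of a rational linear subspace by a possibly irrational vector (the coefficient vector of $D-K_X$), so your polytope $\mathcal{P}$ need not have rational vertices at all.

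The paper avoids this by letting the divisor genuinely vary. It passes to a $\Q$-factorial dlt model $(X',B')$, writes $\pi^*D$ and the Iitaka-side divisor $G$ in terms of their components, and defines a rational linear space $\mathcal{L}$ of divisors $D'$ that are $\R$-linearly equivalent to some $K_{X'}+\Delta'$ with $\Delta'$ supported on the fixed components; the point is that $\mathcal{L}$ is rational because its defining equations are rational linear relations among fixed integral divisors, not a single irrational affine slice. Nefness is handled by the Shokurov--Birkar polytope \cite[Proposition~3.2]{birkar-existII} (not merely the cone theorem), and abundance by intersecting with the pullback of a rational polytope inside the big cone on $Y$. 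All three constraints are rationally polyhedral in $\mathcal{L}$, so $\pi^*D$ is a convex combination of rational points satisfying all three simultaneously. Your last paragraph correctly identifies the need to coordinate the nefness and abundance constraints, but the fix has to start one step earlier: you must allow $D$ to move.
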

\begin{proof}
	The first statement follows immediately from the previous lemma. We prove the second statement. Let $\pi:(X',B') \to X$ be a $\Q$-factorial dlt model. Since $D$ is nef and abundant over $Z$, by Lemma \ref{lem-nef-abundant-divisor}, there exist a birational model $\pi': X'' \to X'$, a surjective morphism $g : X'' \to Y$ of smooth quasi-projective varieties over $Z$,
	and a nef and big$/Z$ divisor $G$ of $Y$ such that $(\pi \circ \pi')^*D \sim_\R g^*G$.
	
	Now let $\{D_i\}_{i=1}^n$ be the set of all components of $\pi^*D$ and $\{G_i\}_{i=1}^m$ be the set of all components of $G$. Write $$D+ \sum_{j=1}^k a_j(\varphi_j)=K_{X}+B=\sum_{j=1}^{q} \ell_jL_j;~ (\pi \circ \pi')^*D + \sum_{j=1}^l b_j(\phi_j)=\sum_{j=1}^{p}c_j g^*G_j$$ 
	Consider the following $\R$-linear space
	$$
	\mathcal{L}:=\{ \sum_{j=1}^{n} \alpha_j D_j|\sum_{j=1}^{n} \alpha_j D_j +\sum_{j=1}^k \R(\varphi_j)\text{ intersects } \sum_{j=1}^q \R \pi^*L_j \}
	$$
	which is a rational $\R$-linear subspace of $\sum_{j=1}^n \R D_j$. Moreover, by \cite[Proposition 3.2]{birkar-existII}, there is a rational polytope $\mathcal{N}$ containing $B'$ such that $K_X+\Delta$ is nef$/Z$ for every $\Delta \in \mathcal{N}$. Since $G$ is big, there exists a rational polytope $\mathcal{P}$ containing $G$. Given an element $D'$ of $\mathcal{L}$, because the conditions that $\pi'^* D'+ \sum_{j=1}^l \R(\phi_j)$ intersects $g^*\mathcal{P}$ and that $D'+\sum_{j=1}^k \R(\varphi_j)$ intersects $\sum_{j=1}^q \R \pi^*L_j \bigcap (K_X+ \mathcal{N})$ are rationally polyhedral, we deduce $\pi^*D$ can be expressed as a convex combination of nef and abundant$/Z$ $\Q$-divisors which in turn implies the lemma by construction.
\end{proof}

\begin{rem}
	We note that the converse statement of the previous lemma does not hold unconditionally by an easy example. With a little more effort, the above lemma can be generalised to NQC g-pairs. %But we can weaken the assumption in the second statement from ``lc" ot ``g-lc" with an NQC moduli b-divisor (see \cite[Proposition 3.16]{hanli}). 
\end{rem}

\begin{defn}[b-nef and log abundant divisors]\label{defn-b-nef-log-abundant-divisor}
	Let $f\colon X\to Z$ be a proper morphism from a normal variety to a variety, $(X,B+M)$ be a g-sub-lc sub-pair with data $\mathbf{M}$, and $\mathbf{D}$ be an $\mathbb{R}$-b-Cartier b-divisor on $X$. We say $\mathbf{D}$ is b-nef and log abundant if for any data log resolution $(\overline{X},\overline{B}+\overline{M}) \to X$ to which $\mathbf{D},\mathbf{M}$ descends, we have $\mathbf{D}_{\overline{X}}$ is nef and log abundant. Note that the definition is independent of the choice of $(\overline{X},\overline{B}+\overline{M})$.
\end{defn}

\subsection{Minimal model program for relatively trivial pairs}\label{subsec-LMMP-scaling}
In this subsection we review the minimal model program (MMP) on a generalised log canonical divisor from \cite{birkarzhang}. We will use standard results of MMP (cf. \cite{kollar-mori}\cite{bchm}).

\noindent \textbf{MMP with scaling.}
Let $(X/Z,B+C+M)=(X_1/Z, B_1 + C_1+M_1)$ be a g-lc pair with data $\overline{M}$ such
that $K_{X_1} + B_1 + C_1+M_1$ is nef$/Z$, $B_1 \ge 0$, and $C_1 \ge 0$ is $\R$-Cartier. Suppose $X \to Z$ is projective, $X$ is $\Q$-factorial and $C$ is big$/Z$. Assume 
\begin{enumerate}
	\item[($*$)] for any $s \in (0, 1)$ there is a boundary $\Delta\sim_\R B + sC + M/Z$ such that
	$(X, \Delta + (1 -s)C)$ is klt.
\end{enumerate}
(Note that $(*)$ is satisfied if $X$ is klt and $C$ is ample$/Z$, which is the only situation in this paper.) 
By minimal model theory, either $K_{X_1} + B_1+M_1$ is nef$/Z$ or there is an extremal ray $R_1/Z$ such that
$(K_{X_1} + B_1+M_1) \cdot R_1 < 0$ and $(K_{X_1} + B_1 + \lambda_1 C_1) \cdot R_1 = 0$ where
$$\lambda_1 := \inf \{t \ge 0 | K_{X_1} + B_1 + t C_1 +M_1 \mathrm{~is~ nef}/Z \}.$$
Now, if $K_{X_1} + B_1+M_1$ is nef$/Z$ or if $R_1$ defines a Mori fibre structure, we stop.
Otherwise, $R_1$ gives a divisorial contraction or a log flip $X_1 \dashrightarrow X_2$.
We now consider $(X_2/Z, B_2 + \lambda_1 C_2+M_2)$ where $B_2 + \lambda_1 C_2+M_2$ is the birational
transform of $B_1 + 
\lambda_1 C_1+M_1$ and continue. By continuing this process, we obtain a sequence of numbers $\lambda_i$ and an MMP$/Z$ which is called an MMP$/Z$ on $K_{X_1} + B_1+M_1$ with scaling of $C_1$.
Note that by definition $\lambda_i \ge \lambda_{i+1}$ for every $i$, and we usually put $\lambda = \lim\limits_{i \rightarrow \infty} \lambda_i$.

Recall that, given a proper morphism $g:X \to Z$, a Cartier divisor $D$ is called \emph{movable over $Z$} if $g_*\mathcal{O}_X(D) \neq 0$ and if the cokernel of the natural homomorphism $g^*g_*\mathcal{O}_X(D) \otimes\mathcal{O}_X(-D) \to \mathcal{O}_X$, called the \emph{base ideal} of $|D/Z|$, has a support of codimension $\ge2$ (\cite[Definition 2.4.3]{fujino-book}). In the geometric context, when $Z$ is quasi-projective (for instance, $Z$ is affine), by a theorem of Bertini (\cite[Corollary 10.9]{hartshorne}), $D$ is movable$/Z$ if and only if the relative base locus $\mathrm{Bs}(|D/Z|)$ has codimension $\ge2$. An $\R$-Cartier divisor $D$ on $X$ is called \emph{movable over $Z$} if it is a non-negative $\R$-linear combination of Cartier movable$/Z$ divisors.

In general, when $K_X+B+M$ is pseudo-effective, we do not know whether the above MMP terminates, but 
we know that in some step of the MMP we reach a model $Y$ on which $K_Y+B_Y+M_Y$, 
the push-down of $K_X+B+M$, is a \emph{pseudo-movable$/Z$} divisor (i.e.a limit of movable$/Z$ divisors in $N^1(X)$): indeed, if the MMP terminates, then 
the claim is obvious; otherwise the MMP produces an infinite sequence $X_i\bir X_{i+1}$ 
of flips and a decreasing sequence $\lambda_i$ of numbers in $(0,1]$ such that 
$K_{X_i}+B_i+\lambda_iC_i +M_i$ is nef$/Z$; by \cite[Lemma 4.4(2)]{birkarzhang}(cf.\cite{bchm}\cite[Theorem 1.9]{birkar-flip}), $\lim\lambda_i=0$; 
in particular, if $Y:=X_1$, then $K_Y+B_Y+M_Y$ is the limit of the movable$/Z$ $\R$-divisors 
$K_Y+B_Y+\lambda_i C_Y+M_Y$.

\noindent \textbf{MMP on very exceptional divisors.}\label{subsec-very-exc}
With the discussion above on reaching a pseudo-movable model, one can prove the following lemmas of MMP on very exceptional divisors. %Notation as above, recall the basic fact that, if $K_X+B+M \equiv E$ for some $\R$-Cartier divisor, then by the negativity Lemma, the equation $K_{X_i}+B_i+M_i \equiv E_i$ holds for each $i$ since $X_i$ is $\Q$-factorial.

\begin{lem}[\text{cf.\cite[Theorem 3.4]{birkar-flip}}]\label{lem-exc-2}
	Let $(X/Z, B+M)$ be a g-lc pair with data $\overline{M}$ such
	that $K_X + B+M \equiv E/Z$ with $E \ge 0$ very exceptional$/Z$. Suppose $X \to Z$ is projective and $X$ is $\Q$-factorial klt. Then, any MMP$/Z$
	on $K_X + B+M$ with scaling of an ample$/Z$ divisor terminates with a good minimal model $Y$ on
	which $K_Y + B_Y +M_Y \equiv E_Y = 0/Z$.
\end{lem}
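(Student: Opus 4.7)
I would follow the strategy of \cite[Theorem 3.4]{birkar-flip}, adapted to the g-pair setting, in three movements: run the MMP, reach a pseudo-movable model, and then apply a negativity argument to kill $E$.

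First I would set up the MMP. Since $X$ is $\Q$-factorial klt and $H$ is ample$/Z$, condition $(*)$ of Subsection \ref{subsec-LMMP-scaling} is satisfied, so any MMP$/Z$ on $K_X + B + M$ with scaling of $H$ is well-defined. Denote the pair at step $i$ by $(X_i/Z, B_i + M_i)$ and the birational transform of $E$ by $E_i$. A direct check shows that each step (divisorial contraction or flip) is $E_i$-negative, so $E_i$ remains effective; moreover, being very exceptional$/Z$ is preserved under such birational maps, because these operations only contract or modify divisors which are already very exceptional$/Z$. Thus at every step $K_{X_i} + B_i + M_i \equiv E_i/Z$ with $E_i \ge 0$ very exceptional$/Z$.

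Second I would invoke the pseudo-movable model discussion preceding the lemma. Either the MMP terminates in finitely many steps at a model where $K_Y + B_Y + M_Y$ is nef$/Z$ (hence in particular pseudo-movable$/Z$), or by \cite[Lemma 4.4(2)]{birkarzhang} we have $\lambda_i \to 0$; in the latter case, after finitely many steps we reach $Y = X_N$ such that $K_Y + B_Y + M_Y$ is a limit in $N^1(Y/Z)$ of the movable$/Z$ classes $K_Y + B_Y + \lambda_i C_Y + M_Y$ for $i \ge N$, and hence pseudo-movable$/Z$.

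Third, the key step: I would show that an effective, very exceptional$/Z$, pseudo-movable$/Z$ divisor is zero. This is the main technical input, and I would prove it via the negativity lemma \cite[Lemma 3.3]{birkar-flip} applied over the Stein factorisation $Y \to \widetilde{Z} \to Z$: for any prime divisor $P \subset \widetilde{Z}$ and any component $D$ of $E_Y$ lying over $P$, the very exceptionality furnishes a prime divisor $Q \not\subset \Supp E_Y$ with $\widetilde{f}(Q) = P$, and movability of $E_Y$ forces $\mult_D E_Y \le 0$ in the usual way, contradicting $E_Y \ge 0$ unless $E_Y = 0$. This is the only nontrivial step of the argument.

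Once $E_Y = 0$, we have $K_Y + B_Y + M_Y \equiv 0/Z$. This is trivially nef$/Z$, so the MMP must actually terminate at $Y$ (the scaling number at this stage is zero), and numerical triviality over $Z$ gives semi-ampleness over $Z$ trivially, making $Y$ a good minimal model with $E_Y = 0$, as required.
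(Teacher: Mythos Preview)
Your proposal is correct and follows essentially the same route as the paper: run the MMP noting that only components of $E$ can be contracted (so very exceptionality persists), reach a pseudo-movable model via the discussion in Subsection~\ref{subsec-LMMP-scaling}, and then apply the negativity lemma \cite[Lemma 3.3]{birkar-flip} to force $E_Y=0$. One quibble: your final sentence that ``numerical triviality over $Z$ gives semi-ampleness over $Z$ trivially'' is not a valid implication in general; the paper's own proof is equally silent on this point and simply records $K_Y+B_Y+M_Y\equiv E_Y=0/Z$, so you should drop the word ``trivially'' or leave the semi-ampleness implicit as the paper does.
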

\begin{proof}
	Since the question is local, by shrinking $Z$ we may assume $Z$ is affine. Run an MMP$/Z$ on $K_X+B+M$ with scaling of an ample$/Z$ divisor $C$. The only
	divisors that can be contracted are the components of $E$ hence $E$ remains very
	exceptional$/Z$ during the MMP. After finite steps, we reach a model $Y$ on which $K_Y+B_Y+M_Y \equiv E_Y$, the push-down of $K_X+B+M$ and $E$, is pseudo-movable$/Z$. Since $K_Y+B_Y+M_Y$ is nef on the very general curves of
	$S/Z$ for any component of $E_Y$, and $E_Y$ is very exceptional$/Z$, by the negativity lemma \cite[Lemma 3.3]{birkar-flip}, we deduce $E_Y=0$.
\end{proof}

With a little more work we can obtain a slightly generalised result. Notation as below, we note that, by \cite[V. 1.12. Corollary]{nakayama}, for a very general fibre $F$, the condition $E^h|_{F} = N_\sigma((K_X+B+M)|_{F})$ is equivalent to that $\kappa_{\sigma}((K_X+B+M)|_{F})=0$. So it does not depend on the choice of $F$, and is preserved under MMP.

\begin{lem}\label{lem-exc-2+}
	Let $(X/Z, B+M)$ be a g-lc pair with data $\overline{M}$ such
	that $K_X + B+M \equiv E/Z$ with $E \ge 0$ very exceptional$/Z$. Writing $E=E^h+E^v$ where $E^h$ denotes the horizontal$/Z$ part and $E^v$ denotes the vertical$/Z$ part, we suppose that 
	\begin{itemize}
		\item $X \to Z$ is projective and $X$ is $\Q$-factorial klt,
		
		\item $E^h|_{F} = N_\sigma((K_X+B+M)|_{F})$, where $F$ is a very general fibre of the Stein factorisation of $X \to Z$, and
		
		\item $E^v$ is very exceptional$/Z$.
	\end{itemize}
	Then, any MMP$/Z$
	on $K_X + B+M$ with scaling of an ample$/Z$ divisor terminates with a good minimal model $Y$ on
	which $K_Y + B_Y +M_Y \equiv E_Y = 0/Z$.
\end{lem}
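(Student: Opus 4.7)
The plan is to adapt the proof of Lemma~\ref{lem-exc-2}, using the Nakayama--Zariski hypothesis to kill the horizontal part $E^h$ before applying the negativity argument to $E^v$.

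First I would run an MMP$/Z$ on $K_X + B + M$ with scaling of an ample$/Z$ divisor $C$, and verify that all three hypotheses are preserved at each step. Writing $(X_i, B_i+M_i)$ for the $i$-th model and $E_i = E^h_i + E^v_i$ for the birational transform of $E$, the only divisors that can be contracted are components of $E$, so $E^h_i$ and $E^v_i$ remain effective and retain their horizontal/vertical type; moreover the witnesses for very exceptionality of $E^v_i$ are necessarily vertical$/Z$ and disjoint from $E^v$, so they survive under the steps, and $E^v_i$ stays very exceptional$/Z$. By the remark recorded just before the lemma, the second bullet is equivalent to $\kappa_{\sigma}((K_{X}+B+M)|_{F}) = 0$, and this numerical condition on a very general fibre is a birational invariant unaffected by $(K+B+M)$-negative steps, so it is also inherited.

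Next I would rule out non-termination. If the MMP does not terminate, then by the discussion reviewed in Section~\ref{subsec-LMMP-scaling}, $K_X + B + M$ is already pseudo-movable$/Z$ on $X = X_1$. Restricting to a very general fibre $F$ of $X \to Z$, this descends to pseudo-movability on $F$ (since a relatively movable Cartier divisor restricts on a very general fibre to a movable divisor, and pseudo-movability is a limit property), hence $N_{\sigma}((K_X+B+M)|_F) = 0$. Combined with the preserved Nakayama--Zariski hypothesis, this forces $E^h|_F = 0$; since every component of $E^h$ is horizontal$/Z$, its restriction to a general fibre is a nonzero effective divisor as soon as the component is nonempty, so $E^h = 0$ on $X$. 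But then $K_X + B + M \equiv E^v/Z$ with $E^v$ very exceptional$/Z$, and Lemma~\ref{lem-exc-2} guarantees that the very MMP we are running terminates, contradicting the standing assumption. Therefore the MMP terminates at some model $Y$.

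Finally, on the minimal model $Y$, $K_Y + B_Y + M_Y$ is nef$/Z$, so on a very general fibre $F'$ its restriction is nef and $N_{\sigma}((K_Y + B_Y + M_Y)|_{F'}) = 0$; the preserved hypothesis then gives $E^h_Y|_{F'} = 0$, hence $E^h_Y = 0$ by the same horizontal-component argument. The residual identity $K_Y + B_Y + M_Y \equiv E^v_Y/Z$ with $E^v_Y$ very exceptional$/Z$ and nef$/Z$ is precisely the input to the negativity lemma as applied at the end of the proof of Lemma~\ref{lem-exc-2}, giving $E^v_Y = 0$. Hence $K_Y + B_Y + M_Y \equiv 0/Z$, which is in particular semi-ample$/Z$, making $Y$ a good minimal model. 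The technical point I expect to be the main obstacle is the descent of pseudo-movability from $X$ to the very general fibre $F$; this should follow from the codimension-$\geq 2$ characterisation of the relative base locus together with a Bertini-type generic-fibre argument, but it is the only non-routine verification.
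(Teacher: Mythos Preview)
Your proposal is correct and follows essentially the same idea as the paper's proof: reach a model where $K+B+M$ is pseudo-movable over $Z$, restrict to a very general fibre to kill $E^h$ via the Nakayama--Zariski hypothesis, and then reduce to Lemma~\ref{lem-exc-2}. The only difference is structural: the paper argues uniformly by passing to a model $Y$ on which $K_Y+B_Y+M_Y$ is pseudo-movable$/Z$ (as provided by the discussion in Section~\ref{subsec-LMMP-scaling}), deduces $E_Y^h=0$ there, and then invokes Lemma~\ref{lem-exc-2} once; you instead split into the non-terminating case (where $X_1$ itself is pseudo-movable, contradiction via Lemma~\ref{lem-exc-2}) and the terminating case (where you redo the negativity argument directly). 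Both work; the paper's version is just a bit more economical. Your identification of the descent of pseudo-movability to a very general fibre as the only non-routine point is apt—the paper uses this step without further comment.
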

\begin{proof}
	With a similar argument as above, we run an MMP$/Z$ and reach a model $Y$ on which $K_Y+B_Y+M_Y \equiv E_Y$, the push-down of $K_X+B+M$ and $E$, is pseudo-movable$/Z$. It follows that, for a very general fibre, we have $E_Y|_F=E_Y^h|_F$ is pseudo-movable, and thus one infers $E_Y^h=0$ which in turn implies that $E_Y$ is very exceptional$/Z$. So the lemma is proved by Lemma \ref{lem-exc-2}.
\end{proof}

The same argument also imply:
\begin{lem}[\text{\cite[Proposition 3.8]{hanli}, cf.\cite[Theorem 3.5]{birkar-flip}}]\label{lem-exc-3}
	Let $(X/Z, B+M)$ be a g-lc pair with data $\overline{M}$ such that $X \rightarrow Z$ is
	projective, $X$ is $\Q$-factorial klt, and $K_X + B+M \equiv Q = Q_+ - Q_-/Z$ where $Q_+$, $Q_- \ge 0$ have
	no common components and $Q_+$ is very exceptional$/Z$. Then, any MMP$/Z$ on
	$K_X + B+M$ with scaling of an ample$/Z$ divisor contracts $Q_+$ after finite steps.
\end{lem}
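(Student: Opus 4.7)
The plan is to follow the template of Lemma \ref{lem-exc-2}: run an MMP$/Z$ with scaling, reach a pseudo-movable model, and apply the negativity lemma. The new feature here is that $K_X+B+M$ is only $\equiv Q_+-Q_-$ rather than numerically equivalent to an effective very exceptional divisor, so effectivity of the very exceptional part has to be recovered numerically at the end of the argument.

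Since the statement is local over $Z$, I may shrink $Z$ and assume it is affine. Run the MMP$/Z$ on $K_X+B+M$ with scaling of an ample$/Z$ divisor $C$. By the standard argument for MMP with scaling recalled in Section \ref{subsec-LMMP-scaling} (invoking \cite[Lemma 4.4(2)]{birkarzhang}), after finitely many steps we obtain a birational contraction $\phi\colon X\dashrightarrow Y$ over $Z$ such that $K_Y+B_Y+M_Y$ is pseudo-movable$/Z$. The goal is to show that the push-down $Q_{+,Y}:=\phi_*Q_+$ vanishes at this model, which is equivalent to saying that every component of $Q_+$ has been contracted in finitely many MMP steps.

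Write $\phi_*Q=Q_{+,Y}-Q_{-,Y}$ as the decomposition into effective positive and negative parts with no common components (this property is preserved from the initial situation, since push-forward under a birational contraction only drops components of a divisor). First verify that $Q_{+,Y}$ is still very exceptional$/Z$: this is preserved because the MMP steps are $(K+B+M)$-negative, and the prime divisors of $Y/Z$ that witness very-exceptionality of $Q_+$ over each prime divisor of $Z$ can be arranged to survive. The central step is then to show that $Q_{+,Y}$ itself is pseudo-movable$/Z$. From $Q_{+,Y}\equiv(K_Y+B_Y+M_Y)+Q_{-,Y}/Z$ and the subadditivity of the asymptotic vanishing order $\sigma_\Gamma$ developed in Section \ref{sec2-decomp}, we get for any prime divisor $\Gamma$ on $Y$,
$$
\sigma_\Gamma(Q_{+,Y})\le \sigma_\Gamma(K_Y+B_Y+M_Y)+\sigma_\Gamma(Q_{-,Y})=\sigma_\Gamma(Q_{-,Y}),
$$
since $K_Y+B_Y+M_Y$ is pseudo-movable$/Z$. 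If $\Gamma$ is a component of $Q_{+,Y}$, then $\Gamma$ is not a component of $Q_{-,Y}$, whence $\sigma_\Gamma(Q_{-,Y})\le \mult_\Gamma Q_{-,Y}=0$; otherwise $\sigma_\Gamma(Q_{+,Y})\le \mult_\Gamma Q_{+,Y}=0$ already. Thus $\sigma_\Gamma(Q_{+,Y})=0$ for all $\Gamma$, so $Q_{+,Y}$ is pseudo-movable$/Z$.

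Finally, $Q_{+,Y}$ is effective, pseudo-movable$/Z$, and very exceptional$/Z$. Applying the negativity lemma exactly as in the closing line of the proof of Lemma \ref{lem-exc-2}---pseudo-movability implies nef on very general curves in any component, and together with very-exceptionality this forces $Q_{+,Y}\le 0$---yields $Q_{+,Y}=0$. The main obstacles I anticipate are (i) confirming that very-exceptionality of $Q_+$ is preserved under each MMP step, particularly when a contracted divisor lies outside $\Supp(Q)$, which requires a careful bookkeeping of fibre components above each prime divisor of $Z$, and (ii) running the Nakayama--Zariski argument relatively over $Z$: Section \ref{sec2-decomp} develops the machinery for projective varieties, so one must either pass to a compactification of $Z$ (and extend $Y$ by standard properness techniques) or interpret the statement on a general fibre through Definition \ref{defn--num-dim}.
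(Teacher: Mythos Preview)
Your argument is correct, and its skeleton---run the MMP to a pseudo-movable model and invoke \cite[Lemma~3.3]{birkar-flip}---is exactly what the paper means by ``the same argument also implies''. The difference is that you insert an extra step: you pass through the Nakayama--Zariski functions $\sigma_\Gamma$ to prove that $Q_{+,Y}$ itself is pseudo-movable$/Z$, and only then feed $Q_{+,Y}$ into the negativity lemma via the template of Lemma~\ref{lem-exc-2}. The paper (following \cite[Theorem~3.5]{birkar-flip}) applies the negativity lemma directly to $D=Q_{+,Y}-Q_{-,Y}$: the hypothesis needed there is just that $D$ is $\ge 0$ on very general curves of each component of $D^+=Q_{+,Y}$, and this follows immediately from $D\equiv K_Y+B_Y+M_Y$ being pseudo-movable$/Z$. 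No pseudo-movability of $Q_{+,Y}$ is required, and no $\sigma_\Gamma$ enters. This shortcut eliminates your concern (ii) entirely: the argument never leaves the world of intersection numbers with very general relative curves, so there is nothing to compactify and no relative Nakayama--Zariski theory to set up.

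Your concern (i), that very-exceptionality of $Q_+$ survives the MMP, is shared verbatim by the paper's route and is part of what is being cited from \cite[Theorem~3.5]{birkar-flip} and \cite[Proposition~3.8]{hanli}; your instinct that it needs a word of justification is sound, but it is not an obstacle specific to your variant.
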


\begin{lem}[\text{G-dlt modification, \cite[Lemma 4.5]{birkarzhang}\cite[Proposition 3.9]{hanli}}]\label{lem-g-dlt-blow-up}
	Let $(X/Z, B+M)$ be a g-lc pair with data $\overline{M}$. Then, there exists a birational model $\pi:X' \to X$ such that $(X'/Z, B'+M')$ be a $\Q$-factorial g-dlt pair with data $\overline{M}$, where $K_{X'}+B'+M' =\pi^*(K_X+B+M)$, and that $a(E,X,B+M)=0$ for every exceptional prime divisor $E$.
\end{lem}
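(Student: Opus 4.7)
The plan is a standard g-dlt modification: blow up to a log resolution, artificially declare the exceptional divisors of positive log discrepancy to be lc places, and then run a relative MMP to contract exactly those added divisors. First, I would choose a log resolution $\phi:\overline{X}\to X$ of $(X,B)$ on which the b-divisor $\mathbf{M}$ descends, so that $\overline{M}:=\mathbf{M}_{\overline{X}}$ is nef over $Z$. Writing
$$K_{\overline{X}}+\overline{B}+\overline{M}=\phi^{*}(K_X+B+M),$$
the g-lc hypothesis says every coefficient of $\overline{B}$ is at most $1$. Let $E_1,\ldots,E_r$ be the $\phi$-exceptional prime divisors with $a_i:=\mult_{E_i}\overline{B}<1$, and put
$$\Delta:=\overline{B}+\sum_{i=1}^{r}(1-a_i)E_i,\qquad G:=\sum_{i=1}^{r}(1-a_i)E_i\ge 0.$$
Then $(\overline{X},\Delta)$ is log smooth, so $(\overline{X},\Delta+\overline{M})$ is g-dlt with data $\overline{M}$, $G$ is $\phi$-exceptional, and
$$K_{\overline{X}}+\Delta+\overline{M}=\phi^{*}(K_X+B+M)+G\equiv G\,/\,X.$$

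Next, I would run a $(K_{\overline{X}}+\Delta+\overline{M})$-MMP over $X$ with scaling of a sufficiently general ample$/X$ divisor. Since $\overline{X}$ is smooth (hence $\mathbb{Q}$-factorial and klt) and projective over $X$, and $G$ is effective and very exceptional over $X$ (being $\phi$-exceptional), Lemma \ref{lem-exc-2} applies and produces, after finitely many steps, a good minimal model $\pi:X'\to X$ on which the push-down $G'$ of $G$ vanishes. In particular, every $E_i$ of positive log discrepancy has been contracted, and on $X'$ we recover
$$K_{X'}+B'+M'=\pi^{*}(K_X+B+M),$$
with $X'$ being $\mathbb{Q}$-factorial. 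Moreover, any $\pi$-exceptional prime divisor on $X'$ is the strict transform of some $E_j$ with $a_j=1$, hence has log discrepancy $0$ with respect to $(X,B+M)$, as required.

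The main obstacle is verifying the g-dlt property of $(X'/Z,B'+M')$, i.e.\ producing an open subset $U'\subset X'$ containing the generic points of every g-lc centre on which $(U',B'|_{U'}+M'|_{U'})$ is log smooth. The point is that the divisors contracted by the MMP are components of $G$, which are exceptional divisors of \emph{positive} log discrepancy with respect to $(X,B+M)$, hence are \emph{not} g-lc centres of the final pair; every g-lc centre of $(X',B'+M')$ is the birational transform of a stratum of $(\overline{X},\Delta+\overline{M})$ where log smoothness already holds, and flips and divisorial contractions appearing in the MMP are isomorphisms at the codimension-one points of such strata. By choosing the scaling ample divisor generically, one further arranges that the MMP is an isomorphism on a Zariski neighbourhood of the generic points of these strata, so the log smooth structure descends to $X'$, as in \cite[Proof of Lemma 4.5]{birkarzhang}. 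This resolves the delicate point and completes the construction.
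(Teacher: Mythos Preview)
Your argument is correct and follows the same route as the paper, which proves the lemma in one sentence by appealing to the very-exceptional MMP and then noting that g-dlt is preserved because the modified locus never contains a g-lc centre of the resulting pair. Two small remarks: first, the paper cites Lemma~\ref{lem-exc-3} rather than Lemma~\ref{lem-exc-2}, but since your $G\ge 0$ is $\phi$-exceptional, Lemma~\ref{lem-exc-2} applies directly and is the cleaner reference. Second, your last paragraph about choosing the scaling ample divisor generically is unnecessary: at each step of the MMP the log discrepancies strictly increase along the contracted or flipped locus, so every g-lc centre of $(X_{i+1},\Delta_{i+1}+M_{i+1})$ already has its generic point in the isomorphism locus, and log smoothness there is inherited automatically. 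This is exactly the paper's one-line justification ``the locus contracted by the MMP does not contain any g-lc centre''.
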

\begin{proof}
	This is almost a direct consequence of Lemma \ref{lem-exc-3}. It remains to check that $(X_i,B_i+M_i)$ being g-dlt is preserved under MMP, which follows directly from the fact that, the locus contracted by the MMP does not contain any g-lc centre. 
\end{proof}

\section{Lc-trivial morphisms and moduli b-divisors}\label{sec5}

In this section we generalise the notion of lc-trivial fibrations to proper surjective morphisms, and then we define the discriminant b-divisor and moduli b-divisor. Under these constructions we study the positivity of the moduli b-divisor of a class of lc-trivial morphisms, namely good lc-trivial morphisms. 

\subsection{Lc-trivial fibrations}\label{subsec-lc-trivial-fib}
In this subsection we review some definitions and results from \cite{ambro0}\cite{ambro1} \cite{ambro2} with a generalisation to $\R$-divisors. 

\noindent \textbf{Pre-discriminant divisors.}
Let $f : X \to Y$ be a proper surjective morphism of normal varieties and $(X ,B)$ is sub-lc over the generic point $\eta_Y$ of $Y$. For a prime divisor $P \subset Y$. By shrinking $Y$ around the generic point of $P$, we assume
that $P$ is Cartier. We set
\begin{align*}
b_P = \max \{ t\in \R| \text{$(X, B + tf^*P)$ is sub-lc over
	the generic point of $P$} \}
\end{align*}
and set
$$
B_Y =\sum_P (1 - b_P )P,
$$
where $P$ runs over prime divisors on $Y$. Then it is easy to see that $B_Y$ is well defined since $b_P = 1$ for all but a finite number of prime divisors and it is called the \emph{pre-discriminant divisor}. 

If $K_X+ B \sim_{\mathbb{R}} f^*D$ for some divisor $D$ on $Y$, then we set
$$
M_Y = D- K_Y-B_Y
$$
and call $M_Y$ the \emph{pre-moduli divisor}. Note that the pre-discriminant divisor (resp. pre-moduli divisor) is called the discriminant divisor (resp. moduli divisor) in literatures such as \cite{ambro0}\cite{ambro1} etc.. %The pre-discriminant (resp. pre-moduli) divisor is said to be the \emph{discriminant} (resp. \emph{moduli}) divisor when $f$ has connected fibres.
For basic properties of the pre-discriminant divisors, we refer to \cite[Remark 3.1, Example 3.1]{ambro0}.

Let $\phi : Y' \to Y$ be a birational contraction from a normal variety $Y'$. Let $X'$ be a resolution of the main component of $X \times_Y Y'$ which
dominates $Y'$. The induced morphism $\pi: X' \to X$ is birational, and $K_{X'}+B'=\pi^*(K_X+B)$. Let $B_{Y'}$ be the pre-discriminant of $K_{X'} + B'$ on $Y'$. Since
the definition of the pre-discriminant is divisorial and $\phi$ is an isomorphism
over codimension one points of $Y$ , by \cite[Remark 3.1]{ambro0} we have $B_Y = \phi_*(B_{Y'})$.
This means
that there exists a unique b-divisor $\mathbf{B}$ of $Y$ such that $\mathbf{B}_{Y'}$ is the
pre-discriminant on $Y'$ of the induced fibre space $f': (X', B') \to Y'$, for
every birational model $Y'$ of $Y$ . We call $\mathbf{B}$ the \emph{pre-discriminant b-divisor}. We define the \emph{pre-moduli b-divisor} $\mathbf{M}$ in a similar way. %The pre-discriminant (resp. pre-moduli) b-divisor is said to be \emph{discriminant} (resp. \emph{moduli}) when $f$ has connected fibres.

\noindent \textbf{Lc-trivial fibrations.}
Recall that the discrepancy b-divisor $\mathbf{A} = \mathbf{A}(X, B)$
of a pair $(X, B)$ is the b-divisor of $X$ with the trace $ \mathbf{A}_{X'}$ defined by
the formula
$$
K_{X'}= \pi^*(K_X + B) +  \mathbf{A}_{X'} ,
$$
where $\pi : X' \to X$ is a proper birational morphism of normal varieties.
Similarly, we define $ \mathbf{A}^{\ast} =  \mathbf{A}^{\ast}(X, B)$ by
$$
\mathbf{A}_{X'}^{\ast} = \sum_{a_i>-1} a_i A_i
$$
for
$$
K_{X'} = \pi^*(K_X + B) +\sum a_i A_i.
$$
Note that $\mathbf{A}(X, B) = \mathbf{A}^{\ast}(X, B)$ when $(X, B)$ is sub-klt.

\begin{defn}[\text{cf.\cite[Definition 3.2]{fujino-gongyo2}\cite[Definition 2.1]{ambro1}}]\label{defn-Q-lc-trivial-fib}
	A \emph{$\K$-lc-trivial (resp. $\K$-klt-trivial) fibration} $f : (X, B) \to
	Y$ consists of a proper surjective morphism between normal varieties with connected fibers and a sub-pair $(X, B)$ satisfying the	following properties:
	\begin{enumerate}
		\item $(X, B)$ is sub-lc (resp. sub-klt) over the generic point of $Y$;
		
		\item $\mathrm{rank} f_*\mathcal{O}_X(\lceil  \mathbf{A}^*
		(X, B)\rceil) = 1$;
		
		\item There exists an $\R$-Cartier divisor $D$ on $Y$ such that
		$$
		K_X + B \sim_\K f^*D.
		$$
	\end{enumerate}
	In this case, the pre-discriminant (resp. pre-moduli) divisor (resp. b-divisor) is said to be the \emph{discriminant} (resp. \emph{moduli}) \emph{divisor} (resp. \emph{b-divisor}).
\end{defn}

We briefly sketch the results for $\Q$-lc-trivial fibrations. Note that we allow $D$ to be $\R$-Cartier even for the definition of a $\Q$-lc-trivial fibration. In fact, by modifying $D$ to a $\Q$-Cartier divisor, one can easily reduce to the rational case. Thanks to the important results \cite[Theorem 2.5]{ambro1}\cite[Theorem 3.6]{fujino-gongyo2} obtained by the theory of variations of (mixed) Hodge structure, the moduli b-divisor $\mathbf{M}$ of a $\Q$-lc-trivial fibration is $\Q$-b-Cartier and b-nef. Hence $\mathbf{K}+\mathbf{B}$ is $\R$-b-Cartier.

If we assume further that $Y$ is complete, the geometric generic fiber $X_{\overline{\eta}} = X \times_Y \Spec(\overline{k(Y )})$ is a projective variety and $(X_{\overline{\eta}},B_{\overline{\eta}})$ is klt, where $B_{\overline{\eta}}=B|_{X_{\overline{\eta}}}$, then by \cite[Theorem 3.3]{ambro1}\cite[Theorem 3.10]{fujino-gongyo2} the moduli b-divisor $\mathbf{M}$ is b-nef and abundant.

Moreover, by \cite[Theorem 1.1]{fujino-gongyo2}, the moduli b-divisor $\mathbf{M}$ of a $\Q$-lc-trivial fibration from an lc pair is b-nef and abundant. As we mentioned in the introduction, we will not directly apply \cite[Theorem 1.1]{fujino-gongyo2} in this article, but we will use their strategy in Section \ref{subsec-adj-commute}. 

\begin{lem}\label{l-pull-back-Cartier}
	Notation as in Definition \ref{defn-Q-lc-trivial-fib}, we write
	$$
	K_X + B +(\varphi) = f^*(K_Y+B_Y+M_Y)
	$$
	for a $\K$-rational function $\varphi$. 
	\begin{enumerate}
		\item The moduli b-divisor $\mathbf{M}$ is uniquely determined by $\varphi$.
		
		\item For any $\K$-rational function $\psi$, $K_X + B +(\psi) =0/Y$ if and only if $K_F+B_F +(\psi|_F)=0$, where $F$ is a general fibre (or the geometric generic fibre) and $K_F+B_F=(K_X+B)|_F$.
	\end{enumerate}
   In particular, if $f$ is $\Q$-lc-trivial, then there exists an integer $r$, denoted by $r=b(F,B_F)$, uniquely defined as the least positive integer such that $r(K_F+B_F) \sim 0$. 
\end{lem}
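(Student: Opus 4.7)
My plan for (1) is to use the injectivity of $f^*$ on $\R$-Weil divisors: once $\varphi$ is fixed, the equation $K_X+B+(\varphi)=f^*(K_Y+B_Y+M_Y)$ determines $K_Y+B_Y+M_Y$ uniquely as an $\R$-Weil divisor on $Y$, and since the pre-discriminant $B_Y$ depends only on the triple $(X,B,f)$ by its divisorial definition, $M_Y$ is determined. Passing to a birational model $Y'\to Y$ with induced $f':X'\to Y'$ and crepant pullback $K_{X'}+B'=\pi^*(K_X+B)$, the equation pulls back to an analogous expression on $X'$, which determines the trace $M_{Y'}$. Thus $\mathbf{M}$ is unique.

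For (2), the direction $(\Rightarrow)$ follows by restriction to $F$, since $(f^*D_Y)|_F=0$. For $(\Leftarrow)$, set $h:=\psi/\varphi$. Combining the hypothesis $K_F+B_F+(\psi|_F)=0$ with the relation $K_F+B_F+(\varphi|_F)=0$ obtained by restricting the given equation to $F$ yields $(h)|_F=0$. The goal becomes to exhibit $(h)$ as the pullback $f^*E$ of an $\R$-Cartier divisor on $Y$, which will imply $K_X+B+(\psi)=f^*(K_Y+B_Y+M_Y+E)$. This proceeds in two steps. First, $(h)|_F=0$ on a very general fibre forces the horizontal coefficients of $(h)$ to vanish: horizontal prime divisors of $X$ correspond bijectively to prime divisors of the generic fibre $X_\eta$, and restrict to divisors on a very general $F$ sharing no common components, so the zero sum of their restrictions forces each horizontal coefficient of $(h)$ to be zero; hence $(h)$ is vertical over $Y$. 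Second, a vertical principal $\K$-divisor is a pullback. For $\K=\Q$, after clearing denominators in the exponents we may assume $h\in k(X)^*$ is a genuine rational function; then $h|_F$ has no zeros or poles on the complete fibre $F$, hence is a constant, for every very general $F$, so $h$ is constant on general fibres and therefore $h\in f^*k(Y)^*$ (using $f_*\mathcal{O}_X=\mathcal{O}_Y$), whence $(h)$ is a pullback. For $\K=\R$, write $h=\prod h_i^{a_i}$; the condition that $\sum a_i(h_i)$ is vertical is given by a system of $\Q$-linear equations in $(a_i)$, so rational solutions are dense in the solution space; approximating, applying the $\Q$-case, and passing to the limit via the closedness of the image of $f^*$ in $\mathrm{Div}_\R(X)$ yields the claim.

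The main technical obstacle is the $\R$-case of the vertical-implies-pullback step, since an $\R$-rational function is only a formal product without a genuine function-theoretic restriction to a fibre; the density argument reduces it to the rational case. Finally, for the ``In particular'' part, if $f$ is $\Q$-lc-trivial, restricting $K_X+B\sim_\Q f^*D$ to a very general $F$ gives $K_F+B_F\sim_\Q 0$, so the set $\{r\in\Z_{>0}\mid r(K_F+B_F)\sim 0\}$ is non-empty and has a unique least element $b(F,B_F)$; its independence of $F$ follows because the restricted pair $(F,B_F)$ is constant over the locus of very general fibres.
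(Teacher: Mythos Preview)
Your argument is correct and matches the paper's approach: the paper's proof simply asserts that (1) is obvious and that (2) follows from the fact that every $\K$-rational function with vertical divisor factors through $f$, which is exactly the key step you prove. Your elaboration of (1) via injectivity of $f^*$ and the divisorial nature of $B_Y$, and of $(\Leftarrow)$ in (2) via reducing to the verticality of $(\psi/\varphi)$, are the natural details behind the paper's one-line proof.

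One minor simplification for the $\R$-case of the ``vertical $\Rightarrow$ pullback'' step: rather than approximating by rational solutions and invoking closedness of the image of $f^*$, observe that the solution space $V=\{(a_i)\in\R^n : \sum a_i(h_i)\text{ is vertical}\}$ is defined by $\Q$-linear equations, hence has a rational basis. Writing $(\alpha_i)$ as an $\R$-linear combination of rational basis vectors and applying the $\Q$-case to each basis vector exhibits $(h)$ directly as $f^*$ of an $\R$-principal divisor on $Y$, with no limit needed. Your limiting argument works once restricted to the finite-dimensional subspace spanned by the components of the $(h_i)$, but the basis argument is cleaner and avoids having to check that the limit divisor on $Y$ is $\R$-Cartier.
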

\begin{proof}
	(1) is obvious. (2) follows from the fact that every $\K$-rational function $\varphi$ with vertical support can be decomposed to $h \circ f$ for a $\K$-rational function $h$ on $Y$.
\end{proof}

\begin{lem}\label{lem-Q-lc-trivial-fib}
	Let $f:(X,B) \to Y$ be an $\R$-lc-trivial (resp. $\R$-klt-trivial) fibration. Then, $B$ is a convex combination of $\Q$-divisors $B_i$ such that $f:(X,B_i) \to Y$ is $\Q$-lc-trivial (resp. $\Q$-klt-trivial).
\end{lem}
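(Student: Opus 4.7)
My plan is a convex-geometric perturbation, decomposing $B$ as a convex combination of nearby $\Q$-divisors within a rational polytope that encodes the three defining conditions of Definition \ref{defn-Q-lc-trivial-fib}.

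\emph{Setup.} Fix a projective log resolution $\pi\colon X' \to X$ of $(X,B)$ that simultaneously resolves a finite collection $\varphi_1,\dots,\varphi_k$ of rational functions realizing
$$
K_X + B + \sum_{j=1}^{k} r_j (\varphi_j) = f^* D,
$$
for some $r_j \in \R$ and some $\R$-Cartier divisor $D$ on $Y$. Let $V$ be the finite-dimensional $\R$-vector space of Weil $\R$-divisors on $X$ supported on the components of $B$ together with the $(\varphi_j)$'s. Inside $V \times \R^k \times \CDiv(Y)_\R$, the locus
$$
\mathcal{A} = \left\{ (B'',\vec{s},D'') : K_X + B'' + \sum_{j} s_j (\varphi_j) = f^* D'' \right\}
$$
is cut out by one linear equation per prime divisor on $X'$ horizontal over $Y$; since these equations have rational coefficients, $\mathcal{A}$ is a rational affine subspace containing $(B,\vec{r},D)$. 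Sub-lc over the generic point $\eta_Y$ is a finite rational system of linear inequalities in the coefficients on $X'$, defining a rational polytope $\mathcal{P} \subset V$. Set $Q = \mathcal{A} \cap (\mathcal{P} \times \R^k \times \CDiv(Y)_\R)$ and let $F$ be the minimal face of $Q$ containing $(B,\vec{r},D)$; then $F$ is a rational polytope with $(B,\vec{r},D)$ in its relative interior.

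\emph{Rank-$1$ stability.} For $(B'',\vec{s},D'') \in F$ sufficiently close to $(B,\vec{r},D)$, every non-integer coefficient of $\pi^* B''$ on $X'$ remains in the same open integer interval $(n,n+1)$ as that of $\pi^* B$, while integer-valued coefficients of $\pi^* B$ are preserved by the defining equations of $F$ (after if necessary cutting down by further rational linear equations pinning those coefficients). On the generic fibre $X'_\eta$ of $f \circ \pi$, this yields $\lceil \mathbf{A}^*(X, B'') \rceil|_{X'_\eta} = \lceil \mathbf{A}^*(X, B) \rceil|_{X'_\eta}$ as integral divisors; combined with the $\R$-linear equivalence $K_X + B'' \sim_\R K_X + B$ over $Y$ built into $\mathcal{A}$, these two divisors are $\R$-linearly equivalent on $X'_\eta$, hence have the same global sections, and the pushforward to $Y$ has rank $1$, as required by Definition \ref{defn-Q-lc-trivial-fib}(2).

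\emph{Conclusion and main obstacle.} Since $(B,\vec{r},D)$ lies in the relative interior of the rational polytope $F$, I can write it as a convex combination $\sum_i \alpha_i (B_i,\vec{r}_i,D_i)$ of finitely many rational points of $F$ in the neighbourhood of the previous step. Each $B_i$ is then a $\Q$-divisor satisfying $K_X + B_i \sim_\Q f^* D_i$ over $Y$, with $(X,B_i)$ sub-lc over $\eta_Y$ and rank-$1$ condition preserved, so $f\colon(X,B_i)\to Y$ is a $\Q$-lc-trivial fibration; the sub-klt case is identical, replacing ``$\le 1$'' by ``$< 1$'' throughout. The main obstacle will be the rank-$1$ stability step: a priori the face $F$ of $Q$ only freezes coefficients forced to $1$ by the sub-lc inequalities, not arbitrary integer values of horizontal log discrepancies, so one may need an additional rational-polyhedral reduction (restricting to a further subface) to handle integer-valued log discrepancies not arising from tight sub-lc constraints.
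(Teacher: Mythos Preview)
Your approach is correct and follows the same convex-geometric strategy as the paper: cut out a rational polytope encoding conditions (1)--(3) of Definition \ref{defn-Q-lc-trivial-fib} and write $B$ as a convex combination of rational points therein. The only substantive difference is how the rank-$1$ condition is handled. You aim to keep $\lceil\mathbf{A}^*(X,B'')\rceil$ \emph{equal} to $\lceil\mathbf{A}^*(X,B)\rceil$ on the generic fibre, which (as you correctly flag) forces you to pin every integer-valued log discrepancy by an extra rational linear equation. The paper instead passes to the rational sub-polytope
\[
\mathcal{Q}=\{\Delta\in\mathcal{P}\ :\ \lceil\mathbf{A}^*(X,\Delta)_{\overline{X}}\rceil\le\lceil\mathbf{A}^*(X,B)_{\overline{X}}\rceil\},
\]
after first shrinking $Y$ so that every $(X,\Delta)$ with $\Delta\in\mathcal{P}$ is sub-lc. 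Then $\mathrm{rank}\,f_*\mathcal{O}_X(\lceil\mathbf{A}^*(X,\Delta)\rceil)\ge 1$ from effectivity of $\mathbf{A}^*$ and $\le 1$ from the inequality, so equality holds for every $\Delta\in\mathcal{Q}$. This inequality trick dissolves your ``main obstacle'' without any case analysis on integer coefficients, and since $\mathcal{Q}$ is a rational polytope containing $B$, the vertices already give the desired $B_i$.

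Two minor remarks on your write-up: once you have equality of the integral divisors $\lceil\mathbf{A}^*(X,B'')\rceil|_{X'_\eta}=\lceil\mathbf{A}^*(X,B)\rceil|_{X'_\eta}$, equality of global sections is immediate; the appeal to $\R$-linear equivalence is superfluous (and $\R$-linear equivalence of integral divisors does not by itself identify $H^0$). Also, your $Q$ is only a rational polyhedron, not a polytope, since the $\CDiv(Y)_\R$ factor is unbounded; this is harmless for the argument (rational points are still dense in the minimal face), but worth saying.
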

\begin{proof}
	We only prove for the $\R$-lc-trivial fibrations since the klt case can be argued verbatim. Replacing $X$ we may assume it is smooth. Let $f:(X,B) \to Y$ be an $\R$-lc-trivial fibration, $\varphi=\prod_{i=1}^k \varphi_i^{\alpha_i}$ be an $\R$-rational function so that $K_X+B+(\varphi)=f^*D$. Let $\mathcal{V} \subset \CDiv_\R(Y) $ be a finite dimensional rational linear subspace containing $D$, $\mathcal{L}\subset \CDiv_\R(X)$ be a rational polytope containing $B$ such that, for every $\Delta \in \mathcal{L}$, we have $(X,\Delta)$ is a sub-pair which is sub-lc over the generic point of $Y$. Now we consider the rational polytope 
	$$
	\mathcal{P}:=\{\Delta \in \mathcal{L}|\Delta +\sum_{i=1}^k \R(\varphi_i)\text{ intersects }f^*\mathcal{V} \}
	$$
	For every $\Delta \in \mathcal{P}$, we have further $K_X+\Delta \sim_\R 0/Y$. It is obvious that $B \in \mathcal{P}$.
	
	It suffices to show that, there exists a convex combination $B=\sum_j \alpha_jB_j$ of $\Q$-divisors $B_j \in \mathcal{P}$ with $\mathrm{rank} f_*\mathcal{O}_X(\lceil  \mathbf{A}^*
	(X, B_j)\rceil) = 1$. To this end, pick a log resolution $\pi: \overline{X} \to X$ of $(X, \sum_j \Gamma_j  )$ where every element of $\mathcal{P}$ is supported by $\sum_j \Gamma_j$. Note that the proofs of \cite[Lemmas 3.19 and 3.20]{fujino-abund-saturation} are still valid for $\R$-sub-boundaries. Hence, by shrinking $Y$, we may assume $(X,\Delta)$ is sub-lc for every $\Delta \in \mathcal{P}$, and we have $$
	f_*\mathcal{O}_X(\lceil  \mathbf{A}^*
	(X, \Delta)\rceil)=f_* \pi_* \mathcal{O}_{\overline{X}}(\sum_{a_i \neq -1} \lceil a_i  \rceil A_i)$$
	where $K_{\overline{X}}=\pi^*(K_X+\Delta) +\sum a_i A_i$. %By a theorem of Grauert \cite[Corollary 11.9]{hartshorne}, it is enough to show $H^0(\overline{X}_{\overline{\eta}},\mathcal{O}_{\overline{X}_{\overline{\eta}}}(\sum_{a_i \neq -1} \lceil a_i  \rceil A_i|_{\overline{X}_{\overline{\eta}}})) =0$, where $\overline{X}_{\overline{\eta}}$ is the geometric generic fibre.  %By \cite[Proposition 10.1]{hartshorne}, $(\sum_{a_i \neq -1} \lceil a_i  \rceil A_i|_{\overline{X}_{\overline{\eta}}})$ is log smooth. 
	Consider the rational sub-polytope 
	$$\mathcal{Q}=\{\Delta \in \mathcal{P}| \lceil  \mathbf{A}^*
	(X, \Delta)_{\overline{X}}\rceil \le \lceil  \mathbf{A}^*
	(X, B)_{\overline{X}}\rceil \}.$$ Then, for any $B_j \in \mathcal{Q}$, we have $\mathrm{rank} f_*\mathcal{O}_X(\lceil  \mathbf{A}^*
	(X, B_j)\rceil) = 1$ which completes the proof. 
\end{proof}

\begin{rem}
	The converse direction of the previous lemma is not true in general. See the example below.
\end{rem}

\begin{exa}
	Let $\pi: X \to \PP^2$ be the blow-up at two points $p_1,p_2\in \PP^2$, $E$ be one of the exceptional curves. Let $L_0$ be the line passing through both two points, $L_{i,j}$ be general lines passing through $p_i$ respectively, for $i=1,2,j=1,2$, and let $L_0',L_{i,j}'$ be the birational transforms on $X$. Let $A$ be a general member of $|L|_\Q$, where $L$ is a general line, and $A'$ be the birational transform of $A$ on $X$.
	
	Set $B_1=3A'-E$ and $B_2=\frac{3}{2}A'+ \frac{3}{4}(L_{1,1}'+L_{1,2}') +\frac{3}{4}( L_{2,1}'+L_{2,2}') -\frac{1}{2}L_0'$. One can easily check that $K_X+B_1 \sim_\Q 0, K_X+B_2 \sim_\Q 0$ with $h^0(X,\mathcal{O}_X(\lceil-B_1\rceil)) =0, h^0(X,\mathcal{O}_X(\lceil-B_2\rceil)) =0$, but $h^0(X,\mathcal{O}_X(\lceil-\frac{1}{2}(B_1+B_2)\rceil))=h^0(X,\mathcal{O}_X(E+L_0'))=1$.   
\end{exa}

The next proposition is a consequence of toroidal geometry.
\begin{prop}\label{prop-R-lc-trivial}
	Notation as in Lemma \ref{lem-Q-lc-trivial-fib}, write $\mathbf{B},\mathbf{B}_j$ (resp. $\mathbf{M},\mathbf{M}_j$) for the discriminant (resp. moduli) b-divisor of $(X,B) \to Y,(X,B_j) \to Y$ respectively. If we write $B= \sum_j \alpha_j B_j$ as the convex combination, then we may further require $\mathbf{B}= \sum_j \alpha_j \mathbf{B}_j$ (resp. $\mathbf{M}= \sum_j \alpha_j \mathbf{M}_j$). 
\end{prop}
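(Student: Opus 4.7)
The aim is to upgrade the rational decomposition $B=\sum_j\alpha_j B_j$ already furnished by Lemma \ref{lem-Q-lc-trivial-fib} so that both b-divisors split accordingly. The main obstruction is that the discriminant coefficient $b_P(\Delta)$, viewed as a function of the boundary $\Delta$ on a rational polytope, is only a concave piecewise-affine function in general, being the infimum, over divisors $D$ over $X$, of the affine functionals $\Delta\mapsto a(D,X,\Delta)/\mathrm{mult}_D f^{\ast}P$. The strategy, as suggested by the hint, is to pass to a toroidal model via weak semi-stable reduction, on which this infimum is attained by a distinguished toroidal divisor and hence genuinely affine in $\Delta$ over a refined polytope.

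\textbf{Setting up the polytope.} Retain the rational polytope $\mathcal{P}\subset \CDiv_\R(X)$ produced in the proof of Lemma \ref{lem-Q-lc-trivial-fib}, chosen so that every $\Delta\in \mathcal{P}$ is sub-lc over $\eta_Y$ with $\mathrm{rank}\,f_{\ast}\mathcal{O}_X(\lceil\mathbf{A}^{\ast}(X,\Delta)\rceil)=1$, and so that there exist affine-in-$\Delta$ choices of an $\R$-rational function $\varphi_\Delta$ and an $\R$-divisor $D_\Delta$ on $Y$ with $K_X+\Delta+(\varphi_\Delta)=f^{\ast}D_\Delta$. Fix a log resolution $\pi:\overline{X}\to X$ of $(X,\sum_j\Gamma_j)$ as in that proof, computing $\mathbf{A}^{\ast}(X,\Delta)$ uniformly for all $\Delta\in\mathcal{P}$.

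\textbf{Toroidal/weak semi-stable model.} Apply Theorem \ref{thm-ss-reduction} (reviewed in Appendix \ref{subsec-ssred}) to the reduced support of $\pi^{\ast}\mathcal{P}$ and to $f$: there is a finite surjection $\phi:\overline{Y}\to Y$ from a normal variety and an equidimensional toroidal morphism $f':X'\to\overline{Y}$ with a birational map $X'\to \overline{X}$ commuting with $f$, such that the transforms of every element of $\mathcal{P}$ are supported in the toroidal boundary of $X'$. On $X'\to\overline{Y}$ the formula
\[
1-b_{P}(\Delta)\ =\ \max_{Q\subset f'^{\ast}P}\frac{\mathrm{mult}_Q(K_{X'/\overline{Y}}+\Delta')-\mathrm{mult}_Q(f'^{\ast}P)+1}{\mathrm{mult}_Q(f'^{\ast}P)}
\]
is a finite maximum of affine functionals of $\Delta$. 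Subdividing $\mathcal{P}$ into rational sub-polytopes on which the maximising component $Q$ is constant for every $P\subset\overline{Y}$ and for every toroidal blow-up of $\overline{Y}$ (a finite operation because of equidimensionality), we get a rational sub-polytope $\mathcal{P}'\subset\mathcal{P}$ containing $B$ on which $\Delta\mapsto \mathbf{B}_{Y''}(\Delta)$ is \emph{affine} for every birational model $Y''\to Y$ dominated by some iterated toroidal blow-up of $\overline{Y}$. Since such models are cofinal among birational models of $Y$, the affinity holds on every sufficiently high $Y''$. Because $\Delta\mapsto D_\Delta$ is affine by construction, so is $\Delta\mapsto \mathbf{M}_{Y''}(\Delta)=D_\Delta|_{Y''}-K_{Y''}-\mathbf{B}_{Y''}(\Delta)$.

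\textbf{Conclusion and main obstacle.} Now refine the rational decomposition provided by Lemma \ref{lem-Q-lc-trivial-fib} so that all $B_j$ lie in $\mathcal{P}'$ (this is an elementary convex-geometric move, decomposing $B$ as a convex combination of rational points of a rational sub-polytope). The affinity of the b-divisor on $\mathcal{P}'$ then yields $\mathbf{B}=\sum_j\alpha_j\mathbf{B}_j$ and $\mathbf{M}=\sum_j\alpha_j\mathbf{M}_j$. The technical heart I expect to be the verification that, on a weakly semi-stable model, the maximising component in the formula above can be made uniform over a whole rational sub-polytope and over \emph{all} further toroidal blow-ups of the base; this relies on the equidimensionality of $f'$ (to control how components of $f'^{\ast}P$ transform under blow-ups of $\overline{Y}$) together with the combinatorial description of log discrepancies at toric divisors.
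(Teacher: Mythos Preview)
Your approach shares the key opening move with the paper---observing that $\Delta \mapsto b_P(\Delta)$ is piecewise affine on a rational polytope and refining to a sub-polytope on which it becomes genuinely affine---but diverges at the crucial step and leaves a real gap. You correctly identify the difficulty: the affinity must hold not just for the finitely many prime divisors $P \subset \overline{Y}$, but for every prime divisor over $Y$. Your claim that the subdivision stabilizing all $b_P$ simultaneously is ``a finite operation because of equidimensionality'' is not justified: there are infinitely many toroidal blow-ups of $\overline{Y}$, each introducing new exceptional divisors $P$ with their own piecewise-affine functions $b_P$, and you have not shown that the common refinement of all these polyhedral decompositions is finite. You yourself flag this as the unverified ``technical heart''. (There is also a secondary loose end: Theorem \ref{thm-ss-reduction} gives a finite cover $\overline{Y}\to Y$, not a birational model, so you would still need to transport the decomposition back down to $Y$.)

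The paper sidesteps this problem entirely by proving a \emph{descent} statement rather than controlling all higher models directly. It passes to a \emph{resolved model} (birational modifications of $X$ and $Y$ only) on which the $B_i$'s and $(\varphi_i)$'s are supported in a fixed snc divisor $\Delta$ with $\Delta^v = f^{-1}\Delta_Y$. On this model $b_P \equiv 1$ for $P \nsubseteq \Delta_Y$, so only finitely many $b_P$ need to be made affine, yielding $B_Y = \sum_j \alpha_j B_{Y,j}$ and $M_Y = \sum_j \alpha_j M_{Y,j}$ at the level of divisors on $Y$. The remaining step is that every $\mathbf{M}_j$ \emph{descends to $Y$}: this is Lemma \ref{lem-resolved-model}, which takes the cyclic cover of $X$ in $k(X)(\varphi_j^{1/r_j})$, resolves it toroidally over $Y$, and invokes \cite[Section 5]{ambro1}. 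Once all $\mathbf{M}_j$ descend to the same model $Y$, the b-divisor identity $\mathbf{M}=\sum_j\alpha_j\mathbf{M}_j$ is immediate from the divisor identity. This descent argument is the idea your proposal is missing; with it, the need to handle infinitely many blow-ups disappears.
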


Before we prove Proposition \ref{prop-R-lc-trivial}, we need some preparation.
\begin{defn}[Resolved model]
	Let $f:(X,B) \to Y$ be an $\R$-lc-trivial fibration. We say it is \emph{resolved} if the followings hold:
	 	\begin{enumerate}
	 	\item There exist a convex combination $B=\sum_i \alpha_iB_i$ of $\Q$-divisors $B_i$ such that $f:(X,B_i) \to Y$ is $\Q$-lc-trivial with $\Q$-rational functions $\varphi_i^{1/r_i}$ where $r_i=b(F,B_{i,F})$. 
	 		
	 	\item There exist reduced divisors $\Delta,\Delta_Y$ on $X,Y$ respectively, such that $(X,\Delta),(Y,\Delta_Y)$ are quasi-projective log smooth, and $\Delta^v=f^{-1}\Delta_Y$.
	 	
	 	\item $(f^{-1}U,\Delta|_{f^{-1}U})$ is log smooth over $U$, where  $U=Y \setminus \Delta_Y$.
	 	
	 	\item  $B_i$'s and $(\varphi_i)$'s are supported by $\Delta$, and $B_{Y,i}$'s and $M_{Y,i}$'s are supported by $\Delta_{Y}$.
	 \end{enumerate}
\end{defn}

Note that the existence of a resolved model of a given $\R$-lc-trivial fibration is guaranteed by log resolution and Lemma \ref{lem-Q-lc-trivial-fib}. 

Recall that a morphism $f:(X,\Delta) \to Y$ from a log smooth pair with a reduced divisor $\Delta$ to a normal variety is \emph{a log smooth morphism} if for any stratum $S$, the restriction map $f|_S$ is a smooth morphism. In this case, we say $(X,\Delta)$ \emph{log smooth over} $Y$.

\begin{lem}\label{lem-resolved-model}
	Let $(X,\Delta)$ be a log smooth pair with a reduced divisor $\Delta$ and $f:(X,\Delta) \to Y$ be a dominant morphism to a smooth variety. Given a rational function $\varphi \in k(X)$ with $(\varphi)$ supported in $\Delta$ and a positive integer $l$, let $X'$ be the normalisation of $X$ in $k(X)(\varphi^{\frac{1}{l}})$ and $\Delta'$ be the inverse image of $\Delta$. Then, there exists a log resolution $\pi:X'' \to X'$ of $(X',\Delta')$ such that 
	\begin{enumerate}
		\item The composite morphism $f'':(X'',\Delta'') \to Y$ is log smooth.
		
		\item If we denote by $\Delta''=\pi^{-1}\Delta'$, then $K_{X''}+\Delta''$ is the pull-back of $K_X+\Delta$.
	\end{enumerate}
\end{lem}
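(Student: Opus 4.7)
The plan is to combine Abhyankar's lemma with toroidal resolution of singularities. The key observation is that because $\mathrm{Supp}(\varphi)\subseteq \Delta$ and $(X,\Delta)$ is log smooth, at every point of $X$ one can choose \'etale-local coordinates $y_1,\ldots,y_n$ in which $\Delta$ is the union of some coordinate hyperplanes $(y_1\cdots y_r=0)$ and $\varphi=u\cdot y_1^{a_1}\cdots y_r^{a_r}$ for a unit $u$ and integers $a_i\in\Z$. After an \'etale cover trivialising $u^{1/l}$, extracting an $l$-th root of $\varphi$ reduces to the classical Kummer extraction of an $l$-th root of a monomial. Hence $X'$ is a toroidal embedding with toroidal boundary $\Delta'$, and \'etale-locally arises as a quotient of an affine toric variety by a finite abelian group.

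I would then apply toroidal resolution of singularities to the toroidal pair $(X',\Delta')$ to obtain a log resolution $\pi:X''\to X'$ with $(X'',\Delta''=\pi^{-1}\Delta')$ log smooth. For the base-compatibility in (1), one uses that $f:(X,\Delta)\to Y$ is log smooth, so the Kummer cover $\rho:X'\to X$ is a toroidal morphism over $Y$; the subdivision of the conical complex on $X'$ defining $\pi$ can be chosen to refine the pullback of the conical structure on $Y$, so that each stratum of $(X'',\Delta'')$ maps smoothly to $Y$. This is the standard weak toroidal resolution for morphisms.

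For (2), apply the ramification formula for the Kummer cover $\rho:X'\to X$. If $\Delta_i$ is a component of $\Delta$ with $\mult_{\Delta_i}(\varphi)=a_i$, then the unique prime $\Delta_i'\subset X'$ above $\Delta_i$ has ramification index $e_i=l/\gcd(l,a_i)$ and $\rho^*\Delta_i=e_i\Delta_i'$. Hence
\[
K_{X'}=\rho^*K_X+\sum_i(e_i-1)\Delta_i',
\]
so $K_{X'}+\Delta'=\rho^*(K_X+\Delta)$. Since $(X',\Delta')$ is toroidal and $\pi$ is a toroidal log resolution, $K_{X''}+\Delta''=\pi^*(K_{X'}+\Delta')$, and composing with $\rho$ gives (2).

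The main obstacle is the base-compatibility in (1): producing a subdivision of the conical complex on $X'$ that simultaneously resolves its toric singularities and refines the structure coming from the log smooth morphism $(X,\Delta)\to Y$, so that log smoothness over $Y$ is preserved. The remaining content is a local toric computation together with the standard ramification formula.
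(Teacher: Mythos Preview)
Your proposal is correct and matches the paper's approach: Abhyankar's lemma to see that $(X',\Delta')\to(X,\Delta)$ is toroidal, then a toroidal resolution $\pi:X''\to X'$, and the ramification formula for (2). The only difference is that the paper frames (1) more cleanly and avoids the difficulty you flag. Since the target is $(Y,0)$ with \emph{empty} toroidal boundary, the composite $(X',\Delta')\to(X,\Delta)\to(Y,0)$ is already toroidal (log smooth morphisms are toroidal, and compositions of toroidal morphisms are toroidal), and \emph{any} toroidal resolution of $(X',\Delta')$ remains toroidal over $(Y,0)$; as $X''$ is smooth this forces $f''$ to be log smooth. There is no conical structure on $Y$ to refine, so your ``main obstacle'' evaporates once you view everything as toroidal over $(Y,0)$ rather than trying to build a compatible subdivision by hand.
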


\begin{proof}
	By Lemma \ref{lem-toroidal}, we have $f:(X,\Delta) \to (Y ,0)$ is toroidal (see Notation \ref{note-toroidal}). Since $\mathrm{div}\varphi$ is supported in $\Delta$, we deduce that the normalisation $\gamma:(X',\Delta') \to (X,\Delta)$ is toroidal by Abhyankar's lemma (\cite[Lemma 3.3]{adk}). Taking a toroidal resolution $X''$ of $X'$, one immediately concludes (1). The assertion (2) follows from the ramification formula.
\end{proof}

\begin{proof}[Proof of Proposition \ref{prop-R-lc-trivial}]
	Replacing $X$, $Y$ and $f$, we may assume it is resolved. Let $\mathcal{P} \subset \CDiv_\R(X)$ be the polytope defined by $B_i$'s. For any prime divisor $P$ on $Y$, we set the function $b_P$ on $\mathcal{P}$: 
	\begin{align*}
	b_P(C) = \max \{ t\in \R| \text{$(X, C + tf^*P)$ is sub-lc over
		the generic point of $P$} \}.
	\end{align*}
	We note that the $b_P$ is a convex piecewisely affine function and gives a rational polyhedral decomposition of $\mathcal{P}$. Also note that $b_P$ is not identically one on $\mathcal{P}$ when $P \nsubseteq \Delta_Y$. Therefore, there exists a rational sub-polytope $\mathcal{Q}$ containing $B$ such that $b_P$ is affine on $\mathcal{Q}$, for any prime divisor $P$. In particular, replacing $B_i$'s and $\alpha_i$'s, we have $B_Y=\sum_i \alpha_i B_{Y,i}$ and $M_Y=\sum_i \alpha_i M_{Y,i}$, where $B_Y,B_{Y,i}$ are discriminant divisors and $M_Y,M_{Y,i}$ are moduli divisors of $f:(X,B) \to Y,f:(X,B_i)\to Y$ respectively.
	
	By Lemma \ref{lem-resolved-model} and \cite[Section 5]{ambro1}, all $\mathbf{M}_i$'s descend to $Y$ which concludes the proposition.
\end{proof}

\begin{rem}
	The argument above shows that the base variety $Y$ of a resolved model is an \emph{Ambro model}, that is, $\mathbf{M}$ descends to $Y$.
\end{rem}

By Lemma \ref{lem-Q-lc-trivial-fib} and Proposition \ref{prop-R-lc-trivial}, an $\R$-lc-trivial fibration is a convex combination of $\Q$-lc-trivial fibrations, and in the same way, its discriminant (resp. moduli) b-divisor is a convex combination of $\Q$-b-divisors. We immediately obtain the following theorem.
\begin{thm}[cf.\text{\cite[Theorem 2.5]{ambro1}}]\label{thm--lc-trivial}
	With notation in Definition \ref{defn-Q-lc-trivial-fib}, letting $\mathbf{B}$ be the discriminant b-divisor, we have $\mathbf{B}=\sum_i \alpha_i \mathbf{B}_{Y,i}$ is a convex combination of $\Q$-b-divisors $\mathbf{B}_i$ such that $\mathbf{K}+\mathbf{B}_i$ is $\Q$-b-Cartier. In particular, $\mathbf{K}+\mathbf{B}$ is $\R$-b-Cartier. In the same way, the moduli b-divisor $\mathbf{M}=\sum_i \alpha_i \mathbf{M}_{Y,i}$ is a convex combination of $\Q$-b-Cartier and b-nef $\Q$-divisors. In particular, $(Y,B_Y+M_Y)$ with data $\mathbf{M}$ is an NQC g-sub-pair.
\end{thm}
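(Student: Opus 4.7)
The theorem is essentially a packaging of the two preceding results, Lemma \ref{lem-Q-lc-trivial-fib} and Proposition \ref{prop-R-lc-trivial}, together with Ambro's theorem for the rational case. The plan is to reduce from $\R$-coefficients to $\Q$-coefficients via a convex decomposition, apply the known result for $\Q$-lc-trivial fibrations to each piece, and then reassemble.

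First, I would apply Lemma \ref{lem-Q-lc-trivial-fib} to the given $\R$-lc-trivial fibration $f:(X,B) \to Y$ to obtain a convex combination $B = \sum_i \alpha_i B_i$ with $\alpha_i > 0$, $\sum_i \alpha_i = 1$, and each $(X,B_i) \to Y$ a $\Q$-lc-trivial fibration. Next, invoking Proposition \ref{prop-R-lc-trivial} (after possibly refining the decomposition, which the proposition allows), I may additionally assume that the discriminant and moduli $\R$-b-divisors decompose compatibly, namely $\mathbf{B} = \sum_i \alpha_i \mathbf{B}_i$ and $\mathbf{M} = \sum_i \alpha_i \mathbf{M}_i$, where $\mathbf{B}_i$ and $\mathbf{M}_i$ denote the discriminant and moduli b-divisors of the $\Q$-lc-trivial fibration $(X,B_i) \to Y$.

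For each $i$, the classical theorem of Ambro \cite[Theorem 2.5]{ambro1} (cf.\ \cite[Theorem 3.6]{fujino-gongyo2}) applied to the $\Q$-lc-trivial fibration $(X,B_i)\to Y$ gives that $\mathbf{M}_i$ is $\Q$-b-Cartier and b-nef. Since $\mathbf{K}+\mathbf{B}_i+\mathbf{M}_i$ descends to a $\Q$-Cartier divisor on $Y$ (it is the pull-back of the $\Q$-Cartier divisor on $Y$ coming from $K_X+B_i$), we conclude that $\mathbf{K}+\mathbf{B}_i$ is also $\Q$-b-Cartier. Summing over $i$ with the weights $\alpha_i$ yields that $\mathbf{K}+\mathbf{B}$ and $\mathbf{M}$ are $\R$-b-Cartier, with $\mathbf{M}$ expressed as a non-negative $\R$-linear combination of $\Q$-b-Cartier and b-nef $\Q$-b-divisors. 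The NQC property of $(Y,B_Y+M_Y)$ is then immediate from the definition, since $\mathbf{M}$ is by construction a non-negative $\R$-linear combination of nef $\Q$-Cartier b-divisors.

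Since the serious content has already been extracted into Lemma \ref{lem-Q-lc-trivial-fib} and Proposition \ref{prop-R-lc-trivial}, there is no remaining obstacle at this stage: the only thing to be careful about is that the decomposition used to express $\mathbf{B}$ and $\mathbf{M}$ as convex combinations must be the \emph{same} decomposition of $B$ as in Lemma \ref{lem-Q-lc-trivial-fib}, which is precisely what Proposition \ref{prop-R-lc-trivial} guarantees after refining the polytope $\mathcal{P}$ to a sub-polytope on which each function $b_P$ is affine.
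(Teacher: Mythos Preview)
Your proposal is correct and matches the paper's approach exactly: the paper states the theorem as an immediate consequence of Lemma \ref{lem-Q-lc-trivial-fib} and Proposition \ref{prop-R-lc-trivial} together with Ambro's rational-coefficient result, and your write-up simply spells this out. The only point worth tightening is the claim that $\mathbf{K}+\mathbf{B}_i+\mathbf{M}_i$ descends to a $\Q$-Cartier divisor on $Y$; this uses the observation (noted just after Definition \ref{defn-Q-lc-trivial-fib}) that for a $\Q$-lc-trivial fibration one may take $D_i$ to be $\Q$-Cartier, so that $\mathbf{K}+\mathbf{B}_i = D_i - \mathbf{M}_i$ is $\Q$-b-Cartier.
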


One can immediately generalise the results from \cite{ambro1}\cite{fujino-gongyo2} as below.
\begin{cor}[\text{cf.\cite[Theorem 3.3]{ambro1}\cite[Theorem 3.10]{fujino-gongyo2}}]\label{cor--klt-trivial}
	Let $f:(X,B) \to Y$ be an $\R$-lc-trivial fibration. Suppose that $Y$ is complete, the geometric generic fiber $X_{\overline{\eta}} = X \times_Y \Spec(\overline{k(Y )})$ is a projective variety and $(X_{\overline{\eta}},B_{\overline{\eta}})$ is klt, where $B_{\overline{\eta}}=B|_{X_{\overline{\eta}}}$. Then, the moduli b-divisor $\mathbf{M}$ is a convex combination of $\Q$-b-Cartier and b-nef and abundant $\Q$-divisors. In particular, $\mathbf{M}$ is $\R$-b-Cartier and b-nef and abundant.
\end{cor}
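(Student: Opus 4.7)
The strategy mirrors the proof of Theorem \ref{thm--lc-trivial}: I would reduce the $\mathbb{R}$-coefficient statement to a convex combination of known $\mathbb{Q}$-coefficient results, namely \cite[Theorem 3.3]{ambro1} and \cite[Theorem 3.10]{fujino-gongyo2}, which already handle the $\mathbb{Q}$-lc-trivial case under precisely these hypotheses (complete base, projective geometric generic fibre, klt fibre pair).

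First I would produce a convex decomposition $B=\sum_i \alpha_i B_i$ into $\mathbb{Q}$-divisors $B_i$ such that each $f:(X,B_i)\to Y$ is a $\mathbb{Q}$-lc-trivial fibration and $\mathbf{M}=\sum_i \alpha_i \mathbf{M}_{Y,i}$, together with the additional property that each geometric generic fibre pair $(X_{\overline{\eta}},B_{i,\overline{\eta}})$ is klt. This refines the decomposition already produced in Lemma \ref{lem-Q-lc-trivial-fib} and Proposition \ref{prop-R-lc-trivial}: the condition that $(X_{\overline{\eta}},B_{\overline{\eta}})$ be klt depends only on the coefficients of the horizontal components of the boundary and is an open condition on those coefficients, so the rational sub-polytope $\mathcal{Q}$ constructed in the proofs of Lemma \ref{lem-Q-lc-trivial-fib} and Proposition \ref{prop-R-lc-trivial} can be shrunk to a smaller rational polytope around $B$ on which the $\mathbb{Q}$-lc-trivial conditions, the moduli-affine-on-the-polytope condition, and the klt geometric-generic-fibre condition hold simultaneously for every vertex $B_i$.

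For each $i$, the fibration $(X,B_i)\to Y$ now satisfies every hypothesis of \cite[Theorem 3.3]{ambro1} (equivalently \cite[Theorem 3.10]{fujino-gongyo2}), so $\mathbf{M}_{Y,i}$ is $\mathbb{Q}$-b-Cartier and b-nef and abundant. This gives the desired convex combination. The ``in particular'' clause follows formally: $\mathbb{R}$-b-Cartier is immediate from the definition of a convex combination of $\mathbb{Q}$-b-Cartier b-divisors; b-nefness is preserved under non-negative $\mathbb{R}$-linear combinations; and b-abundance of the sum follows by iterating the lemma stated just before Lemma \ref{lem-nef-abund-2}, that the sum of two nef and abundant divisors is nef and abundant.

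The only delicate point is the first step: arranging the rational decomposition to simultaneously preserve the klt condition on the geometric generic fibre, which requires refining the polytope arguments of Lemma \ref{lem-Q-lc-trivial-fib} and Proposition \ref{prop-R-lc-trivial} without losing the rank-one cohomological condition or the affineness of the discriminant and moduli divisors over the polytope. Once this compatibility is verified, the rest of the proof is a routine assembly of the already established Theorem \ref{thm--lc-trivial} together with the cited $\mathbb{Q}$-theorems.
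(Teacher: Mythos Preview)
Your proposal is correct and follows essentially the same route as the paper: invoke Proposition \ref{prop-R-lc-trivial} (together with Lemma \ref{lem-Q-lc-trivial-fib}) to decompose $\mathbf{M}$ as a convex combination of moduli b-divisors of $\Q$-lc-trivial fibrations whose geometric generic fibre is klt, then apply the cited $\Q$-coefficient theorems and conclude with Lemma \ref{lem-nef-abund-2}. One minor simplification: the ``delicate point'' you flag is already absorbed by the parenthetical klt-trivial case of Lemma \ref{lem-Q-lc-trivial-fib}, since under the hypotheses the fibration is $\R$-klt-trivial, so no extra refinement of the polytope is needed beyond what those two results already provide.
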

\begin{proof}
	This is immediate by Proposition \ref{prop-R-lc-trivial}. The last assertion follows from Lemma \ref{lem-nef-abund-2}.
\end{proof}

As a consequence of Theorem \ref{thm--lc-trivial}, one can easily generalise a canonical bundle formula of Fujino-Mori type (cf. \cite{fujino-mori}) and \cite[Corollary 1.1.2]{bchm}. For details of the proof, we refer to \cite{hu2}.

\begin{thm}[\text{\cite[Theorem 1.2]{hu2}}]\label{thm-canonical-bundle-formula}
	Let $f:X \to Y$ be a contraction of normal varieties and $(X,B)$ be an lc pair such that $\kappa_{\iota}(X/Y,K_X+B)=0$. Then, there exists a commutative diagram
	$$
	\xymatrix{
		(X',B') \ar[d]_{f'} \ar[r]^{\pi}   &  (X,B)\ar[d]^{f}\  &\\
		Y' \ar[r]^{\phi} &    Y } 
	$$  
	which consists of birational models $\pi:X' \to X$ and $\phi:Y' \to Y$, such that:
	\begin{enumerate}
		\item $K_{X'}+B'=\pi^*(K_X+B)+E$ where $E$ is exceptional$/X$ and $B',E\ge 0$ have no common components.
		
		\item $K_{X'}+B' \sim_{\mathbb{R}} f'^*(K_{Y'}+B_{Y'}+M_{Y'})+R$ where $R \ge 0$ and $(Y',B_{Y'}+M_{Y'})$ is a g-lc pair with data $\mathbf{M}$.
		
		\item $\kappa(X'/Y',R^h)=0$ and $R^v$ is very exceptional$/Y'$, where $R^h$ (resp. $R^v$) denotes the horizontal (resp. vertical) part over $Y'$.
	\end{enumerate}
	Moreover, if every lc centre of $(X,B)$ is horizontal$/Y$, then $(Y',B_{Y'}+M_{Y'})$ is g-klt.
\end{thm}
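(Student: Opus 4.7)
The strategy is to extract the ``stable'' effective part of $K_X+B$ relative to $Y$, thereby reducing to an $\R$-lc-trivial fibration, and then apply Theorem \ref{thm--lc-trivial}. First, I would pass to a common log resolution: choose a log resolution $\pi:X' \to X$ of $(X,B)$ and a resolution $\phi:Y' \to Y$ so that the induced $f':X' \to Y'$ is a morphism. Splitting the discrepancy formula $K_{X'}+\pi^{-1}_*B = \pi^*(K_X+B)+\sum a_i E_i$ along the sign of $a_i$, I set $B'=\pi^{-1}_*B + \sum_{a_i<0}(-a_i)E_i$ and $E=\sum_{a_i>0}a_i E_i$. Since $(X,B)$ is lc we have $a_i\ge -1$, so the coefficients of $B'$ lie in $[0,1]$, $E\ge 0$ is $\pi$-exceptional, and $B',E$ share no components. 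This gives item (1).

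Next, I would use the hypothesis $\kappa_\iota(X/Y,K_X+B)=0$. By Definition \ref{defn--inv-iitaka-dim}, on a very general fibre $F$ there is an effective $\R$-divisor representing the class of $(K_X+B)|_F$; this lifts to an effective $\R$-divisor $R_0\ge 0$ on $X'$ with $K_{X'}+B'\sim_\R R_0/Y'$. Decompose $R_0=R_0^h+R_0^v$ into horizontal and vertical parts over $Y'$. By \cite[V.1.12. Corollary]{nakayama} we have $R_0^h|_F\ge N_\sigma((K_X+B)|_F)$, and selecting $R_0$ with horizontal part minimal within the $\R$-linear equivalence class over $Y'$ gives $R_0^h|_F=N_\sigma((K_X+B)|_F)$, so $\kappa(X'/Y',R_0^h)=0$. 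To make $R_0^v$ very exceptional, I would subtract appropriate multiples of $f'^*P$ for each prime $P\subset Y'$ to cancel covered vertical components, then run an MMP over $Y'$ with scaling of an ample divisor, invoking Lemmas \ref{lem-exc-2+} and \ref{lem-exc-3} to contract the remaining non-very-exceptional excess. The resulting $R=R^h+R^v$ satisfies item (3).

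The final step is to apply Theorem \ref{thm--lc-trivial}. Consider the sub-pair $(X',B'-R^h)$; by construction $(K_{X'}+B'-R^h)|_F\sim_\R 0$, and the rank-$1$ condition $\mathrm{rank}\,f'_*\mathcal{O}_{X'}(\lceil \mathbf{A}^*(X',B'-R^h)\rceil)=1$ holds by the minimality of $R^h$ on fibres (after a further log resolution if necessary). Theorem \ref{thm--lc-trivial} then yields $K_{X'}+B'-R^h\sim_\R f'^*(K_{Y'}+B_{Y'}+M_{Y'})$ with $(Y',B_{Y'}+M_{Y'})$ a g-lc pair with data $\mathbf{M}$, and adding $R^h$ back gives item (2). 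For the ``moreover'' clause, if every lc centre of $(X,B)$ is horizontal over $Y$, then over the generic point of any prime divisor on $Y'$ the sub-pair $(X',B'-R^h)$ is klt, so the pre-discriminant coefficients are all $<1$ and $(Y',B_{Y'}+M_{Y'})$ is g-klt.

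The principal obstacle is the second step: simultaneously arranging $R^h|_F=N_\sigma((K_X+B)|_F)$ and $R^v$ very exceptional over $Y'$, while keeping control of the log smooth lc structure on $(X',B')$. This requires careful coordination of the relative MMP for g-pairs on very exceptional divisors (Section \ref{subsec-LMMP-scaling}) with the control of asymptotic vanishing orders in families, and is essentially the technical core of the Fujino-Mori canonical bundle formula in the $\R$-coefficient setting.
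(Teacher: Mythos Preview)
The paper does not prove this theorem; it is imported from \cite[Theorem~1.2]{hu2} with the remark ``For details of the proof, we refer to \cite{hu2}.'' So there is no in-paper argument to compare against. Your overall strategy---peel off the stable effective part of $K_X+B$ over $Y$ to reduce to an lc-trivial fibration, then invoke Theorem~\ref{thm--lc-trivial}---is the correct Fujino--Mori approach and is indeed how \cite{hu2} proceeds.

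There is, however, a genuine confusion in your second step. You propose to make $R^v$ very exceptional by first subtracting multiples of $f'^*P$ and then ``running an MMP over $Y'$ with scaling \dots\ invoking Lemmas~\ref{lem-exc-2+} and~\ref{lem-exc-3}.'' This is backwards. Those lemmas take a divisor already known to be very exceptional and \emph{contract} it via MMP; they do not manufacture very exceptionality. Moreover, running an MMP on $X'$ will typically destroy the morphism $\pi:X'\to X$ that item~(1) demands. The correct mechanism is purely birational on the base: after fixing an effective $R_0$ with $K_{X'}+B'\sim_\R R_0/Y'$, one replaces $(X',Y')$ by higher models so that $f'$ is equidimensional (Theorem~\ref{thm-equidimension}), or at least so that every vertical component of $R_0$ either dominates a prime divisor of $Y'$ or is $f'$-exceptional. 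Then for each prime divisor $P\subset Y'$ one subtracts the maximal multiple $c_P f'^*P$ dominated by $R_0^v$; what remains over $P$ misses at least one component of $f'^*P$ and is therefore very exceptional by definition. No MMP on $X'$ is involved.

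A smaller point: the rank-$1$ condition for $f'_*\mathcal{O}_{X'}(\lceil \mathbf{A}^*(X',B'-R^h)\rceil)$ does not follow from ``minimality of $R^h$'' alone. One uses that $\kappa_\iota=0$ forces the effective representative on $F$ to be unique, so $R^h|_F$ is the entire fixed part; combined with $B'|_F\ge 0$ (from item~(1)) this gives $\lceil -B'|_F + R^h|_F\rceil \le R^h|_F$ componentwise away from $\lfloor B'\rfloor$, which is what pins down the rank. This deserves an explicit sentence rather than a parenthetical.
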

\begin{thm}[\text{\cite[Theorem 1.1]{hu2}}]\label{thm-canonical-model}
	Let $(X/Z,B)$ be a klt pair with the Kodaira dimension $\kappa_\iota(X/Z,K_X+B) \ge 0$. Then, $(X/Z,B)$ has the canonical model.
\end{thm}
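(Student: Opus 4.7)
The plan is to reduce Theorem \ref{thm-canonical-model} to the existence of a canonical model for an NQC g-klt pair with big generalised log canonical divisor over the base, via the canonical bundle formula of Theorem \ref{thm-canonical-bundle-formula}.

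First I would take the relative Iitaka fibration of $K_X+B$ over $Z$: after replacing $(X,B)$ birationally by a log resolution and performing an appropriate modification, one obtains a contraction $f: X \to Y$ over $Z$ such that $(K_X+B)|_F$ has invariant Iitaka dimension zero on a very general fibre $F$ of $f$, while the relative dimension of $Y$ over $Z$ equals $\kappa_\iota(X/Z,K_X+B)$. By construction, the image of $K_X+B$ on $Y$ is big over $Z$.

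Next I would apply Theorem \ref{thm-canonical-bundle-formula} to $f$. Since $(X,B)$ is klt it has no lc centres, so the horizontality condition is vacuously satisfied and the theorem yields a g-klt pair $(Y'/Z,\,B_{Y'}+M_{Y'})$ with data $\mathbf{M}$ such that
$$
K_{X'}+B' \sim_{\mathbb{R}} f'^{*}(K_{Y'}+B_{Y'}+M_{Y'})+R,
$$
where $\kappa(X'/Y',R^{h})=0$ and $R^{v}$ is very exceptional$/Y'$. By Theorem \ref{thm--lc-trivial}, $\mathbf{M}$ is an $\mathbb{R}_{\geq 0}$-combination of b-nef $\mathbb{Q}$-b-Cartier b-divisors, so $(Y'/Z,\,B_{Y'}+M_{Y'})$ is NQC; and by the Iitaka fibration construction $K_{Y'}+B_{Y'}+M_{Y'}$ is big over $Z$.

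The heart of the argument is then to run an MMP$/Z$ on $K_{Y'}+B_{Y'}+M_{Y'}$ and produce a good minimal model. Since the g-pair is NQC g-klt with big generalised log canonical divisor over $Z$, the generalised BCHM for NQC g-klt pairs supplies a minimal model whose ample model $W\to Z$ is the canonical model of $(Y'/Z,\,B_{Y'}+M_{Y'})$. To transfer this back to $(X/Z,B)$, one uses that $R^{v}$ is contracted by the relative MMP on $K_{X'}+B'$ via Lemma \ref{lem-exc-2+} (its horizontal and vertical behaviour is exactly as required there), while the condition $\kappa(X'/Y',R^{h})=0$ makes $R^{h}$ the unique effective representative in its relative class and hence absorbed into the push-forward of the log canonical ring. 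Consequently the relative log canonical rings of $(X/Z,B)$ and $(Y'/Z,\,B_{Y'}+M_{Y'})$ coincide after sufficient truncation, so $W$ is the canonical model of $(X/Z,B)$ as well.

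The main obstacle is the invocation of generalised BCHM for NQC g-klt pairs with big g-log canonical divisor over $Z$; this is the genuine input beyond the canonical bundle formula. A secondary technical point is setting up the relative Iitaka fibration and the accompanying birational modifications so that $K_{Y'}+B_{Y'}+M_{Y'}$ is honestly big$/Z$ and $R$ satisfies the very-exceptionality and fibre-rigidity conditions needed to identify the two canonical rings.
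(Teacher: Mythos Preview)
The paper does not give a proof of this theorem; it merely states it as an application of Theorem \ref{thm--lc-trivial} and Theorem \ref{thm-canonical-bundle-formula} and refers to \cite{hu2} for the details. Your outline is exactly the expected argument and is consistent with what the paper indicates: pass to the relative Iitaka fibration, apply the canonical bundle formula to obtain an NQC g-klt pair $(Y'/Z,B_{Y'}+M_{Y'})$ with $K_{Y'}+B_{Y'}+M_{Y'}$ big$/Z$, produce its canonical model, and identify it with the canonical model of $(X/Z,B)$.

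Two remarks on the technical steps. First, the ``generalised BCHM'' input you flag is lighter than it may appear: once $(Y',B_{Y'}+M_{Y'})$ is g-klt with $K_{Y'}+B_{Y'}+M_{Y'}$ big$/Z$, you can pass to a $\Q$-factorial model on which $\mathbf{M}$ descends and is nef, absorb a small multiple of an ample divisor coming from bigness into the nef part so that the moduli part becomes big and hence $\R$-linearly equivalent to an effective divisor keeping kltness, and then invoke \cite{bchm} for an honest klt pair; the NQC property from Theorem \ref{thm--lc-trivial} is what makes this perturbation and the $\R$-to-$\Q$ reduction go through cleanly. Second, the identification of canonical rings does not literally need Lemma \ref{lem-exc-2+}: the conditions $\kappa(X'/Y',R^{h})=0$ and $R^{v}$ very exceptional$/Y'$, together with $E$ being exceptional$/X$ in Theorem \ref{thm-canonical-bundle-formula}(1), already force the natural map of graded $\mathcal{O}_Z$-algebras between the log canonical ring of $(X/Z,B)$ and that of $(Y'/Z,B_{Y'}+M_{Y'})$ to be an isomorphism after truncation, so finite generation on the base passes upstairs.
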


Because an $\R$-lc-trivial fibration possesses almost the same properties, from now on we will omit $\R$ and simply say it an \emph{lc-trivial fibration}. The category of lc-trivial (resp. klt-trivial) fibrations is closed under base change (see \cite[Proposition 3.1]{ambro2}\cite[Section 3.3]{fujino-gongyo2}). More precisely, given an lc-trivial fibration $f : (X, B) \to Y$ and a surjective proper morphism $\gamma: Y' \to Y$, the \emph{induced lc-trivial fibration} $f':(X',B') \to Y'$
is given by the morphism induced by base change and $K_{X'}+B'=\rho^*(K_X+B)$ where $\rho: X' \to X$ is the induced map.

\begin{lem}[\text{cf.\cite[Proposition 3.1]{ambro2}}]\label{lem-lc-fib-base-change}
	Let $f : (X, B) \to Y$ be an lc-trivial fibration. Let $\gamma: Y' \to Y$ be a surjective proper morphism from a normal variety $Y'$, and let $f': (X', B') \to Y'$
	be an lc-trivial fibration induced by base change
	$$
	\xymatrix{
		(X',B') \ar[d]_{f'} \ar[r]^{\rho}   &  (X,B)\ar[d]^{f}  &\\
		Y' \ar[r]^{\gamma} &  Y } 
	$$  
	Let $\mathbf{M}$ and $\mathbf{M}'$
	be the corresponding moduli b-divisors. Then,	
	$$	
	\gamma^*\mathbf{M} =\mathbf{M}' .
	$$
\end{lem}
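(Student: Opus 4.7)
The overall strategy is a reduction to the $\Q$-coefficient case, where the result is known from Ambro's argument using variations of (mixed) Hodge structure (the statement being \cite[Proposition 3.1]{ambro2}). Both $\gamma^{\ast}\mathbf{M}$ and $\mathbf{M}'$ are $\R$-b-Cartier b-divisors by Theorem \ref{thm--lc-trivial}, so the identity $\gamma^{\ast}\mathbf{M}=\mathbf{M}'$ is checked on a sufficiently high birational model of $Y'$ where both descend. Hence we are free to pass to any convenient birational models, in particular a resolved model of $f:(X,B)\to Y$ in the sense of the definition preceding Lemma \ref{lem-resolved-model}, and then base change it to obtain a resolved model of $f':(X',B')\to Y'$.

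First I would invoke Lemma \ref{lem-Q-lc-trivial-fib} and Proposition \ref{prop-R-lc-trivial} to write $B=\sum_i \alpha_i B_i$ as a convex combination of $\Q$-divisors for which each $f:(X,B_i)\to Y$ is a $\Q$-lc-trivial fibration, and such that the discriminant and moduli b-divisors satisfy $\mathbf{B}=\sum_i \alpha_i \mathbf{B}_i$ and $\mathbf{M}=\sum_i \alpha_i \mathbf{M}_i$ on the resolved model. For each index $i$, define $B'_i$ on $X'$ by $K_{X'}+B'_i=\rho^{\ast}(K_X+B_i)$. Since $\sum_i \alpha_i=1$, a direct calculation gives $\sum_i \alpha_i B'_i = \sum_i \alpha_i \rho^{\ast}B_i - (K_{X'}-\rho^{\ast}K_X) = B'$, so the decomposition is compatible with base change. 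One then checks that each $(X',B'_i)\to Y'$ remains a $\Q$-lc-trivial fibration: sub-lc over the generic point of $Y'$ is automatic from $K_{X'}+B'_i=\rho^{\ast}(K_X+B_i)$, the linear equivalence condition is preserved by pulling back $\varphi_i^{1/r_i}$, and the rank-one condition on $f'_{\ast}\mathcal{O}_{X'}(\lceil \mathbf{A}^{\ast}(X',B'_i)\rceil)$ follows from flat base change since $\gamma$ is generically finite onto its image component-wise and $(X,B_i)$ is sub-klt at the generic point.

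The main obstacle is verifying the exact match $\gamma^{\ast}\mathbf{M}_i = \mathbf{M}'_i$ for the $\Q$-pieces. For this, I would apply \cite[Proposition 3.1]{ambro2} (equivalently the base change statement inside \cite[Section 3]{fujino-gongyo2}): after passing to a common resolved model dominating $Y$ and $Y'$, the moduli part is constructed from the period mapping of an admissible variation of mixed Hodge structure on a log smooth open locus, and this construction commutes with pullback by $\gamma$; meanwhile, discrepancy jumps along the ramification of $\gamma$ are entirely absorbed into the discriminant $\mathbf{B}'_i$ via the ramification formula $K_{Y'}=\gamma^{\ast}K_Y+R_{Y'}$, leaving the moduli piece intact.

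Combining the above, on a sufficiently high model where all the b-divisors in play descend, we conclude
\[
\gamma^{\ast}\mathbf{M} \;=\; \gamma^{\ast}\Bigl(\sum_i \alpha_i \mathbf{M}_i\Bigr)\;=\;\sum_i \alpha_i \gamma^{\ast}\mathbf{M}_i \;=\; \sum_i \alpha_i \mathbf{M}'_i \;=\; \mathbf{M}',
\]
where the last equality uses Proposition \ref{prop-R-lc-trivial} applied to $f':(X',B')\to Y'$ together with the decomposition $B'=\sum_i \alpha_i B'_i$ established above. This completes the plan.
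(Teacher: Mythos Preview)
The paper gives no proof of this lemma; it simply cites \cite[Proposition 3.1]{ambro2}. Ambro's argument there is purely discrepancy-theoretic: one shows that the discriminant b-divisor satisfies $\mathbf{K}_{Y'}+\mathbf{B}'=\gamma^{*}(\mathbf{K}_Y+\mathbf{B})$ directly from the lct definition and the ramification formula, and since $\mathbf{K}_{Y'}+\mathbf{B}'+\mathbf{M}'=\gamma^{*}(\mathbf{K}_Y+\mathbf{B}+\mathbf{M})$ holds tautologically once one fixes $\varphi'=\rho^{*}\varphi$, subtraction yields $\mathbf{M}'=\gamma^{*}\mathbf{M}$. None of this uses that $B$ has rational coefficients, so the $\R$-case follows verbatim; the only additional input is that $\mathbf{M}$ is $\R$-b-Cartier (Theorem~\ref{thm--lc-trivial}) so that $\gamma^{*}\mathbf{M}$ is defined.

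Your route through the $\Q$-decomposition is more circuitous and has a genuine gap at the final step. Proposition~\ref{prop-R-lc-trivial} is an \emph{existence} statement: it says one can \emph{choose} a convex decomposition $B=\sum_i\alpha_iB_i$ so that $\mathbf{M}=\sum_i\alpha_i\mathbf{M}_i$ (its proof works by shrinking to a sub-polytope on which every $b_P$ is affine). It does \emph{not} assert that an arbitrary decomposition enjoys this additivity. Hence you cannot simply invoke it for the pulled-back decomposition $B'=\sum_i\alpha_iB'_i$ to conclude $\mathbf{M}'=\sum_i\alpha_i\mathbf{M}'_i$. To close the gap you would have to check that the functions $b_{P'}$ on $Y'$ remain affine on the image polytope after base change---which ultimately comes down to the same lct/ramification identity $\mathbf{K}_{Y'}+\mathbf{B}'=\gamma^{*}(\mathbf{K}_Y+\mathbf{B})$ that gives the direct proof. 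So the detour through $\Q$-coefficients buys nothing here; it is cleaner to argue as Ambro does.
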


\subsection{Lc-trivial morphisms.}\label{subsec-lc-trivial-morphism}
We consider a class of morphisms as follows. 
\begin{defn}\label{defn-lc-trivial-morphism}
	An \emph{lc-trivial (resp. klt-trivial) morphism} $f : (X, B) \to
	Y$ consists of a proper surjective morphism between normal varieties and a sub-pair $(X, B)$ satisfying the
	following properties:
	\begin{enumerate}
		\item $K_X + B \sim_\K 0/Y$,
		
		\item If we denote by $X \overset{\widetilde{f}}{\to} \widetilde{Y} \overset{\gamma}{\to} Y$ the Stein factorisation, then  $\widetilde{f} : (X, B) \to
		\widetilde{Y}$ is an lc-trivial (resp. klt-trivial) fibration (see Definition \ref{defn-Q-lc-trivial-fib}).
	\end{enumerate}
\end{defn}

To study these objects, we want to establish a canonical bundle formula of Kodaira type. Unfortunately, the classical definition of the discriminant divisor usually disables the positivity of the moduli b-divisor (see \cite[Remark 3.2]{ambro0}). We therefore introduce the following definition of the discriminant divisor of a proper surjective generically finite morphism. The construction is from \cite[Theorem 4.5]{hanliu} and \cite[Lemma 1.1]{fg-bundle}.

\begin{defn}[\text{\cite[Theorem 4.5]{hanliu}\cite[Lemma 1.1]{fg-bundle}}]\label{defn-pushdown}
	Let $(\widetilde{X}/Z,\widetilde{B}+\widetilde{M})$ be a g-sub-pair, $\widetilde{\mathbf{B}}$ be its pre-boundary $\R$-b-divisor, $\widetilde{\mathbf{M}}$ be its moduli $\R$-b-Cartier b-divisor and $f : \widetilde{X} \to X$ be a proper surjective generically finite morphism over $Z$ between normal varieties such that
	$$
	K_{\widetilde{X}} + \widetilde{B} + \widetilde{M} \sim_{\mathbb{K}} 0/X.
	$$
	We define the g-sub-pair $(X, B + M)$ by specifying the traces on the
	higher models of $X$ as follows: for any birational morphism $\phi: X' \to X$,
	consider the following commutative diagram
	$$
	\xymatrix{
		\widetilde{X}' \ar[d]_{f'} \ar[r]^{\pi}   &  \widetilde{X}\ar[d]^{f}  &\\
		 X'\ar[r]^{\phi} &   X } 
	$$
	where $f'$ is induced by base change. Define
	$$
	\mathbf{B}_{X'} =\frac{1}{\deg f}f'_*(\widetilde{R} + \widetilde{B}'), \text{~and~} \mathbf{M}_{X'} =\frac{1}{\deg f}f'_*\widetilde{M}',
	$$
	as the proper push-forwards (\cite[1.4]{fulton}) where $\widetilde{B}'=\widetilde{\mathbf{B}}_{\widetilde{X}'}$, $\widetilde{M}'=\widetilde{\mathbf{M}}_{\widetilde{X}'}$ and $\widetilde{R}$ is the closure of ramification divisor of $f'$ restricted over the Gorenstein locus of $X'$. 
\end{defn}

\begin{rem}\label{rem-gen-pullback}
	In the above notation, for every pair of birational models $\widetilde{X}' \to \widetilde{X}$ and $X' \to X$ such that the induced map $f':\widetilde{X}' \dashrightarrow X'$ is a morphism, by \cite[Proof of Lemma 1.1]{fg-bundle} one may easily verify that 
	$$
	K_{\widetilde{X}'} + \widetilde{B}' + \widetilde{M}' \sim_{\mathbb{K}}f'^*( K_{X'}+B' + M' )
	$$
	where $\widetilde{B}',\widetilde{M}'$ are the traces of $\widetilde{\mathbf{B}},\widetilde{\mathbf{M}}$ on $\widetilde{X}'$ and $B',M'$ are the traces of $\mathbf{B},\mathbf{M}$ on $X'$. Indeed, if we write $K_{\widetilde{X}'} + \widetilde{B}' + \widetilde{M}'\sim_{\mathbb{K}} f'^*D'$ for some $\R$-Cartier divisor $D'$, since $f'$ is finite flat over an open subset of $X'$ outside a locus of codimension $\ge 2$, then by flattening lemma and \cite[\href{https://stacks.math.columbia.edu/tag/02RH}{Lemma 02RH}]{stacks-project} (see also \cite[Example 1.7.4]{fulton}) we see 
	$$f'_*(K_{\widetilde{X}} + \widetilde{B} + \widetilde{M}) = f'_*f'^*( K_{X'}+B'+M')=(\deg f) ( K_{X'}+B'+M')$$ which in turn implies that $D' \sim_\K K_{X'}+B'+M'$. So the formula follows. Note that the same argument also works for the relation ``$\sim$" and ``$=$" instead of ``$\sim_\K$".
\end{rem}
	
In order to show that $(X,B+M)$ is actually a g-sub-pair, it remains to verify that the b-divisors $\mathbf{K}+\mathbf{B}$ and $\mathbf{M}$ descends to some model and $\mathbf{M}$ is represented by a nef divisor. %The next part will be devoted to proving this, and we begin with the following lemma.
Quite recently, J. Han and W. Liu also obtained a similar result \cite[Theorem 4.5]{hanliu}.
\begin{thm}\label{thm-gen-pair}
	Let $(\widetilde{X}/Z, \widetilde{B} + \widetilde{M})$ be a g-sub-pair with data $\widetilde{\mathbf{M}}$ over a variety $Z$. Let $f : \widetilde{X} \to	X$ be a proper surjective generically finite morphism of normal varieties over $Z$ with $K_{\widetilde{X}} + \widetilde{B} + \widetilde{M} \sim_{\mathbb{K}} 0/X$. Then, there is a g-sub-pair $(X/Z, B + M)$ with data $\mathbf{M}$ such that
	$$
	K_{\widetilde{X}} + \widetilde{B} + \widetilde{M} \sim_{\mathbb{K}}f^*( K_X + B + M)
	$$
	Moreover, if $(\widetilde{X}/Z, \widetilde{B} + \widetilde{M})$ is g-lc (resp. g-klt, g-sub-lc, g-sub-klt), then so is $(X/Z, B + M)$; and if $\widetilde{\mathbf{M}}$ is b-nef and abundant$/Z$ (resp. semi-ample$/Z$), then so is $\mathbf{M}$.
\end{thm}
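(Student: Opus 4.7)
The plan is to verify each assertion in turn via direct manipulation of the pushforward construction of Definition \ref{defn-pushdown}. First I would check that the prescriptions $\mathbf{B}_{X'} = \frac{1}{\deg f}\, f'_*(\widetilde{R}' + \widetilde{\mathbf{B}}_{\widetilde{X}'})$ and $\mathbf{M}_{X'} = \frac{1}{\deg f}\, f'_*\widetilde{\mathbf{M}}_{\widetilde{X}'}$ are compatible under birational contractions of the model $X'$, so that they assemble into honest b-divisors on $X$; this reduces, by functoriality of proper pushforward and Lemma \ref{lem-descend}, to a routine verification. Given any $\R$-Cartier $D$ on $X$ with $K_{\widetilde{X}} + \widetilde{B} + \widetilde{M} \sim_{\K} f^* D$ (which exists by the hypothesis $K_{\widetilde{X}} + \widetilde{B} + \widetilde{M} \sim_{\K} 0/X$), I apply $f_*$ to both sides and use the projection formula $f_* f^* D = (\deg f)\, D$ together with the ramification formula $K_{\widetilde{X}} = f^* K_X + \widetilde{R}$ over the Gorenstein locus of $X$, obtaining $(\deg f)\, D \sim_{\K} f_*(K_{\widetilde{X}} + \widetilde{B} + \widetilde{M}) = (\deg f)(K_X + B + M)$. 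Hence $K_X + B + M$ is $\R$-Cartier and $K_{\widetilde{X}} + \widetilde{B} + \widetilde{M} \sim_{\K} f^*(K_X + B + M)$.

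To upgrade $(X/Z, B+M)$ to a genuine g-sub-pair, I next need $\mathbf{M}$ to descend to some model with nef$/Z$ trace. Fix a log resolution $\widetilde{X}_0 \to \widetilde{X}$ on which $\widetilde{\mathbf{M}}$ descends to a nef$/Z$ divisor $\widetilde{M}_0$, and choose a birational model $X_0 \to X$ such that $f_0\colon \widetilde{X}_0 \to X_0$ is a morphism. The pull-back formula from the first paragraph applied to $f_0$, combined with the ramification identity, forces the proper pushforward of the difference $\widetilde{M}_0 - f_0^* M_0$ to vanish by the very definition of $B_0$. A pushforward analysis over the Gorenstein locus of $X_0$ along the lines of Lemma \ref{lem-descend}, together with the fact that $\widetilde{\mathbf{M}}$ genuinely descends to $\widetilde{X}_0$, then rules out an exceptional or vertically balancing remainder and forces $\widetilde{M}_0 = f_0^* M_0$. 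In particular $M_0$ is $\R$-Cartier, and nefness of $M_0/Z$ follows from the projection formula $(\deg f_0)(M_0 \cdot C) = f_0^* M_0 \cdot f_0^* C \ge 0$ for every curve $C/Z$ on $X_0$.

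Finally, for the singularity and positivity statements, I would work on a sufficiently high common model where the divisorial identity $K_{\widetilde{X}_0} + \widetilde{B}_0 + \widetilde{M}_0 = f_0^*(K_{X_0} + B_0 + M_0)$ holds. For any prime divisor $\widetilde{D}$ over $\widetilde{X}$ dominating a prime divisor $D$ over $X$ with ramification index $e$, the ramification formula yields
\[
a(D, X, B + M) \;=\; \frac{1}{e}\, a(\widetilde{D}, \widetilde{X}, \widetilde{B} + \widetilde{M}),
\]
from which g-(sub-)lc and g-(sub-)klt pass down at once; effectivity of $B$ is automatic since $\widetilde{R} + \widetilde{B} \ge 0$ and proper pushforward preserves effectivity. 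The b-nef and abundant (resp.\ semi-ample) conclusion for $\mathbf{M}$ then follows directly from Lemma \ref{lem-nef-abund-3} applied to $f_0$, since $(\deg f_0)\, M_0 = f_{0*}\widetilde{M}_0$. The principal obstacle is the descent argument of the second paragraph, in which one must exclude the possibility that $\widetilde{M}_0$ differs from $f_0^* M_0$ by a nonzero divisor with vanishing pushforward; once this rigidity is established, everything else is a formal consequence of the ramification formula and the cited lemmas.
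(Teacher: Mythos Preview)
Your approach follows the right outline, but the ``principal obstacle'' you identify is in fact a genuine error: the claim $\widetilde{M}_0 = f_0^* M_0$ is simply false in general, even when $f_0$ is finite. For a concrete example, take $f\colon \PP^1\to \PP^1$, $z\mapsto z^2$, with $\widetilde{\mathbf{M}}$ represented by the point $\widetilde{M}=[1]$. Then $M=\tfrac{1}{2}f_*\widetilde{M}=\tfrac{1}{2}[1]$, while $f^*M=\tfrac{1}{2}([1]+[-1])\neq\widetilde{M}$. More generally, for a Galois cover with group $G$ one has $f^*M=\tfrac{1}{|G|}\sum_{\sigma\in G}\sigma^*\widetilde{M}$, which equals $\widetilde{M}$ only when $\widetilde{M}$ is $G$-invariant. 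So there is no ``rigidity'' argument to be found here; the difference $\widetilde{M}_0 - f_0^*M_0$ genuinely can be a nonzero divisor with vanishing pushforward. Since your nefness argument and your log discrepancy formula $a(D,X,B+M)=\tfrac{1}{e}\,a(\widetilde D,\widetilde X,\widetilde B+\widetilde M)$ both rest on this equality, the proof as written does not go through.

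The paper sidesteps this entirely. Rather than seeking any identity of the form $\widetilde{M}'=f'^*M'$, it first applies the flattening lemma to produce a model $X'$ with $X'$ smooth and the induced map $f'\colon\widetilde{X}'\to X'$ \emph{finite}. Smoothness of $X'$ makes $M'=\tfrac{1}{\deg f}f'_*\widetilde{M}'$ automatically $\R$-Cartier; Lemma~\ref{lem-descend} (which requires the bottom map to be finite) then gives descent of $\mathbf{M}$ to $X'$; and nefness of $M'$ follows from the standard fact that pushforward of a nef divisor along a finite surjection is nef when the target is $\R$-Cartier (e.g.\ via the Galois closure, where $g^*f'_*\widetilde{M}'$ becomes a sum of pullbacks of $\widetilde{M}'$). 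For the singularities, the paper does not use your discrepancy formula but instead computes the coefficient of $\mathbf{B}_{X'}$ at a prime $D$ directly from $\mathbf{B}_{X'}=\tfrac{1}{\deg f}f'_*(\widetilde{R}'+\widetilde{B}')$, using $\sum_{\widetilde D\mapsto D}[\kappa(\widetilde D):\kappa(D)]\,e_{\widetilde D}=\deg f$; this is exactly the content of \cite[Proof of Lemma~1.1]{fg-bundle} and needs no relation between $\widetilde{M}'$ and $f'^*M'$. The abundant and semi-ample statements then follow from Lemma~\ref{lem-nef-abund-3}, which again only requires $f'$ finite. In short: drop the attempt to prove $\widetilde{M}_0=f_0^*M_0$, insert the flattening step to force $f_0$ finite and $X_0$ smooth, and argue with pushforwards throughout.
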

\begin{proof}
	By Lemma \ref{lem-descend}, we infer that $\mathbf{M}$ descends to some model $X' \to X$. In fact, by flattening lemma (a special case of Theorem \ref{thm-equidimension}), we can assume the induced morphism $f':\widetilde{X}' \to X'$ is finite, $X'$ is smooth and $\widetilde{\mathbf{M}}$ descends to $\widetilde{X}'$. Hence, we deduce $M'=\mathbf{M}_{X'}$ is nef$/Z$ which completes the first part. 
	
	To show the second part, we simply notice that, by definition $B_{Y'}=\frac{1}{\deg f}(R'+B')$ and apply \cite[Proof of Lemma 1.1]{fg-bundle} to conclude the result on singularities. The last assertion follows from Lemma \ref{lem-nef-abund-3}.
\end{proof}

With the above construction we are ready to define the moduli b-divisors of lc-trivial morphisms.

\begin{defn}\label{defn-moduli}
	Let $f : (X, B) \to	Y$ be an lc-trivial morphism, $X \overset{\widetilde{f}}{\to} \widetilde{Y} \overset{\gamma}{\to} Y$ be the Stein factorisation, $\varphi$ be a $\K$-rational function, $D$ be an $\R$-Cartier divisor on $Y$ and
	$$
	K_X+B+(\varphi)=\widetilde{f}^*(K_{\widetilde{Y}}+B_{\widetilde{Y}}+M_{\widetilde{Y}})=f^*D
	$$
	with the pre-boundary b-divisor $\widetilde{\mathbf{B}}$ and data $\widetilde{\mathbf{M}}$. By Definition \ref{defn-pushdown}, Remark \ref{rem-gen-pullback} and Theorem \ref{thm-gen-pair} we define a pre-boundary b-divisor $\mathbf{B}$ and data $\mathbf{M}$ satisfying 
	$$
	K_X+B+(\varphi)=f^*(K_Y+B_Y+M_Y)
	$$
	and $(Y,B_Y+M_Y)$ is a g-sub-pair where $B_Y,M_Y$ are the traces of $\mathbf{B},\mathbf{M}$ on $Y$. We call $\mathbf{B}$ the \emph{discriminant b-divisor} and $\mathbf{M}$ the \emph{moduli b-divisor}.
\end{defn}

\begin{rem}\label{rem-pushdown}
	Notation as above, we remark the followings:
	\begin{enumerate}
		\item The discriminant b-divisor $\mathbf{B}$ (resp. the moduli b-divisor $\mathbf{M}$) defined above does not in general coincide with that in \cite{ambro0} (the pre-discriminant (resp. pre-moduli b-divisor) in Section \ref{subsec-lc-trivial-fib}). As pointed out in \cite[Remark 3.2]{ambro0} the classical construction does not guarantee the b-nefness of $\mathbf{M}$.
		
		\item If $(X,B)$ is lc (resp. klt, sub-lc, sub-klt), then by Theorem \ref{thm-gen-pair}, $(Y,B_Y+M_Y)$ is a g-lc (resp. g-klt, g-sub-lc, g-sub-klt) pair. %Moreover, if $(Y,B_Y+M_Y)$ is g-sub-lc, then every g-sub-lc centre of $(Y,B_Y+M_Y)$ is dominated by a sub-lc centre of $(X,B)$ by a calculation of the pull-back formula.
	\end{enumerate}
\end{rem}

%To study the positivity of the moduli b-divisor from Definition \ref{defn-moduli}, we discuss its behavior under pull-back and base change. We start with the following flattening lemma for generically finite morphisms.
\begin{defn}\label{defn-good-lc-trivial}
	With the notation of Definition \ref{defn-moduli}, we say an lc-trivial morphism $f : (X, B) \to Y$ is \emph{good} if the equation $\widetilde{\mathbf{M}}=\gamma^* \mathbf{M}$ holds. In particular, every lc-trivial fibration is good by definition.
\end{defn}

\begin{lem}\label{lem-finite-pullback}
	Let $f : X \to Y$ be a generically finite proper surjective
	morphism between normal varieties, and let $\mathbf{D}$ be an $\R$-b-Cartier divisor on $X$ with $\mathbf{D} \sim_\K 0/Y$ (resp. $\mathbf{D} \sim 0/Y$, $\mathbf{D} = 0/Y$). Then, we have 
	$$\mathbf{D} \sim_\K (\text{resp.} \sim, =)  f^*(\frac{1}{\deg f}f_*\mathbf{D}) .$$ 
\end{lem}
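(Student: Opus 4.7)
The plan is to verify the three equivalences (equality, linear equivalence, and $\K$-linear equivalence) simultaneously by reducing to a single pair of birational models on which everything can be computed explicitly.

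First, I would choose birational models $\pi: X' \to X$ and $\phi: Y' \to Y$ such that the induced rational map $f': X' \to Y'$ is a morphism, both $X',Y'$ are normal, $\mathbf{D}$ descends to $X'$ with trace $D' := \mathbf{D}_{X'}$, and $D' \sim_\K f'^*D_{Y'}$ (resp.\ $D' \sim f'^*D_{Y'}$, $D' = f'^*D_{Y'}$) for some $\R$-Cartier divisor $D_{Y'}$ on $Y'$; such models exist by the assumption $\mathbf{D} \sim_\K 0/Y$ together with the definition of $\sim_\K /Y$ for b-divisors in Section~\ref{sec2-defn}. By the flattening lemma (which is a special case of Theorem~\ref{thm-equidimension}) and by replacing $X'$ and $Y'$ with higher birational models, I may further assume that $f'$ is finite flat over an open subset $U \subset Y'$ whose complement has codimension $\ge 2$, and that $\deg f' = \deg f$.

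Next, I would compute $f'_* D'$. Writing $D' = f'^*D_{Y'} + (\varphi)$ for a $\K$-rational function $\varphi$ (or $\varphi$ an ordinary rational function in the $\sim$ case, or $\varphi=1$ in the $=$ case), the projection formula for finite flat morphisms (\cite[\href{https://stacks.math.columbia.edu/tag/02RH}{Lemma 02RH}]{stacks-project}, see also \cite[Example 1.7.4]{fulton}) gives $f'_* f'^* D_{Y'} = (\deg f)\,D_{Y'}$ over $U$, and hence in codimension one on $Y'$; since $Y'$ is normal this equality extends to all of $Y'$. By \cite[Proposition 1.4]{fulton}, the push-forward $f'_*(\varphi)$ equals the principal divisor $(N_{K(X')/K(Y')}(\varphi))$, which is again $\K$-principal (resp.\ principal, trivial). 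Therefore
\[
\tfrac{1}{\deg f}\,f'_* D' \;\sim_\K\; D_{Y'}\quad(\text{resp.}\ \sim,\ =),
\]
which in particular shows that the b-divisor $\tfrac{1}{\deg f} f_*\mathbf{D}$ (constructed by the recipe in Definition~\ref{defn-pushdown}, but without the ramification term since we are pushing $\mathbf{D}$ rather than a log canonical divisor) descends to $Y'$, and applying Lemma~\ref{lem-descend} with any higher birational model identifies its trace there with $\tfrac{1}{\deg f} f'_*D'$.

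Finally, pulling back to $X'$ I obtain
\[
f'^*\bigl(\tfrac{1}{\deg f}\,f_*\mathbf{D}\bigr)_{Y'} \;\sim_\K\; f'^*D_{Y'} \;\sim_\K\; D' \;=\; \mathbf{D}_{X'},
\]
and analogously for $\sim$ and $=$. Since both $\mathbf{D}$ and $f^*(\tfrac{1}{\deg f}f_*\mathbf{D})$ are $\R$-b-Cartier b-divisors that agree (up to the appropriate equivalence) on the sufficiently high model $X'$, the desired relation holds as b-divisors. The only subtle point is ensuring that the push-forward $\tfrac{1}{\deg f}f_*\mathbf{D}$ is genuinely $\R$-b-Cartier and that Lemma~\ref{lem-descend} applies to identify its trace on arbitrary higher models of $Y'$; this is handled exactly as in the proof of Theorem~\ref{thm-gen-pair}, and is the main technical (though routine) ingredient.
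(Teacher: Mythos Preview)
Your proposal is correct and follows essentially the same approach as the paper: the paper's proof also reduces to suitable birational models (invoking Lemma~\ref{lem-descend} to ensure $f_*\mathbf{D}$ is $\R$-b-Cartier) and then refers to Remark~\ref{rem-gen-pullback}, which is exactly the flattening-plus-projection-formula computation you spelled out. Your write-up is simply a more detailed version of that sketch, using the same key ingredients (\cite[\href{https://stacks.math.columbia.edu/tag/02RH}{Lemma 02RH}]{stacks-project}, \cite[Proposition 1.4, Example 1.7.4]{fulton}).
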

\begin{proof}
    Note that $f_*\mathbf{D}$ is $\R$-b-Cartier b-divisor by Lemma \ref{lem-descend} and thus the pull-back is well-defined. Replacing $X,Y$ we can assume both $\mathbf{D},f_*\mathbf{D}$ descend to $X,Y$ respectively. The rest can be argued analogously as in Remark \ref{rem-gen-pullback}.
\end{proof}

\begin{rem}\label{rem--moduli-lc-trivial-morphism}
 Note the followings:
	\begin{enumerate}
		\item 	With the notation of Definition \ref{defn-moduli}, by Lemma \ref{lem-finite-pullback}, an lc-trivial morphism being good is equivalent to $\widetilde{\mathbf{M}} = 0/Y$.
		
		\item  With the notation of Definition \ref{defn-pushdown}, if $\widetilde{\mathbf{M}} \sim_\K 0/Y$, then by Lemma \ref{lem-finite-pullback}, we see $\widetilde{\mathbf{M}} \sim_\K f^* \mathbf{M}$. In this case, $\widetilde{\mathbf{M}}$ is b-nef and (log) abundant (resp. b-semi-ample, b-ample) over $Z$ if and only if $\mathbf{M}$ is also.
	\end{enumerate}
\end{rem}

\begin{prop}\label{prop-g-finite-pullback}
	Let $f : (X, B) \to Y$ be a good lc-trivial morphism and $X \overset{\widehat{f}}{\to} \widehat{Y} \overset{\tau}{\to} Y$ be a factorisation of $f$ such that $\tau$ is generically finite. Then, $\widehat{f}: (X,B) \to \widehat{Y}$ is also a good lc-trivial morphism with
	$$\tau^*\mathbf{M} = \widehat{\mathbf{M}}$$
	where $\mathbf{M}$ and $\widehat{\mathbf{M}}$
	are the moduli b-divisors on $Y$ and $\widehat{Y}$ respectively. In particular, $\widehat{f}$ is a good lc-trivial morphism. Moreover, if $ Y$ is proper over $Z$, then $\mathbf{M},\widehat{\mathbf{M}}$ are b-nef$/Z$.
\end{prop}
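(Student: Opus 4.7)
The plan is to reduce to Lemma \ref{lem-lc-fib-base-change} by comparing the Stein factorisations of $f$ and $\widehat{f}$. I would start by writing the Stein factorisations $X\overset{\widetilde{f}}{\to}\widetilde{Y}\overset{\gamma}{\to}Y$ and $X\overset{\widetilde{\widehat{f}}}{\to}\widetilde{\widehat{Y}}\overset{\widehat{\gamma}}{\to}\widehat{Y}$. Stein factorising the proper generically finite composition $\widetilde{\widehat{Y}}\to\widehat{Y}\overset{\tau}{\to}Y$ as $\widetilde{\widehat{Y}}\overset{\nu}{\to}V\to Y$ with $V\to Y$ finite, the composite $X\to\widetilde{\widehat{Y}}\overset{\nu}{\to}V$ is again a contraction, so by uniqueness of the Stein factorisation of $f$ one identifies $V\cong\widetilde{Y}$. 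This furnishes a birational morphism $\nu:\widetilde{\widehat{Y}}\to\widetilde{Y}$ satisfying $\gamma\circ\nu=\tau\circ\widehat{\gamma}$.

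Next I would verify that $\widetilde{\widehat{f}}:(X,B)\to\widetilde{\widehat{Y}}$ is an lc-trivial fibration: since $\nu$ is birational, the geometric generic fibres of $\widetilde{\widehat{f}}$ and $\widetilde{f}$ agree, so the sub-lc and rank conditions in Definition \ref{defn-Q-lc-trivial-fib} carry over from $\widetilde{f}$, and $K_X+B\sim_{\mathbb{K}}0/\widetilde{Y}$ forces $K_X+B\sim_{\mathbb{K}}0/\widetilde{\widehat{Y}}$; in particular $\widehat{f}$ is an lc-trivial morphism. Because $\widetilde{f}$ has connected geometric generic fibre, the unique main component of the normalised base change of $\widetilde{f}$ along $\nu$ is $X$ itself, so Lemma \ref{lem-lc-fib-base-change} yields $\widetilde{\widehat{\mathbf{M}}}=\nu^*\widetilde{\mathbf{M}}$. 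Combining this with goodness of $f$, namely $\widetilde{\mathbf{M}}=\gamma^*\mathbf{M}$, gives $\widetilde{\widehat{\mathbf{M}}}=\widehat{\gamma}^*\tau^*\mathbf{M}$.

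Then by Definition \ref{defn-moduli} and the projection formula for the finite morphism $\widehat{\gamma}$,
$$\widehat{\mathbf{M}}=\tfrac{1}{\deg\widehat{\gamma}}\,\widehat{\gamma}_*\widetilde{\widehat{\mathbf{M}}}=\tfrac{1}{\deg\widehat{\gamma}}\,\widehat{\gamma}_*\widehat{\gamma}^*\tau^*\mathbf{M}=\tau^*\mathbf{M},$$
which immediately gives $\widehat{\gamma}^*\widehat{\mathbf{M}}=\widetilde{\widehat{\mathbf{M}}}$, i.e.\ goodness of $\widehat{f}$. Finally, when $Y$ is proper over $Z$ the variety $\widetilde{Y}$ is also proper over $Z$, and Theorem \ref{thm--lc-trivial} gives that $\widetilde{\mathbf{M}}$ is b-nef; taking a model $Y'\to Y$ on which $\mathbf{M}$ descends with representative $M'$, its pullback $(\gamma')^*M'$ to the base change $\widetilde{Y}'$ represents $\widetilde{\mathbf{M}}$ and is nef, so $M'$ is nef$/Z$ because $\gamma'$ is finite surjective, and then $\widehat{\mathbf{M}}=\tau^*\mathbf{M}$ is b-nef$/Z$ since pullback preserves nefness. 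The main technical point I expect to need care is the base-change compatibility invoked for Lemma \ref{lem-lc-fib-base-change}: one must check that the fibration obtained by base changing $\widetilde{f}$ along the birational map $\nu$ coincides with $\widetilde{\widehat{f}}$, which rests on the uniqueness of the main component coming from connectedness of the generic fibre of $\widetilde{f}$.
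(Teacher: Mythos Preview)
Your proof is correct and follows essentially the same route as the paper's. Both arguments compare the Stein factorisations of $f$ and $\widehat{f}$, produce a birational morphism between their intermediate targets (you via Stein-factorising the composite $\widetilde{\widehat{Y}}\to Y$, the paper via the rigidity lemma), identify the moduli b-divisors across that birational map, and finish with the push--pull identity $\widehat{\gamma}_*\widehat{\gamma}^*=(\deg\widehat{\gamma})\cdot\mathrm{id}$; the paper phrases this last step through Lemma~\ref{lem-finite-pullback} while you invoke the projection formula directly, and your treatment of the b-nef claim is a bit more explicit than the paper's one-line appeal to Theorem~\ref{thm--lc-trivial}, but there is no substantive difference.
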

\begin{proof}
	Let $\widehat{f}\!:  \!X \!\overset{\breve{f}}{\to} \!\breve{Y}\! \overset{\varrho}{\to} \!\widehat{Y}$ and $f\!:\!X\! \overset{\widetilde{f}}{\to}\! \widetilde{Y} \!\overset{\gamma}{\to}\! Y$ be the corresponding Stein factorisations respectively. Consider the following commutative diagram:
	$$
	\xymatrix{
	   X\ar[dr]_{\widetilde{f}}\ar[r]^{\breve{f}} & \breve{Y} \ar[r]^{\varrho}\ar@{-->}[d]_{\widetilde{\tau}} &   \widehat{Y} \ar[d]^{\tau} \\
		& \widetilde{Y} \ar[r]^{\gamma} & Y } 
	$$
	By the rigidity lemma, the induced map $\widetilde{\tau}$ is a morphism. Because $\varrho$ is finite and $\tau$ is generically finite, one infers that $\widetilde{\tau}$ is generically finite. Since $\widetilde{f}$ is a contraction, one infers that $\widetilde{\tau}$ is birational which in turn implies that $\widehat{f}: (X,B) \to \widehat{Y}$ is also an lc-trivial morphism. Since $f$ is good, we have $ \widetilde{\mathbf{M}}=\gamma^*\mathbf{M} $ and $\widehat{\mathbf{M}}=\frac{1}{\deg \varrho} \varrho_*\breve{\mathbf{M}}$ where $\breve{\mathbf{M}}$ and $\widetilde{\mathbf{M}}$ are the moduli b-divisors given by $\breve{f}$ and $\widetilde{f}$ respectively. Note that $\breve{\mathbf{M}}=\widetilde{\mathbf{M}}$ as $\widetilde{\tau}$ is birational. By Lemma \ref{lem-finite-pullback} we immediately obtain that $\breve{\mathbf{M}}=\varrho^*\widehat{\mathbf{M}}$ which proves the first assertion. The second assertion follows from Theorem \ref{thm--lc-trivial}.
\end{proof}

\begin{rem}[Comparison of sub-lc centres]\label{rem-compare-lc-centre} 
	With the notation of Definition \ref{defn-moduli}, as we discussed in Remark \ref{rem-pushdown}(3), if we suppose $(X,B)$ is sub-lc, then we have:
	\begin{enumerate}
		\item Every g-sub-lc centre of the induced g-pair $(Y,B_Y+M_Y)$ is dominated by some sub-lc centre.
		
		\item If $f:(X,B) \to Y$ is good, then every vertical sub-lc centre of $(X,B)$ is mapped onto a g-sub-lc centre of $(Y,B_Y+M_Y)$. 
	\end{enumerate}
\end{rem}

\subsection{Dlt models of lc-trivial morphisms.}\label{subsec-dlt-trivial-morphism}
In this subsection we introduce the notion of \emph{dlt model}. This class of morphisms can be regarded as ``dlt models" of lc-trivial morphisms.

\begin{defn}\label{defn-dlt-trivial-fib}
	An lc-trivial morphism $f : (X, B) \to Y$ is called a \emph{dlt model} if $(X,B)$ is dlt, and the induced g-pair $(Y,B_Y+M_Y)$ is g-dlt. We say a dlt model is \emph{good (resp. fibred)} if $f$ is good (resp. with connected fibres), and we say it is \emph{$\Q$-factorial (resp. (quasi-)projective) }if both $X$ and $Y$ are $\Q$-factorial (resp. (quasi-)projective).
\end{defn}		

Next we prove that every lc-trivial morphism from an lc pair possesses a dlt model, in the sense of B-birational equivalence. Let us start with the following lemma.

\begin{lem}\label{lem-dlt-trivial-fib}
	Let $f:(X,B) \to Y$ be an lc-trivial morphism from an lc pair and $(Y,B_Y+M_Y)$ be the induced g-lc pair. Let $\phi:Y' \to Y$ be a g-dlt model and $\pi:(\overline{X},\overline{B}) \to X$ be a quasi-projective log smooth model of $(X,B)$ such that $\overline{f}:\overline{X} \dashrightarrow Y'$ is a morphism. Then, $(\overline{X}/Y',\overline{B})$ has a good log minimal model $(X'/Y',B')$.
	$$
	\xymatrix{
		X \ar[d]_{f} & \overline{X} \ar[l]_{\pi}\ar[dr]_{\overline{f}}\ar@{-->}[r]^{} &  X'\ar[d]^{f'}  &\\
		Y & &Y' \ar[ll]^{\phi}   } 
	$$
	Moreover, $(X',B')$ is a log birational model of $(X,B)$ and the induced map $(X',B') \dashrightarrow (X,B)$ is a B-birational contraction.
\end{lem}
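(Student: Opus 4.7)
The plan is to run a $(K_{\overline{X}}+\overline{B})$-MMP over $Y'$ with scaling of an ample divisor and contract a precise effective divisor $F$, using Lemma~\ref{lem-exc-2+}. First the set-up: since $\phi$ is a g-dlt model, $K_{Y'}+B_{Y'}+M_{Y'}=\phi^*(K_Y+B_Y+M_Y)$, and combining with the induced g-pair relation $K_X+B\sim_{\R} f^*(K_Y+B_Y+M_Y)$ one gets $\pi^*(K_X+B)\sim_{\R}\overline{f}^*(K_{Y'}+B_{Y'}+M_{Y'})$. Writing $K_{\overline{X}}+\overline{B}=\pi^*(K_X+B)+F$, one has $F=\sum_{E}a(E,X,B)\,E$ (summed over $\pi$-exceptional primes $E$), which is effective because $(X,B)$ is lc. Hence
$$K_{\overline{X}}+\overline{B}\sim_{\R} F/Y',\qquad F\ge 0 \text{ exceptional}/X.$$

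To apply Lemma~\ref{lem-exc-2+} I verify its two conditions for the decomposition $F=F^h+F^v$ over $Y'$. For the horizontal condition, a very general fibre $F_0$ of the Stein factorisation of $\overline{f}$ maps birationally via $\pi$ onto a very general fibre $X_y$ of $f$; since $(X_y,B|_{X_y})$ is lc with $K_{X_y}+B|_{X_y}\sim_{\R} 0$, the restriction $K_{F_0}+\overline{B}|_{F_0}\sim_{\R} F^h|_{F_0}$ is an effective divisor exceptional over $X_y$, so by the negativity lemma it agrees with $N_\sigma((K_{\overline{X}}+\overline{B})|_{F_0})$. For the vertical condition, fix a prime $P\subset Y'$; after further blowing up $\overline{X}$ so that every relevant lc place of $(X,B)$ appears as a divisor on $\overline{X}$, I distinguish two cases. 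If $P$ is not $\phi$-exceptional, then $\phi(P)$ is a divisor of $Y$, $f^{-1}(\phi(P))$ contains a prime divisor $D\subset X$, and the strict transform of $D$ in $\overline{X}$ dominates $P$ and is not $\pi$-exceptional, so it is not in $\operatorname{Supp} F$. If $P$ is $\phi$-exceptional, then $P$ is a g-lc centre of $(Y,B_Y+M_Y)$, and by the correspondence in Remark~\ref{rem-compare-lc-centre} there is an lc place $S$ of $(X,B)$ whose image dominates the centre of $P$; realising $S$ as a divisor on $\overline{X}$ gives $a(S,X,B)=0$ (so $S\notin\operatorname{Supp} F$) with $\overline{f}(S)\supseteq P$.

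With both conditions verified, and using that $\overline{X}$ is quasi-projective $\mathbb{Q}$-factorial and smooth, Lemma~\ref{lem-exc-2+} yields termination of the MMP with a good minimal model $(X'/Y',B')$ on which $K_{X'}+B'\sim_{\R} 0/Y'$; the goodness follows because $K_{X'}+B'$ is relatively numerically trivial, hence semi-ample over $Y'$. For the last assertion, the divisors contracted by the MMP are precisely components of $F$, which are all $\pi$-exceptional; therefore every strict transform of a divisor of $X$ survives on $X'$, making $(X',B')\dashrightarrow(X,B)$ a birational contraction, and one reads off $B'=\pi'^{-1}_*B+\sum_{a(E,X,B)=0}E$, the log birational model boundary. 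Finally, on a common log resolution the pullbacks of $K_X+B$ and $K_{X'}+B'$ coincide (both equal the pullback of $K_{\overline{X}}+\overline{B}-F$), so the map is B-birational. The main obstacle I expect is verifying the vertical condition in the $\phi$-exceptional case, which hinges on locating an lc place of $(X,B)$ above each $\phi$-exceptional prime via the canonical bundle formula; this may force a careful choice or refinement of the initial log smooth model $\overline{X}$.
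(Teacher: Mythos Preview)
Your approach is the paper's: write $K_{\overline X}+\overline B\sim_{\R}F/Y'$ with $F\ge 0$ $\pi$-exceptional, verify the two hypotheses of Lemma~\ref{lem-exc-2+}, run the MMP, and read off the log birational model and B-birational contraction. The horizontal check and the final paragraph match the paper essentially verbatim.

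The gap you flag in the $\phi$-exceptional case is real, and your citation of Remark~\ref{rem-compare-lc-centre} does not quite close it. Applied to $f:(X,B)\to Y$, that remark only produces an lc centre dominating $\phi(P)\subset Y$, not $P\subset Y'$; applied instead to the induced morphism $(\overline X,\overline B-F)\to Y'$, it gives a sub-lc \emph{centre} $W$ with $\overline f(W)=P$, but an arbitrary lc place with centre $W$ need not dominate $P$ once extracted. The paper resolves this by first passing to the Stein factorisation so that $f$ is a contraction and the discriminant on $Y'$ is given by the lc-threshold formula. Then $\mult_P B_{Y'}=1$ forces $b_P=0$, which by definition means $(\overline X,\overline B-F)$ fails to be sub-klt \emph{over the generic point} $\eta_P$; the witnessing divisor $S$ therefore has $a(S,X,B)=0$ and its centre on $\overline X$ maps to $\eta_P$, so after extracting $S$ one gets $\overline f(S)=P$. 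This is precisely the refinement of $\overline X$ you anticipate, and with it your argument goes through.
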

\begin{proof}
	Let $f:X \to \widetilde{Y} \to Y$ and $\overline{f}:\overline{X} \to \widetilde{Y'} \to Y'$ be the Stein factorisations. The induced map $\widetilde{\phi}:\widetilde{Y'} \to \widetilde{Y} $ is a birational morphism. Replacing $Y,Y'$ we may assume $f$ is a contraction. Note that it no longer holds that $(Y',B_{Y'}+M_{Y'})$ is g-dlt. However, by Remark \ref{rem-compare-lc-centre}(1), we see that $a(\Gamma,Y,B_Y+M_Y)=0$ for every exceptional$/Y$ prime divisor $\Gamma$ on $Y'$. By Remark \ref{rem-pushdown}(3), replacing $(\overline{X},\overline{B})$ with a suitable blow-up, we can assume every exceptional$/Y$ prime divisor $\Gamma$ on $Y'$ is dominated by a component of $S \subset\rddown{\overline{B}}$ with $a(S,X,B)=0$.
	
	Write $E:=K_{\overline{X}}+\overline{B} - \pi^* (K_X+B) \ge0$ and $E=E^h+E^v$, where $E^h$ (resp. $E^v$) denotes the horizontal (resp. vertical) part. Since $E^h$ is exceptional$/X$, we see
	$$
	E^h|_{F} = N_\sigma((K_{\overline{X}}+\overline{B})|_F)
	$$
	where $F$ is a general fibre.
	
	Since $E^v$ is exceptional$/X$, it is very exceptional$/Y$ by definition. We claim that it is also very exceptional$/Y'$. To this end, let $Z'$ be an irreducible closed subset of $\overline{f}(E^v) \subset Y'$, and let $0 \le E_{Z'}  \le  E^v$ be the effective divisor supported by those components of $E^v$ mapped onto $Z'$ with the same coefficients. Write $E^v=\sum_{Z'} E_{Z'}$. If $\mathrm{codim}_{Y'} Z' \ge 2$, then  $E_{Z'}$ is very exceptional$/Y'$. If $\mathrm{codim}_{Y'} Z'=1$ and $\mathrm{codim}_{Y}\phi(Z')=1$, then it is also very exceptional$/Y'$ as $E_{Z'}$ is very exceptional$/\phi(Z')$. It remains to prove when $Z'$ is an exceptional$/Y$ divisor. Recall our assumption that $Z'$ is dominated by a component of $S \subset\rddown{\overline{B}}$ with $a(S,X,B)=0$. Obviously $S \notin \Supp E_{Z'}$. Therefore $E_{Z'}$ is again very exceptional$/Y'$. Since $E^v$ is the sum of $E_{Z'}$'s, the claim is proved. 
	
	Hence, by Lemma \ref{lem-exc-2+}, we can run an MMP$/Y'$ on $K_{\overline{X}}+\overline{B}$ which terminates with a good log minimal model. We thus obtain the first assertion. Moreover, since the divisor contracted by the above MMP is exactly $E$, one infers that $(X',B')$ is a log birational model of $(X,B)$, and the induced map $(X',B') \dashrightarrow (X,B)$ is a B-birational contraction.
\end{proof}

\begin{rem}\label{rem-can-bundle}
	Notation as above, we remark the followings.
	\begin{enumerate}
		\item If $f$ is good, then every vertical lc centre of $(X,B)$ is mapped onto a g-lc centre of $(Y,B_{Y}+M_{Y})$ by Remark \ref{rem-compare-lc-centre}(2). However, a g-lc centre of $(Y,B_{Y}+M_{Y})$ is not necessarily an lc centre of $(Y,B_{Y})$ unless it is g-dlt. 
		
		\item One should not expect in general that the rational map $X' \dashrightarrow X$ is a morphism as the example below indicates.
	\end{enumerate}
\end{rem}

\begin{exa}
	Let $(X,B)$ be a log terminal pair and the diagram below be a $K_X$-flip.
	$$
	\xymatrix{
		X \ar[dr]_{f} \ar@{-->}[rr]^{\phi}&   &  X^+\ar[dl]^{f^+}\  &\\
		& Z &    } 
	$$
	Suppose $\phi$ is also a $K_X+B$-flop. Then, for any common crepant model $(W,B_W)$ of $(X,B)$ and $(X^+,B^+)$, we have $(W,B_W)$ is sub-klt but not klt.
\end{exa}

\begin{prop}[Existence of dlt models]\label{prop-dlt-trivial-fib}
	Let $f:(X,B) \to Y$ be an lc-trivial morphism from an lc pair. Then, there exist a birational model $\phi:Y' \to Y$ and a log birational model $(X',B')$ of $(X,B)$ with a B-birational contraction $\pi:(X',B') \dashrightarrow (X,B)$ such that the induced map $f':(X',B')\dashrightarrow Y'$ is a quasi-projective $\Q$-factorial dlt model.
	$$
	\xymatrix{
		X' \ar[d]_{f'} \ar@{-->}[r]^{\pi}   &  X\ar[d]^{f}\  &\\
		Y' \ar[r]^{\phi} &    Y } 
	$$
	In particular, if $f:(X,B) \to Y$ is good, then $f':(X',B') \to  Y'$ is good.
\end{prop}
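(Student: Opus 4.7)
The strategy is to first modify the base using the g-dlt property of the induced g-pair, then pull back an appropriate log resolution of $(X,B)$ to that base and apply Lemma \ref{lem-dlt-trivial-fib} to run a relative MMP. Let $(Y,B_Y+M_Y)$ with data $\mathbf{M}$ denote the induced g-lc pair coming from $f:(X,B)\to Y$ via Definition \ref{defn-moduli}; note $(Y,B_Y+M_Y)$ is a g-lc pair by Remark \ref{rem-pushdown}(2). By Lemma \ref{lem-g-dlt-blow-up} there is a $\mathbb{Q}$-factorial g-dlt modification $\phi:Y'\to Y$ with data $\mathbf{M}$ such that $K_{Y'}+B_{Y'}+M_{Y'}=\phi^*(K_Y+B_Y+M_Y)$ and $a(E,Y,B_Y+M_Y)=0$ for every exceptional prime divisor $E/Y$; this $(Y',B_{Y'}+M_{Y'})$ will be the base of our dlt model.

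Next, take a sufficiently high quasi-projective log resolution $\pi_0:(\overline{X},\overline{B})\to(X,B)$ so that the induced map $\overline{f}:\overline{X}\dashrightarrow Y'$ is a morphism; this is possible by standard resolution. Apply Lemma \ref{lem-dlt-trivial-fib} to $(\overline{X}/Y',\overline{B})$ to obtain a good log minimal model $(X'/Y',B')$, which is automatically $\mathbb{Q}$-factorial dlt and quasi-projective, together with a B-birational contraction $\pi:(X',B')\dashrightarrow(X,B)$. Let $f':X'\to Y'$ be the induced morphism; since $(X'/Y',B')$ is a good minimal model, $K_{X'}+B'\sim_{\mathbb{R}}0/Y'$, so $f'$ is an lc-trivial morphism from a $\mathbb{Q}$-factorial dlt pair.

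It remains to check that $f'$ is a dlt model, i.e.\ that the g-pair $(Y',B'_{Y'}+M'_{Y'})$ induced by $f'$ via Definition \ref{defn-moduli} coincides with the g-dlt pair $(Y',B_{Y'}+M_{Y'})$ from the first step. Because $\pi$ is a B-birational contraction, $K_{X'}+B'$ is the birational transform of $K_X+B$ with the same pullback formula, so the discriminant and moduli b-divisors constructed from $f'$ agree with those constructed from $f$ pulled back by $\phi$; hence their traces on $Y'$ are $B_{Y'}$ and $M_{Y'}$, which gives g-dlt as required. Combined with the fact that $(X',B')$ is $\mathbb{Q}$-factorial dlt and $(Y',B_{Y'}+M_{Y'})$ is $\mathbb{Q}$-factorial g-dlt, $f':(X',B')\to Y'$ is a quasi-projective $\mathbb{Q}$-factorial dlt model.

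For the final assertion, suppose $f$ is good, so with Stein factorisation $X\overset{\widetilde{f}}{\to}\widetilde{Y}\overset{\gamma}{\to}Y$ we have $\widetilde{\mathbf{M}}=\gamma^*\mathbf{M}$. Let $X'\overset{\widetilde{f'}}{\to}\widetilde{Y'}\overset{\gamma'}{\to}Y'$ be the Stein factorisation of $f'$; by the rigidity lemma the composition $\widetilde{Y'}\to\widetilde{Y}$ is a well-defined (birational over $\gamma$) morphism fitting into a commutative square with $\phi$ and $\gamma$. Since $\pi$ is B-birational, the moduli b-divisor of $\widetilde{f'}$ coincides with that of $\widetilde{f}$ (as b-divisors over $\widetilde{Y}$), and Proposition \ref{prop-g-finite-pullback} applied to the factorisation through $\gamma'$ yields $\widetilde{\mathbf{M}}=\gamma'^*\mathbf{M}'$, proving $f'$ is good. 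The main technical obstacle here is Step 2, whose hard content is exactly Lemma \ref{lem-dlt-trivial-fib}: one must run an MMP on $K_{\overline{X}}+\overline{B}$ over $Y'$ and show that the exceptional part is very exceptional over $Y'$ — verified there via the careful choice of resolution so that every divisor exceptional over $Y$ on $Y'$ is dominated by a component of $\lfloor\overline{B}\rfloor$ with log discrepancy zero.
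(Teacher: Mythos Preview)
Your proof is correct and follows essentially the same approach as the paper, which simply cites Lemma~\ref{lem-g-dlt-blow-up} and Lemma~\ref{lem-dlt-trivial-fib} without further comment. You have usefully spelled out the details the paper leaves implicit---in particular the verification that the g-pair induced on $Y'$ by $f'$ coincides with the g-dlt modification (which holds because the discriminant and moduli b-divisors are birational invariants and $\pi$ is B-birational), and the goodness assertion; one small terminological note is that $(\overline{X},\overline{B})$ in Lemma~\ref{lem-dlt-trivial-fib} is a \emph{log smooth model} (i.e.\ $\overline{B}=B^\sim+E$ with $E$ the reduced exceptional divisor) rather than a crepant log resolution, so you should phrase Step~2 accordingly.
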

\begin{proof}
	The proposition follows directly from Lemma \ref{lem-g-dlt-blow-up} and Lemma \ref{lem-dlt-trivial-fib}. 
\end{proof}

\subsection{Adjunction for fibre space commutes with restriction.}\label{subsec-adj-commute}
The main goal of the rest subsections is to prove the following theorem which is a generalisation of \cite[Theorem 1.1]{fujino-gongyo2}. 

\begin{thm}\label{thm-abun-moduli}
	Let $f : (X,B) \to Y$ be a good lc-trivial morphism from an lc pair to a complete variety. Then, the moduli b-divisor $\mathbf{M}$ is b-nef and log abundant with respect to the induced g-pair.
\end{thm}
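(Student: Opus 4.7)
The plan is to combine the existence of good dlt models (Proposition \ref{prop-dlt-trivial-fib}) with the adjunction-commutes-with-restriction statement (Theorem \ref{thm-moduli-div}), and run an induction on $\dim Y$. B-nefness of $\mathbf{M}$ is already in hand: by Theorem \ref{thm--lc-trivial} applied after Stein factorisation, together with Proposition \ref{prop-g-finite-pullback}. The substantive content is therefore log abundance. First I would replace $f:(X,B)\to Y$ by a quasi-projective $\Q$-factorial good dlt model $f':(X',B')\to Y'$ obtained from Proposition \ref{prop-dlt-trivial-fib}. Since the moduli b-divisor, b-nefness, and log abundance (Definition \ref{defn-b-nef-log-abundant-divisor}) are all b-divisor-level notions invariant under this kind of birational replacement, this reduction is harmless.

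Next I would induct on $\dim Y$, the case $\dim Y=0$ being trivial. For the inductive step, fix a proper g-lc centre $V\subsetneq Y'$ of the induced g-dlt pair $(Y',B_{Y'}+M_{Y'})$; since the pair is g-dlt, $V$ is the (normalisation of a) stratum $T$ of the dlt pair $(Y',B_{Y'})$. Theorem \ref{thm-moduli-div} then supplies a stratum $S$ of $(X',B')$ saturated over $T$ such that $f'|_S:(S,B_S)\to T$ is again a good dlt model, and the moduli b-divisor $\mathbf{N}$ of $f'|_S$ satisfies $\mathbf{M}|_T=\mathbf{N}$. Since $\dim T<\dim Y'$, the inductive hypothesis applied to $f'|_S$ gives that $\mathbf{N}$ is b-nef and log abundant with respect to the induced g-pair on $T$; in particular $\mathbf{N}$ is abundant, so $\mathbf{M}|_T$ is abundant. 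Running $V$ through all proper g-lc centres establishes the log abundance condition on every proper g-lc centre of $(Y',B_{Y'}+M_{Y'})$.

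It remains to show that $\mathbf{M}$ itself is abundant on $Y'$. If $(X',B')$ has klt generic fibre over $Y'$, i.e.\ $\lfloor B'\rfloor$ has no horizontal component, then Corollary \ref{cor--klt-trivial} applies directly (using that $Y$ is complete) and yields abundance of $\mathbf{M}$. Otherwise, I would choose a stratum $S$ of $(X',B')$ horizontal over $Y'$ of minimal dimension; by minimality, the restricted dlt pair $(S,B_S)$ has klt generic fibre over $Y'$. Treating $Y'$ as the ``top'' stratum and applying Theorem \ref{thm-moduli-div} with $T=Y'$ gives a good dlt model $f'|_S:(S,B_S)\to Y'$ whose moduli b-divisor coincides with $\mathbf{M}$, and Corollary \ref{cor--klt-trivial} then produces abundance. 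The main obstacle in this plan is the invocation of Theorem \ref{thm-moduli-div}, whose proof is the technical heart of the paper and which itself requires Proposition \ref{prop-dlt-trivial-fib}, the finite-cover extension Theorem \ref{thmmain-extend-finite-cover}, and weak semi-stable reduction; a secondary delicacy is checking that $Y'$ behaves as a ``top stratum'' so that the horizontal-stratum restriction really does produce an lc-trivial morphism over the original base, which is precisely the mechanism reducing the lc case to the klt case handled by Corollary \ref{cor--klt-trivial}.
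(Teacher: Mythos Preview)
Your proposal is correct and follows essentially the same route as the paper: reduce to a good dlt model via Proposition \ref{prop-dlt-trivial-fib}, use Theorem \ref{thm-moduli-div} to identify $\mathbf{M}|_T$ with the moduli b-divisor of the restricted good dlt model, and handle abundance of $\mathbf{M}$ itself by passing to a minimal horizontal stratum so that Corollary \ref{cor--klt-trivial} applies. Two minor remarks: the existence of a stratum $S$ saturated over $T$ is not part of Theorem \ref{thm-moduli-div} but is supplied by Lemma \ref{lem-sat-stratum}; and your induction on $\dim Y$ is unnecessary, since the minimal-horizontal-stratum argument already proves abundance of $\mathbf{M}$ for \emph{every} good dlt model directly, so $\mathbf{M}|_T=\mathbf{N}$ is abundant without invoking lower dimensions (this is how the paper organises it, also using Remark \ref{rem--moduli-lc-trivial-morphism} to pass through the Stein factorisation after restriction).
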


We begin with elementary lemmas.

\begin{lem}[Bertini]\label{lem-Bertini}
	Let $T$ be a smooth prime divisor of a smooth variety $Y$ and $Q \subset T$ be a smooth prime divisor of $H$. Then, there exists a smooth prime divisor $P$ containing $Q$ such that $(Y,P+T)$ is log smooth.
\end{lem}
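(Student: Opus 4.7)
The plan is a Bertini-type construction. The paper's setting places us in the quasi-projective category, so I fix a sufficiently positive very ample line bundle $\mathcal{L}$ on $Y$ and look for $P$ inside the linear system $|\mathcal{L}\otimes\mathcal{I}_Q|$ of divisors containing $Q$. Replacing $\mathcal{L}$ by a high twist, Serre vanishing yields: $\mathcal{L}\otimes\mathcal{I}_Q$ is globally generated (so its base locus equals $Q$ scheme-theoretically); the restriction $H^0(Y,\mathcal{L})\twoheadrightarrow H^0(T,\mathcal{L}|_T)$ is surjective; and $\mathcal{L}(-T)$ is base-point-free. These give the ingredients for a Bertini argument together with the control needed near $Q$.

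The local picture at a point $p\in Q$ explains what must be arranged. In coordinates $x_1,\dots,x_n$ with $T=\{x_n=0\}$ and $Q=\{x_{n-1}=x_n=0\}$, a defining equation $F=ax_{n-1}+bx_n$ gives $P=V(F)$ smooth at $p$ iff $(a(p),b(p))\neq 0$, and gives $(Y,P+T)$ log smooth at $p$ iff in addition $a(p)\neq 0$. Invariantly, via the conormal sequence
\[
0\to N^{*}_{T/Y}|_Q\to N^{*}_{Q/Y}\to N^{*}_{Q/T}\to 0,
\]
the image $\overline{F}$ of $F$ in $H^0(Q,\mathcal{L}|_Q\otimes N^{*}_{Q/Y})$ projects to a section $\overline{F}_T\in H^0(Q,\mathcal{L}|_Q\otimes N^{*}_{Q/T})$, and the log-smoothness condition along all of $Q$ is exactly that $\overline{F}_T$ is nowhere-vanishing.

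The core step is therefore to produce $F\in H^0(Y,\mathcal{L}\otimes\mathcal{I}_Q)$ whose image $\overline{F}_T$ is a nowhere-vanishing section of $\mathcal{L}|_Q\otimes N^{*}_{Q/T}$, which reduces to trivialising this line bundle on $Q$ (up to an ample twist) by a judicious choice of $\mathcal{L}$; in the quasi-projective setting this is done by iterated twisting of $\mathcal{L}$ by very ample classes on $Y$ together with the standard Serre-vanishing surjections. Once such an $F$ is fixed, any perturbation in $F+H^0(Y,\mathcal{L}(-T))$ leaves $\overline{F}_T$ unchanged, so $P$ stays smooth along $Q$ with $P\cap T=Q$ locally there, while Bertini applied to the base-point-free system $|\mathcal{L}(-T)|$ yields smoothness of $P$ away from $Q$ for a generic perturbation; this produces the desired smooth prime divisor $P$ with $(Y,P+T)$ log smooth. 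The main obstacle is the very first part of this last step, namely producing a line bundle $\mathcal{L}$ on $Y$ whose restriction $\mathcal{L}|_Q\otimes N^{*}_{Q/T}$ admits a nowhere-vanishing section, equivalently an extension of $\mathcal{O}_T(Q)$ (up to an ample adjustment) from $T$ to $Y$; this is the only non-formal input, and in the quasi-projective context of the paper it is handled routinely by twisting with pull-backs of ample classes on $Y$.
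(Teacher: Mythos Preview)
Your analysis correctly isolates the crux: one needs $\overline{F}_T$ to be a nowhere-vanishing section of $\mathcal{L}|_Q\otimes N^*_{Q/T}$, which forces $\mathcal{L}|_Q\cong N_{Q/T}$. The gap is your final sentence, where you declare this is ``handled routinely by twisting with pull-backs of ample classes on $Y$.'' Twisting $\mathcal{L}$ by an ample $A$ on $Y$ only moves $\mathcal{L}|_Q$ within the image of $\Pic(Y)\to\Pic(Q)$, and even when $N_{Q/T}$ lies in that image, no preimage need be positive enough for your Bertini step. In fact the global statement is \emph{false}: take $Y=\mathbb{P}^3$, $T$ a smooth cubic surface, and $Q$ one of its $27$ lines. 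Then $N_{Q/T}\cong\mathcal{O}_{\mathbb{P}^1}(-1)$, so for a degree-$d$ surface $P\supset Q$ one has $\mathcal{L}|_Q\otimes N^*_{Q/T}\cong\mathcal{O}_{\mathbb{P}^1}(d+1)$, which never trivialises for $d\ge 1$; equivalently, on $T$ the residual curve $(P\cap T)-Q$ meets $Q$ in $Q\cdot(dH|_T-Q)=d+1>0$ points, so $P\cap T$ is always singular and $(Y,P+T)$ is never log smooth.

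For comparison, the paper's proof takes a different route---blow up $Q$ and apply Bertini to a very ample system on the blow-up $Y'$---but runs into the same obstruction: a general very ample divisor on $Y'$ must meet the positive-dimensional locus $T'\cap E\cong Q$, and at those points its image is tangent to $T$. The lemma is only invoked in the paper after shrinking $Y$ around the generic point of $Q$; in that local setting a defining equation for $Q$ inside $T$ extends to a function on $Y$ and cuts out $P$ directly, so the honest repair is to state and use the lemma Zariski-locally.
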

\begin{proof}
	 We can assume $Y$ is projective. Let $\pi:Y' \to Y$ be the blow-up at $Q$ and apply a theorem of Bertini \cite[Corollary 10.9]{hartshorne} to $Y'$.
\end{proof}

\begin{lem}\label{lem-log-abundance}
	Let $f:X \to Y$ be a contraction of normal varieties. Suppose the following conditions holds:
	\begin{enumerate}
		\item (Weakly semi-stable) There exist reduced divisors $\Delta,\Delta_Y$ on $X,Y$ respectively, such that $f:(X,\Delta) \to (Y,\Delta_Y)$ satisfies the conditions listed in Theorem \ref{thm-ss-reduction}.
		
		\item (Lc-trivial) There exists a sub-boundary $B$ such that $f:(X,B) \to Y$ is an lc-trivial fibration.
		
		\item (Weakly resolved) Denote by $\mathbf{B}$ the discriminant b-divisor and by $\mathbf{M}$ the moduli b-divisor with traces $B_Y,M_Y$ on $Y$. We have that, $B$ is supported by $\Delta$, $B_Y$ is supported by $\Delta$, and $M_Y$ represents $\mathbf{M}$.
	\end{enumerate}
    Then, we have:
    \begin{enumerate}
    	\item[(i)] Given a prime divisor $Q$ of $Y$ such that $(Y,Q+\Delta_Y)$ is log smooth, if $Q \nsubseteq \Delta_Y$, then $b_Q=1$ and $(X,B+f^*Q)$ is sub-lc (for definition of $b_Q$, see Section \ref{subsec-lc-trivial-fib}).
    	
    	\item[(ii)] Given a horizontal prime divisor $S\subset B^{=1}$ and any prime divisor $Q$ of $Y$, if $Q \nsubseteq \Delta_Y$, then $b_Q=1$ with respect to $f|_S:(S,B_S) \to Y$, where $K_S+B_S=(K_X+B)|_S$.
    	
    	\item[(iii)] Given a vertical prime divisor $S\subset B^{=1}$ with its image $T$ a prime divisor of $Y$, and any prime divisor $Q$ of $T$, if $Q \nsubseteq \Delta_T=(\Delta_Y -T)|_T$, then $b_Q=1$ with respect to $f|_S:(S,B_S) \to T$, where $K_S+B_S=(K_X+B)|_S$.
    \end{enumerate}
\end{lem}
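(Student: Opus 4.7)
The plan is to derive all three assertions from the local toroidal structure guaranteed by weak semi-stability in condition (1). Recall that a weakly semi-stable $f\colon (X,\Delta) \to (Y,\Delta_Y)$ is a toroidal morphism of toroidal embeddings that is equidimensional, with reduced fibres, and such that $\Delta=f^{-1}\Delta_Y$ set-theoretically; in particular $(Y,\Delta_Y)$ is log smooth. I will not use condition (3) directly for these three claims — it only ensures that the global objects $B_Y$ and $M_Y$ are the correct traces of $\mathbf{B}$ and $\mathbf{M}$.

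\textbf{Part (i).} Since $Q\not\subset\Delta_Y$ and $(Y,Q+\Delta_Y)$ is log smooth, $Q$ is a smooth prime divisor meeting $\Delta_Y$ transversally. Its pullback $f^{*}Q$ has support disjoint from $\Supp\Delta$ over the generic point $\eta_Q$ (because $\Supp\Delta=f^{-1}\Supp\Delta_Y$), and by the reduced-fibre property $f^{*}Q$ is reduced over $\eta_Q$. Consequently $f\colon(X,\Delta+f^{*}Q)\to (Y,\Delta_Y+Q)$ is again toroidal, so $(X,\Delta+f^{*}Q)$ is lc. Because $B$ is a sub-boundary supported on $\Delta$ we have $B\le\Delta$, whence $(X,B+f^{*}Q)$ is sub-lc. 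To pin down $b_Q=1$, I would observe that at the generic point of every component $P$ of $f^{*}Q$ the coefficient of $P$ in $B$ vanishes, while $\mult_P f^{*}Q=1$; so $(X,B+tf^{*}Q)$ is sub-lc over $\eta_Q$ precisely when $t\le 1$.

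\textbf{Part (ii).} A horizontal prime $S\subset B^{=1}$ is forced to be a horizontal component of $\Delta$. Restricting the toroidal morphism along such an $S$ yields a toroidal morphism $f|_S\colon (S,\Delta_S)\to(Y,\Delta_Y)$, with $\Delta_S=(\Delta-S)|_S$, which inherits equidimensionality and reduced fibres over codimension-one strata of $Y$ outside $\Delta_Y$. Adjunction gives $K_S+B_S=(K_X+B)|_S$ with $B_S$ a sub-boundary supported on $\Delta_S$, so the argument of (i) applied to $f|_S$ and $(f|_S)^{*}Q=(f^{*}Q)|_S$ produces $b_Q=1$.

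\textbf{Part (iii).} For a vertical $S\subset B^{=1}$ with image a prime divisor $T\subset\Delta_Y$, the divisor $S$ is a component of the reduced toroidal fibre $f^{-1}T$. Toroidal adjunction in the base along $T$ and in the total space along $S$ produces a toroidal morphism $f|_S\colon (S,\Delta_S)\to(T,\Delta_T)$ with $\Delta_T=(\Delta_Y-T)|_T$, again equidimensional with reduced fibres over codimension-one strata of $T$ outside $\Delta_T$. Given this, the same transversality computation as in (i), applied to $Q\not\subset\Delta_T$ on the base $T$, delivers $b_Q=1$. The technical heart of the whole lemma sits here: the assertion that restriction of a weakly semi-stable morphism to a vertical divisor sitting over a boundary divisor is again weakly semi-stable over the restricted base. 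I would verify this in local toroidal charts, where it reduces to a standard fact about faces of the rational polyhedral cones describing the toric fibre of $f$; once this is in hand, (i)--(iii) all collapse to the identical local transversality calculation.
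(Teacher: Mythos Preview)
Your approach is sound but diverges from the paper's in two places. For (i) the paper \emph{does} use condition~(3): setting $b=\sup\{t:(X,B+tf^*Q)\text{ sub-lc}\}$, it first notes that $f^*Q$ avoids every sub-lc centre of $(X,B)$ (so $b>0$), then pushes a newly created sub-lc centre of $(X,B+bf^*Q)$ down to a g-sub-lc centre of $(Y,B_Y+bQ)$ via the Ambro-model property; since $\mult_Q B_Y=0$ this forces $b=1$. Your comparison $B\le\Delta$ together with $(X,\Delta+f^*Q)$ lc bypasses condition~(3) entirely, but the assertion that $f:(X,\Delta+f^*Q)\to(Y,\Delta_Y+Q)$ is again toroidal is not literally Lemma~\ref{lem-toroidal}(3) (which is stated for a \emph{general} hyperplane) and deserves a word: in an \'etale chart $f$ is a toric map times the identity on a smooth factor, and $Q$ is cut by a free coordinate of that factor.

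For (iii) the paper takes a shorter path that sidesteps any verification that $f|_S$ is again weakly semi-stable. After shrinking so that $\Delta_Y=T$ and $Q$ is smooth, it invokes the Bertini-type Lemma~\ref{lem-Bertini} to produce a smooth prime divisor $P\subset Y$ with $Q\subset P$ and $(Y,P+T)$ log smooth; part (i) applied to $P$ on the ambient space gives $(X,B+f^*P)$ sub-lc, and adjunction along $S$ then yields $(S,B_S+(f|_S)^*Q)$ sub-lc over $\eta_Q$, hence $b_Q=1$. This extension trick replaces the toroidal-chart verification you defer. Both routes work under the stated hypotheses; the paper's, because it rests on condition~(3) rather than on reduced fibres, also persists when weak semi-stability is relaxed to the merely equidimensional setting of Theorem~\ref{thm-equidimension}, as remarked immediately after the lemma.
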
                                                                                                                    
\begin{proof}
	(i). Set $b:=\sup \{t|(X,B+tf^*Q)\text{ is sub-lc}\}$. By condition (3), $f^*Q$ does not contain any sub-lc centre in its support, hence $b>0$. Let $S$ be the newly constructed sub-lc centre of $(X,B+bf^*Q)$. Since $Y$ is an Ambro model, the image $T$ of $S$ is a sub-lc center of $(Y,B_Y+bQ)$. Because $Q$ does not contain any sub-lc centre of $(Y,B_Y)$ in its support, we deduce that $T$ is also newly constructed, hence $b=1$. In particular, we have $b_Q=1$.
	
	(ii). Shrinking $Y$ and $X$ accordingly, we may assume $\Delta_Y=0$. This follows from (i).
	
	(iii). Shrinking $Y$ and $X$ accordingly, we may assume $Q$ is smooth and $\Delta_Y=T$. By Lemma \ref{lem-Bertini}, there is a prime divisor $P \subset Y$ containing $Q$ such that $(Y,P+\Delta_Y)$ is log smooth. So, by (i), we deduce $(X,\Delta+f^*P)$ is sub-lc which in turn implies $b_Q=1$.
\end{proof}

\begin{rem}
	Lemma \ref{lem-log-abundance} still holds if Condition (1) is weakened to equidimensional, that is, $f$ satisfies the conditions listed in Theorem \ref{thm-equidimension}.
\end{rem}

A similar technique from \cite[Theorem 1.1]{fujino-gongyo2} will be used in the next lemma. But we will not directly apply \cite[Theorem 1.1]{fujino-gongyo2}. See also \cite[Proposition 4.4]{floris-lazic}. 

\begin{lem}\label{lem-moduli-res}
	Let $f,(X,B),\Delta,(Y,B_Y+M_Y),\Delta_Y$ be as in Lemma \ref{lem-log-abundance}, and $S\subset B^{=1}$ be a prime divisor of $X$. Suppose further that: 
    \begin{enumerate}
	\item The image $T$ of $S$ is $Y$ or a prime divisor of $Y$.
	
	\item $-B^{<0}|_{F}= N_\sigma((K_{X}+B^{>0})|_{F})$ on a general fibre $F$ of $f$.
    \end{enumerate} 
    If we denote by $N_T$ the pre-moduli divisor of the restriction morphism $f|_S:(S,B_S) \to T$, where $K_S+B_S=(K_X+B)|_S$, then we have
	$$
	M_Y|_T=N_T.
	$$
\end{lem}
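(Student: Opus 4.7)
The plan is to compare the restriction to $S$ of the canonical bundle formula for $f:(X,B)\to Y$ with the intrinsic canonical bundle formula for $f|_S:(S,B_S)\to T$, and reduce the desired identity to the equality of two pre-discriminant divisors. By Lemma \ref{lem-Q-lc-trivial-fib} and Proposition \ref{prop-R-lc-trivial}, first reduce to the $\Q$-lc-trivial case and fix a single $\K$-rational function $\varphi$ with $K_X+B+(\varphi)=f^*(K_Y+B_Y+M_Y)$. Since $S$ is a log smooth component of $\Delta$, the function $\varphi$ restricts to $S$.

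In the horizontal case ($T=Y$), restriction yields $K_S+B_S+(\varphi|_S)=(f|_S)^*(K_Y+B_Y+M_Y)$. In the vertical case, weak semi-stability (reduced fibres) together with $S\subset B^{=1}$ forces $b_T=0$ and hence $T$ appears with coefficient exactly $1$ in $B_Y$; log-smooth adjunction on $(Y,\Delta_Y)$ then gives $(K_Y+B_Y+M_Y)|_T=K_T+\widetilde{B}_{Y,T}+M_Y|_T$ with $\widetilde{B}_{Y,T}=(B_Y-T)|_T$, whence $K_S+B_S+(\varphi|_S)=(f|_S)^*(K_T+\widetilde{B}_{Y,T}+M_Y|_T)$. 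Applying the pre-moduli definition to this equation gives $N_T=\widetilde{B}_{Y,T}-B_{S,T}+M_Y|_T$, so the lemma reduces to the identity $\widetilde{B}_{Y,T}=B_{S,T}$.

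For this coefficient comparison, both $\widetilde{B}_{Y,T}$ and $B_{S,T}$ are supported inside $(\Delta_Y-T)|_T$: the former by condition (3) of Lemma \ref{lem-log-abundance} together with log smoothness, the latter by parts (ii) or (iii) of that lemma. At each prime $Q$ of $T$ in this locus, let $P\subset\Delta_Y-T$ be the corresponding prime of $Y$ (so $P|_T\supseteq Q$; in the horizontal case simply $P=Q$). By reduced fibres, the coefficient of $\widetilde{B}_{Y,T}$ at $Q$ equals $\max_i\mathrm{mult}_{D_i}B$ over the vertical components $D_i$ of $f^*P$, while the coefficient of $B_{S,T}$ at $Q$ equals $\max_j\mathrm{mult}_{E_j}B_S$ over components $E_j$ of $(f|_S)^*Q$; log-smooth adjunction gives $\mathrm{mult}_{E_j}B_S=\mathrm{mult}_{D_i}B$ whenever $E_j$ is a component of $D_i\cap S$.

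The main obstacle is therefore to show that a component $D_i$ realising the maximum $\max_i\mathrm{mult}_{D_i}B$ always intersects $S$. I expect condition (2), $-B^{<0}|_F=N_\sigma((K_X+B^{>0})|_F)$, to supply precisely this: it is a minimality statement on $B$ in its $\K$-linear equivalence class which, combined with the toroidal stratification from Theorem \ref{thm-ss-reduction}, should force the maximising component to lie in $B^{=1}$ and thus to meet $S$ in the log-smooth local model. Once verified, $\widetilde{B}_{Y,T}=B_{S,T}$, and hence $M_Y|_T=N_T$.
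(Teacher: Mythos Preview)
Your reduction to the identity $\widetilde{B}_{Y,T}=B_{S,T}$ of pre-discriminant divisors is on the right track and mirrors what the paper ultimately proves. The gap is exactly where you flag it, and your proposed resolution does not work. Condition~(2) is a statement about the restriction of $B$ to a \emph{general fibre} $F$; vertical components of $\Delta$ over a prime $P\subset\Delta_Y$ do not meet $F$ at all, so condition~(2) says nothing about which component $D_i$ of $f^*P$ carries the maximal $B$-coefficient, let alone whether that component meets $S$. Your further claim that the maximising component should ``lie in $B^{=1}$'' is also unfounded: $\max_i\mult_{D_i}B=1-b_P$ can be any number in $(-\infty,1]$. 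Finally, the adjunction identity $\mult_{E_j}B_S=\mult_{D_i}B$ is not literally correct when $E_j$ lies in several components of $\Supp B$; one has a sum, so the direct coefficient-matching argument needs more care even in the easy direction.

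The paper does not try to argue directly that the maximising component meets $S$. Instead it sets $\Theta=B+\sum_{P\in\Delta_Y} b_P f^*P$ and runs an MMP$/Y$ on $K_X+\Xi$ with $\Xi=\Theta^{>0}+\epsilon\sum E_i$, where the $E_i$ are the vertical components with $\mult_{E_i}\Theta^{>0}<1$. Condition~(2) is used here, but only to guarantee that the horizontal part $-B^{h,<0}$ equals $N_\sigma$ on general fibres; together with the observation that the vertical excess $D^v=-\Theta^{v,<0}+\epsilon\sum E_i$ is very exceptional$/Y$, Lemma~\ref{lem-exc-2+} shows the MMP contracts \emph{all} non-maximal vertical components and terminates with $(X',\Xi')$ where $\Xi'^{v}=f'^{\,*}\Delta_Y$. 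On this model adjunction to the transform $S'$ gives $\Xi_{S'^\nu}\ge f_{S'}^*\Delta_T$, which yields $C_T\ge B_T$; the reverse inequality $C_T\le B_T$ is the easy direction and follows from sub-lc-ness of $(S,B_S+\sum b_Q'(f|_S)^*Q)$. So the role of condition~(2) is to make the MMP terminate, not to locate the maximising component directly.
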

\begin{proof}
	By construction we have 
	$$(B + \sum_{P \in \Delta_{Y}} b_P f^*P)^{>0} \le \Delta.$$
	Here by the notation $P \in \Delta_{Y}$ we mean $P$ is a component of the reduced divisor $\Delta_{Y}$, we see $(X,(B + \sum_{P \in \Delta_{Y}} b_P f^*P)^{>0})$ is sub-lc. Now pick a sufficiently small number $\epsilon>0$, write $\Theta:=B+ \sum_{P \in \Delta_{Y}} b_P f^*P$ and run an MMP$/Y$ on 
	$$
	K_{X} +\Xi :=K_{X}+\Theta^{>0} +\epsilon \sum_i E_i 
	$$ 
	where $E_i's$ are all prime divisors on $X$ with $f(E_i) \in \Delta_{Y}$ and the multiplicities $\mult_{E_i}\Theta^{>0} <1$. By Condition (2), the horizontal divisor $D^h:=-B^{h,<0} \ge 0$ satisfies $$D^h|_{F}=-B^{h,<0}|_{F}=-B^{<0}|_{F}= N_\sigma((K_{X}+\Xi)|_{F})$$
	where $B^{h}$ denotes the horizontal part of $B$. Moreover, since $\epsilon$ is sufficiently small, we see $\Xi \le \Delta$ which in turn implies that $(X,\Xi)$ is lc. One can easily check that the vertical divisor 
	$$
	D^v:=-\Theta^{v,<0} +\epsilon\sum_i E_i \ge 0$$ 
	is very exceptional$/Y$. Now we have
	$$
	K_{X} +\Xi \sim_\R D/Y
	$$
	where $D:=D^h+D^v$. Therefore, by Lemma \ref{lem-exc-2+} the above MMP$/Y$ contracts all components of $D$ and terminates with a good minimal model $(X'/Y,\Xi')$ where $\Xi'$ is the birational transform of $\Xi$. Consider the commutative diagram where $f'$ denotes the induced morphism.
	$$
	\xymatrix{
		&  &	S \ar@{-->}@/_/[dll]^{ }  \ar[dr]_{}\ar[dd]_{f|_{S}}  \\
		S'^\nu \ar[dr]_{}\ar[drr]_{} &  &	&  X\ar[dd]^{f}\ar@{-->}@/_/[dll]^{ }    \\
		&X'\ar[drr]_{f'}  &	T\ar[dr]_{}   \\
		&  &	&Y 
	} 
	$$
	Note that the birational transform $\Xi'=B'+ \sum_{P \in \Delta_{Y}} b_P f'^*P$, and that the rational map given by the MMP $(X,\Theta) \dashrightarrow (X',\Xi')$ is B-birational. Since $K_{X'}+B' \sim_\R 0/Y$, the pair $(X',\Xi')$ is $\Q$-factorial lc and the vertical part $\Xi'^v$ is a reduced divisor. In addition, since $f$ is weakly semi-stable, it holds that
	$$
	\Xi'^v=(B'+ \sum_{P \in \Delta_{Y}} b_P f'^*P)^v=  \sum_{P \in \Delta_{Y}} f'^*P=f'^*\Delta_{Y}.
	$$
	Because $\mult_{S}B=1$, the rational map $X \dashrightarrow X'$ does not contract $S$. Let $S'$ be the birational transform with its normalisation $\nu:S'^\nu \to S'$. Therefore, by the adjunction formula, one infers that 
	\begin{equation*}\tag{$\spadesuit$}
	\Xi_{S'^\nu} \ge (B'+ \sum_{P \in \Delta_{Y}} b_P f'^*P - S')|_{S'^\nu}\ge  \sum_{Q \in \Delta_{T}} f_{S'}^* Q=f_{S'}^*\Delta_{T}
	\end{equation*}
	where $f_{S'}:=(f' \circ \nu)|_{S'^\nu}:S'^\nu \to T$, $K_{S'^\nu}+\Xi_{S'^\nu}=(K_{X'}+\Xi')|_{S'^\nu}$, $\Delta_{T}=\Delta_Y$ when $T=Y$, and $\Delta_{T}:=(\Delta_{Y}-T)|_{T}$ is a snc divisor on $T$ when $T$ is a divisor. 
	
	Writing $C_{T}$ for the discriminant divisor of $f|_{S}:(S,B_{S})\to T$, since $f|_{S}$ has equidimensional and reduced fibres, by Lemma \ref{lem-log-abundance}, we have $\Supp C_{T} \subseteq \Delta_{T} $, hence $C_{T}= \sum_{Q \in \Delta_{T}} (1-b_Q)Q$. Since  $(S,B_{S}+\sum_{Q \in \Delta_{T}} b_Q' (f|_{S})^* Q)$ is sub-lc, where $b_Q':=b_P$ if $Q$ is an irreducible component of $P|_{T}$ for some $P \in \Delta_{Y}$, we deduce $b_Q \ge b_Q'$, hence $C_{T} \le B_T$ where $B_T=B_Y$ if $T=Y$ and $B_T=(B_{Y}-T)|_{T}$ if $T$ is a divisor.
	
	On the other hand, writing $K_{S'}+B_{S'^\nu}=(K_{X'}+B')|_{S'^\nu}$, because $$\Xi_{S'^\nu}=B_{S'^\nu}+\sum_{Q \in \Delta_{T}} b_Q' f_{S'}^* Q,$$
	by ($\spadesuit$) it in turn holds that 
	$$B_{S'^\nu} \ge  \sum_{Q \in \Delta_{T}} (1-b_Q') f_{S'}^* Q.$$ 
	We therefore deduce $b_Q \le b_Q'$, hence $C_{T} \ge B_T$. So we conclude $N_{T} = M_{Y}|_{T}$.
\end{proof}

\begin{defn}[Filtration of strata in codimension one]\label{defn-filtration-codim-one}
	Let $(X,B)$ be a sub-dlt pair and $S$ be a stratum. An ascending sequence of strata 
	$$S=S_0 \subset S_1 \subset \ldots \subset S_n =X 
	$$ is called a \emph{filtration of strata in codimension one} if the codimension of $S_i$ in $S_{i+1}$ is one for all $i$. 
\end{defn}

\begin{defn}\label{defn-saturated-log-morphism}
	Let $f:(X,B) \to (Y,B_Y)$ be a surjective morphism of sub-dlt pairs, and $T$ be a stratum of $(Y,B_Y)$. A stratum $S$ of $(X,B)$ is said to be \emph{saturated over $T$} if there exist a filtration of strata in codimension one of $T$: 
		$$
		T=T_0 \subset T_1 \subset \ldots \subset T_m =Y,
		$$
		and a filtration of strata in codimension one of $S$:
		$$S=S_0 \subset S_1 \subset \ldots \subset S_n =X 
		$$
		such that $f(S)=T$ and for every $0 \le j \le m$, there is some $j \le i \le n$ with $f(S_i)=T_j$.	
\end{defn}

Recall the following fact from scheme theory:

{\noindent \textbf{Fact} $(\clubsuit)$:} Let $X,Y$ and $S$ be integral schemes of finite Krull dimension, $f:X \to S$ be an integral morphism and $g:Y\to S$ be a morphism which factors through $f$. Then, the induced map $Y \to X \times_S Y$ is a closed immersion since $f$ is separated. Because an integral morphism is stable under base change and preserves Krull dimensions (\cite[\href{https://stacks.math.columbia.edu/tag/0ECG}{Lemma 0ECG}]{stacks-project}), the closed immersion maps $Y$ \emph{set-theoretically} to a component of $X \times_S Y$. Moreover, it also maps \emph{scheme-theoretically} when $f$ has reduced geometric generic fibre, for example, $f$ is dominant with $k(X)/k(S)$ separable, since geometric reducibility is stable under base change (\cite[\href{https://stacks.math.columbia.edu/tag/0576}{Lemma 0576}]{stacks-project}).
 
\begin{thm}\label{thm-moduli-div}
    Let $f:(X,B) \to Y$ be a good dlt model (see Definition \ref{defn-dlt-trivial-fib}) and $T$ be a stratum of the induced dlt pair $(Y,B_Y)$. Suppose $S$ is a stratum of $(X,B)$ saturated over $T$. Then, 
    \begin{enumerate}
    	\item $f|_S:(S,B_S) \to T$ is a good dlt model where $K_S+B_S=(K_X+B)|_S$.
    	
    	\item If we denote by $(T,B_T+M_T)$ the g-dlt pair with data $\mathbf{M}|_T$ given by the adjunction formula $K_T+B_T+M_T=(K_Y+B_Y+M_Y)|_T$, and we denote by $(T,C_T +N_T)$ the g-dlt pair with data $\mathbf{N}$ given by the dlt model $f|_S:(S,B_S) \to T$. Then, $$\mathbf{M}|_T = \mathbf{N}.$$
    \end{enumerate}  
\end{thm}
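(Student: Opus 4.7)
The plan is to follow the sketch in the introduction: I will use the codimension-one filtrations built into Definition \ref{defn-saturated-log-morphism} to reduce to the case where $S$ has codimension one in $X$ and $T$ is either $Y$ or a prime divisor, and then reduce the resulting situation to a weakly semi-stable model where Lemma \ref{lem-moduli-res} applies directly. Writing compatible filtrations
$$S=S_0\subset S_1\subset\cdots\subset S_n=X,\qquad T=T_0\subset T_1\subset\cdots\subset T_m=Y,$$
a double induction on $(m,n)$ reduces both (1) and (2) to this single-step base case, since dlt adjunction and the restriction operation on moduli b-divisors compose naturally along the filtration. For (1) at the base stage, the dltness of $(S,B_S)$ is standard dlt adjunction and $K_S+B_S\sim_\R 0/T$ follows from restricting $K_X+B\sim_\R 0/Y$, so the real content is the goodness of $f|_S$ together with the b-divisorial identity $\mathbf{M}|_T=\mathbf{N}$.

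Next I will reduce to the case where both $f$ and $f|_S$ have connected fibres. For $f$, Proposition \ref{prop-dlt-trivial-fib} permits replacing the whole setup by a quasi-projective $\Q$-factorial fibred good dlt model, and Proposition \ref{prop-g-finite-pullback} together with Remark \ref{rem--moduli-lc-trivial-morphism} guarantees that the moduli b-divisor $\mathbf{M}$ and the induced g-dlt structure on $Y$ are preserved under this B-birational modification. For $f|_S$, I take the Stein factorisation $S\to\widetilde{T}\to T$ and apply Theorem \ref{thmmain-extend-finite-cover} to the finite morphism $\widetilde{T}\to T$: this produces a finite cover $\rho:\widetilde{Y}\to Y$ of normal varieties together with a closed subvariety $\widehat{T}\subset\widetilde{Y}$ mapping onto $T$ and admitting $\widetilde{T}$ as its normalisation. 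Base-changing $f$ along $\rho$, passing to the main normal component, and invoking Lemma \ref{lem-lc-fib-base-change} to track the moduli b-divisor under base change, I obtain a new lc-trivial morphism for which the stratum mapping down to $\widehat{T}$ has connected fibres; the desired identity on $Y$ is then recovered from the identity on $\widetilde{Y}$ via the proper push-forward from Definition \ref{defn-pushdown}.

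With $f$ and $f|_S$ both fibrations, I will apply weak semi-stable reduction (Theorem \ref{thm-ss-reduction}) together with a log resolution to reach a resolved weakly semi-stable model satisfying the hypotheses of Lemma \ref{lem-moduli-res}: the condition $-B^{<0}|_F=N_\sigma((K_X+B^{>0})|_F)$ on a general fibre should follow from sub-dltness and the fact that $f$ is an lc-trivial contraction. The lemma then yields the divisorial identity $M_{Y'}|_{T'}=N_{T'}$ on a sufficiently high birational model $(Y',T')$, and since both $\mathbf{M}$ and $\mathbf{N}$ are $\R$-b-Cartier by Theorem \ref{thm--lc-trivial} and descend to this model, the b-divisorial equation $\mathbf{M}|_T=\mathbf{N}$ follows. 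The main obstacle I anticipate is the second reduction: after base-changing by the finite cover produced by Theorem \ref{thmmain-extend-finite-cover}, one must verify that the resulting lc-trivial morphism still admits a saturated stratum restricting to $S$ and that goodness is preserved, since the extension theorem controls the cover only over $T$ and compatibility with the dlt singularities of $(Y,B_Y)$ away from $T$ has to be forced by an additional B-birational modification. This bookkeeping, rather than any single computation, is where I expect the bulk of the technical work to lie.
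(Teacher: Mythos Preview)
Your proposal is correct and follows essentially the same route as the paper's own proof: induction along the saturated filtration to reduce to codimension one, Stein factorisation plus Proposition \ref{prop-dlt-trivial-fib} to make $f$ a contraction, the finite-cover extension Theorem \ref{thm-extend-finite-cover} plus base change to make $f|_S$ a contraction, and finally weak semi-stable reduction feeding into Lemma \ref{lem-moduli-res}. The obstacle you anticipate in the second reduction is dispatched in the paper by the elementary scheme-theoretic observation labelled Fact $(\clubsuit)$: because $\deg\rho=\deg\gamma$ and $S\to T$ already factors through $\widetilde{T}\cong T'$, the fibre product $S\times_T T'$ contains a component canonically isomorphic to $S$ on which the induced map to $T'$ is precisely the Stein contraction $\widetilde{f_S}$, so one more application of Proposition \ref{prop-dlt-trivial-fib} restores the dlt model and no further bookkeeping is required.
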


\begin{proof}
	We will prove by induction on the dimension of $X$. If $\dim X=1$, then there is nothing to prove. Suppose $\dim X=n$ and by induction we may assume Theorem \ref{thm-moduli-div} holds in dimension $\le n-1$. Since $S$ is saturated over $T$, by inductive assumption, replacing $S,T$, we can assume $S$ is a prime divisor on $X$ and $T=Y$ or $T$ is a prime divisor on $Y$. We can assume further: 
	
	{\noindent \textbf{(I). $f$ has connected fibres.}} Indeed, let $f:X \overset{\widetilde{f}}{\to} \widetilde{Y} \overset{\gamma}{\to} Y$ be the Stein factorisation which in turn induces a factorisation of $f|_S:S\overset{\widehat{f_S}}{\to} \widehat{T}^\nu \overset{\gamma|_{\widehat{T}^\nu}}{\to} T $ where $\widehat{T}$ is the image of $S$ on $\widetilde{Y}$ with is normalisation $\widehat{T}^\nu$. Since $\gamma|_{\widehat{T}^\nu}$ is finite and $f|_S$ is an lc-trivial morphism, we deduce that $\widehat{f_S}$ is an lc-trivial morphism. Fixing a sufficiently general $\R$-rational function $\varphi$, we may assume $\varphi|_S$ is well-defined. By construction we have $\mathbf{N}=\frac{1}{\deg \gamma_T} \gamma_{T,*}\widehat{\mathbf{N}}$ where $\gamma_T:=\gamma|_{\widehat{T}^\nu}$. Thus, by Lemma \ref{lem-finite-pullback}, it suffices to prove $\widetilde{\mathbf{M}}|_{\widehat{T}^\nu} = \widehat{\mathbf{N}}$ and $\widehat{f_S}$ is good. Thanks to Proposition \ref{prop-dlt-trivial-fib} there exists a dlt model of $\widetilde{f}:(X,B) \to \widetilde{Y}$. Replacing $X,Y,f$ with the previous dlt model and the rest accordingly we may assume $f$ has connected fibres.

    {\noindent \textbf{(II). $f|_S$ has connected fibres.}}  Indeed, let $f|_S:S \overset{\widetilde{f_S}}{\to} \widetilde{T} \overset{\zeta}{\to} T$ be the Stein factorisation. Since $\zeta$ is finite, by Theorem \ref{thm-extend-finite-cover}, there exists a finite morphism $\rho: Y' \to Y$ together with a prime divisor $\widehat{T}$ such that $\nu:T':=\widetilde{T} \to \widehat{T}$ is the normalisation. Let $f':(X',B') \to Y'$ be the induced lc-trivial fibration and let $\varrho: X' \to X$ be the induced morphism. By Fact $(\clubsuit)$, there is a component $S'$ of $S \times_T T'$ which is isomorphic to $S$. Consider the following diagram, where $\varrho_{S}:=\varrho|_{S'}$, $f_{S}':=f'|_{S'}$ and $\rho_{T}:=\rho|_{T'}=\zeta$. Hence $\varrho_{S}$ is the identity and $f_{S}'=\widetilde{f_S}$ is a contraction.
	 $$
	\xymatrix{
		S'\ar[dr]_{}\ar[dd]_{f_{S}'}\ar[rr]^{\varrho_{S}} && S \ar[dr]_{}\ar[dd]_{}\\
		&  X'\ar[dd]^{ }   \ar[rr]^{} & & X \ar[dd]^{f} \\
		T'\ar[dr]_{\nu}\ar[rr]^{\ } && T \ar[dr]^{} \\
		&Y' \ar[rr]^{\rho}   &  & Y
	} 
	$$
	Let $\mathbf{M}',\mathbf{N}'$ be the moduli b-divisors defined by $f',f_{S}'$ respectively. By Lemma \ref{lem-lc-fib-base-change}, we infer that $\mathbf{M}'=\rho^*\mathbf{M}$ and $\frac{1}{\deg \zeta}\zeta_* \mathbf{N}'=\mathbf{N}$. So, by Lemma \ref{lem-finite-pullback}, it is enough to prove $\mathbf{M}'|_{T'}=\mathbf{N}'$ which implies the goodness of $f|_S$. Applying Proposition \ref{prop-dlt-trivial-fib} and replacing we may assume $f|_S$ has connected fibres.

   {\noindent \textbf{Apply weakly semi-stable reduction.}} Finally, we let $Z=\Supp B$, $Z_Y=\Supp B_Y$ and apply Theorems \ref{thm-equidimension} and \ref{thm-ss-reduction} to construct a commutative diagram so that $\mathbf{M}$ descends to $Y_1$ and $\mathbf{N}$ descends to $T_1$, where $T_1$ is the birational transform of $T$ on $Y_1$.
    $$
    \xymatrix{
    	(X_2,\Delta_2) \ar[d]_{f_2} \ar[r]^{\mu}	& (X_1,\Delta_1) \ar[d]_{f_1} \ar[r]^{\pi}   &  X\ar[d]^{f}\  &\\
    	(Y_2,\Delta_{Y_2}) \ar[r]^{\eta}	& (Y_1,\Delta_{Y_1}) \ar[r]^{\phi} &    Y } 
    $$  
    Let $K_{X_2}+B_2=(\pi \circ \mu)^*(K_X+B)$. By Lemma \ref{lem-lc-fib-base-change}, we deduce $\mathbf{M}_2 = \eta^* \mathbf{M}$ and $\mathbf{N}_2 = (\eta|_{T_2})^* \mathbf{N}$ where $\mathbf{M}_2$ and $\mathbf{N}_2$ are the moduli b-divisors given by $f_2:(X_2,B_2) \to Y_2$ and $f_2|_{S_2}:(S_2,B_{S_2}) \to T_2$ respectively, and hence $\mathbf{M}_2|_{T_2} = (\eta|_{T_2})^* \mathbf{M}|_{T_1}$. Hence, $N_{T_2} =(\eta_{T_2})^* N_{T_1}$ represents $\mathbf{N}_2$ and $M_{Y_2} = \eta^* M_{Y_1}$ represents $\mathbf{M}_2$. So it suffices to show that $N_{T_2} = M_{Y_2}|_{T_2}$, which follows from Lemma \ref{lem-moduli-res}.
\end{proof}

\subsection{Log abundance of the moduli b-divisor.}\label{subsec-log-abundance}
In this subsection we prove Theorem \ref{thm-abun-moduli} and its corollaries.

\begin{lem}\label{lem-sat-stratum}
	Given a good dlt model $f:(X,B) \to Y$ of an lc-trivial morphism, for any stratum $T$ of $(Y,B_Y)$, there exists a stratum $S$ of $(X,B)$ saturated over $T$.
\end{lem}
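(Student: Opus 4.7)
The plan is to proceed by induction on $m:=\codim_Y T$. For the base case $m=0$, we have $T=Y$ and $S:=X$ satisfies the definition trivially with the length-zero filtrations.

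For $m\geq 1$, since $T\subsetneq Y$ we choose a codimension-one stratum $T^*$ of $(Y,B_Y)$ containing $T$ -- any irreducible component of $\rddown{B_Y}$ containing $T$ will do. By Remark \ref{rem-compare-lc-centre}(1) applied to the g-lc centre $T^*$ of $(Y,B_Y+M_Y)$, there is an lc centre, hence a stratum, $S^*$ of the dlt pair $(X,B)$ with $f(S^*)=T^*$. Writing $S^*$ as an irreducible component of $S_{i_1}\cap\cdots\cap S_{i_k}$ for components $S_{i_j}$ of $\rddown{B}$, and successively dropping one defining component while passing to the irreducible component containing the previous stratum, we obtain a chain of codimension-one strata of $(X,B)$
$$
S^*=S^*_0\subset S^*_1\subset\cdots\subset S^*_k=X.
$$
Together with the length-one chain $T^*\subset Y$, this exhibits $S^*$ as saturated over $T^*$, so Theorem \ref{thm-moduli-div}(1) yields that $f|_{S^*}\colon (S^*,B_{S^*})\to T^*$ is a good dlt model, now with $\dim T^*<\dim Y$.

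By Theorem \ref{thm-moduli-div}(2), the induced g-dlt pair on $T^*$ arising from $f|_{S^*}$ agrees with $(T^*,B_{T^*}+M_{T^*})$ obtained by adjunction from $f$; in particular $T$ is a stratum of this induced pair, of codimension $m-1$ in $T^*$. Applying the inductive hypothesis to the good dlt model $f|_{S^*}$ and the stratum $T$ of its base, we produce a stratum $S$ of $(S^*,B_{S^*})$ -- which is also a stratum of $(X,B)$, since strata of an adjunction pair along an lc centre are themselves strata of the ambient pair -- saturated over $T$ with respect to $f|_{S^*}$, via filtrations
$$
S=S_0\subset\cdots\subset S_p=S^*,\qquad T=T_0\subset\cdots\subset T_{m-1}=T^*,
$$
with $p\geq m-1$. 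Concatenation with the previously constructed chains gives filtrations of $X$ and $Y$ of lengths $p+k$ and $m$ respectively; the required saturation of $S$ over $T$ with respect to $f$ follows from the inner saturation for indices $0\leq j\leq m-1$, and from $f(S^*_k)=f(X)=Y$ together with $p+k\geq m$ for $j=m$.

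The work is essentially combinatorial once the two substantive inputs are supplied: Theorem \ref{thm-moduli-div} (adjunction for fibre spaces commutes with restriction) and Remark \ref{rem-compare-lc-centre}(1) (each g-lc centre is dominated by some lc centre). The only non-trivial verification left is the construction of the codimension-one chain rising from $S^*$ up to $X$ within the dlt stratification, which rests on the open dense log smooth locus of $(X,B)$ containing the generic points of all lc centres.
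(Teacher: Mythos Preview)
Your argument is correct and follows the same strategy as the paper's one-line proof (``By induction on the codimension of $T$ and Theorem \ref{thm-moduli-div}(1)''), spelling out the details the paper leaves implicit. Your explicit invocation of Theorem \ref{thm-moduli-div}(2) to identify the adjunction pair $(T^*,B_{T^*}+M_{T^*})$ with the pair induced by $f|_{S^*}$ is a point the paper glosses over but which is indeed needed to ensure $T$ remains a stratum of the new base; this follows because $\mathbf{M}|_{T^*}=\mathbf{N}$ together with $(f|_{S^*})^*(K_{T^*}+B_{T^*}+M_{T^*})=(f|_{S^*})^*(K_{T^*}+C_{T^*}+N_{T^*})$ forces $B_{T^*}=C_{T^*}$.
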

\begin{proof}
	By induction on the codimension of $T$ and Theorem \ref{thm-moduli-div}(1).
\end{proof}

\begin{proof}[Proof of Theorem \ref{thm-abun-moduli}]
    By Proposition \ref{prop-dlt-trivial-fib}, replacing $f:(X,B) \to Y$ we may assume it is a good dlt model. Let $(Y,B_Y+M_Y)$ be the induced g-dlt pair. It suffices to show that, for every stratum $T$ of $(Y,B_Y)$, the moduli b-divisor $\mathbf{M}|_T$ is b-nef and abundant. Thanks to Theorem \ref{thm-moduli-div} and the above lemma, there exists a stratum $S$ of $(X,B)$ with $f(S)=T$ such that, writing $K_S+B_S=(K_X+B)|_S$, the morphism $f|_S:(S,B_S) \to T$ is a good dlt model and $\mathbf{M}|_T= \mathbf{N}$ where $\mathbf{N}$ is the moduli b-divisor of $f|_S$. So, it is sufficient to prove $\mathbf{M}$ is b-nef and abundant.
    
    To this end, let $W$ be a minimal horizontal lc centre of $(X,B)$. Again by Theorem \ref{thm-moduli-div}, writing $K_W+B_W=(K_X+B)|_W$, the morphism $f|_W:(W,B_W) \to Y$ is a good dlt model and $\mathbf{M}= \mathbf{P}$ where $\mathbf{P}$ is the moduli b-divisor of $f|_W$. Replacing $(X,B)$ with $(W,B_W)$ we may assume $\rddown{B}$ has no horizontal$/Y$ component. Finally, the result follows from Corollary \ref{cor--klt-trivial} and Remark \ref{rem--moduli-lc-trivial-morphism}.
\end{proof}

\begin{lem}[Compactification of dlt model]\label{lem-compact}
	Let $f : (X,B) \to Y$ be a fibred dlt model. Suppose $g:Y \to Z$ is a proper morphism. Then, there exists an open immersion $Z  \subset Z^c$ and a projective fibred dlt model $f^c: (X^c,B^c) \to Y^c$ with $g^c:Y^c \to Z^c$ proper such that $(X^c|_{(g\circ f)^{-1}Z}, B^c|_{(g\circ f)^{-1}Z})=(X,B)$, $f^c|_{(g\circ f)^{-1}Z}=f$ and $Y^c|_{g^{-1}Z}=Y$.
\end{lem}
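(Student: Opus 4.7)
The plan is to compactify the tower $X \to Y \to Z$ one layer at a time using Nagata's compactification theorem, extend the divisorial data to the compactifications, and apply the dlt modification of Proposition \ref{prop-dlt-trivial-fib}, arranging that every step is an isomorphism over $Z$.

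First, I would apply Nagata's compactification theorem together with Chow's lemma and normalization to obtain an open immersion $Z \hookrightarrow Z^c$ with $Z^c$ a normal projective variety. Applying Nagata's theorem to $g$ viewed as a morphism to $Z^c$ and normalizing the closure of $Y$, I would obtain a normal variety $Y^c_0$ proper over $Z^c$ with $Y$ the preimage of $Z$; the same procedure applied to $f$ viewed as a morphism to $Y^c_0$ produces a normal variety $X^c_0$ proper over $Y^c_0$ with $X$ the preimage of $Y$. Writing $K_X+B+(\varphi)=f^*D$ for some $\K$-rational function $\varphi$ and $\R$-Cartier divisor $D$ on $Y$, I would close up $D$ to $\bar{D}$ on $Y^c_0$ and blow up $Y^c_0$ outside $Y$ so that $\bar{D}$ becomes $\R$-Cartier. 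Then
\[
F := K_{X^c_0}+\bar{B}+(\varphi)-(f^c_0)^*\bar{D}
\]
is $\R$-Cartier and supported in $X^c_0 \setminus X$, where $\bar{B}$ denotes the closure of $B$.

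Next, I would set $B^c_0 := \bar{B}-F$, so that $K_{X^c_0}+B^c_0 \sim_\R 0/Y^c_0$ and $B^c_0|_X=B$. The divisor $B^c_0$ may fail to be a sub-boundary along $X^c_0 \setminus X$; to remedy this I would exploit the freedom to add any $\R$-Cartier divisor supported in $Y^c_0 \setminus Y$ to $\bar{D}$ without changing the linear equivalence over $Y$, together with successive blow-ups of $X^c_0$ outside $X$, to reduce to the case where $(X^c_0,B^c_0)$ is a sub-lc pair. A further truncation of boundary coefficients against the reduced divisor $(X^c_0 \setminus X)_{\mathrm{red}}$ (again absorbed into $\bar{D}$) makes $B^c_0$ effective near $X^c_0 \setminus X$, so that $(X^c_0,B^c_0)$ becomes an lc pair equipped with an lc-trivial morphism to $Y^c_0$ that agrees with $f:(X,B)\to Y$ over $g^{-1}Z$.

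Finally, taking the Stein factorization $X^c_0 \to \widetilde{Y^c_0} \to Y^c_0$ (which is the identity over $g^{-1}Z$ since $f$ is already fibred) and applying Proposition \ref{prop-dlt-trivial-fib} to the resulting lc-trivial fibration over $\widetilde{Y^c_0}$ yields a projective $\Q$-factorial fibred dlt model $f^c:(X^c,B^c) \to Y^c$. Because $f:(X,B) \to Y$ is already a fibred dlt model, the g-dlt modification of Lemma \ref{lem-g-dlt-blow-up} involves no blow-ups over $Y \subset \widetilde{Y^c_0}$, and the MMP of Lemma \ref{lem-dlt-trivial-fib} contracts only divisors lying over $\widetilde{Y^c_0} \setminus Y$, since all contracted divisors have non-positive discrepancies and these cannot appear over $g^{-1}Z$ where the pair is already dlt. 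Hence every step is an isomorphism over $Z$, yielding the desired compatibilities $Y^c|_{g^{-1}Z}=Y$, $(X^c,B^c)|_{(g\circ f)^{-1}Z}=(X,B)$ and $f^c|_{(g\circ f)^{-1}Z}=f$. The hard part will be the boundary-extension step: producing a sub-lc extension of $B$ on $X^c_0$ without altering the pair over $Z$. This rests on careful use of both the ambiguity of $\bar{D}$ modulo $\R$-Cartier divisors supported in $Y^c_0 \setminus Y$ and of successive blow-ups of $X^c_0$ along $X^c_0 \setminus X$ to resolve the singularities of the closure of $B$; once this is achieved, the dlt modification step goes through automatically by the negativity lemma argument already used in Lemma \ref{lem-dlt-trivial-fib}.
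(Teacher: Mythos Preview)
The paper does not argue this directly: its proof is the single sentence ``This follows from \cite[Theorem 1.2]{has-mmp} (see also \cite[Theorem 3.2]{hu}).'' Those references package precisely the compactification-preserving-dlt-structure statement you are trying to rebuild by hand.

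Your sketch follows the natural strategy and correctly identifies the hard step (extending $B$ to an lc boundary on a compactification), but the final paragraph has a genuine gap. When you invoke Proposition \ref{prop-dlt-trivial-fib}, the underlying Lemma \ref{lem-dlt-trivial-fib} starts from a \emph{log smooth} model $(\overline{X},\overline{B})$ of the compactified pair. Since $(X,B)$ is only dlt, not log smooth, this model blows up over $(g\circ f)^{-1}Z$ as well, and for each such exceptional $D$ one has $\mult_D E = a(D,X,B)$, which is strictly positive over the klt locus of $(X,B)$. Hence the MMP$/Y'$ must contract divisors lying over $g^{-1}Z$, contrary to your claim; and even after it does, the output is a minimal model of $(\overline{X},\overline{B})/Y'$, determined only up to flops, so there is no reason it literally equals $(X,B)$ over $(g\circ f)^{-1}Z$ (compare Remark \ref{rem-can-bundle}(2) and the example following it). The same objection applies to your use of Lemma \ref{lem-g-dlt-blow-up} on the base: nothing in its statement forces it to be an isomorphism over the already g-dlt locus $Y$. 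What is actually needed is a dlt modification that is \emph{a priori} an isomorphism over the open set where the pair is already dlt; arranging this carefully, for both $Y^c$ and $X^c$, is exactly what \cite{has-mmp} supplies and does not follow from Proposition \ref{prop-dlt-trivial-fib} as written. Your boundary-extension step is also underspecified: adjusting $\bar D$ by divisors supported on $Y_0^c\setminus Y$ shifts $B_0^c$ only by vertical pullbacks, which cannot correct coefficients of horizontal components of $\bar B-F$.
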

\begin{proof}
	This follows from \cite[Theorem 1.2]{has-mmp} (see also \cite[Theorem 3.2]{hu}).
\end{proof}

The following theorem is a relative version of Theorem \ref{thm-abun-moduli}.
\begin{thm}\label{thm-relative-log-abundance}
	Let $f : (X,B) \to Y$ be a good lc-trivial morphism from an lc pair, $Y$ be proper over a variety $Z$. Then, the moduli b-divisor $\mathbf{M}$ is b-nef and log abundant over $Z$ with respect to the induced g-pair.
\end{thm}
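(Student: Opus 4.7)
The plan is to pass to a fibred good $\Q$-factorial dlt model, compactify via Lemma \ref{lem-compact}, apply the absolute version (Theorem \ref{thm-abun-moduli}) on the compactification, and finally descend using Lemma \ref{lem-global-local-nef-abun} and open restriction.

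First I would reduce to the fibred case. Let $X \to \widetilde Y \overset{\gamma}{\to} Y$ be the Stein factorisation. Since $f$ is good, $\widetilde f: (X,B)\to \widetilde Y$ is an lc-trivial fibration with $\widetilde{\mathbf M} = \gamma^* \mathbf M$ (Definition \ref{defn-good-lc-trivial}), and $\widetilde Y$ is proper over $Z$ through $Y$. By Lemma \ref{lem-nef-abund-3} applied to $\gamma$, together with Remark \ref{rem-compare-lc-centre} identifying g-lc centres of $(Y,B_Y+M_Y)$ as images of g-lc centres of $(\widetilde Y, B_{\widetilde Y}+M_{\widetilde Y})$ under the finite map $\gamma$, it suffices to prove the statement for $\widetilde f$. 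Applying Proposition \ref{prop-dlt-trivial-fib} to $\widetilde f$ then produces a fibred good $\Q$-factorial dlt model (fibredness is preserved since $\widetilde Y' \to \widetilde Y$ is birational), and after relabelling I may assume $f: (X,B) \to Y$ is itself a fibred good $\Q$-factorial dlt model with $Y$ proper over $Z$.

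Second, I would apply Lemma \ref{lem-compact} to obtain an open immersion $Z \subset Z^c$ and a projective fibred dlt model $f^c: (X^c, B^c) \to Y^c$ with $g^c: Y^c \to Z^c$ proper, restricting to $f$ over $Z$. Enlarging $Z^c$ if necessary via Nagata compactification, I arrange that $Z^c$ is complete; then $Y^c$ is complete as well. Applying Theorem \ref{thm-abun-moduli} to the good lc-trivial morphism $f^c$ yields that the moduli b-divisor $\mathbf M^c$ is b-nef and log abundant with respect to the induced g-pair on $Y^c$. Passing to any data log resolution on which $\mathbf M^c$ descends, the trace of $\mathbf M^c$ is nef and abundant globally, and its pullback to the normalisation of each g-lc centre is likewise abundant; by Lemma \ref{lem-global-local-nef-abun}, these statements upgrade to nef and abundance over $Z^c$.

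Finally, I would restrict to the open subset $Z \subset Z^c$. Since the pre-discriminant and pre-moduli b-divisors are defined divisorially by shrinking around generic points of prime divisors (see Section \ref{subsec-lc-trivial-fib} and Definition \ref{defn-moduli}), the restriction of $\mathbf M^c$ to $Y$ coincides with $\mathbf M$; similarly, every g-lc centre of $(Y, B_Y+M_Y)$ is the restriction of some g-lc centre of the compactified g-pair that meets $Y$. Because the relevant very general fibres of the Stein factorisation over $Z^c$ and over $Z$ agree, nef and abundance over $Z^c$ immediately restricts to nef and abundance over $Z$, and this on every g-lc stratum gives the required log abundance over $Z$. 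The main obstacle I anticipate is the bookkeeping in the first paragraph: justifying that the finite-descent (via goodness and Lemma \ref{lem-nef-abund-3}) for log abundance really does transfer the statement from $\widetilde Y$ to $Y$ at the level of each g-lc centre, so that the reduction to the fibred case is clean before compactification is invoked.
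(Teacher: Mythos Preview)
Your approach is essentially the same as the paper's: reduce to a fibred dlt model, compactify via Lemma \ref{lem-compact}, apply Theorem \ref{thm-abun-moduli}, and descend to the relative statement via Lemma \ref{lem-global-local-nef-abun}. Two minor differences are worth noting. First, the paper begins by shrinking $Z$ to an affine open (legitimate since log abundance over $Z$ is checked on very general fibres, Definition \ref{defn--abund}), which makes the compactification step slightly cleaner than your Nagata enlargement of $Z^c$. Second, and directly addressing your stated obstacle: the paper handles the reduction to the fibred case in one line via Remark \ref{rem--moduli-lc-trivial-morphism}(2), which already records that when $f$ is good, $\widetilde{\mathbf M}$ is b-nef and \emph{log} abundant over $Z$ if and only if $\mathbf M$ is. This packages exactly the finite-descent bookkeeping you were worried about, so you do not need the separate appeal to Lemma \ref{lem-nef-abund-3} and Remark \ref{rem-compare-lc-centre} (the latter of which, incidentally, compares lc centres of $(X,B)$ with those of $(Y,B_Y+M_Y)$, not those of $\widetilde Y$ with those of $Y$; the comparison you actually need follows from $K_{\widetilde Y}+B_{\widetilde Y}+M_{\widetilde Y}=\gamma^*(K_Y+B_Y+M_Y)$ and the behaviour of log discrepancies under finite pullback).
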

\begin{proof}
	Since the question is local (see Definition \ref{defn--abund}), we may assume $Z$ is affine. By Remark \ref{rem--moduli-lc-trivial-morphism}, replacing $Y$ we can assume $f$ is a contraction. By Proposition \ref{prop-dlt-trivial-fib}, replacing $f:(X,B) \to Y$ we can assume it is a fibred dlt model. By Lemma \ref{lem-compact}, we can assume $Z$ is complete and $X,Y$ are projective. By Theorem \ref{thm-abun-moduli}, $\mathbf{M}$ is b-nef and log abundant. Therefore, by Lemma \ref{lem-global-local-nef-abun}, it is b-nef and log abundant over $Z$ with respect to the induced g-pair. 
\end{proof}

\appendix

\section{Extending a finite cover from a subvariety}\label{sec3}
The main purpose of this section is to prove the following theorem on extending a finite cover over a closed subvariety to a finite cover of the same degree over the whole variety. Although in this paper, we will only use a special case the ground field is an algebraically closed field of characteristic zero, we prove it in full generality. Throughout this section, all varieties are over an arbitrary field. 

\begin{thm}\label{thm-extend-finite-cover}
	Let $X$ be a normal variety and $S$ be a closed subvariety. Suppose we are given a finite morphism $\gamma: \widetilde{S} \to S$ from a normal variety. Suppose further that one of the following conidtions holds:
	\begin{enumerate}
		\item The residue field $\kappa(\eta_S)$ at the generic point $\eta_S$ of $S$ is perfect.
		
		\item $X$ is regular at the generic point of $S$.
	\end{enumerate}
	Then there exists a finite morphism $\rho: \widetilde{X} \to X$ of normal varieties together with a closed subvariety $\widehat{S} \subset \widetilde{X}$ satisfying:
	$$
	\xymatrix{
		\widetilde{S} \ar[dr]_{\gamma}\ar[r]^{\nu} & \widehat{S} \ar[d]^{} \ar@{^(->}[r]^{}   &  \widetilde{X}\ar[d]^{\rho}\  &\\
		& S \ar@{^(->}[r]^{} &    X } 
	$$ 
	
	(1). $\widehat{S}$ is mapped onto $S$ through $\rho$ and the above diagram commutes.
	
	(2). $\nu$ is the normalisation of $\widehat{S}$. 
	
	(3). $\deg \rho= \deg \gamma$. 
\end{thm}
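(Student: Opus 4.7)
My approach is to lift the minimal polynomial(s) of a generator (or tower of generators) of the residue field extension $K(\widetilde{S})/K(S)$ to polynomials over $X$, form the corresponding $X$-algebra, and then normalize. By replacing $X$ with an affine open containing $\eta_S$, I assume $X=\Spec A$ with $\mathfrak{p}\subset A$ the prime defining $S$; put $R=A_{\mathfrak{p}}$, $k=\kappa(\mathfrak{p})=K(S)$, $L_0=K(\widetilde{S})$, and $d=[L_0:k]=\deg\gamma$. The heart of the construction is a finite extension $R\hookrightarrow R'$ in which $R'$ is a normal local $R$-domain of rank $d$ with residue field $L_0$. Granted such $R'$, I pick $R$-module generators $\theta_j\in R'\subset L:=\mathrm{Frac}(R')$ satisfying monic equations over $R$, clear finitely many denominators (shrinking $A$ to some $A_s$ with $s\notin\mathfrak{p}$), and set $B:=A[\theta_1,\ldots,\theta_m]\subset L$. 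Then $B$ is a finite $A$-domain with $B_{\mathfrak{p}}=R'$ and $\mathrm{Frac}(B)=L$. Letting $\widetilde{A}$ be the integral closure of $A$ in $L$ (finite over $A$ by the Nagata property of varieties) and $\widetilde{X}:=\Spec\widetilde{A}$, the map $\rho:\widetilde{X}\to X$ is finite of degree $d$ and $\widetilde{X}$ is normal. Since $R'$ is already normal, $\widetilde{A}_{\mathfrak{p}}=R'$, so there is a unique prime $\widetilde{\mathfrak{q}}\subset\widetilde{A}$ above $\mathfrak{p}$ with $\kappa(\widetilde{\mathfrak{q}})=L_0$. Setting $\widehat{S}:=V(\widetilde{\mathfrak{q}})$, the ring $\widetilde{A}/\widetilde{\mathfrak{q}}\subset L_0$ is integral over $A/\mathfrak{p}$ with fraction field $L_0$, so its integral closure in $L_0$ coincides with that of $A/\mathfrak{p}$---namely $\widetilde{S}$---which identifies $\nu$ as the normalization of $\widehat{S}$ and gives $\deg\rho=d=\deg\gamma$.

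For the construction of $R'$ in case (1), $k$ is perfect, so $L_0/k$ is separable. The primitive element theorem gives $L_0=k(\alpha)$ with separable minimal polynomial $\bar{f}(T)\in k[T]$ of degree $d$; lift $\bar{f}$ to a monic $f\in R[T]$. Since $R$ is normal (as the local ring of a normal variety) and $\bar{f}$ is irreducible, a Gauss-type argument gives $f$ irreducible in $K[T]$: any monic factorization in $K[T]$ would lie in $R[T]$ by normality, and reducing modulo $\mathfrak{m}$ would contradict irreducibility of $\bar{f}$. Hence $R':=R[T]/f(T)$ injects into the field $L=K[T]/f(T)$, is free of rank $d$ over $R$, and is local with residue field $L_0$. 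Separability makes $f'$ a unit in $R'$, so $R\to R'$ is étale, and $R'$ is automatically normal.

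Case (2) is the principal difficulty: when $R$ is merely regular but $k$ is not perfect, $L_0/k$ may be inseparable and even non-simple. I factor $k\subset L_{\mathrm{sep}}\subset L_0$ with $L_{\mathrm{sep}}/k$ separable and $L_0/L_{\mathrm{sep}}$ purely inseparable of characteristic $p>0$, and apply case (1) to $L_{\mathrm{sep}}/k$ to obtain a regular local extension $R^{(0)}\supset R$ with residue field $L_{\mathrm{sep}}$. For the purely inseparable part I pick a chain $L_{\mathrm{sep}}=F_0\subsetneq F_1\subsetneq\cdots\subsetneq F_s=L_0$ with $F_{i+1}=F_i(\beta_i^{1/p})$ where $\beta_i\in F_i\setminus F_i^p$, lift each $\beta_i$ to a unit $b_i\in R^{(i)}$, and set $R^{(i+1)}:=R^{(i)}[T]/(T^p-b_i)$; the final $R':=R^{(s)}$ is the desired extension. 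The key technical ingredient is the following observation: if $R$ is a regular local ring of dimension $n$ with residue field $k$ and $f\in R[T]$ is monic with $\bar{f}\in k[T]$ irreducible, then $R[T]/f(T)$ is again regular local with residue field $k[T]/\bar{f}$. Indeed, irreducibility of $\bar{f}$ makes the quotient local with maximal ideal $\mathfrak{m}R[T]/(f)$; the quotient is free of rank $\deg f$ over $R$, hence of Krull dimension $n$; and its maximal ideal is generated by the same $n$ generators of $\mathfrak{m}$, forcing embedding dimension equal to $n$ and hence regularity. Applied iteratively at each step of the purely inseparable tower, no intermediate normalization is needed, the residue fields track the $F_i$ correctly, and the remaining verifications (irreducibility of each $T^p-b_i$ over $\mathrm{Frac}(R^{(i)})$, equality of degrees, compatibility of spreading out) are routine.
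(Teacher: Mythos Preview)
Your argument is correct and takes a genuinely different route from the paper's. The paper (Lemma~\ref{lem-com-alg-2}) picks arbitrary $A/\mathfrak{p}$-algebra generators $y_1,\dots,y_k$ of $C$, lifts the minimal polynomial of each $y_j$ in turn, and \emph{normalizes at every step}; it controls the degree via the separate inequality Lemma~\ref{lem-com-alg-1} and, in case~(2), preserves regularity by invoking Lemma~\ref{lem-com-alg-3}, itself proved by an induction on $\mathrm{ht}(\mathfrak{p})$. You instead exploit the structure of the residue-field extension directly: one primitive element for the separable part, then a purely inseparable $p$-tower. Your key observation---that for regular local $R$ and monic $f$ with $\bar f$ irreducible the ring $R[T]/(f)$ is again regular local, because its maximal ideal is $\mathfrak{m}\cdot R[T]/(f)$ and hence has embedding dimension $\le\dim R$---replaces both auxiliary lemmas in one line and eliminates all intermediate normalizations. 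The paper's approach is more uniform (no separable/inseparable case split) and yields the degree inequality of Lemma~\ref{lem-com-alg-1} as a by-product of independent interest; yours is more elementary, keeps every ring in the tower explicit, and makes the regularity check a dimension count.

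One point to tidy: you begin by ``replacing $X$ with an affine open containing $\eta_S$'', but the theorem concerns the original $X$. After you build $\widetilde U\to U$ with function field $L$, take $\widetilde X$ to be the normalization of $X$ in $L$ (finite by the Nagata property) and $\widehat S$ the closure of $\widehat S_U$; since $\widetilde A_{\mathfrak p}=R'$ is already normal, $\widetilde X|_U=\widetilde U$, and the normalization of $\widehat S$ is determined generically, hence agrees with $\widetilde S$. The paper does the analogous step via compactification and Stein factorization.
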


We need some knowledge from commutative algebra. We begin with the following elementary lemma. For a domain $A$, we denote by $\kappa(\mathfrak{p})$ the residue field at $\mathfrak{p}$, and by $K(A)$ the field of fractions.
\begin{lem}\label{lem-com-alg-1}
	Let $A \subset B$ be an integral extension of domains, where $A$ is integrally closed. Let $\mathfrak{q} \subset B$ be a prime ideal, $\mathfrak{p}=\mathfrak{q} \bigcap A $, and $A/\mathfrak{p} \subset B/\mathfrak{q}$ be the induced inclusion of domains. Suppose further that one of the following conditions holds:
	\begin{enumerate}
		\item $\kappa(\mathfrak{q})$ is generated by one element over $\kappa(\mathfrak{p})$. 
		
		\item $A$ is Noetherian, and $B$ is finite and locally flat over $A$ at $\mathfrak{p}$.  
		
		\item $A$ is regular at $\mathfrak{p}$, and $[K(B):K(A)]< +\infty$. 
	\end{enumerate}
	Then, we have $[\kappa(\mathfrak{q}):\kappa(\mathfrak{p})]< +\infty$ and the inequality $$[\kappa(\mathfrak{q}):\kappa(\mathfrak{p})] \le [K(B):K(A)].$$
\end{lem}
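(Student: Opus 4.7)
I plan to prove each of the three hypotheses separately implies the inequality $[\kappa(\mathfrak{q}):\kappa(\mathfrak{p})] \le [K(B):K(A)]$, which automatically entails $[\kappa(\mathfrak{q}):\kappa(\mathfrak{p})]<+\infty$ when the right-hand side is finite (in case (1) the finiteness will drop out of the argument itself). A uniform preliminary used throughout: since $B$ is a domain integral over the domain $A$, every nonzero element of $B$ divides a nonzero element of $A$ via its integral equation, so the canonical map $B\otimes_A K(A)\to K(B)$ is an isomorphism; the same reasoning applied to the quotient extension gives $(B/\mathfrak{q})\otimes_{A/\mathfrak{p}}\kappa(\mathfrak{p})=\kappa(\mathfrak{q})$.

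For case (1), I would lift a generator $\bar{b}$ of $\kappa(\mathfrak{q})/\kappa(\mathfrak{p})$ to some $b\in B$. Because $A$ is integrally closed in $K(A)$ and $b$ is integral over $A$, the minimal polynomial $f(x)\in K(A)[x]$ of $b$ in fact lies in $A[x]$ (the standard consequence of integral closedness). Setting $d=\deg f=[K(A)(b):K(A)]$, which is finite by integrality and bounded by $[K(B):K(A)]$, and reducing $f$ modulo $\mathfrak{p}$, I obtain a monic polynomial over $\kappa(\mathfrak{p})$ of degree $d$ annihilating $\bar{b}$, whence $[\kappa(\mathfrak{q}):\kappa(\mathfrak{p})]\le d\le [K(B):K(A)]$.

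For case (2), I would exploit flatness. Since $B_\mathfrak{p}$ is finite and flat over the Noetherian local ring $A_\mathfrak{p}$, it is free; its rank equals $[K(B):K(A)]=:n$ by the preliminary observation. Then $B\otimes_A\kappa(\mathfrak{p})$ is an $n$-dimensional $\kappa(\mathfrak{p})$-algebra which surjects onto $\kappa(\mathfrak{q})$ via the preliminary observation, yielding the bound.

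For case (3), I would reduce to case (2). Choosing a $K(A)$-basis $b_1,\ldots,b_n$ of $K(B)$ with $b_i\in B$ and setting $B'=A[b_1,\ldots,b_n]\subset B$, integrality of each $b_i$ makes $B'$ a finite $A$-module with $K(B')=K(B)$. The main obstacle will be establishing freeness of $B'_\mathfrak{p}$ over the regular local ring $A_\mathfrak{p}$: when $\dim A_\mathfrak{p}\le 1$ this is immediate, since a finite torsion-free module over a DVR is free; in higher Krull dimension one would appeal to miracle flatness (after verifying $B'_\mathfrak{p}$ is Cohen--Macaulay of the same dimension as $A_\mathfrak{p}$), or argue directly using the UFD property of $A_\mathfrak{p}$ together with case (1) applied element by element to lifts of generators of $\kappa(\mathfrak{q})$. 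With freeness in hand, case (2) applied to the triple $(A,B',\mathfrak{q}\cap B')$ closes the proof.
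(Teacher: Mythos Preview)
Your arguments for cases (1) and (2) are correct and essentially identical to the paper's. The issue is case (3).

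First, a minor gap: replacing $B$ by the finite subalgebra $B'=A[b_1,\dots,b_n]$ only bounds $[\kappa(\mathfrak{q}\cap B'):\kappa(\mathfrak{p})]$, not $[\kappa(\mathfrak{q}):\kappa(\mathfrak{p})]$. This is repairable by also throwing into $B'$ lifts of any prescribed finite set of elements of $\kappa(\mathfrak{q})$ and then passing to the limit, but you do not say this.

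The real problem is your flatness step in dimension $\ge 2$. Miracle flatness requires $B'_{\mathfrak p}$ to be Cohen--Macaulay, and there is no reason an arbitrary finite $A_{\mathfrak p}$-subalgebra of $K(B)$ should be: any non-CM affine domain, realised via Noether normalisation as a finite extension of a polynomial ring, gives a counterexample after localising. So the appeal to miracle flatness is circular unless you can \emph{choose} $B'$ to be CM, and you give no mechanism for that. Your fallback ``UFD property together with case (1) applied element by element'' does not work either: iterating case (1) up a tower $\kappa(\mathfrak p)\subset\kappa(\mathfrak p)(\bar b_1)\subset\cdots$ would require the intermediate rings $A[b_1,\dots,b_{i}]$ to be integrally closed, which again fails in general.

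The paper avoids this entirely by induction on $\operatorname{ht}(\mathfrak p)$. When $\operatorname{ht}(\mathfrak p)=1$, $A_{\mathfrak p}$ is a DVR with uniformiser $\pi$, and a direct length computation on $B/\pi B$ (using only $B\subset K(B)\cong K(A)^{\oplus n}$, no finiteness of $B$) gives $\sum_i[\kappa(\mathfrak q_i):\kappa(\mathfrak p)]\le n$. For $\operatorname{ht}(\mathfrak p)=d$, pick a regular parameter $x_1$; then $A/(x_1)$ is again regular local, choose (by going-down) a prime $\mathfrak r\subset\mathfrak q$ of $B$ over $(x_1)$, and apply the height-$1$ case to bound $[\kappa(\mathfrak r):\kappa((x_1))]\le n$, then the inductive hypothesis to $A/(x_1)\subset B/\mathfrak r$ at the prime $\mathfrak q/\mathfrak r$ of height $d-1$. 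This never needs flatness or Cohen--Macaulayness of anything.
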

\begin{proof}
	Let $S=A \backslash \mathfrak{p}$ be the multiplicative set. Replacing $A,B$ with $S^{-1}A,S^{-1}B$, we may assume $A$ is a local domain. In particular, $A/\mathfrak{p}$ is a field, denote by $\Bbbk$, and $B/\mathfrak{q}$ is integral over $\Bbbk$ which is also a field by \cite[Proposition 5.7]{am}. %Replacing $B$ with $B_{\mathfrak{q}}$, we may further assume $B$ is a local domain.
	
	\noindent \textbf{Case 1.} We suppose that $ B/\mathfrak{q}$ is generated by one element $\overline{b}$ over $\Bbbk$. Lift $\overline{b}$ to any element $b \in B$. Let $\alpha(X) \in K(A)[X]$ be the minimal polynomial of $b \in K(B)$ over $K(A)$. Since $A$ is integrally closed, by \cite[Proposition 5.15]{am}, we deduce $\alpha(X) \in A[X]$. Thus we have
	$K(A)\subset K(A)[b]\subset K(B)$ with $[K(A)[b]:K(A)]= \deg (\alpha)$. On the other hand, the reduction mod $\mathfrak{p}$ gives $\overline{\alpha}(X) \!\in\! A/\mathfrak{p}[X]$ which still satisfies $\overline{\alpha}(\overline{b})=0$ in $B/\mathfrak{q}$. In particular, if $\beta(X) \in \Bbbk[X]$ is the minimal polynomial of $\overline{b}$, then $\overline{\alpha}(X)$ is divisible by $\beta(X)$, hence $\deg(\beta) \le \deg (\overline{\alpha}) =\deg(\alpha)$. Therefore, we deduce $$[K(B):K(A)] \ge [K(A)[b]:K(A)] \ge  [B/\mathfrak{q}:\Bbbk].$$
	
	\noindent \textbf{Case 2.} We suppose that $A$ is Noetherian and $B$ is finite and flat over $A$. Since $A$ is Noetherian and $B$ is finite over $A$, by \cite[\href{https://stacks.math.columbia.edu/tag/00FP}{Lemma 00FP}]{stacks-project}, we deduce $B$ is of finite presentation over $A$. Moreover, because $B$ is flat over $A$, by \cite[\href{https://stacks.math.columbia.edu/tag/00NX}{Lemma 00NX}]{stacks-project}, $B$ is a free $A$-module, and $\dim_{\kappa(\mathfrak{p})} B \otimes_A \kappa(\mathfrak{p})= \dim_{K(A)} B \otimes_A K(A)$. Since $B$ is integral over $A$, by \cite[Propositions 3.5 and 5.7]{am}, we have $B \otimes_A K(A)=K(B)$. On the other hand, $\dim_{\kappa(\mathfrak{p})} B \otimes_A \kappa(\mathfrak{p})=\dim_{\kappa(\mathfrak{p})} B/\mathfrak{p}B \ge \dim_{\kappa(\mathfrak{p})} B/\mathfrak{q}$. We established the inequality.
	
	\noindent \textbf{Case 3.} We suppose that $A$ is regular at $\mathfrak{p}$, hence a regular local ring. We prove by induction on the height of $\mathfrak{p}$. We first assume $\mathrm{ht}(\mathfrak{p})=1$. Due to \cite[Proposition 9.2]{am}, $A$ is a discrete valuation ring, and let $\pi \in A$ be a uniformiser. %Let $\overline{B}$ be the integral closure in its field of fractions. By \cite[Theorem 5.10]{am}, there exists a prime ideal $\overline{\mathfrak{q}} \subset \overline{B}$ such that $\overline{\mathfrak{q}} \bigcap B= \mathfrak{q}$. Replacing $B,\mathfrak{q}$ with $\overline{B},\overline{\mathfrak{q}}$, we may assume $B$ is integrally closed. Since $B$ is integral over $A$, we see $\dim B=\dim A=1$ which in turn implies that $B_{\mathfrak{q}}$ is a discrete valuation ring. According to \cite[\href{https://stacks.math.columbia.edu/tag/09E5}{Lemma 09E5}]{stacks-project}, we achieve the conclusion. Indeed, 
	By \cite[\href{https://stacks.math.columbia.edu/tag/031F}{Lemma 031F}]{stacks-project}, we deduce $B$ is a semi-local domain with finitely many maximal ideals $\mathfrak{q}_i$ lying over $\mathfrak{p}$. Moreover, $[\kappa(\mathfrak{q}_i):\kappa(\mathfrak{p})]< +\infty$ for each $i$. Let $M=B/\pi B$. We claim that $M$ is a $B$-module of finite length. Indeed, since $B \subset K(A)^{\oplus n}$, where $n=[K(B):K(A)]$, we apply \cite[\href{https://stacks.math.columbia.edu/tag/00PE}{Lemma 00PE}]{stacks-project} to obtain that
	\begin{equation}\label{eq1}
	\mathrm{length}_A M \le n \cdot \mathrm{length}_A (A/\pi A)=n
	\end{equation}
	is finite which proves the claim by  \cite[\href{https://stacks.math.columbia.edu/tag/00IX}{Lemma 00IX}]{stacks-project}. Hence, applying \cite[\href{https://stacks.math.columbia.edu/tag/02M0}{Lemma 02M0}]{stacks-project}, we immediately deduce
	\begin{equation}\label{eq2}
	\mathrm{length}_A M = \sum_i [\kappa(\mathfrak{q}_i):\kappa(\mathfrak{p})] \mathrm{length}_{B_{\mathfrak{q}_i}} M_{\mathfrak{q}_i} \ge  [\kappa(\mathfrak{q}):\kappa(\mathfrak{p})].
	\end{equation}
	Combining (\ref{eq1}) and (\ref{eq2}), we conclude the result.
	
	Now we suppose $\mathrm{ht}(\mathfrak{p})=d$, and Lemma \ref{lem-com-alg-1} holds in height $\le d-1$. Given a regular sequence $\{x_1,\ldots,x_d\}$ of $A$, by \cite[\href{https://stacks.math.columbia.edu/tag/00NQ}{Lemma 00NQ}]{stacks-project}, $A/(x_1)$ is a regular local ring. In particular, it is a domain by \cite[\href{https://stacks.math.columbia.edu/tag/00NP}{Lemma 00NP}]{stacks-project}, which in turn implies that $(x_1)$ is prime. So, by \cite[Theorem 5.10]{am}, there is a prime ideal $\mathfrak{r} \subset B$ such that $\mathfrak{r} \bigcap A=(x_1)$. Apply the inductive assumption to $A/(x_1) \subset B/\mathfrak{r}$ to deduce $[\kappa(\mathfrak{r}):\kappa((x_1))] \le [K(B):K(A)]$ and $[\kappa(\mathfrak{q}):\kappa(\mathfrak{p})] \le [\kappa(\mathfrak{r}):\kappa((x_1))]$, hence $[\kappa(\mathfrak{q}):\kappa(\mathfrak{p})] \le [K(B):K(A)]$.
\end{proof}

\begin{rem}\label{rem-algebra-1}
	Notation as above, from equations (\ref{eq1}) and (\ref{eq2}), and by the inductive argument, we obtain a stronger inequality $\sum_i [\kappa(\mathfrak{q}_i):\kappa(\mathfrak{p})] \le [K(B):K(A)]$ in Case 3. If the equality holds, then for each $1\le i \le d$, there is a unique prime ideal $\mathfrak{r}_i \subset B$ such that $\mathfrak{r}_i \bigcap A=(x_i)$ and that $[\kappa(\mathfrak{r}_i):\kappa((x_i))]= [K(B):K(A)]$.
\end{rem}

\begin{rem}\label{rem-algebra-2}
	In the proof of Case 3, although we do not assume $B_{\mathfrak{p}}$ to be Noetherian explicitly, we can deduct it (if $\dim A_{\mathfrak{p}}=1$) by Krull-Akizuki \cite[\href{https://stacks.math.columbia.edu/tag/00PG}{Lemma 00PG}]{stacks-project}. Thus, $M$ is Artinian hence the finiteness of $\mathrm{length}_B M$. If we assume further that $B_{\mathfrak{p}}$ is integrally closed (which is reasonable by Krull-Akizuki's theorem), then $A_{\mathfrak{p}} \hookrightarrow B_{\mathfrak{q}}$ is an extension of discrete valuation rings, hence achieving a refined inequality \cite[\href{https://stacks.math.columbia.edu/tag/09E5}{Lemma 09E5}]{stacks-project}. Furthermore, assuming $K(B)/K(A)$ is finite separable, one infers the fundamental identity \cite[\href{https://stacks.math.columbia.edu/tag/09E8}{Remark 09E8}]{stacks-project}. In particular, this identity becomes a simple form as in \cite[\href{https://stacks.math.columbia.edu/tag/09EB}{Lemma 09EB}]{stacks-project} when $K(B)/K(A)$ is finite Galois. For a general form of this identity in algebraic geometry, see \cite[Examples 4.3.6 and 4.3.7]{fulton}. One can derive the inequality in Case 3 by this identity and the fact that the multiplicity $e_V Y=1$ if and only if $Y$ is regular at $V$ (see \cite[Example 4.3.5(d)]{fulton}). Finally, we remark that, a similar inequality can be established for an extension of valuation rings; see \cite[\href{https://stacks.math.columbia.edu/tag/0ASH}{Lemma 0ASH}]{stacks-project}.
\end{rem}

Recall that a ring $A$ is \emph{Japanese} if for every finite extension $L$ of its field of fractions $K$, the integral closure of $A$ in $L$ is a finitely generated $A$-module, and it is a \emph{Nagata ring} if it is Noetherian, and or every prime ideal $\mathfrak{p}$ the ring $A/\mathfrak{p}$ is Japanese. Recall a basic fact that, a Nagata ring $A$ is universally Japanese, and any finite type $A$-algebra is Nagata (see  \cite[\href{https://stacks.math.columbia.edu/tag/0334}{Proposition 0334}]{stacks-project}). Every quasi-excellent ring is a Nagata ring (see \cite[\href{https://stacks.math.columbia.edu/tag/07QV}{Lemma 07QV}]{stacks-project}), so in particular almost all Noetherian rings that occur in algebraic geometry are Nagata rings. See \cite[\href{https://stacks.math.columbia.edu/tag/0335}{Proposition 0335}]{stacks-project} for examples.

\begin{rem}\label{rem-com-alg-1}
	We collect some important cases in which the inequality of the previous lemma holds. With notation as in Lemma \ref{lem-com-alg-1}, we assume further that $A$ is Nagata. In this situation, we may replace $B$ with its integral closure in the field of fractions, so we may further assume $B$ is finite over $A$. In particular, it automatically holds that $[K(B):K(A)]< +\infty$ and  $[\kappa(\mathfrak{q}):\kappa(\mathfrak{p})]< +\infty$.
	
	If the residue field extension $\kappa(\mathfrak{q})/\kappa(\mathfrak{p})$ is separable, then by the primitive element theorem, the condition (1) is satisfied. Furthermore, if $\kappa(\mathfrak{p})$ is perfect, then $\kappa(\mathfrak{q})/\kappa(\mathfrak{p})$ is separable, and by \cite[\href{https://stacks.math.columbia.edu/tag/09H2}{Lemma 09H2}]{stacks-project}, $\kappa(\mathfrak{q})$ is perfect.
	
	If the height of $\mathfrak{p}$ is one, then $A$ is regular at $\mathfrak{p}$ as $A$ is Noetherian and integrally closed (\cite[Proposition 9.2]{am}). Note that $\mathrm{ht}(\mathfrak{q})=1$ and $B$ is integrally closed, so $B$ is regular at $\mathfrak{q}$.
\end{rem}

\begin{rem}\label{rem-com-alg-2}
	Strikingly enough, Lemma \ref{lem-com-alg-1} does not hold true if we drop all the three assumptions listed. See the following counter-example taken from \href{https://mathoverflow.net/a/198690}{Mathoverflow}.
\end{rem}

\begin{exa}
	Let $R_m=\mathbb{F}_p[t_i,x_i]_{i\in \N_m}$ where $\N_m:= \{1,2,\ldots, m\}$. Let $\sigma:R_m \to R_m$ be the order $p$ automorphism sending $t_i$ to $t_i$ and $x_i$ to $x_i+t_i$. Let $S_m \subset R_m$ be the subring of the fixed elements under the action. Then, by \cite[Proposition 7.8 and Corollary 7.7]{am}, $R_m$ and $S_m$ are Noetherian integrally closed domains and $R_m$ is integral over $S_m$. In particular, $R_m$ is the integral closure of $S_m$ in the extension of field of fractions, and the field extension has degree $p$ by Galois theory.
	
	Let $\mathfrak{q}\subset R_m$ be the prime ideal $\mathfrak{q}=(t_i)_{i \in \N_m}$. This lies over the prime ideal $\mathfrak{p}=S_m \bigcap \mathfrak{q}$. consider an element $f\in R_m$. We can write $f$ as
	$$
	f=f_0+ \sum_i t_i f_i+ \text{terms of higher orders}
	$$
	where $f_0,f_i$ are polynomials of $x_j$ and all other terms have order higher than one in $t_k$'s. Now if $f \in S$, then $\sigma(f)=f$. Note that
	$$
	\sigma(f)=f_0 +\sum_i t_i(f_i+ \partial f_0/\partial x_i ) + \text{terms of higher orders}.
	$$
	This means that $\partial f_0/\partial x_i$ is identically zero. In other words, we see that $f_0\in \mathbb{F}_p[x^p_i]$. Hence we see that $\kappa(\mathfrak{p})=K(S_m/\mathfrak{p})=\mathbb{F}_p(x^p_i)_{i \in \N_m}$. Since $\kappa(\mathfrak{q})=\mathbb{F}_p(x_i)_{i \in \N_m}$, we deduce $[\kappa(\mathfrak{q}):\kappa(\mathfrak{p})] = p^m>p=[K(R_m):K(S_m)]$.
	
	Let $R_{\infty}=\mathbb{F}_p[t_i,x_i]_{i\in \N}$. We may construct $S_{\infty}$ in the same way. Then, $R_\infty$ and $S_\infty$ are integrally closed domains and $R_\infty$ is integral over $S_\infty$, but they are not Noetherian. Let $\mathfrak{q}\subset R_\infty$ be the prime ideal $\mathfrak{q}=(t_i)_{i \in \N}$ and $\mathfrak{p}=S_\infty \bigcap \mathfrak{q}$. One can easily calculate that $[K(R_m):K(S_m)]=p$ but $[\kappa(\mathfrak{q}):\kappa(\mathfrak{p})] = +\infty$.
\end{exa}

\begin{exa}
	By making minor changes to the previous example, we may construct a counter-example, as Remark \ref{rem-com-alg-2} desired, in characteristic $0$. Let $T_m=\Z[\zeta, t_i,x_i]_{i\in \N_m}$ where $\N_m:= \{1,2,\ldots, m\}$ and $\zeta$ is the $p$-th root of unity. Let $\sigma:T_m \to T_m$ by $\sigma|_{Z[\zeta]}=1_{Z[\zeta]}$, $\sigma(t_i)=t_i$ and $\sigma(x_i)=\zeta x_i+t_i$ be an automorphism of order $p$. Let $U_m \subset T_m$ be the subring of the fixed elements under the action. Then, $T_m$ and $U_m$ are Noetherian integrally closed domains and $T_m$ is integral over $U_m$ with $[K(T_m):K(U_m)]=p$.
	
	Let $\mathfrak{q}=(1-\zeta,t_i)_{i \in \N_m}$ and $\mathfrak{p}=\mathfrak{q} \bigcap U_m$ be prime ideals. By similar arguments, one can verify that $[\kappa(\mathfrak{q}):\kappa(\mathfrak{p})] =p^m$.
\end{exa}

\begin{lem}\label{lem-com-alg-3}
	Let $A \subset B$ be a finite extension of domains, $\mathfrak{p} \subset A$ be a prime ideal and $\mathfrak{q} \subset B$ be a prime ideal such that $\mathfrak{q} \bigcap A=\mathfrak{p}$. Suppose $[K(B):K(A)]=[\kappa(\mathfrak{q}):\kappa(\mathfrak{p})] $ and $A$ is regular at $\mathfrak{p}$. Then, $B$ is regular at $\mathfrak{q}$.
\end{lem}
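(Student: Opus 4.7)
The plan is to reduce to the monogenic case and then to identify $B$ explicitly as a quotient of a polynomial ring. First I localize at $\mathfrak{p}$, so $A$ is regular local with maximal ideal $\mathfrak{p}$. By Remark~\ref{rem-algebra-1}, the equality $[K(B):K(A)] = [\kappa(\mathfrak{q}):\kappa(\mathfrak{p})]$ forces $\mathfrak{q}$ to be the unique prime of $B$ lying over $\mathfrak{p}$, so $B$ itself is local with maximal ideal $\mathfrak{q}$ (and $B_\mathfrak{q} = B$).

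For the monogenic case, suppose $B = A[\theta]$. Since a regular local ring is a UFD (Auslander--Buchsbaum), $A$ is integrally closed, so the minimal polynomial $f(X) \in K(A)[X]$ of $\theta$ lies in $A[X]$. Its reduction $\bar f \in \kappa(\mathfrak{p})[X]$ vanishes on $\bar\theta \in \kappa(\mathfrak{q})$, hence $\deg\bar f \ge [\kappa(\mathfrak{p})(\bar\theta):\kappa(\mathfrak{p})]$; combined with $\deg f \le [K(B):K(A)] = n$ and the hypothesis $[\kappa(\mathfrak{q}):\kappa(\mathfrak{p})] = n$, this forces $\deg f = n$, $\kappa(\mathfrak{p})(\bar\theta) = \kappa(\mathfrak{q})$, and $\bar f$ irreducible. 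Consequently $A[\theta] = A[X]/(f)$ is a free $A$-module of rank $n$, and is local with maximal ideal $\mathfrak{p}A[\theta]$ (unique because $\bar f$ has a single irreducible factor) and residue field $\kappa(\mathfrak{q})$. Since $\dim A[\theta] = \dim A = d$ and $\mathfrak{p}A[\theta]$ is generated by a regular system of parameters of $A$, the local ring $A[\theta]$ is regular.

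For the general case, pick $\theta_1, \ldots, \theta_k \in B$ whose images generate $\kappa(\mathfrak{q})$ over $\kappa(\mathfrak{p})$, and form the tower $A = A_0 \subset A_1 \subset \cdots \subset A_k$ with $A_i := A[\theta_1, \ldots, \theta_i]$ and $\mathfrak{q}_i := \mathfrak{q}\cap A_i$. Since $\kappa(\mathfrak{q}_{i+1})/\kappa(\mathfrak{q}_i)$ is generated by one element, Lemma~\ref{lem-com-alg-1}(1) gives $[\kappa(\mathfrak{q}_{i+1}):\kappa(\mathfrak{q}_i)] \le [K(A_{i+1}):K(A_i)]$ at each step; multiplying these together and combining with $[\kappa(\mathfrak{q}_k):\kappa(\mathfrak{p})] = [\kappa(\mathfrak{q}):\kappa(\mathfrak{p})] = [K(B):K(A)] \ge [K(A_k):K(A)]$ forces equality at every step and $K(A_k) = K(B)$. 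Applying the monogenic case inductively along the tower then shows $A_k$ is regular at $\mathfrak{q}_k$; since $A_k$ is thereby a UFD hence integrally closed, and $B$ is integral over $A_k$ with the same fraction field, we conclude $B = A_k$, so $B$ is regular at $\mathfrak{q}$. The main subtlety is the monogenic case --- specifically, the interplay of the degree bounds that forces $\bar f$ to be irreducible of the maximum possible degree, which is what converts the abstract numerical hypothesis into the explicit presentation $A[X]/(f)$ of a regular local ring.
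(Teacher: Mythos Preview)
Your monogenic case is correct and clean: identifying $B=A[X]/(f)$ with $\bar f$ irreducible forces $\mathfrak{q}=\mathfrak{p}B$, and regularity follows at once. The trouble is in the reduction of the general case to the monogenic one. You invoke Lemma~\ref{lem-com-alg-1}(1) for each inclusion $A_i\subset A_{i+1}$ to obtain $[\kappa(\mathfrak{q}_{i+1}):\kappa(\mathfrak{q}_i)]\le[K(A_{i+1}):K(A_i)]$, but that lemma is stated for an \emph{integrally closed} base ring, and for $i\ge 1$ you only know $A_i$ is integrally closed \emph{after} the monogenic case has been applied to the step $A_{i-1}\subset A_i$---which in turn requires the equality at that step, which you obtain by multiplying \emph{all} the inequalities together. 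This is circular. If instead you try to use only what is available from the start, namely Lemma~\ref{lem-com-alg-1}(3) applied to $A\subset A_j$ (since $A$ is regular), you get the cumulative inequalities $[\kappa(\mathfrak{q}_j):\kappa(\mathfrak{p})]\le[K(A_j):K(A)]$ with equality at $j=k$; but these do not force equality at each individual step (for instance $a_1=2\le b_1=3$ with $a_2=b_2=6$ violates nothing), so you cannot feed the monogenic case along the tower.

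The paper sidesteps this by inducting on $\operatorname{ht}\mathfrak{p}$ rather than on the number of generators. In height one, a direct length computation (equations~(\ref{eq1}) and~(\ref{eq2}) in the proof of Lemma~\ref{lem-com-alg-1}) shows $\operatorname{length}_B(B/\mathfrak{p}B)=1$, hence $\mathfrak{q}=\mathfrak{p}B$ and $B$ is a DVR. For the inductive step the crucial input is Remark~\ref{rem-algebra-1}: when the degree equality holds and $x_1$ is part of a regular system of parameters of $A$, there is a \emph{unique} prime $\mathfrak{r}_1\subset B$ over $(x_1)$, again satisfying the degree equality; the height-one case then gives $\mathfrak{r}_1=x_1B$, and one passes to $A/(x_1)\subset B/x_1B$. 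The point is that this scheme always keeps a \emph{regular} ring in the role of the base, so Lemma~\ref{lem-com-alg-1} is legitimately available at every stage---precisely what your tower of subrings $A_i$ cannot guarantee.
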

\begin{proof}
	Replacing $A,B$ with $A_{\mathfrak{p}},B_{\mathfrak{p}}$, we may assume $A$ is a regular local ring. By Remark \ref{rem-algebra-1}, there is a unique prime ideal $\mathfrak{q}$ lying over $\mathfrak{p}$, which in turn implies, by \cite[Corollary 5.8]{am}, $B$ is a local domain. 
	
	We prove by induction on the height of $\mathfrak{p}$. First assume $\mathrm{ht}(\mathfrak{p})=1$. Since $\dim B=1$, and by Krull-Akizuki \cite[\href{https://stacks.math.columbia.edu/tag/00PG}{Lemma 00PG}]{stacks-project}, $B$ is Noetherian, it suffices to show $B$ is integrally closed. Let $B \subset \overline{B}$ be the integral closure in the field of fractions. By Remark \ref{rem-algebra-1}, there is a unique prime ideal $\overline{\mathfrak{q}} \subset \overline{B}$ such that $\overline{\mathfrak{q}} \bigcap B=\mathfrak{q}  $. In particular, $\overline{B}$ is a discrete valuation ring. Let $\pi$ be the uniformiser of $\overline{B}$. By \cite[\href{https://stacks.math.columbia.edu/tag/09E5}{Lemma 09E5}]{stacks-project}, the ramification index of $\overline{B}$ over $A$ is one, hence $\pi \in \mathfrak{q}$. So, $\mathfrak{q}$ is principle which in turn implies $B$ is integrally closed by \cite[Proposition 9.2]{am}.
	
	Now we assume $\mathrm{ht}(\mathfrak{p})=d$ and suppose Lemma \ref{lem-com-alg-3} holds in height $\le d-1$. Let $\{x_1,\ldots,x_d\} \subset \mathfrak{p}$ be a regular sequence of $A$. By \cite[\href{https://stacks.math.columbia.edu/tag/00NQ}{Lemma 00NQ}]{stacks-project}\cite[\href{https://stacks.math.columbia.edu/tag/00NP}{Lemma 00NP}]{stacks-project}, $A/(x_1)$ is a regular local ring and $(x_1)$ is prime. By Remark \ref{rem-algebra-1}, there exists a unique prime ideal $\mathfrak{r}_1$ such that $\mathfrak{r}_1 \bigcap A=(x_1)$, and that $[\kappa(\mathfrak{r}_1):\kappa((x_1))]= [K(B):K(A)]$. Moreover, by Remark \ref{rem-algebra-1} and the discussion above, the homomorphism of discrete valuation rings $A_{(x_1)} \hookrightarrow B_{(x_1)}=B_{\mathfrak{r}_1}$ has the ramification index one, hence $\mathfrak{r}_1B_{\mathfrak{r}_1}=(x_1)$. In particular $\mathfrak{r}_1=(x_1)$. Apply the inductive assumption to $A/(x_1) \subset B/\mathfrak{r}_1$. We deduce $B/\mathfrak{r}_1$ is regular. Since $B$ is finite over $A$, we see $B$ is Noetherian. Therefore, by \cite[\href{https://stacks.math.columbia.edu/tag/00NU}{Lemma 00NU}]{stacks-project}, $B$ is a regular local ring. The lemma is proved.
\end{proof}

\begin{lem}\label{lem-com-alg-2}
	Let $A$ be an integrally closed Nagata ring, $\mathfrak{p} \subset A$ be a prime ideal and $C \supset A/\mathfrak{p}$. Suppose $C$ is an integrally closed domain finite over $A/\mathfrak{p}$. Suppose further that one of the following conditions holds:
	\begin{enumerate}
		\item The residue field $\kappa(\mathfrak{p})$ is perfect.
		
		\item $A$ is regular at $\mathfrak{p}$.
	\end{enumerate}
	Then there exists an integrally closed domain $B\supset A$ together with a prime ideal $\mathfrak{r} \subset B$ satisfying:
	$$
	\xymatrix{
		B  \ar@{->>}[r]^{q} & B/\mathfrak{r} \ar@{^(->}[r]^{i}\  &  C  &\\
		A \ar@{^(->}[u]^{}\ar@{->>}[r]^{p} &    A/\mathfrak{p} \ar@{^(->}[u]^{} \ar@{^(->}[ur]^{}  } 
	$$
	\begin{enumerate}
		\item $\mathfrak{r} \bigcap A =\mathfrak{p}$ and the above diagram commutes.
		
		\item $C$ is the integral closure of $B/\mathfrak{r}$ in its field of fractions.
		
		\item $B$ is finite over $A$ with $[ K(B):K(A)]= [K(C):{K(A/\mathfrak{p})}]$.
	\end{enumerate}
\end{lem}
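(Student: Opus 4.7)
My plan is to construct $B$ as the normalisation of $A[x]/(f(x))$ for a well-chosen monic $f(x) \in A[x]$ that lifts the minimal polynomial of a primitive element of $K(C)/K(A/\mathfrak{p})$. Set $n = [K(C):\kappa(\mathfrak{p})]$. In case (1), since $\kappa(\mathfrak{p}) = K(A/\mathfrak{p})$ is perfect, the finite extension $K(C)/\kappa(\mathfrak{p})$ is separable and the primitive element theorem yields $\bar{\theta} \in C$ with $K(A/\mathfrak{p})(\bar{\theta}) = K(C)$. In case (2) the extension need not be simple, so I would iterate the one-step construction below along a tower $\kappa(\mathfrak{p}) \subset K_1 \subset \cdots \subset K_k = K(C)$ obtained by adjoining generators of $C$ over $A/\mathfrak{p}$ one at a time, producing a matching tower $A = A_0 \subset A_1 \subset \cdots \subset A_k = B$; Lemma \ref{lem-com-alg-3}, whose relative-degree hypothesis is built into the output of each step, will propagate the regularity of the base at the distinguished prime along the tower.

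For the one-step construction: since $A/\mathfrak{p}$ is Nagata, the integral closure $(A/\mathfrak{p})^+$ of $A/\mathfrak{p}$ in $K(A/\mathfrak{p})$ is finite over $A/\mathfrak{p}$, so there exists nonzero $d \in A/\mathfrak{p}$ with $d(A/\mathfrak{p})^+ \subset A/\mathfrak{p}$; replacing $\bar{\theta}$ by $d\bar{\theta}$ (still primitive, still in $C$ by integral closedness) forces the minimal polynomial $\bar{f}(x)$ of the new $\bar{\theta}$ to be monic of degree $n$ with coefficients in $A/\mathfrak{p}$. Lift $\bar{f}$ to a monic $f(x) \in A[x]$ and set $B_0 := A[x]/(f(x))$, free of rank $n$ over $A$. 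Gauss's lemma (valid because $A$ is integrally closed) combined with the irreducibility of $\bar{f}$ over $K(A/\mathfrak{p})$ forces $f$ to be irreducible over $K(A)$: any monic factorisation in $K(A)[x]$ would descend to $A[x]$ and contradict irreducibility of $\bar{f}$ mod $\mathfrak{p}$. Hence $B_0$ is a domain with $[K(B_0):K(A)] = n$, and $\mathfrak{r}_0 := \mathfrak{p} B_0$ is prime with $B_0/\mathfrak{r}_0 \cong (A/\mathfrak{p})[\bar{\theta}] \subset C$ and $\mathfrak{r}_0 \cap A = \mathfrak{p}$.

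Next I would take $B$ to be the integral closure of $B_0$ in $K(B_0)$; the Nagata hypothesis makes $B$ finite over $B_0$, hence over $A$, integrally closed by construction with $[K(B):K(A)] = n$. Going-up produces a prime $\mathfrak{r} \subset B$ contracting to $\mathfrak{r}_0$. The residue field $\kappa(\mathfrak{r}) = K(B/\mathfrak{r})$ contains $K(C)$ because $B/\mathfrak{r}$ dominates $(A/\mathfrak{p})[\bar{\theta}]$, while Lemma \ref{lem-com-alg-1} (its hypothesis (1) holding in case (1), its hypothesis (3) holding in case (2)) bounds $[\kappa(\mathfrak{r}):\kappa(\mathfrak{p})] \leq [K(B):K(A)] = n = [K(C):\kappa(\mathfrak{p})]$, forcing $K(B/\mathfrak{r}) = K(C)$. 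Then $B/\mathfrak{r} \hookrightarrow C$ is integral over $A/\mathfrak{p}$ with fraction field $K(C)$, and since $C$ is integral over $B/\mathfrak{r}$ and integrally closed, $C$ is precisely the integral closure of $B/\mathfrak{r}$ in its fraction field.

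The subtlest step will be the denominator clearing producing $\bar{f} \in (A/\mathfrak{p})[x]$: the minimal polynomial of an element integral over $A/\mathfrak{p}$ a priori lives only in $(A/\mathfrak{p})^+[x]$, and correcting it to land in $(A/\mathfrak{p})[x]$ essentially uses the finiteness of $(A/\mathfrak{p})^+$ over $A/\mathfrak{p}$, i.e. the Nagata hypothesis. For case (2) the further subtlety is checking at each iteration that $[K(A_i):K(A_{i-1})]$ matches $[\kappa(\mathfrak{p}_i):\kappa(\mathfrak{p}_{i-1})]$ — equality which is guaranteed by the degree conclusion of the one-step construction — so that Lemma \ref{lem-com-alg-3} keeps regularity alive along the tower.
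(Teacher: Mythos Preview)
Your proposal is correct and follows the same broad architecture as the paper's proof: lift the minimal polynomial of a generator of $K(C)/\kappa(\mathfrak{p})$ to a monic polynomial over $A$, produce a domain finite over $A$ of the right degree, pass to the normalisation (finite by the Nagata hypothesis), invoke Lemma~\ref{lem-com-alg-1} to force the residue-field degree to match, and in case~(2) iterate along a tower using Lemma~\ref{lem-com-alg-3} to propagate regularity.

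The one genuine technical difference is in how you establish that the lift is a domain. The paper lifts the minimal polynomial $\overline{\alpha}$ only after localising so that its coefficients lie in $(A/\mathfrak{p})_{\overline{f}}$; the lifted $\alpha\in A_f[X]$ is then not known to be irreducible, so the paper must take a minimal prime of the kernel $A_f[w]\to C_{1,\overline{f}}$ and appeal to Krull's Hauptidealsatz to show it contracts to $(0)$ in $A_f$, followed by a separate minimal-polynomial comparison to recover the degree equality. You instead clear denominators so that $\bar f\in (A/\mathfrak{p})[x]$ already, and then the classical ``irreducible mod $\mathfrak{p}$ implies irreducible'' argument (Gauss for integrally closed $A$ plus reduction) gives irreducibility of $f$ in one stroke, so $B_0=A[x]/(f)$ is immediately a domain of the correct degree with $\mathfrak{p}B_0$ prime. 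This is more elementary and shorter; the paper's route is more robust in that it never needs the coefficients of the minimal polynomial to land in $A/\mathfrak{p}$ itself. Your additional use of the primitive element theorem in case~(1) to avoid iteration altogether is a further minor streamlining over the paper, which iterates uniformly over generators in both cases. (Incidentally, the Nagata hypothesis is not actually needed for the denominator-clearing step you flag as subtle---finitely many elements of $K(A/\mathfrak{p})$ always have a common denominator in $A/\mathfrak{p}$---though it is of course essential later for the finiteness of the normalisation.)
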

\begin{proof}
	Since $C$ is a finitely generated $A/\mathfrak{p}$-algebra, writing $C=A/\mathfrak{p}[y_1,y_2,\ldots,y_k]$, we consider the inclusions of fields 
	$$K (A/\mathfrak{p}) \subseteq K(A/\mathfrak{p})(y_1) \subseteq  K(A/\mathfrak{p})(y_1,y_2) \subseteq \cdots \subseteq K(A/\mathfrak{p})(y_1,y_2,\ldots,y_k)=K(C).$$
	We will prove the lemma inductively. If all the inclusions above are equalities, then there is nothing to prove as $K (A/\mathfrak{p})=K(C)$. So we assume at least one inclusion is strict. After re-indexing we may assume the inclusion $K (A/\mathfrak{p}) \subset K(A/\mathfrak{p})(y_1)$ is strict. 
	
	Let $C_1=A/\mathfrak{p}[y]$ be the sub-ring of $C$ generated by $y=y_1$ over $A/\mathfrak{p}$. Let $$\overline{\alpha}(X)=X^n+\overline{a_1}X^{n-1}+\ldots+ \overline{a_n} \in K(A/\mathfrak{p})[X]$$ be the minimal polynomial of $y$. Since all coefficients $\overline{a_i} \in K(A/\mathfrak{p})$, there is an element $\overline{f}\in A/\mathfrak{p} $ such that $\overline{a_i} \in (A/\mathfrak{p})_{\overline{f}}$. For each $i$, we pick $a_i \in A$, and pick $f \in  A\backslash\mathfrak{p}$ so that $\overline{a_i}$'s and $\overline{f}$ are the reductions of $a_i$'s and $f$ mod $\mathfrak{p}$ respectively. We therefore construct a polynomial $\alpha(X)=X^n+a_1X^{n-1}+\ldots + a_n \in A_f[X]$, which gives a surjetice homomorphism $\phi:A_f[X] \to A_f[w]:=A_f[X]/(\alpha)$ and then induces a surjective homomorphism $\rho:A_f[w] \to C_{1,\overline{f}}$. 
	
	Because $C_1$ is a domain and so is the localisation of $C_{1}$, the kernel of $\rho$ is a prime ideal, denote by $\mathfrak{a}$. It is obvious that $\mathfrak{a} \bigcap A_f=\mathfrak{p}A_f$. Let $\mathfrak{q}_{\min} \subseteq  \mathfrak{a}$ be a minimal prime ideal, $\mathfrak{q}_{\min} \bigcap A_f= \mathfrak{p}_{\min}$, and $\mathfrak{Q}_{\min}=\phi^{-1} \mathfrak{q}_{\min}$. By definition $\mathfrak{Q}_{\min}$ is a minimal prime ideal containing $\alpha$, and by assumption $A_f[X]$ is Noetherian. So, by Krull's Hauptidealsatiz \cite[Corollary 11.17]{am}, we deduce that the height $\mathrm{ht}(\mathfrak{Q}_{\min})=1$. On the other hand, $\mathfrak{Q}_{\min}$ contracts to $\mathfrak{p}_{\min}$ in $A_f$. We claim $\alpha \notin \mathfrak{p}_{\min}A_f[X]$. In fact, if we assume the opposite, then we deduce that, after reduction mod $\mathfrak{p}A_f$, we have $\overline{\alpha}=0 \in	A_f/\mathfrak{p}A_f [X]=(A/\mathfrak{p})_{\overline{f}}[X]$ which is a contradiction. Therefore we have $\mathfrak{Q} \supsetneq \mathfrak{p}_{\min}A_f[X]$, hence $\mathrm{ht}(\mathfrak{Q}_{\min})=\mathrm{ht}(\mathfrak{p}_{\min})+1$ which in turn implies that $\mathfrak{p}_{\min}=(0)$.  
	
	Now we have a surjective homomorphism $q_U: U_1:=A_f[w]/\mathfrak{q}_{\min} \to C_{1,\overline{f}}$. Moreover, since $\mathfrak{q}_{\min} \bigcap A_f=(0)$, it in turn produces a finite domain extension $A_f \subset U_1$. We denote by $x \in U_1$ the image of $w$. Because $A_f$ is integrally closed and $x$ is integral over $A_f$, by \cite[Proposition 5.15]{am}, it turns out that the minimal polynomial of $x$ over $K(A)$, denoted by $\alpha'(X)=X^m+a_1' X^{m-1}+\ldots + a_m'$, has all its coefficients $a_i' \in A_f$ and $m$ divides $n$. Since the minimal polynomial of an algebraic element over a field is unique, by $q_U(\alpha'(x))=0$, one can easily obtain $\overline{a_i'}=\overline{a_i}$ and $m=n$, where $\overline{a_i'}=q_U(a_i)$. It follows that $U_1=A_f[X]/(\alpha')$ and $[K(U_1):K(A)]=[K(C_1):{K(A/\mathfrak{p})}]$.
	
	Let $B_1$ be the integral closure of $A$ in $U_1$, and consider the extension of domains $A \subset B_1$. Since $U_1$ is finite over $A_f$, by \cite[Proposition 5.12]{am} we deduce $U_1=B_{1,f}$. Furthermore, if we denote by $C_1'$ the image of $B_1$ in $C_{1,\overline{f}}$ through $q_U$, then we have $C_{1,\overline{f}}'=C_{1,\overline{f}}$. Therefore, we have $K(B_1)=K(U_1)$ and $K(C_1')=K(C_1)$. We note that every element of $C_1'$ is integral over $A/\mathfrak{p}$, hence integral over $C$ which in turn implies that it is an element of $C$. So, we have $C_1' \subset C$. Let $q_1: B_1 \to C_1'$ be the surjective homomorphism induced by $q_U$. We obtain a commutative diagram.
	$$
	\xymatrix{
		B_1  \ar@{->>}[r]^{q_1}   &  C_1'  &\\
		A \ar@{^(->}[u]^{}\ar@{->>}[r]^{p} &    A/\mathfrak{p} \ar@{^(->}[u]^{} } 
	$$
	
	Next we let $\overline{B_1}$ be the integral closure of $B_1$ in its field of fractions. Since $A$ is a Nagata ring, by \cite[Corollary 5.4]{am}, we see $\overline{B_1}$ is the integral closure of $A$ in $K(B_1)$, and therefore it is finite over $A$. Moreover, $\overline{B_1}$ is also a Nagata ring. %By \cite[Proposition 7.8]{am}, $B_1$ is also finite over $A$. 
	By \cite[Theorem 5.11]{am}, there exists a prime ideal $\overline{\mathfrak{r}_1}$ such that $\overline{\mathfrak{r}_1} \bigcap B_1=\mathfrak{r}_1$ where $\mathfrak{r}_1$ is the kernel ideal of $q_1$. Letting $\overline{C_1}=\overline{B_1}/\overline{\mathfrak{r}_1}$, we consider the commutative diagram 
	$$
	\xymatrix{
		\overline{B_1} \ar@{->>}[r]^{\overline{q_1}}    &   \overline{C_1} 	\\
		B_1 \ar@{^(->}[u]^{} \ar@{->>}[r]^{q_1}   &  C_1' \ar@{^(->}[u]^{}  &\\
		A \ar@{^(->}[u]^{}\ar@{->>}[r]^{p} &    A/\mathfrak{p} \ar@{^(->}[u]^{}} 
	$$ 
	of which all horizontal arrows are surjective homomorphisms. Suppose one of the following conditions holds:
	\begin{enumerate}
		\item The residue field $\kappa(\mathfrak{p})$ is perfect.
		
		\item $A$ is regular at $\mathfrak{p}$.
	\end{enumerate}
	By Lemma \ref{lem-com-alg-1} and Remark \ref{rem-com-alg-1}, we deduce
	\begin{align*}
	[K(B_1):K(A)] &=[K(\overline{B_1}):K(A)]\\
	&\ge [K(\overline{C_1}):{K(A/\mathfrak{p})}] =[K(\overline{C_1}):K(C_1')][K(C_1'):{K(A/\mathfrak{p})}].
	\end{align*}
	Hence we achieve the equation $[K(\overline{C_1}):K(C_1')]=1$ and $$[K(\overline{B_1}):K(A)]=[K(\overline{C_1}):{K(A/\mathfrak{p})}].
	$$
	By assumption $C$ is integrally closed. Hence the integral closure of $C_1$ in its field of fractions lies in $C$ which in turn implies that $\overline{C_1}$ is a sub-ring of $C$. Also note that, in the first case, $\kappa(\overline{\mathfrak{r}_1})$ is perfect; in the second case, by Lemma \ref{lem-com-alg-3}, $\overline{B_1}$ is regular at $\overline{\mathfrak{r}_1}$. Therefore, we may continue the procedure above for $\overline{B_1},\overline{C_1}$ instead of $A, A/\mathfrak{p}$. Because $[K(C):K(\overline{C_1})]< [K(C):{K(A/\mathfrak{p})}]$, after finite steps we obtain the required integrally closed domain $B$.
\end{proof}

\begin{proof}[Proof of Theorem \ref{thm-extend-finite-cover}]
	Let $U \subseteq X$ be an affine open subset such that $U \bigcap S \ne \emptyset$, and let $S_U,\widetilde{S}_U$ be affine open subsets $S \bigcap U, \widetilde{S} \bigcap \gamma^{-1}S_U$ of $S,\widetilde{S}$ respectively. By the previous lemma, there exists a finite morphism $\rho_U:\widetilde{U} \to U$ with a closed subvariety $\widehat{S}_U \subset \widetilde{U}$ which induces a normalisation $\nu_U: \widetilde{S}_U \to \widehat{S}_U $. Moreover, we have $\deg \rho_U=\deg \gamma$. 
	
	Let $X \hookrightarrow X^c$ be an open immersion to a complete normal variety and let $S^c$ be the Zariski closure of $S$ in $X^c$. Thus, there exists an open immersion $\widetilde{S} \hookrightarrow \widetilde{S}^c$ such that the induced map $\gamma^c:\widetilde{S}^c \dashrightarrow S^c$ is generically finite dominant. Replacing $\gamma^c$ by the Stein factorisation we can assume it is still finite. Replacing $X,S,\widetilde{S}$ with $X^c,S^c,\widetilde{S}^c$, we assume they are complete and consider the following commutative diagram:
	$$
	\xymatrix{
		\widetilde{S} \ar[dr]_{\gamma} \ar@{-->}[r]^{}& \widehat{S}_U \ar[d]_{} \ar@{^(->}[r]_{} &  \widetilde{U}\ar[d]^{\rho_U}\  &\\
		& S \ar@{^(->}[r]^{} &    X } 
	$$ 
	Let $\widetilde{U} \hookrightarrow \widetilde{X}$ be an open immersion to a complete normal variety such that the induced map $\rho: \widetilde{X} \dashrightarrow X$ is a proper surjective morphism and let $\widehat{S}$ be the Zariski closure of $\widehat{S}_U$ in $\widetilde{X}$. Replacing $\widetilde{X}$ by the Stein factorisation we can assume further that $\rho$ is a finite morphism. Since $\widehat{S} \to S$ is a finite morphism and the induced map $\nu: \widetilde{S} \dashrightarrow \widehat{S}$ is birational, we conclude that $\nu$ the normalisation.
\end{proof}

\begin{rem}
	One can arrange a scheme-theoretic argument so that Theorem \ref{thm-extend-finite-cover} can be generalised to a quasi-compact and quasi-separated normal Nagata scheme $X$ with an integral closed subscheme $S$, instead of varieties. Indeed, with notation above, by Nagata compactification \cite[\href{https://stacks.math.columbia.edu/tag/0F41}{Theorem 0F41}]{stacks-project}, we may compactify $\rho_U$ to a proper morphism $\rho: \widetilde{X} \to X$. Apply the normalisation \cite[\href{https://stacks.math.columbia.edu/tag/035L}{Lemma 035L}]{stacks-project} and the Stein factorisation \cite[\href{https://stacks.math.columbia.edu/tag/03H0}{Theorem 03H0}]{stacks-project} to reduce to the case when $\widetilde{X}$ is normal and $\rho$ is finite, hence the conclusion. 
\end{rem}

\section{Weak semi-stable reductions}\label{subsec-ssred}
In this paper we apply theorems of weak toroidal reduction and weak semi-stable reduction developed by Abramovich, Denef and Karu \cite{adk}\cite{ak}. See also \cite[Section 4]{ambro1}. We will require an extra condition on the base variety. Note that the extra condition will cause no troubles, so the proofs from \cite{adk}\cite{ak} are still sufficient. For the reader's convenience, we give a sketch of proof. For detailed arguments we refer to \cite{adk}\cite{ak}.

\begin{note}\label{note-toroidal}
	A pair $(X,\Delta)$ with a reduced divisor $\Delta$ is a \emph{(strict) toroidal variety} if $X \setminus \Delta \subset X$ is a \emph{(strict) toroidal embedding} (\cite[Definition 1.2]{ak}). All toroidal varieties in this paper are assumed to be strict. A morphism $f:(X,\Delta) \to (Y ,\Delta_Y)$ between toroidal varieties is \emph{toroidal} if it is toroidal between toroidal embeddings (\cite[Definition 1.3]{ak}).
\end{note}

We collect some basic properties from toroidal geometry for the reader's convenience:

\begin{lem}\label{lem-toroidal-var}
	Let $(X,\Delta)$ be a toroidal variety. Then we have:
	\begin{enumerate}
		\item $X$ is normal and Cohen-Macaulay, and $(X,\Delta)$ is lc.
		
		\item If $X$ is smooth, then $(X,\Delta)$ is log smooth. Conversely, any log smooth pair is a toroidal variety.
		
		\item If $(X,\Delta)$ is quasi-smooth, then $X$ has only abelian quotient singularities. In particular, $X$ is $\Q$-factorial klt.
		
		\item $(X,\Delta)$ defines a natural stratification, and lc centres are exactly the closures of strata, hence an lc centre $S$ inherits the toroidal structure. Moreover, $S$ is (quasi-)smooth if $X$ (quasi-)smooth, 
	\end{enumerate}
\end{lem}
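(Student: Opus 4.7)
The plan is to reduce every assertion to the local toric model provided by the definition of a toroidal embedding. Recall that for a toroidal variety $(X,\Delta)$, every point $x\in X$ admits an étale neighbourhood $U\to X$ together with an étale morphism $U\to V_\sigma$ to an affine toric variety $V_\sigma=\Spec k[\sigma^\vee\cap M]$ such that the preimage of $\Delta$ coincides with the preimage of the toric boundary $\partial V_\sigma$. Thus any étale-local property (normality, Cohen--Macaulayness, $\mathbb{Q}$-factoriality, klt/lc singularities, being a quotient singularity, being smooth, being simple normal crossing) transfers from the toric model to $(X,\Delta)$.

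For (1), affine toric varieties are normal and Cohen--Macaulay by Hochster's theorem, and the pair $(V_\sigma,\partial V_\sigma)$ is log canonical (in fact $K_{V_\sigma}+\partial V_\sigma\sim 0$, and the pair has dlt--type singularities). Normality, Cohen--Macaulayness and lc-ness of $(X,\Delta)$ now follow by étale descent. For (2), if $X$ is smooth at $x$ then the toric model $V_\sigma$ can be taken smooth at the marked point, hence isomorphic to $\mathbb{A}^n$ with $\partial V_\sigma$ a coordinate hyperplane arrangement; this gives log smoothness of $(X,\Delta)$ at $x$. The converse direction, that any log smooth pair is toroidal, is immediate by choosing coordinates witnessing the snc condition.

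For (3), quasi-smoothness means that the cone $\sigma$ appearing in the local toric chart is simplicial. A classical result of Danilov says that a simplicial affine toric variety is an abelian quotient of an affine space, so $X$ has only abelian quotient singularities. Such singularities are $\mathbb{Q}$-factorial, and the pair $(X,\Delta)$ is klt because the pull-back of $K_X+\Delta$ under the quotient map is again a toric boundary with standard coefficients and the index divides the order of the group. For (4), the stratification is the one obtained locally from the torus-orbit decomposition of $V_\sigma$; étale descent glues these pieces into the desired globally defined stratification of $(X,\Delta)$. On the toric model the lc centres of $(V_\sigma,\partial V_\sigma)$ are precisely the closures of torus orbits, and the toroidal structure restricts to the toroidal structure of each orbit closure; this gives the last statement, including the persistence of (quasi-)smoothness to strata, since a face of a (simplicial) smooth cone is again (simplicial) smooth.

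The arguments are largely bookkeeping once the local toric model is invoked; the only delicate point is the klt assertion in (3), where one must ensure that the discrepancies computed on the smooth cover of the simplicial toric model remain $>-1$ after accounting for the ramification along the toric boundary. This is handled by the standard ramification formula for the quotient map of a toric variety by a finite abelian group acting diagonally, and poses no genuine obstacle.
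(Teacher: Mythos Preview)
The paper states this lemma without proof; it is presented as a collection of standard facts from toroidal geometry ``for the reader's convenience,'' with implicit reference to \cite{ak} and the surrounding literature. Your strategy of reducing each assertion to the local toric model via the \'etale (or formal) charts furnished by the definition of a toroidal embedding is exactly the standard one, and your treatment of (1), (2), and (4) is correct.

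There is, however, a misreading in your handling of (3). The assertion is that \emph{$X$} is $\Q$-factorial klt, i.e.\ that the pair $(X,0)$ is klt; it is not claimed that $(X,\Delta)$ is klt, and indeed that is false: $\Delta$ is reduced, so every component of $\Delta$ is an lc centre and the pair $(X,\Delta)$ is only lc, never klt. Your paragraph arguing that ``the pair $(X,\Delta)$ is klt because the pull-back of $K_X+\Delta$ under the quotient map is again a toric boundary \ldots'' and the closing remark about discrepancies ``remaining $>-1$'' are therefore aimed at the wrong target. What you should say instead is simply that abelian quotient singularities are klt (for instance because the quotient map $\mathbb{A}^n\to\mathbb{A}^n/G$ is \'etale in codimension one when $G$ acts freely in codimension one, and one can always reduce to this case; alternatively, klt is preserved under finite quotients). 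Once you correct this, the proof is fine.
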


\begin{lem}\label{lem-toroidal}
Let $f:(X,\Delta) \to (Y ,\Delta_Y)$ be a toroidal morphism. Then we have:
   \begin{enumerate}
   	\item The morphism $f$ maps strata to strata. In particular, $\Delta^v=f^{-1}\Delta_{Y}$ where $\Delta^v$ denotes the vertical part of $\Delta$.
   	
   	\item Suppose $X$ is smooth. Then, for any reduced divisor $\Delta'$ with $\Delta-\Delta' \ge 0$ horizontal, the morphism $f:(X,\Delta') \to (Y ,\Delta_Y)$ is toroidal. Moreover, given a horizontal stratum $S$, the restriction $f|_S$ is again toroidal.  In particular, $f$ is log smooth over $Y \setminus \Delta_Y$. Conversely, any log smooth morphism is toroidal.
   	
   	\item Suppose $Y$ is quasi-projective and $H$ is a general hyperplane. Then, $f:(X,\Delta+f^{-1}(H)) \to (Y ,\Delta_Y+H)$ is toroidal.
   \end{enumerate}
\end{lem}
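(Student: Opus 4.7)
The plan is to reduce all three assertions to their toric counterparts via the local model in the definition of a toroidal morphism: \'etale-locally at every closed point, $f:(X,\Delta)\to (Y,\Delta_Y)$ is isomorphic to an equivariant dominant morphism $f_\sigma:X_\sigma\to X_\tau$ between affine toric varieties, with $\Delta$ and $\Delta_Y$ corresponding to the toric boundary divisors. Once this chart is fixed, each claim becomes combinatorial and is read off from the underlying map of fans.

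For (1), toric morphisms send torus orbits to torus orbits, hence in each \'etale chart strata of $(X,\Delta)$ are sent to strata of $(Y,\Delta_Y)$; since the stratification is local in the \'etale topology, the global statement follows. A prime component $D$ of $\Delta$ is vertical over $Y$ precisely when the corresponding torus-invariant hyperplane in $X_\sigma$ maps to a proper sub-orbit of $X_\tau$, equivalently $f(D)\subset \Delta_Y$; this gives $\Delta^v\subseteq f^{-1}\Delta_Y$. For the reverse inclusion, over the generic point of any component of $\Delta_Y$ the local toric model forces every prime component of the pullback to be one of the coordinate hyperplanes of $X_\sigma$, hence already a component of $\Delta$.

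For (2), when $X$ is smooth the \'etale local model takes the form $\A^n\to \A^m$ given by monomials in standard coordinates, with $\Delta=\sum_i \{x_i=0\}$. Dropping horizontal components replaces the source boundary by a smaller coordinate divisor, still a toric boundary for the same ambient torus, which gives the first claim. A horizontal stratum $S$ corresponds to the vanishing of some horizontal coordinates and therefore maps dominantly to $\A^m$, so $f|_S$ is again a monomial map between smooth toric varieties and hence toroidal. Log smoothness over $Y\setminus \Delta_Y$ is immediate because after inverting the toric coordinates on the target the morphism becomes smooth in the chart. The converse follows from the fact that any log smooth morphism of log smooth pairs admits a Kato chart of monomial form, which is a toroidal local model.

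For (3), the idea is Bertini with transversality. Since the toroidal stratification of $Y$ has finitely many strata and $Y$ is quasi-projective, a general hyperplane $H$ meets every stratum of $(Y,\Delta_Y)$ properly and transversally, so $(Y,\Delta_Y+H)$ is toroidal with the refined stratification obtained by cutting each stratum by $H$. Working in the local model $X_\sigma\to X_\tau$, the pullback of a generic hyperplane is a smooth hypersurface meeting every torus orbit of $X_\sigma$ transversally, using flatness of monomial maps over the dense torus of $X_\tau$ together with generic smoothness along orbit closures. Adjoining $f^{-1}(H)$ to $\Delta$ therefore yields a toroidal structure compatible with $f$. The main obstacle is global-to-local compatibility: one has to arrange a single $H$ whose genericity is simultaneously witnessed on every \'etale chart and with respect to every stratum, which is precisely where quasi-projectivity of $Y$ and the finiteness of the stratification are used.
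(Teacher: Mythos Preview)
The paper does not actually prove this lemma: it is stated without proof under the heading ``We collect some basic properties from toroidal geometry for the reader's convenience,'' and is treated as a list of standard facts about toroidal morphisms. So there is no argument in the paper to compare against; your proposal supplies what the paper omits.

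Your sketch is essentially the standard route and is sound. One small point in (2): when you say that dropping horizontal components leaves ``a toric boundary for the same ambient torus,'' this is not quite literally true, since removing a coordinate hyperplane from the boundary changes the open stratum and hence the implicit torus in the local chart. The cleaner way to phrase it is that since $X$ is smooth and $\Delta'\le\Delta$ is still simple normal crossings, $(X,\Delta')$ is log smooth; then in the local monomial model $\A^n\to\A^m$ a horizontal coordinate $x_j$ is one whose ray maps to $0$ under the lattice map, so treating $x_j$ as a unit (i.e.\ working in the chart for the smaller boundary) still produces a monomial model for the morphism. This is what you mean, and it does work, but the wording ``same ambient torus'' could mislead a reader.
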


The following theorem is \cite[Theorem 1.1]{adk}\cite[Theorem 2.1]{ak}. Note that Condition (3) below is similar to \cite[Theorem 1.1, Condition (4)]{adk} but with an extra requirement on $Y$.

\begin{thm}[Weak toroidal reduction]\label{thm-fib-resolution}
	Let $f:X \to Y$ be a dominant morphism of varieties and $Z \subset X,Z_Y\subset Y$ be proper closed subsets. Then there exist proper birational morphisms $\pi,\phi$ and a commutative diagram
	$$\xymatrix{
		(\overline{X},\overline{\Delta}) \ar[d]_{\overline{f}} \ar[r]^{\pi}   &  X \supset  Z\ar[d]^{f}  \\
		(\overline{Y},\Delta_{\overline{Y}}) \ar[r]^{\phi} &    Y  \supset  Z_Y 
	} 
	$$  
	satisfying the following conditions:
	\begin{enumerate}
		\item $(\overline{X},\overline{\Delta}),(\overline{Y},\Delta_{\overline{Y}})$ are quasi-projective.

		\item $(\overline{X},\overline{\Delta}),(\overline{Y},\Delta_{\overline{Y}})$ are smooth toroidal varieties and $\overline{f}$ is a toroidal morphism.
	
		\item $\pi^{-1}Z \bigcup \ex(\pi) \subseteq \overline{\Delta}$ and $\phi^{-1}Z_Y \bigcup \ex(\phi) \subseteq \Delta_{\overline{Y}}$, where $\ex(\pi),\ex(\phi)$ denote the exceptional loci. 
		
		\item[(3$\dag$)] $\pi^{-1}Z,\phi^{-1}Z_Y,\ex(\phi)$ are divisors.
	\end{enumerate}	
    Furthermore, if $Z$ is vertical, then $(\overline{X},\overline{\Delta}^v)$ also satisfies the conditions above, except that $\ex(\pi) \subseteq \overline{\Delta}$, where $\overline{\Delta}^v$ denotes the vertical part of $\overline{\Delta}$.
\end{thm}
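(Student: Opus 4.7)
The plan is to invoke the Abramovich--Karu weak toroidal reduction \cite[Theorem 2.1]{ak} (and the Abramovich--Denef--Karu refinement \cite[Theorem 1.1]{adk}) as the main input, then perform additional blow-ups to arrange the quasi-projectivity condition (1) and the divisoriality condition $(3\dagger)$.

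First I would reduce to the case where $X,Y$ are quasi-projective and $Z,Z_Y$ are snc divisors. This is standard: apply Chow's lemma to find a proper birational morphism from a quasi-projective variety onto each of $X,Y$, then use Hironaka's resolution of singularities to make the total transforms of $Z,Z_Y$ (together with the exceptional loci) into snc divisors. By the rigidity lemma the morphism $f$ lifts to a morphism between these new models after possibly further resolving indeterminacies. After this reduction, $\pi^{-1}Z$ and $\phi^{-1}Z_Y$ will automatically be divisors in any further birational modification, so two-thirds of $(3\dagger)$ is already guaranteed.

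Second, I would apply \cite[Theorem 2.1]{ak} to the morphism $f:(X,Z)\to(Y,Z_Y)$ to obtain $\pi:\overline{X}\to X$ and $\phi:\overline{Y}\to Y$ satisfying (2), together with the inclusions $\pi^{-1}Z\cup\ex(\pi)\subseteq\overline{\Delta}$ and $\phi^{-1}Z_Y\cup\ex(\phi)\subseteq\Delta_{\overline{Y}}$ of condition (3). The construction in \cite{ak} proceeds by equivariant toric/toroidal blow-ups on locally quasi-projective charts, so quasi-projectivity of $\overline{X},\overline{Y}$ is preserved (alternatively, cite the projective version directly). To finish $(3\dagger)$, it remains only to make $\ex(\phi)$ a divisor; this is achieved by blowing up the (finitely many) irreducible components of the exceptional locus of $\phi$ which have codimension $\geq 2$, yielding a new model on which these components are replaced by exceptional divisors. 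This additional blow-up can be performed equivariantly inside the toroidal structure on $\overline{Y}$, and then pulled back through $\overline{f}$ (followed by a further toroidal modification of $\overline{X}$ to restore toroidality of the morphism) so that (2) and (3) still hold.

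Finally, for the \emph{furthermore} statement, suppose $Z$ is vertical over $Y$. Then $\pi^{-1}Z$ is vertical over $\overline{Y}$, and by Lemma~\ref{lem-toroidal}(1) we have $\overline{\Delta}^v=\overline{f}^{-1}\Delta_{\overline{Y}}$. Since $\overline{f}(\pi^{-1}Z)\subseteq\phi^{-1}(f(Z))\subseteq\Delta_{\overline{Y}}$, we get $\pi^{-1}Z\subseteq\overline{\Delta}^v$. By Lemma~\ref{lem-toroidal}(2), removing the horizontal components of $\overline{\Delta}$ preserves the toroidal structure and the toroidality of $\overline{f}$, so $(\overline{X},\overline{\Delta}^v)$ is again a smooth toroidal variety and $\overline{f}:(\overline{X},\overline{\Delta}^v)\to(\overline{Y},\Delta_{\overline{Y}})$ is toroidal; the quasi-projectivity and the inclusions of $\phi^{-1}Z_Y,\ex(\phi),\pi^{-1}Z$ in the respective boundaries are unchanged. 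Note we cannot ensure $\ex(\pi)\subseteq\overline{\Delta}^v$, since $\pi$-exceptional divisors may well be horizontal over $\overline{Y}$.

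The main obstacle is not any single step but rather the bookkeeping: one must verify that the successive blow-ups used to enforce quasi-projectivity and $(3\dagger)$ remain compatible with the toroidal structure produced by \cite{adk}\cite{ak}. This is essentially routine since all modifications can be taken to be toroidal (hence compatible with the stratifications), but it is the only place where care is needed beyond directly quoting the existing literature.
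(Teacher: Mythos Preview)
Your approach differs from the paper's in a substantive way. The paper does not simply cite \cite[Theorem 2.1]{ak} or \cite[Theorem 1.1]{adk}; it re-sketches their inductive proof (induction on relative dimension, factorisation through a nodal family of curves via de Jong's alterations, Abhyankar's lemma, $G$-equivariant toroidal resolution). The reason is stated just before the theorem: the inclusion $\phi^{-1}Z_Y\cup\ex(\phi)\subseteq\Delta_{\overline{Y}}$ is an \emph{extra requirement on $Y$} not present in the statements of \cite{ak}\cite{adk}, and the paper's point is that it falls out automatically if one follows the construction. You instead assert that a direct citation of \cite[Theorem 2.1]{ak} already yields condition (3) for $\phi$. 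That is precisely the gap: the cited theorems only control $\pi^{-1}Z\cup\ex(\pi)\subseteq\overline{\Delta}$, not the analogous inclusion on the base. Your preliminary reduction makes $Z_Y$ a divisor, which handles $\phi^{-1}Z_Y$, but $\ex(\phi)\subseteq\Delta_{\overline{Y}}$ does not follow from the theorem statements alone. You propose to fix this by further toroidal blow-ups on $\overline{Y}$ and then restoring toroidality upstairs; that is plausible, but it is not ``routine bookkeeping'' --- it is essentially the content the paper supplies by tracing the inductive construction.

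A smaller slip: in your ``furthermore'' argument you claim $\phi^{-1}(f(Z))\subseteq\Delta_{\overline{Y}}$, but nothing forces $f(Z)\subseteq Z_Y$. The conclusion $\pi^{-1}Z\subseteq\overline{\Delta}^v$ is still correct, via the simpler observation that $\pi^{-1}Z$ is a vertical divisor already contained in $\overline{\Delta}$, so each of its components is a vertical component of $\overline{\Delta}$.
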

\begin{proof}[Sketch of Proof]
	Replacing $X,Y$ we can assume they are projective and $Z,Z_Y$ are the supports of Cartier divisors (\cite[3.1]{adk}). 
	
	{\noindent \textbf{Step 1.}} We prove by induction on the relative dimension $n$ of $f$. If $n=0$, then we may construct a finite morphism $\overline{f}:(\overline{X},\overline{\Delta}) \to (\overline{Y},\Delta_{\overline{Y}})$ ramified over the snc divisor $\Delta_{\overline{Y}}$ which satisfies all the conditions, except that $(\overline{X},\overline{\Delta})$ is not necessarily log smooth (see \cite[3.4.2]{adk}). It is toroidal by Abhyankar's lemma \cite[Lemma 3.3]{adk}. Replacing $(\overline{X},\overline{\Delta})$ with a toroidal resolution we immediately obtain the conclusion.
	
	{\noindent \textbf{Step 2.}} By induction we suppose Theorem \ref{thm-fib-resolution} holds in relative dimension $n-1$. Replacing $X,Y,Z,Z_Y$ we can factorize $f:X \overset{g}{\to} P \overset{h}{\to} Y$ with relative dimension $n-1$ of $h$ (see \cite[3.5]{adk}) and assume $f^{-1}Z_Y \subset Z $. %where $Z^v$ is the vertical$/Y$ part of $Z$. Moreover, and a Cartier divisor $Z_P$ such that the loci $g^{-1}Z_P= Z^t$ where $Z^t$ is the vertical$/P$ part of $Z$. 
	Now construct a commutative diagram
	$$
	\xymatrix{
		\widehat{X} \ar[d]_{\widehat{g}} \ar[r]^{\mu}	& \widehat{X}/G \ar[d]_{g'} \ar[r]^{\pi}   &  X\ar[d]_{g}  \supset Z \ar@/^/[dd]^{f }\\
		\widehat{P} \ar[r]^{\gamma}	& \widehat{P}/G \ar[r]^{\phi} &    P\ar[d]_{h} \\
		&  & Y \supset Z_Y } 
	$$  
	such that (\cite[Theorem 2.4, Remark 2.5]{dejong}):
	\begin{enumerate}
		\item $\mu':=\pi \circ \mu, \gamma':=\phi \circ \gamma$ are Galois alterations with Galois group $G$.
		
		\item $\widehat{g}:\widehat{X} \to \widehat{P}$ is a nodal family of curves with irreducible smooth general fibres.
		
		\item There are finitely many disjoint sections $\sigma_i: \widehat{P} \to \widehat{X}$ into the smooth locus of  of $\widehat{g}$ such that $G$ permutes the sections $\widehat{D}_i:=\sigma_i(\widehat{P})$.
		
		\item $\widehat{Z}=\mu'^{-1}Z \subset (\bigcup_i \widehat{D}_i)\bigcup \widehat{g}^{-1}(\widehat{D})$ for some proper closed subset $\widehat{D} \subset \widehat{P}$.
	\end{enumerate}
	Note that $\ex(\pi)$ is vertical over $\widehat{P}/G $, hence $\mu^{-1}\ex(\pi)$ is vertical over $\widehat{P}$. Replacing $Z$ with $\pi^{-1}Z \bigcup \ex(\pi)$ and $X,P,\widehat{D}$ accordingly we assume $X=\widehat{X}/G, P=\widehat{P}/G$. For the moment, $Z=Z_1\bigcup Z_2$ where $Z_1$ is the support of a Cartier divisor and $g(Z_2) \subset \gamma(\widehat{D})$. %We may temporarily lose the property that $Z$ is a divisor but we will recover it later. 
	
	{\noindent \textbf{Step 3.}} Write $Z_P:=\gamma(\widehat{D}) \bigcup \{\text{the loci over which $Z, P$ or $X$ are not smooth}\}$. %First resolving the locus $g(Z)$ to a Cartier divisor and then 
	Applying the inductive assumption to $h:(P,Z_P) \to (Y,Z_Y)$, we construct a commutative diagram:
	$$
	\xymatrix{
		\widetilde{X}\ar[dr]_{\overline{\mu}}\ar[dd]_{\widetilde{g}}\ar[rr]^{\widetilde{\pi}} && \widehat{X} \ar[dr]^{\mu}\ar[dd]_{}\\
		& \overline{X}\ar[dd]^{ }   \ar[rr]^{\overline{\pi}} & & X \ar[dd]^{g} \\
		\widetilde{P}\ar[dr]_{\overline{\gamma}}\ar[rr]^{\ } && \widehat{P} \ar[dr]^{\gamma} \\
		&(\overline{P},\Delta_{\overline{P}}) \ar[d]_{\overline{h}} \ar[rr]^{\overline{\phi}} &  &  P\ar[d]^{h}\  &\\
		&(\overline{Y},\Delta_{\overline{Y}}) \ar[rr]^{} &  &  Y \\
	} 
	$$
	where $\overline{h}:(\overline{P},\Delta_{\overline{P}}) \to (\overline{Y},\Delta_{\overline{Y}})$ satisfies all the conditions. Since $g$ is equidimensional, if we write $\overline{X},\widetilde{X},\widetilde{P},\widetilde{D}_i$ induced by base change, $\overline{Z}:=\overline{\pi}^{-1}Z \bigcup \ex(\overline{\pi})$, then by inductive assumption we deduce $\overline{g}(\ex(\overline{\pi})) \subset \Delta_{\overline{P}}$ which in turn implies that $\overline{Z}=\overline{Z}_1\bigcup \overline{Z}_2$ where $\overline{Z}_1= \overline{\pi}^{-1} Z_1$ and $\overline{g}(\overline{Z}_2) \subset \Delta_{\overline{P}}$. 
	
	By Abhyankar's lemma again, we see $(\widetilde{P},\Delta_{\widetilde{P}}=\overline{\gamma}^{-1}\Delta_{\overline{P}})$ is toroidal, hence $(\widetilde{X},\widetilde{\Delta}=(\gamma \circ \widetilde{g})^{-1}\Delta_{\overline{P}} + \sum_i \widetilde{D}_i)$ is toroidal (\cite[1.3]{adj}). Since $\overline{g}(\overline{Z}_2) \subset \Delta_{\overline{P}}$, we deduce $\overline{\mu}^{-1} \overline{Z} \subset \widetilde{\Delta}$. 
	
	{\noindent \textbf{Step 4.}} Finally let us modify $\widetilde{X}$ to a $G$-equivariant toroidal variety and then $\overline{X}$ accordingly (\cite[3.9]{adk}). Writing $U_{\widetilde{P}}=\widetilde{P} \backslash \Delta_{\widetilde{P}}$, because the $G$-equivariant resolution $b:(\widetilde{X}',\widetilde{\Delta}') \to (\widetilde{X},\widetilde{\Delta})$ preserves $\widetilde{U}$, we deduce $\ex(b) \subset \widetilde{\Delta}$ (\cite[3.9.1, 3.9.5, 3.9.6]{adk}). Letting $(\overline{X}',\overline{\Delta}')=(\widetilde{X}'/G,\widetilde{\Delta}'/G)$ and $\psi:\overline{X}' \to \overline{X}$ be the induced modification, we see $\ex(\psi) \subset \overline{\Delta}'$ and $(\overline{X}',\overline{\Delta}') \to (\overline{P},\Delta_{\overline{P}})$ is toroidal (\cite[2.3]{adk}). Replacing $(\overline{X},\overline{\Delta})$ with a toroidal resolution of $(\overline{X}',\overline{\Delta}')$ we complete the proof.
\end{proof}

\begin{rem}\label{rem-toroidal-res}
	Since we suppose both $Z$ and $Z_Y$ are the supports of Cartier divisors, it follows that $\pi^{-1}Z$ and $\pi^{-1}Z_Y$ are snc divisors by Krull's Hauptidealsatz. By log resolution we can assume $\ex(\phi)$ is also a divisor. This proves (3$\dag$).
	
	However, in general we have no idea how to resolve $\ex(\pi)$ to an snc divisor unless components of $\ex(\pi)$ are toroidal strata. Hence, we are interested if one can further require the exceptional locus $\ex(\pi)$ to be snc.
\end{rem}

%\begin{rem}
%	The argument for Theorem \ref{thm-fib-resolution} also applies to varieties over a non-closed field of characteristic zero. For definition of toroidal varieties over a non-closed field, we refer to \cite[Sec.2.2]{adk}. Note that, a variety $(X,\Delta)$ with a reduced divisor is toroidal if the base change $(X_{\overline{k}},\Delta_{\overline{k}})$ over its algebraic closure $k \subseteq \overline{k}$ is toroidal, where $(X_{\overline{k}},\Delta_{\overline{k}})$ is a pair of schemes of finite type over $\overline{k}$. Let us say a few words about resolution in arbitrary characteristic. Recall the fact: Smoothness, non-singularity, and geometric regularity (=regularity if the residue field is perfect) are locally equivalent (cf.\cite[\href{https://stacks.math.columbia.edu/tag/038X}{Lemma 038X}, \href{https://stacks.math.columbia.edu/tag/01V7}{Lemma 01V7}, \href{https://stacks.math.columbia.edu/tag/0381}{Lemma 0381}]{stacks-project}). Moreover, given a variety over a perfect field, the smooth locus coincides with the regular locus (\cite[\href{https://stacks.math.columbia.edu/tag/0B8X}{Lemma 0B8X}]{stacks-project}). In the proof of Theorem \ref{thm-fib-resolution}, we apply strong log resolution in the first paragraph to settle the case of relative dimension zero, which remains open in positive characteristic; In the last paragraph, the claim that, the quotient $(\overline{X}',\overline{\Delta}')=(\widetilde{X}/G,\widetilde{\Delta}/G)$ is toroidal, fails in positive characteristic (see \cite[Sec.0.3.2]{adj}).
%\end{rem}

\begin{thm}[Equidimensional reduction]\label{thm-equidimension}
	Let $f: X\to Y$ be a dominant morphism of normal varieties and $Z \subset X,Z_Y\subset Y$ be proper closed subsets. Then there exists a commutative diagram
	$$
	\xymatrix{
		(X_1,\Delta_1) \ar[d]_{f_1} \ar[r]^{\pi}   &  X\ar[d]^{f}\  &\\
		(Y_1,\Delta_{Y_1}) \ar[r]^{\phi} &    Y } 
	$$  
	satisfying the following conditions:
	\begin{enumerate}
		\item $\pi$ and $\phi$ are birational morphisms from quasi-projective varieties. 
		
		\item $f_1:(X_1,\Delta_1) \to (Y_1 ,\Delta_{Y_1})$ is toroidal from a quasi-smooth toroidal variety to a smooth variety, and $f_1^{-1}(Y_1 \setminus \Delta_{Y_1})$ is smooth.
		
		\item $f_1$ is flat. In particular, all the fibers of $f_1$ have the same dimension. 
		
		\item $\pi^{-1}Z \bigcup \ex(\pi)  \subseteq \Delta_1$ and $\phi^{-1}Z_Y \bigcup \ex(\phi) \subseteq \Delta_{Y_1}$, where $\ex(\pi),\ex(\phi)$ denote the exceptional loci. 
	\end{enumerate}
	Furthermore, if $Z$ is vertical$/Y$, then $(X_1,\Delta_1^v)$ also satisfies the conditions listed above, except that $\ex(\pi) \subseteq \Delta_1$, where $\Delta_1^v$ denotes the vertical part of $\Delta_1$.
\end{thm}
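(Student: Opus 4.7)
The plan is to combine Theorem \ref{thm-fib-resolution} (weak toroidal reduction) with the equidimensional reduction procedure of Abramovich--Karu \cite{ak}, followed by a toroidal resolution of the base. First, I would apply Theorem \ref{thm-fib-resolution} to the given $f\colon X \to Y$ with $Z \subset X$, $Z_Y \subset Y$, producing a commutative diagram with a toroidal morphism $\overline{f}\colon (\overline{X}, \overline{\Delta}) \to (\overline{Y}, \Delta_{\overline{Y}})$ of smooth quasi-projective toroidal varieties, satisfying $\pi^{-1}Z \cup \ex(\pi) \subseteq \overline{\Delta}$ and $\phi^{-1}Z_Y \cup \ex(\phi) \subseteq \Delta_{\overline{Y}}$.

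In the second step, I would invoke the equidimensional reduction of \cite{ak} (see also \cite[Section 4]{ambro1}) applied to $\overline{f}$: there exists a toroidal subdivision $Y' \to \overline{Y}$ together with the induced toroidal modification $X' \to \overline{X}$ such that the resulting toroidal morphism $f'\colon (X', \Delta') \to (Y', \Delta_{Y'})$ is equidimensional. The subdivision is chosen so that the image cone of each cone of the polyhedral complex of $\overline{X}$ becomes a cone of $Y'$, and a simplicial refinement is taken to ensure that $X'$ is quasi-smooth. If $Y'$ is not yet smooth, I would take a further toroidal resolution $Y_1 \to Y'$ and form the induced base change of $f'$ to obtain $f_1\colon (X_1, \Delta_1) \to (Y_1, \Delta_{Y_1})$, where a further simplicial refinement keeps the source quasi-smooth and equidimensionality is preserved under this base change. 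Flatness of $f_1$ then follows from the miracle flatness theorem applied to a morphism from a Cohen--Macaulay variety to a regular variety with equidimensional fibres (quasi-smooth toroidal varieties are Cohen--Macaulay in characteristic zero by the Hochster--Roberts theorem). The smoothness of $f_1^{-1}(Y_1 \setminus \Delta_{Y_1})$ is inherited from the toroidal structure via Lemma \ref{lem-toroidal}(2), and the containments in condition (4) survive since all subsequent modifications only add divisorial components lying in the boundaries.

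For the final assertion, if $Z$ is vertical over $Y$ then the parenthetical part of Theorem \ref{thm-fib-resolution} lets us instead work with $(\overline{X}, \overline{\Delta}^v)$ from the outset. The subdivision and base change in the Abramovich--Karu step only introduce new components coming from pull-backs of $Y_1 \to \overline{Y}$, which are automatically vertical; hence $(X_1, \Delta_1^v)$ inherits the desired properties, with the same caveat that $\ex(\pi) \subseteq \Delta_1$ rather than $\Delta_1^v$. The main obstacle is the equidimensional reduction in the second step: one must verify that the polyhedral subdivision simultaneously flattens the morphism, preserves toroidality, and can be arranged with a simplicial refinement so that the induced modification of the source remains quasi-smooth. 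This combinatorial control over how cones subdivide under the image morphism of fans is the heart of the Abramovich--Karu construction; once it is granted, everything else reduces to standard facts about toroidal geometry and miracle flatness.
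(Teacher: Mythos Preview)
Your proposal is correct and follows essentially the same route as the paper: first apply Theorem~\ref{thm-fib-resolution} to obtain a toroidal morphism between smooth toroidal varieties, then invoke the Abramovich--Karu equidimensional reduction (\cite[Proposition~4.4]{ak}) together with a further simplicial refinement (\cite[Remark~4.5]{ak}) to make the source quasi-smooth. The paper's sketch is terser and simply cites these two results, whereas you unpack what is happening combinatorially and add the miracle-flatness justification; your extra step of resolving $Y'$ is harmless but unnecessary, since \cite[Proposition~4.4]{ak} already produces a nonsingular base.
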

\begin{proof}[Sketch of Proof]
	By Theorem \ref{thm-fib-resolution} there exist modifications $\overline{\pi},\overline{\phi}$ and a toroidal morphism $\overline{f}:(\overline{X},\overline{\Delta}) \to (\overline{Y},\Delta_{\overline{Y}})$ satisfying the conditions listed in Theorem \ref{thm-fib-resolution}. By \cite[Proposition 4.4]{ak} there exist projective toroidal modifications $\pi_1:X_1 \to \overline{X}$ and $\phi_1: Y_1 \to \overline{Y}$ such that the induced map $f_1$ satisfies all the conditions except that $X_1$ is quasi-smooth. Moreover, by \cite[Remark 4.5]{ak} and replacing $X_1$, we can assume $X_1$ is quasi-smooth. 
\end{proof}

\begin{thm}[Weak semi-stable reduction]\label{thm-ss-reduction}
	With the notation of Theorem \ref{thm-equidimension}, if we suppose further that $f$ has the geometrically connected generic fibre, then, by adding appropriate general hyperplanes to $\Delta_{Y_1}$ and replacing
	$\Delta_1$ accordingly (see Lemma \ref{lem-toroidal}(3)), there exists a commutative diagram
	$$
	\xymatrix{
		(X_2,\Delta_2) \ar[d]_{f_2} \ar[r]^{\mu}	& (X_1,\Delta_1) \ar[d]_{f_1} \ar[r]^{\pi}   &  X\ar[d]^{f}\  &\\
		(Y_2,\Delta_{Y_2}) \ar[r]^{\gamma}	& (Y_1,\Delta_{Y_1}) \ar[r]^{\phi} &    Y } 
	$$  
	satisfying the following conditions:
		\begin{enumerate}
		\item $\gamma$ is a Galois finite toroidal morphism, and $f_2$ is induced by base change. 
		
		\item $f_2:(X_2,\Delta) \to (Y_2 ,\Delta_{Y_2})$ is toroidal from a quasi-smooth toroidal variety to a smooth variety, and $f_2^{-1}(Y_2 \setminus \Delta_{Y_2})$ is smooth.
		
		\item All the fibers of $f_2$ are reduced, and $f_2$ is flat.
		
		\item $\mu^{-1}(\pi^{-1}Z \bigcup \ex(\pi)) \bigcup R \subseteq \Delta_2$ and $\gamma^{-1}(\phi^{-1}Z_Y \bigcup \ex(\phi)) \bigcup R_Y \subseteq \Delta_{Y_2}$, where $R,R_Y$ are the ramification sets of $\mu,\gamma$ respectively.
	\end{enumerate}
	Furthermore, if $Z$ is vertical$/Y$, then $(X_2,\Delta_2^v)$ also satisfies the conditions listed above, except that $\mu^{-1}\ex(\pi) \subseteq \Delta_2$, where $\Delta_2^v$ denotes the vertical part of $\Delta_2$.
\end{thm}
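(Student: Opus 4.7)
The plan is to build on the output of Theorem \ref{thm-equidimension} via a carefully chosen Kummer-type base change, following Abramovich--Karu \cite[Sections 5--6]{ak}. Start from the diagram provided by Theorem \ref{thm-equidimension}, so that $f_1\colon (X_1,\Delta_1)\to (Y_1,\Delta_{Y_1})$ is equidimensional and toroidal, $Y_1$ is smooth, $X_1$ is quasi-smooth, and $f_1^{-1}(Y_1\setminus \Delta_{Y_1})$ is smooth. Since $f$ has geometrically connected generic fibre, so does $f_1$. The only obstruction to reducedness of the fibres is combinatorial: at the generic point of each component $E\subset f_1^{-1}\Delta_{Y_1}$ mapping to a component $D\subset \Delta_{Y_1}$, the multiplicity $m_E:=\mathrm{mult}_E f_1^*D$ may exceed $1$.

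First, by Lemma \ref{lem-toroidal}(3) one may add finitely many general hyperplane sections to $\Delta_{Y_1}$ (and update $\Delta_1$ by taking $f_1^{-1}$ of them) while preserving all toroidal properties. Choose a positive integer $N$ divisible by every $m_E$ above. Since $Y_1$ is smooth with snc boundary $\Delta_{Y_1}$, there is a standard Galois finite toroidal Kummer cover $\gamma\colon Y_2\to Y_1$, ramified exactly along $\Delta_{Y_1}$ with ramification index $N$ along each component; $Y_2$ is smooth and $\Delta_{Y_2}:=\gamma^{-1}\Delta_{Y_1}$ is snc. Form $X_2:=(X_1\times_{Y_1} Y_2)^\nu$ with induced morphism $f_2\colon X_2\to Y_2$ and projection $\mu\colon X_2\to X_1$. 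Because $f_1$ has geometrically connected generic fibre, $X_2$ is irreducible. By Abhyankar's lemma \cite[Lemma 3.3]{adk}, $(X_2,\Delta_2:=\mu^{-1}\Delta_1)$ is toroidal and $f_2$ is a toroidal morphism, and by the divisibility of $N$ the pullback multiplicities are now $1$, so $f_2$ has reduced fibres over the generic points of the components of $\Delta_{Y_2}$. As $f_2$ is equidimensional toroidal, the standard toric calculation on each cone then upgrades this to reducedness of \emph{all} fibres, and flatness follows from miracle flatness since $X_2$ is Cohen--Macaulay (Lemma \ref{lem-toroidal-var}(1)) with equidimensional fibres over the smooth $Y_2$.

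Next, $X_2$ may fail to be quasi-smooth. Apply a $G$-equivariant toroidal resolution $X_2'\to X_2$ (where $G=\mathrm{Gal}(\gamma)$); as in Step~4 of the proof of Theorem \ref{thm-fib-resolution}, this modification is supported in $\Delta_2$, so no new vertical boundary components outside $f_2^{-1}\Delta_{Y_2}$ appear, the reduced-fibre property is preserved, and equidimensionality (hence flatness, again by miracle flatness) is maintained on the quasi-smooth model. Replacing $X_2$ by $X_2'$ and $\Delta_2$ by its total transform gives all conditions: (1) holds by construction of $\gamma$ and base change; (2) follows from toroidality plus quasi-smoothness plus the description of $f_2$ over $Y_2\setminus \Delta_{Y_2}\cong Y_1\setminus \Delta_{Y_1}$ (which is unchanged from $f_1$); (3) has just been established; and (4) follows since $R\subseteq \mu^{-1}f_1^{-1}\Delta_{Y_1}\subseteq\Delta_2$ and $R_Y=\Delta_{Y_2}$, while the inclusions from $\pi^{-1}Z\cup\mathrm{Ex}(\pi)$ and $\phi^{-1}Z_Y\cup\mathrm{Ex}(\phi)$ are inherited from Theorem \ref{thm-equidimension}. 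The ``furthermore'' clause, when $Z$ is vertical, is then obtained by discarding the horizontal components of $\Delta_2$ using Lemma \ref{lem-toroidal}(2).

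The main obstacle I anticipate is ensuring that the Kummer cover exists globally as a Galois toroidal morphism and that the subsequent $G$-equivariant toroidal resolution does not introduce horizontal boundary components or break equidimensionality. The first is handled by enlarging $\Delta_{Y_1}$ with sufficiently many general hyperplane sections (so the local $N$-th root covers glue to a global Galois cover), and the second by the standard fact that toroidal modifications of $X_2$ which project trivially to $Y_2$ only affect $\Delta_2$ and preserve equidimensionality, so that flatness is regained automatically over the smooth base.
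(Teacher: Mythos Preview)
Your approach is essentially the same as the paper's: both are sketches of the Abramovich--Karu construction from \cite[\S5]{ak}. The paper's own proof is even terser, simply invoking \cite[Proposition~5.1]{ak} for the existence of the Kawamata-type cover $\gamma$ giving reduced fibres and \cite[Proposition~5.10]{ak} for the quasi-smoothness of $X_2$.

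The one substantive discrepancy is your $G$-equivariant toroidal resolution step. It is unnecessary: since $X_1$ is already quasi-smooth (simplicial cones) by Theorem~\ref{thm-equidimension}, and the Kummer base change only refines the lattice without altering the fan, the normalised base change $X_2$ is automatically quasi-smooth---this is exactly what \cite[Proposition~5.10]{ak} supplies. More importantly, inserting a birational modification of $X_2$ would violate condition~(1), which requires that $f_2$ be literally the morphism induced by base change (in the paper's sense of the main component of the normalised fibre product). So you should delete that paragraph and appeal to \cite[Proposition~5.10]{ak} directly; the rest of your sketch then matches the paper.
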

\begin{proof}[Sketch of Proof]
	By \cite[Proposition 5.1]{ak}, adding appropriate general hyperplanes to $\Delta_{Y_1}$, there exists a finite toroidal morphism $\gamma : Y_2 \to Y_1$ between smooth varieties so that the induced morphism $f_2:( X_2,\Delta_2) \to  (Y_2,\Delta_{Y_2})$ is flat toroidal with reduced fibres. By \cite[Proposition 5.10]{ak} one obtains that $(X_2,\Delta_2)$ is quasi-smooth.
\end{proof}

\begin{rem}
	The connectedness condition on fibres can be removed. But note that in this case, $X_1 \times_{Y_1} Y_2$ possibly have multiple main components, hence $X_2$ is a disjoint union of finitely many varieties. See \cite[Theorem 4.3]{ambro1}\cite[Lemma 5.6]{ak}.
\end{rem}

%%%%%%%%%%%%%%%


\begin{thebibliography}{BCHM}

\bibitem[ADK13]{adk} D.~Abramovich, J.~Denef, K.~Karu. Weak toroidalization over non-closed
fields. Manuscripta Math., {\textbf{142}} (2013) (1-2), 257--271.

	
\bibitem[AdJ97]{adj} D. Abramovich, A.~J. de Jong, Smoothness, semistability, and toroidal geometry. J.	Algebraic Geom. {\textbf{6}} (1997) (4), 789--801.

\bibitem[AK00]{ak} D.~Abramovich, K.~Karu, Weak semistable reduction in characteristic $0$, Invent. math. {\textbf{139}} (2000), no. 2, 241--273.

%\bibitem[AO00]{ao}D.~Abramovich, F.~Oort, Alterations and Resolution of Singularities, Resolution of Singularities. Progress in Mathematics, {\textbf{181}} (2000), pp 39--108.


\bibitem[Amb99]{ambro0} F.~Ambro, The Adjunction Conjecture and its applications. arXiv:math/9903060v3.

\bibitem[Amb04]{ambro1} F.~Ambro, Shokurov's boundary property.
	J. Differential Geom. {\textbf{67}} (2004), no. 2, 229-255.

\bibitem[Amb05]{ambro2}F.~Ambro, The moduli $b$-divisor of an lc-trivial fibration, Compos. Math. {\textbf{141}} (2005), no. 2, 385--403. 

\bibitem[Amb06]{ambro3} F.~Ambro, Basic properties of log canonical centers. Classification of algebraic varieties, EMS Ser. Congr. Rep., Eur. Math. Soc., Zürich, (2011), 39--48.

%\bibitem[Amb14]{ambro3}  F.~Ambro, An injectivity theorem. Compos. Math. {\textbf{150}} (2014), no. 6, 999-1023.

%\bibitem[AHL10]{coxrings} M.~Artebani, J.~Hausen, A.~Laface, On Cox rings of K3 surfaces, Compos. Math. {\textbf{146}} (2010), no. 4, 964--998. 

%\bibitem[AK]{ambrokollar} F.~Ambro, J.~Koll\'ar, Minimal models of semi-log-canonical pairs, preprint (2017). 

%\bibitem[Ara]{araujo}C.~Araujo, Rationally connected varieties, \url{http://preprint.impa.br/FullText/Araujo_Wed_Oct_27_16_20_11_EDT_2004.html/GHS_preprint.pdf}%, \pageref{araujo}

\bibitem[AM69]{am} M.~F.~Atiyah, I.~G.~MacDonald., {\em Introduction to Commutative Algebra}, Addison-Wesley, Reading, Ma. (1969).


%\bibitem[Bir09]{birkar-09}Log minimal models according to Shokurov, Algebra Number Theory {\textbf{3}} (2009), no. 8, 951--958.


\bibitem[Bir11]{birkar-existII}C.~Birkar, On existence of log minimal models I\!I, J. Reine Angew Math. {\textbf{658}} (2011), 99--113.


\bibitem[Bir12]{birkar-flip}
C.~Birkar, 
Existence of log canonical flips and a special LMMP, 
Publ. Math. Inst. Hautes \'Etudes Sci. {\textbf{115}} (2012), no. 1, 325--368.

%\bibitem[BC]{birkar-fiber}
%C.~Birkar, J.~Chen,
%Varieties fibred over abelian varieties with fibres of log general type, 
%Advances in Math. {\textbf{270}} (2015), 206--222.

\bibitem[BCHM10]{bchm}C.~Birkar, P.~Cascini, C.~D.~Hacon, J.~M\textsuperscript{c}Kernan, Existence of minimal models for varieties of log general type, J. Amer. Math. Soc. {\textbf{23}}(2010), no. 2, 405--468.


%\bibitem[BH14a]{bhzariski} C.~Birkar, Z.~Hu, Polarized pairs, log minimal models, and Zariski decompositions, Nagoya Math. J. {\textbf{215}} (2014), 203--224. 


%\bibitem[BH14b]{birkarhu-arg}C.~Birkar, Z.~Hu, Log canonical pairs with good augmented base loci, Compos. Math. {\textbf{150}} (2014), no. 4,579--592. 


%\bibitem[BDPP]{bdpp} S.~Boucksom, J.-P.~Demailly, M.~P\u{a}un, T.~Peternell, The pseudo-effective cone of a compact K\"ahler manifold and varieties of negative Kodaira dimension, J. Algebraic. Geom., {\textbf{22}} (2013), no. 2, 201--248.


\bibitem[BZh16]{birkarzhang} C.~Birkar, D.~Q.~Zhang, Effectivity of Iitaka fibrations and pluricanonical systems of polarized pairs, Publ. Math. Inst. Hautes \'Etudes Sci. {\textbf{123}} (2016), no. 1, 283--331.

%\bibitem[BBP13]{bbp} S.~Boucksom, A.~Broustet, G.~Pacienza, Uniruledness of stable base loci of adjoint linear systems with and without Mori theory, Math. Z. {\textbf{275}} (2013), no. 1-2,  499--507.


%\bibitem[CB13]{Cacciola} S. Cacciola, L. D. Biagio, Asymptotic base loci on singular varieties. Mathematische Zeitschrift, \textbf{275} (2013), no. 1, pp 151--166.

%\bibitem[C]{gab-logspectrum}
%G. Di Cerbo, 
%On Fujita's log spectrum conjecture, 
%arXiv:1210.5324, (2012), to appear in Math. Ann.

%\bibitem[CKP]{ckp}F.~Campana, V.~Koziarz, M.~P\u{a}un, Numerical character of the effectivity of adjoint line bundle, Ann. Inst. Fourier {\textbf{62}} (2012), no. 1, 107--119. 

%\bibitem[CL]{cortilazic}A.~Corti, V,~Lazi\'c, New outlook of minimal model program, II, Math. Ann. {\textbf{356}} (2013), no. 2, 617--633. 

%\bibitem[DHP]{dhp}J.-P. Demailly, C.~D.~Hacon, M.~P\u{a}un, Extension theorems, non-vanishing and the existence of good minimal models, Acta Math. {\textbf{210}} (2013), no. 2, 203--259.

%\bibitem[E]{eckl}T.~Eckl, Numerical analogues of the Kodaira dimension and the abundance conjecture, Manuscripta Math. {\textbf{150}} (2016), no. 3-4, 337--356.

%\bibitem[ELM$^+$06]{ELMNP} L. Ein, R. Lazarsfeld, M. Musta\c{t}\u{a}, M. Nakamaye, and M. Popa, Asymptotic invariants of base loci, Pure Appl. Math. Q. \textbf{1} (2005), no. 2, 379--403.

\bibitem[FL19]{floris-lazic} E.~Floris, V. Lazi\'{c}, On the B-Semiampleness Conjecture, \'{E}pijournal de G\'{e}ométrie Alg\'{e}brique, Vol. {\textbf{3}} (2019), Article Nr. 12.

%\bibitem[F1]{fujinonon-van}O.~Fujino, Non-vanishing theorem for log canonical pairs, J. Algebraic Geom. {\textbf{20}} (2011), no. 4, 771--783. 

%\bibitem[F1]{fujino1}
%O.~Fujino, 
%Abundance theorem for semi log canonical threefolds, 
%Duke Math. J. {\textbf{102}} (2000), no. 3, 513--532.

%\bibitem[F1]{fujino-toric}O.~Fujino, Note on toric varieties from Mori theoretic viewpoint, 

%\bibitem[Fuj00]{fujino-abund-logbig} O.~Fujino, Base Point Free Theorem of Reid-Fukuda Type, J. Math. Sci. Univ. Tokyo {\textbf{7}} (2000),1--5.

%\bibitem[Fuj07]{fujino-sp-ter}O.~Fujino, {\it Special termination and reduction to pl flips.} In Flips for $3$-folds and $4$-folds, Oxford University Press (2007).

%\bibitem[F3]{fujino-lcring}O.~Fujino, Finite generation of the log canonical ring in dimension four, Kyoto J. Math. {\textbf{50}} (2010), no. 4, 671--684.

%\bibitem[Fuj11]{fujino-fund}O.~Fujino, Fundamental theorems for the log minimal model program, Publ. Res. Inst. Math. Sci. {\textbf{47}} (2011), no. 3, 727--789. 

\bibitem[Fuj12]{fujino-abund-saturation} O.~Fujino, Basepoint-free theorems: saturation, $b$-divisors, and canonical bundle formula, Algebra Number Theory, {\textbf{6}} (2012), no. 4, 797--823. 

%\bibitem[F2]{fujino}
%O.~Fujino, 
%On injectivity, vanishing and torsion-free theorems for algebraic varieties,
%Proc. Japan Acad.. {\textbf{85}}, Ser.A (2009), no. 8, 95--100.

%\bibitem[F3]{fujino-maxalb}
%O.~Fujino, 
%On maximal albanese dimensional varieties, 
%Proc. Japan Acad.. {\textbf{89}}, Ser.A (2013), no. 8, 92--95.

%\bibitem[Fuj17]{fujino-inj} O.~Fujino, Injectivity theorems. Higher Dimensional Algebraic Geometry: In honour of Professor Yujiro Kawamata's sixtieth birthday, Math. Soc. Japan (2017), 131--157.

\bibitem[Fuj-book]{fujino-book}O.~Fujino, {\em Foundations of the minimal model program}, MSJ Mem. \textbf{35}, Mathematical Society in Japan, Tokyo, (2017). 


%\bibitem[F4]{fujino-semipositivity} O.~Fujino, Notes on the weak positivity theorems, preprint (2015). 

%\bibitem[F5]{fujino-remarks}O.~Fujino, Some remarks on the minimal model program for log canonical pairs, J. Math. Sci. Univ. Tokyo {\textbf{22}} (2015), no. 1, 149--192. 

%\bibitem[F6]{fujino-subadd-correct} O.~Fujino, Corrigendum: On subadditivity of the logarithmic Kodaira dimension, preprint (2019).  

%\bibitem[Ft]{fujita}
%T.~Fujita, 
%Fractionally logarithmic canonical rings of algebraic surfaces, 
%J. Fac. Sci. Univ. Tokyo Sect. IA Math. {\textbf{30}} (1984), no. 3, 685--696.

\bibitem[FG12]{fg-bundle} O.~Fujino, Y.~Gongyo, On canonical bundle formulas and subadjunctions, Michigan Math. J. {\textbf{61}} (2012), no. 2, 255-264. 


%\bibitem[FG14a]{fujino-gongyo}O.~Fujino, Y.~Gongyo, Log pluricanonical representations and abundance conjecture, Compos. Math. {\textbf{150}} (2014) no. 4, 593--620.


%\bibitem[FG3]{fg-lcring} O.~Fujino, Y.~Gongyo, On log canonical rings, Adv. Stud. Pure Math., {\textbf{74}} (2017), Higher dimensional algebraic geometry in honour of Professor Yujiro Kawamata's sixtieth birthday, 159--169,

\bibitem[FG14]{fujino-gongyo2} O.~Fujino, Y.~Gongyo, On the moduli b-divisors of lc-trivial fibrations, Ann. Inst. Fourier, {\textbf{64}} (2014), no. 4, 1721--1735.



\bibitem[FM00]{fujino-mori}O.~Fujino, S.~Mori, A canonical bundle formula, J. Differential Geom. {\textbf{56}} (2000), no. 1, 167--188.


%\bibitem[Fk99]{fukuda} S.~Fukuda, A base point free theorem of Reid type. II, Proc. Japan Acad. Ser.A Math. Sci. {\textbf{75}} (1999), no. 3, 32--34.

%\bibitem[Fk]{fukuda} S.~Fukuda, On numerically effective log canonical divisors, Int. J. Math. Math. Sci. {\textbf{30}} (2002), no. 9, 521--531. 

\bibitem[Ful83]{fulton} W.~Fulton, {\em Intersection Theory}. Springer, Berlin (1983). 


%\bibitem[G1]{gongyo}
%Y.~Gongyo, 
%Abundance theorem for numerically trivial log canonical divisors of semi-log canonical pairs, 
%J. Algebraic Geom., {\textbf{22}} (2013), 549--564.

%\bibitem[G11]{gongyo}Y.~Gongyo, On the minimal model theory for dlt pairs of numerical log kodaira dimension zero, Math. Res. Lett., {\textbf{18}} (2011) , no. 5, 991--1000.

%\bibitem[G]{gongyo-nonvanishing}Y.~Gongyo, Remarks on the non-vanishing conjecture, Adv. Stud. Pure Math. {\textbf{65}} (2015), Algebraic geometry in East Asia--Taipei 2011, 107--116.


%\bibitem[GL13]{gongyolehmann}Y.~Gongyo, B.~Lehmann, Reduction maps and minimal model theory, Compos. Math. {\textbf{149}} (2013), no. 2, 295--308.

%\bibitem[GHS]{ghs}T.~Graber, J.~Harris, J.~Starr, Families of rationally connected varieties, J. Amer. Math. Soc. {\textbf{16}} (2003), no. 1, 57--67. 


%\bibitem[HMX14]{hmx-acc}C.~D.~Hacon, J.~M\textsuperscript{c}Kernan, C.~Xu, ACC for log canonical thresholds, Ann. of Math. (2) {\textbf{180}} (2014), no. 2, 523--571. 

%\bibitem[HMX2]{hmx}
%C.~D.~Hacon, J.~M\textsuperscript{c}Kernan, C.~Xu,
%Boundedness of moduli of varieties of general type, 
%preprint (2014), arXiv:1412.1186v1

\bibitem[HX13]{haconxu-lcc}C.~D.~Hacon, C.~Xu, Existence of log canonical closures, Invent.Math. {\textbf{192}} (2013), no. 1, 161--195. 

%\bibitem[HX16]{haconxu}C.~D.~Hacon, C.~Xu, On finiteness of $B$-representation and semi-log canonical abundance, Adv, Stud. Pure. Math. {\textbf{70}} (2016), Minimal Models and extremal rays--Kyoto, 2011, 361--378. 


\bibitem[HL18]{hanli}J.~Han, Z.~Li, Weak Zariski decompositions and log terminal models for generalized polarized pairs. Preprint (2018).

\bibitem[HL19]{hanliu}J.~Han, W.~Liu, On a generalized canonical bundle formula for generically finite morphisms, to appear in Ann. Inst. Fourier (Grenoble).

\bibitem[Harts]{hartshorne} R.~Hartshorne, {\em{Algebraic geometry}}, Graduate Texts in Mathematics, No. {\textbf{52}}. Springer-Verlag, New York-Heidelberg, 1977.

%\bibitem[H]{has-ab}K.~Hashizume, Remarks on the abundance conjecture, preprint (2015), arXiv:1509.04626.

%\bibitem[Ha1]{has-trivial} K.~Hashizume, Minimal model theory for relatively trivial log canonical pairs, to appear in Ann. Inst. Fourier (Grenoble).


\bibitem[Ha19]{has-mmp} K.~Hashizume, Remarks on special kinds of the relative log minimal model program, Manuscripta Math. \textbf{160} (2019), no. 3, 285--314.

%\bibitem[Ha3]{has-class} K.~Hashizume, A class of singularity of arbitrary pairs and log canonicalizations, to appear in Asian J. Math.


\bibitem[HH20]{hashizumehu}K.~Hashizume, Z.~Hu, On minimal model theory for log abundant lc pairs, J. Reine Angew.
Math., \textbf{767} (2020), 109--159.


\bibitem[Hu17]{hu}Z.~Hu, Log canonical pairs with boundaries containing ample divisors, preprint (2017), arXiv:1712.07219.  

\bibitem[Hu20]{hu2}Z.~Hu, Existence of canonical models for Kawamata log terminal pairs, preprint (2020), arXiv:2004.03895.
  

%\bibitem[I]{iitaka} S.~Iitaka, Algebraic geometry: an introduction to the birational geometry of algebraic varieties, Springer-Verlag, 1982.

\bibitem[dJ97]{dejong} A.J. de Jong: Families of curves and alterations. Ann. Inst. Fourier (Grenoble) (\textbf{47}) (1997) No. 2, 599--621.

%\bibitem[Kaw07]{kawakita} M.~Kawakita, Inversion of adjunction on log canonicity, Invent. Math. {\textbf{167}} (2007), no. 1, 129--133.

%\bibitem[Kaw81]{Kawamata-abel} Y.~Kawamata, Characterization of abelian varieties, Compos. Math. {\textbf{43}} (1981), No.2, 253--276.

%\bibitem[Kaw]{kawamata-pluri} Y.~Kawamata, Pluricanonical systems on minimal algebraic varieties, Invent. Math. {\textbf{79}} (1985), no. 3, 567--588.

%\bibitem[K]{kollar}
%J.~Koll\'ar,
%Adjunction and discrepancies, in {\em Flips and abundance for algebraic threefolds}, 
%Ast\'erisque {\textbf{211}} (1992), 183--192.

%\bibitem[Kaw13]{kawamata-abund}Y.~Kawamata, On the abundance theorem in the case of numerical Kodaira dimension zero, Amer. J. Math. {\textbf{135}} (2013), no.1, 115--124.


%\bibitem[Kaw15]{kawamata}Y.~Kawamata, Variation of mixed Hodge structures and the positivity for algebraic fiber spaces, Adv. Stud. Pure Math. {\textbf{65}} (2015), Algebraic geometry in East Asia--Taipei 2011, 27--57.


%\bibitem[KMM94]{kmm-abundance}S.~Keel, K.~Matsuki, J.~M\textsuperscript{c}Kernan, Log abundance theorem for threefolds, Duke Math. J. {\textbf{75}} (1994), no. 1, 99--119. 


%\bibitem[K]{kollar}J.~Koll\'ar, {\em Rational curves on algebraic varieties}, Ergebnisse der Mathematik und ihrer Grenzgebiete. 3. Folge. A Series of Modern Surveys in Mathematics, {\textbf{32}}, Springer-Verlag, Berlin. 

%\bibitem[Ko86]{kollar-inj}  J.~Koll\'ar, Higher direct images of dualizing sheaves. I. Ann. of Math. (2) {\textbf{123}} (1986), no. 1, 11-42.


%\bibitem[Ko13]{kollar-mmp}J.~Koll\'ar, {\em Singularities of the Minimal Model Program}, Cambridge Tracts in Mathematics, {\textbf{200}}. Cambridge University Press, Cambridge, 2013.

%\bibitem[Ko2]{kollar-genera}J.~Koll\'ar, {\em Log-plurigenera in stable families}, Peking Math J. {\textbf{1}} (2018) no. 1, 81--107.  

%\bibitem[KK10]{kollarkovacs}J.~Koll\'ar, S.~Kov\'acs, Log canonical singularities are Du Bois, J. Amer. Math. Soc. {\textbf{23}} (2010), no. 3, 791--813.


\bibitem[KM98]{kollar-mori} J.~Koll\'ar, S.~Mori, {\em{Birational geometry of algebraic varieties}}, With the collaboration of C.~H.~Clemens and A.~Corti. Translated from the 1998 Japanese original. Cambridge Tracts in Mathematics, {\textbf{134}}. Cambridge University Press, Cambridge, (1998).

%\bibitem[KMM]{kawamata-matsuda-matsuki}
%Y.~Kawamata, K.~Matsuda, K.~Matsuki, 
%Introduction to the Minimal Model Problem, in {\em Algebraic Geometry}, 
%{\em Sendai 1985}, Advanced Studies of Pure Math. {\textbf{10}}, (1987) 
%Kinokuniya and North-Holland, 283--360. 

%\bibitem[La]{lai}C.~J.~Lai, Varieties fibered by good minimal models, Math. Ann., {\textbf{350}} (2011), no. 3, 533-547. 


%\bibitem[LP]{lazicpeter} V.~Lazi\'c, T.~Peternell,  Abundance for varieties with many differential forms, preprint (2016), arXiv:1601.01602. 

%\bibitem[Les19]{lesieutre} J.~Lesieutre, Notions of numerical Iitaka dimension do not coincide, preprint (2019). 


%\bibitem[L1]{leh2}B.~Lehmann, Comparing numerical dimensions, Algebra Number Theory, {\textbf{7}} (2013), no. 5, 1065--1100. 

%\bibitem[Leh14]{leh}B.~Lehmann, On Eckl's pseudo-effective reduction map, Trans. Amer. Math. Soc. {\textbf{366}} (2014), no. 3, 1525--1549. 



\bibitem[Nak04]{nakayama}N.~Nakayama, {\em Zariski-decomposition and abundance}, MSJ Mem., {\textbf{14}}, Mathematical Society of Japan, Tokyo, (2004). 


%\bibitem[O]{okawa} S.~Okawa, On images of Mori dream spaces, Math, Ann., {\textbf{364}} (2016), no. 3-4. 1315--1342. 


%\bibitem[Reid93]{reid-shokurov}M.~Reid, Commentary by M.~Reid, in 3-fold log flips, Appendix by Yujiro Kawamata: The minimal discrepancy coefficients of terminal singularities in dimension three, Russ. Acad. Sci., Izv., Math. 40 (1993), no. 1, 95--202.

%\bibitem[Sho03]{shokurov1} V.~V.~Shokurov, Prelimiting flips, Tr. Mat. Inst. Steklova {\textbf{240}} (2003), Biratsion. Geom. Linein. Sist. Konechno Porozhdennye Algebry, 82--219; translation in Proc. Steklov Inst. Math. 2003, no. 1 (240), 75--213. 

%\bibitem[S1]{shokurov-flip}V. V. Shokurov, 3-fold log flips, Appendix by Yujiro Kawamata: The minimal discrepancy coefficients of terminal singularities in dimension three, Russ. Acad. Sci., Izv., Math. 40 (1993), no. 1, 95--202.


%\bibitem[Sho96]{shokurov} V.~V.~Shokurov, $3$-fold log models, Algebraic geometry, 4. J. Math. Sci. {\textbf{81}} (1996), no. 3, 2667--2699.


\bibitem[Stacks]{stacks-project}The {Stacks project authors}, {\em The Stacks project}, {\url{https://stacks.math.columbia.edu}, (2020). }


\end{thebibliography}
\end{document}